\DeclareMathOperator{\AP}{AP}
\DeclareMathOperator{\B}{B}
\DeclareMathOperator{\E}{E}
\DeclareMathOperator{\Ext}{Ext}
\DeclareMathOperator{\Erg}{Erg}
\DeclareMathOperator{\Isom}{Isom}
\DeclareMathOperator{\supp}{supp}
\DeclareMathOperator{\Syn}{Syn}
\DeclareMathOperator{\PSyn}{PW-Syn}
\DeclareMathOperator{\Bohr}{Bohr}
\DeclareMathOperator{\PBohr}{PW-Bohr}
\newtheorem{theorem}{Theorem}[section]
\newtheorem{lemma}[theorem]{Lemma}
\newtheorem{corollary}[theorem]{Corollary}
\newtheorem{proposition}[theorem]{Proposition}
\theoremstyle{definition}
\theoremstyle{definition}
\theoremstyle{definition}
\theoremstyle{definition}
\theoremstyle{definition}\newtheorem{remark}[theorem]{Remark}
\theoremstyle{definition}
\newcommand{\eps}{\epsilon}
\newcommand{\cB}{\mathcal{B}}
\newcommand{\cC}{\mathcal{C}}
\newcommand{\cF}{\mathcal{F}}
\newcommand{\cH}{\mathcal{H}}
\newcommand{\cL}{\mathcal{L}}
\newcommand{\cM}{\mathcal{M}}
\newcommand{\cP}{\mathcal{P}}
\newcommand{\cS}{\mathcal{S}}
\newcommand{\bG}{\mathbb{G}}
\newcommand{\bR}{\mathbb{R}}
\newcommand{\bZ}{\mathbb{Z}}
\newcommand{\bF}{\mathbb{F}}
\newcommand{\bT}{\mathbb{T}}
\newcommand{\goL}{\mathfrak{L}}
\newcommand{\goR}{\mathfrak{R}}
\newcommand{\goS}{\mathfrak{S}}
\newcommand{\oA}{\overline{A}}
\newcommand{\oB}{\overline{B}}
\newcommand{\oC}{\overline{C}}
\newcommand{\oT}{\overline{T}}
\newcommand{\oU}{\overline{U}}
\newcommand{\olambda}{\overline{\lambda}}
\newcommand{\oeta}{\overline{\eta}}
\newcommand{\ra}{\rightarrow}
\newcommand{\onto}{\xymatrix{\ar@{>>}[r]&}}
\newcommand{\da}[4]{\xymatrix{#1 \ar@<.5ex>[r]^{#2} \ar@<-.5ex>[r]_{#3} & #4}}
\newcommand{\qand}{\quad \textrm{and} \quad}
\newcommand{\qqand}{\qquad \textrm{and} \qquad}
\newcommand{\cFS}{\mathcal{F}\mathcal{S}}
\newcommand{\cBo}{\emph{Bohr}}
\numberwithin{equation}{section}
\begin{document}

\title{Product set phenomena for countable groups}


\author{Michael Bj\"orklund}
\address{Department of Mathematics , ETH Z\"urich, Z\"urich, Switzerland}
\curraddr{Department of Mathematics, Chalmers, Gothenburg, Sweden}
\email{micbjo@chalmers.se}
\thanks{}

\author{Alexander Fish}
\address{School of Mathematics and Statistics, University of Sydney, Australia}
\curraddr{}
\email{alexander.fish@sydney.edu.au}
\thanks{}

\subjclass[2010]{Primary: 37B05; Secondary: 05C81, 11B13, 11K70 }

\keywords{Ergodic Ramsey theory; random walks on groups; topological dynamics; additive combinatorics}

\date{}

\dedicatory{}

\begin{abstract}
We develop in this paper novel techniques to analyze  
local combinatorial structures in product sets of two subsets of a 
countable group which are "large" with respect to certain 
classes of (not necessarily invariant) means on the group. 
Our methods heavily utilize the theory of C*-algebras and random walks on groups. As applications of our methods, we extend and quantify 
a series of recent results by Jin, Bergelson-Furstenberg-Weiss,
Beiglb\"ock-Bergelson-Fish, Griesmer and DiNasso-Lupini 
to general countable groups. 
\end{abstract}

\maketitle


\section{Introduction}

\subsection{General comments}
Let $G$ be a group and let $\goL, \goR$ and $\goS$ be given sets 
of subsets of $G$. We shall think of $\goL$ and $\goR$ as defining 
two (possibly different) classes of \emph{large} subsets of the group and 
the elements of $\goS$ will be regarded as the \emph{structured} 
subsets of $G$. 

In this paper, the term \emph{product set phenomenon} 
(with respect to the sets $\goL, \goR$ and $\goS$) will refer 
to the event that whenever $A \in \goL$ and $B \in \goR$, 
then their \emph{product set} $AB$, defined by
\[
AB := \Big\{ a \cdot b \, : \, a \in A, \, \, b \in B \Big\}
\]
belongs to $\cS$. If this happens, we shall say that the pair 
$(\goL,\goR)$ is \emph{$\goS$-regular}.  

Perhaps the first occurrence of a (non-trivial) product set phenomenon recorded in the literature is the following classical observation, which is often attributed to Steinhaus (see e.g. \cite{St}): Let $G$ be a locally compact group with left Haar measure $m$ and define
\[
\goL := \Big\{ A \in \cB(G) \, : \, m(A) > 0 \Big\},
\]
where $\cB(G)$ denotes the set of Borel sets of $G$. Let $\goS$ denote
the set of all subsets of $G$ with non-empty interior. Then the pair 
$(\goL,\goL)$ is $\goS$-regular, that is to say, the product set of any 
two Borel sets with positive Haar measures contains a non-empty 
open set. 

\subsection{Structured sets in countable groups}
This paper is concerned with product set phenomena in \emph{countable}
groups. To explain our main results, we first need to define what classes 
of \emph{large} sets and \emph{structured} sets we shall consider. We begin by describing our choices of the structured sets. 

Let $G$ be a countable group. A set $T \subset G$ is \emph{right thick}
if whenever $F \subset G$ is a finite subset, then there exists $g$ in $G$
such that  
\[
F \cdot g \subset T .
\]
We say that a set $C \subset G$ is \emph{left syndetic} if it has non-trivial intersection with any right thick set, or equivalently, if there exists 
a finite set $F \subset G$ such that $FC = G$. Let $\Syn$ denote the set
of all left syndetic subsets of $G$ and define the class of 
\emph{right piecewise left syndetic sets} $\PSyn$ by
\[
\PSyn := 
\Big\{
C \cap T \, : \, \textrm{$C$ is left syndetic and $T$ is right thick}.
\Big\}.
\]
Equivalently, a set $C \subset G$ is right piecewise left syndetic if there 
exists a finite set $F \subset G$ such that the product set $FC$ is right thick. 

A particularly nice sub-class of left syndetic sets is formed by the so called \emph{Bohr sets}. 
Recall that a set $C \subset G$ is \emph{Bohr} if there exist a compact 
Hausdorff group $K$, an epimorphism $\tau : G \ra K$ (a homomorphism
with dense image) and a non-empty open set $U \subset K$ such that 
\[
C \supset \tau^{-1}(U).
\]
Let $\Bohr$ denote the set of all Bohr sets in $G$ and define the class 
$\PBohr$ of \emph{right piecewise Bohr sets}  by
\[
\PBohr := 
\Big\{
C \cap T \, : \, \textrm{$C$ is Bohr and $T$ is right thick}.
\Big\}.
\]
The classes $\Syn$, $\PSyn$, $\Bohr$ and $\PBohr$ will all be used as structured subsets of $G$ in this paper. 

\subsection{Large sets in amenable groups}
The \emph{large} subsets of $G$ will primarily be defined in terms of means on $G$. Recall that a \emph{mean} on a countable $G$ is a positive linear functional $\lambda$ on $\ell^{\infty}(G)$ with norm one. Alternatively, we can  think of a mean as a \emph{finitely additive} probability measure $\lambda'$ on $G$ via the correspondence 
\[
\lambda'(C) = \lambda(\chi_C), \quad C \subset G,
\]
where $\chi_C$ denotes the indicator function of the set $C$. Let 
$\cM(G)$ denote the set of all means on $G$. If $\cC \subset \cM(G)$,
then we define the upper and lower \emph{$\cC$-density} of  
a set $C \subset G$ by
\[
d^{*}_{\cC}(C) = \sup_{\lambda \in \cC} \lambda'(C)
\qand
d_{*}^{\cC}(C) = \inf_{\lambda \in \cC} \lambda'(C)
\]
respectively. We say that a set is \emph{$\cC$-large} if $d^{*}_{\cC}(C)$ is positive and \emph{$\cC$-conull} if $d_{\cC}^{*}(C)$ equals one. \\

We note that $G$ acts on the set $\cM(G)$ via the left regular representation $\rho$ by 
\[
(\rho(g)^*\lambda)(\phi) = \lambda(\rho(g)\phi), \quad \textrm{for $g\in G$ and $\phi \in \ell^\infty(G)$}.
\] 
Recall that a countable group $G$ is \emph{amenable} if there exists a left invariant mean on $G$, i.e. if the set 
\[
\cL_{G} 
:= 
\Big\{ \lambda \in \cM(G) \, : \, \rho(g)^{*} \lambda = \lambda, \quad \forall \, g \in G \Big\},
\]
where $\rho$ denotes the left regular representation on $\ell^{\infty}(G)$
is non-empty. The class of amenable groups contains all abelian groups, 
all locally finite groups and all groups of subexponential growth. The free
groups on at least two generators are well-known examples of non-amenable groups. We stress that the upper and lower $\cL_G$-densities on an amenable
group are often referred to as the \emph{upper and lower Banach densities} in the literature. 

In the very influential classical paper \cite{Fo}, F\o lner (inspired by earlier works of Bogolyubov in \cite{Bo}) observed that if $G$ is a countable amenable group and $A \subset G$ is $\cL_G$-large, then its \emph{difference set} $AA^{-1}$ is left syndetic and right piecewise 
Bohr. In fact, F\o lner showed that one can always find a Bohr set $B$, which contains the identity element, and a
right thick set $T$ with the property that $T \cap S$ is right thick for 
\emph{every} right thick set $S$ in $G$ such that
\[
AA^{-1} \supset B \cap T. 
\]
More recently, Jin in \cite{Jin}, Bergelson, Furstenberg and Weiss in \cite{BFW} (for the integers) and Beiglb\"ock, Bergelson  and the second author of this paper in \cite{BBF}, showed that whenever $A$ and $B$
are $\cL_G$-large subsets of a countable amenable group $G$, then 
the product set $AB$ is  right piecewise Bohr (and thus right piecewise left
syndetic). In other words, the pair $(\cL_G,\cL_G)$ is 
$\PBohr$-regular (and thus $\PSyn$-regular). Note however that already easy examples show that $(\cL_G,\cL_G)$ is not $\Syn$-regular.

\subsection{Pit-falls for non-amenable groups}
Let $G$ be a countable group and suppose $\mu$ is a probability 
measure on $G$ such that $\mu(g) = \mu(g^{-1})$ for all $g$ and
the support of $\mu$ generates $G$. We shall call 
such a measure \emph{adapted} and refer
to the pair $(G,\mu)$ as a \emph{measured group}. If $\rho$ denotes the left regular representation of $G$ on $\ell^{\infty}(G)$, then we let $\rho(\mu)$ denote the operator
\[
\rho(\mu)\varphi(x) = \int_{G} \varphi(g^{-1}x) \, d\mu(g), \quad 
\varphi \in \ell^{\infty}(G).
\] 
Classical arguments (e.g. Kakutani's Fixed Point Theorem) show that 
the set 
\[
\cL_{\mu} 
= 
\Big\{ \lambda \in \cM(G) \, : \, \rho(\mu)^{*} \lambda = \lambda \Big\}
\]
is non-empty for every measured group $(G,\mu)$. We think of $\cL_\mu$
as a substitute for $\cL_G$ when the group is not amenable. Indeed, if $G$
is amenable, then it is well-known (see e.g. \cite{KV} or \cite{Ro}) that there
exists a probability measure $\mu$ as above, such that $\cL_\mu = \cL_G$. The study of $\cL_\mu$-large sets was initiated by Furstenberg and Glasner in \cite{FG}.

One of the aims of this paper is to extend the results mentioned above (for amenable groups) to the setting of general countable groups and $\cL_\mu$-densities. There are however some serious pit-falls in the non-amenable case which the reader should be aware of before we formulate our results. 

Note that the F\o lner Theorem implies that whenever $G$ is a countable \emph{amenable} group and $A$ and $B$ and $C$ are $\cL_G$-large sets, then the intersection
\[
AA^{-1} \cap BB^{-1} \cap CC^{-1} \quad \textrm{is $\cL_G$-large}.
\]
Let $G$ denote the free group on three (free) generators $\{a,b,c\}$ 
and let $A$, $B$ and $C$ denote the sets of all elements in $G$, 
viewed as reduced words written from left to right, beginning with 
the letters $a$, $b$ and $c$ respectively. One readily checks that $A, B$ and $C$ are left syndetic, so in particular they are $\cL_\mu$-large for every probability measure $\mu$. However, 
\[
AA^{-1} \cap BB^{-1} \cap CC^{-1} = \{e\},
\]
which shows that the F\o lner Theorem does not (naively) extend to non-amenable groups. In fact, the situation is even worse. In Subsection 
\ref{pitfall} we shall construct a subset $A$ of the free group on two 
generators, which is $\cL_\mu$-large for every adapted probability measure
$\mu$ on $G$, such that the difference set $AA^{-1}$ is \emph{not} left syndetic. 

\subsection{Furstenberg-Poisson means}
Let $(G,\mu)$ be a countable measured group. A $\mu$-integrable function $f$ on $G$ is called \emph{left $\mu$-harmonic} if
\[
f(g) = \int_G f(hg) \, d\mu(h), \quad \forall \, g \in G. 
\]
Let $\cH^{\infty}_l(G,\mu)$ denote the space of all \emph{bounded}
left $\mu$-harmonic functions. If $G$ is non-amenable, then it is 
well-known that $\cH^{\infty}_l(G,\mu)$, equipped with the supremum
norm, is a non-separable Banach space. Define the set of (left) 
\emph{Furstenberg-Poisson means} on $G$ by
\[
\cF_\mu := 
\Big\{ 
\lambda \in \cL_\mu \, : \, \lambda |_{\cH^{\infty}_{l}(G,\mu)} = \delta_{e} \Big\}.
\]
It is easy to construct elements in $\cF_\mu$. Indeed, for $n \geq 1$, 
we define the mean
\[
\lambda_n(\varphi) := \frac{1}{n} \sum_{k=1}^{n} \int_G \varphi \, d\mu^{*k}, \quad \forall \, \varphi \in \ell^{\infty}(G),
\]
where $\mu^{*k}$ denotes the $k$-th convolution power of $\mu$. 
One readily checks that every accumulation point $\lambda$ of the sequence $(\lambda_n)$ in the weak*-topology on $\cM(G)$ belongs to $\cL_\mu$ and if $f$ is a left $\mu$-harmonic function on $G$, then 
$\lambda_n(f) = f(e)$ for all $n$, and thus $\lambda \in \cF_\mu$. In 
particular, if $B \subset G$ and
\[
\varlimsup_{n \ra \infty} \, \frac{1}{n} \sum_{k = 0}^{n-1} \mu^{*k}(B) > 0,
\]
then $B$ is $\cF_\mu$-large (and hence $\cL_\mu$-large). 

\subsection{Fourier-Stiltjes means}
Let $G$ be a countable group and let $\pi$ be a unitary representation 
of $G$ on a Hilbert space $\cH$. Given $x, y \in \cH$, the function 
\[
\varphi(g) = \big\langle y, \pi(g)x \big\rangle
\]
is called a \emph{matrix coefficient} of $\pi$. The space of all matrix
coefficients on $G$ (when $\pi$ ranges over all unitary representations 
of $G$) is easily seen to be a $*$-sub-algebra of $\ell^{\infty}(G)$, i.e. 
a linear subspace which is closed under pointwise multiplication and taking complex conjugates. We shall refer to this algebra as the \emph{Fourier-Stiltjes algebra} of $G$ and denote it by $\B(G)$.

A classical result due to Ryll-Nardzewski (see e.g. \cite{Gl}) asserts there exists a
\emph{unique} left invariant mean $\lambda_o$ on $\B(G)$. We define the
space $\cFS$ of \emph{Fourier-Stiltjes means} on $G$ by
\[
\cFS := \Big\{ \lambda \in \cM(G) \, : \, \lambda |_{\B(G)} = \lambda_o \Big\}.
\]
Clearly, if $G$ is amenable, then every left invariant mean belongs to $\cFS$. In fact, it is not hard to show that $\cL_\mu \subset \cFS$ for
every measured group $(G,\mu)$. Indeed, note that if $\varphi$ is a 
matrix coefficient of the form above and $\lambda$ belongs to $\cL_\mu$, then
\[
\lambda(\varphi) = \big\langle y, \bar{x} \big\rangle,
\]
where 
\[
\bar{x} := \int_{G} \pi(h)x \, d\lambda(h)
\]
Note that $\bar{x}$ satisfies
\begin{eqnarray*}
\int_G \pi(g) \bar{x} \, d\mu(g) 
&=& 
\int_G \Big( \int_{G} \pi(gh) x \, d\lambda(h) \Big) \, d\mu(g) \\ 
&=& 
\int_{G} \pi(h) x \, d(\rho(\mu)^{*} \lambda)(h) = \bar{x}.
\end{eqnarray*}
By the strict convexity of the unit ball in $\cH$, we conclude that $\bar{x}$ is $\pi(G)$-invariant, and thus the restriction 
of $\lambda$ to $\B(G)$ must be left $G$-invariant. By the uniqueness of 
the left invariant mean on $\B(G)$, the restriction must coincide with 
$\lambda_o$. In particular, 
\[
\cF_\mu \subset \cL_\mu \subset \cFS.
\]
For an alternative proof of the second inclusion, we refer the reader to \cite{FG}.

We stress that the set of $\cFS$-large subsets of $G$ is considerably 
larger than the set of $\cL_\mu$-large subsets for any adapted 
measure $\mu$. See Subsection \ref{connections} for 
further comments. 

\subsection{Statements of main results}

As we have seen from the discussions above, great care needs to be taken 
when trying to formulate sensible extensions of the F\o lner Theorem to non-amenable 
groups. In this paper, we shall present two results in this direction. The first one
partly extends the syndeticity claim in the F\o lner Theorem, and the second partly extends the "local" Bohr containment. 

Recall that a set $C \subset G$ is \emph{right piecewise left syndetic} if there exists a finite set $F \subset G$ such that $FC$ is right thick. 

\begin{theorem}
\label{thm1}
Let $(G,\mu)$ be a countable measured group. Suppose $A \subset G$ is $\cF_\mu$-large
and $B \subset G$ is $\cL_\mu$-large. Then $AB$ is right piecewise left syndetic.
\end{theorem}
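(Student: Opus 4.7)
The plan is to reduce the combinatorial claim to a dynamical positivity statement on the Stone--\v{C}ech semigroup $\beta G$, where $AB$ is right piecewise left syndetic if and only if $\overline{AB}$ meets the smallest two-sided ideal $K(\beta G)$; the $\cF_\mu$-hypothesis on $A$ will be used through a particular bounded $\mu$-harmonic function built from the data.

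First, I fix $\lambda \in \cF_\mu$ with $\alpha := \lambda(A) > 0$ and $\nu \in \cL_\mu$ with $\beta := \nu(B) > 0$, and introduce $\psi : G \to [0,1]$ defined by $\psi(g) := \nu(gB)$. The key observation is that $\psi$ is bounded left $\mu$-harmonic: the $\mu$-stationarity of $\nu$ applied to the set $gB$ gives
\[
\int_G \psi(hg)\,d\mu(h) \;=\; \int_G \nu(hgB)\,d\mu(h) \;=\; \nu(gB) \;=\; \psi(g),
\]
so by the defining property of $\cF_\mu$ one has $\lambda(\psi) = \psi(e) = \beta > 0$. Next, writing $\tilde\lambda, \tilde\nu$ for the Radon extensions to $\beta G$ and forming the convolution $\sigma := \tilde\lambda \ast \tilde\nu$ via $\sigma(\phi) := \int\!\int \phi(p\cdot q)\,d\tilde\nu(q)\,d\tilde\lambda(p)$, associativity of convolution together with $\mu \ast \lambda = \lambda$ gives $\sigma \in \cL_\mu$, while the elementary ultrafilter inclusion $AB \in p\cdot q$ whenever $A \in p$ and $B \in q$ yields $\sigma(\overline{AB}) \geq \alpha\beta > 0$.

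The final step, which is the main obstacle, is to promote the positivity $\sigma(\overline{AB}) > 0$ to the stronger conclusion $\overline{AB} \cap K(\beta G) \neq \emptyset$. Here the hypothesis $\lambda \in \cF_\mu$ becomes essential: the identification $\cH^\infty_l(G,\mu) \cong L^\infty(\Pi, m_\Pi)$ provided by the Poisson transform, combined with the fact that the Furstenberg--Poisson boundary $\Pi$ is a $G$-factor of a minimal left ideal of $\beta G$, forces $\tilde\lambda$ (and hence $\sigma$) to be concentrated on $K(\beta G)$, so that its support inside $\overline{AB}$ already sits in $K(\beta G)$. Making this concentration argument rigorous is where the $C^*$-algebraic and random walk techniques advertised in the paper's abstract must enter, and is also the reason why one cannot simply swap the roles of $A$ and $B$: with merely $\cL_\mu$-largeness on both sides, the product set $AB$ can fail to be piecewise left syndetic, as the pit-falls of non-amenable groups illustrate.
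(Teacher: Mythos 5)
Your first observation---that $\psi(g) := \nu(gB)$ is a bounded left $\mu$-harmonic function, so that $\lambda(\psi) = \psi(e) = \nu(B) > 0$ for every $\lambda \in \cF_\mu$---is correct and is exactly the mechanism behind Lemma~\ref{lsc harmonic} and Lemma~\ref{lower bound help4} in the paper. But you never use it: after recording it, the rest of the argument rests entirely on the claimed concentration of $\tilde\lambda$ on $K(\beta G)$, and $\psi$ plays no further role.

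That concentration claim is false, not merely unproven. Specialize to $G = \bZ$ with any adapted symmetric $\mu$. By the Choquet--Deny theorem the Poisson boundary is trivial, so $\cH^{\infty}_l(\bZ,\mu)$ consists only of constants and every left-invariant mean on $\bZ$ lies in $\cF_\mu$. If the measure on $\beta\bZ$ corresponding to every such mean were carried by $\overline{K(\beta\bZ)}$, then every set of positive upper Banach density would meet $K(\beta\bZ)$, i.e.\ would be piecewise syndetic. This is false: for example, take $A \subset \bZ$ built from runs of length one separated by gaps whose successive lengths are $v_2(1)+1,\,v_2(2)+1,\,v_2(3)+1,\dots$ (with $v_2$ the $2$-adic valuation). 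This $A$ has density $1/3$, but for every $m$ the starting positions of gaps of length $\geq m+1$ form a syndetic set, so $FA$ is never right thick and $A$ is not right piecewise left syndetic. The auxiliary assertion that the Poisson boundary is a $G$-factor of a minimal left ideal of $\beta G$ is also not part of the theory and would not give the concentration even if it held.

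There is a secondary technical issue earlier: $(\beta G,\cdot)$ is only a \emph{right} topological semigroup, so $q \mapsto p\cdot q$ is not continuous for general $p \in \beta G$, and the iterated integral you wrote (inner in $q$, outer in $p$) does not obviously define a Borel measure. Even granting a suitable definition of $\sigma$, the inequality $\sigma(\overline{AB}) \geq \alpha\beta > 0$ together with $\sigma \in \cL_\mu$ only shows that $AB$ is $\cL_\mu$-large, which is strictly weaker than right piecewise left syndetic.

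The missing ingredient is a finite-translate enlargement. The paper first applies Proposition~\ref{ergodicity} (which rests on left ergodicity of $\cF_\mu$, Corollary~\ref{left ergodic}) to find a finite $F$ with $d^*_{\cF_\mu}(FA) > 1 - d^*_{\cL_\mu}(B)$, and then Proposition~\ref{sum} shows $FAB$ is right thick. It is inside Proposition~\ref{sum} that the reproduction property $\lambda(\varphi) = \varphi(e)$ for $\mu$-harmonic $\varphi$ is used, applied to a function of the same kind as your $\psi$ (Lemmas~\ref{lsc harmonic} and \ref{lower bound help4}). So your harmonicity observation is the right ingredient, but it has to be combined with the translation step from Proposition~\ref{ergodicity}; it cannot be replaced by a $K(\beta G)$ concentration statement, which is simply not available.
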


A more symmetrical statement, which does not involve $\cF_\mu$-densities, can also be made. The deduction of this statement from 
the theorem above will be briefly explained in Subsection 
\ref{ss ergodic theorem}. However, we stress that there are more direct routes to proving this result. 

\begin{corollary}
Let $(G,\mu)$ be a countable measured group. If $A \subset G$ is 
$\cL_\mu$-large, then the difference set $AA^{-1}$ is right piecewise left syndetic, but not necessarily left syndetic. 
\end{corollary}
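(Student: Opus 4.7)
The plan is to reduce the first assertion (piecewise left syndeticity of $AA^{-1}$) to Theorem~\ref{thm1} through a harmonic-function construction, and to invoke the example in Subsection~\ref{pitfall} for the second assertion (failure of left syndeticity).

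Fix $\lambda \in \cL_\mu$ with $\lambda(A) = \alpha > 0$ and define $F(g) := \lambda(gA) = (\rho(g)^*\lambda)(\chi_A)$. Using $\rho(hg) = \rho(h)\rho(g)$ and the stationarity $\lambda \circ \rho(\mu) = \lambda$, one verifies
\[
\int_G F(hg)\, d\mu(h) \;=\; \lambda\bigl(\rho(\mu)\rho(g)\chi_A\bigr) \;=\; \lambda\bigl(\rho(g)\chi_A\bigr) \;=\; F(g),
\]
so $F \in \cH^{\infty}_l(G,\mu)$ with $F(e) = \alpha$. Hence every $\cF_\mu$-mean $\lambda'$ satisfies $\lambda'(F) = \alpha$.

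Consider the super-level set $A^\sharp := \{g \in G : F(g) > 1 - \alpha\}$. For $g \in A^\sharp$, the finite-additivity bound $\lambda(gA \cap A) \geq \lambda(gA) + \lambda(A) - 1 > 0$ gives $gA \cap A \neq \emptyset$, so $A^\sharp \subseteq AA^{-1}$. Moreover the inequality $F \leq (1-\alpha) + \alpha\,\chi_{A^\sharp}$ yields $\lambda'(A^\sharp) \geq (2\alpha - 1)/\alpha$, which is strictly positive once $\alpha > 1/2$; so in this regime $A^\sharp$ is $\cF_\mu$-large and sits inside $AA^{-1}$. For general $\alpha \in (0,1]$, I expect to amplify $A$ to $A^* := F_0 A$ with $F_0 \subset G$ finite chosen so that $\lambda(A^*) > 1/2$; the parallel construction then produces an $\cF_\mu$-large witness contained in $A^*(A^*)^{-1} = F_0 \, AA^{-1}\, F_0^{-1}$. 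Applying Theorem~\ref{thm1} to this witness paired with a suitable $\cL_\mu$-large set built from $A$ yields a piecewise left syndetic product, from which the conclusion for $AA^{-1}$ is extracted by a finite-conjugation reduction.

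The main obstacle is two-fold: first, producing the finite amplifier $F_0$ with $\lambda(F_0 A) > 1/2$ requires exploiting the $\mu$-stationary dynamics of $\lambda$ (for instance through its Poisson-boundary extension), since a F\o lner-type covering is unavailable in the non-amenable setting; and second, the combinatorial reduction from the piecewise syndeticity of $F_0\, AA^{-1}\, F_0^{-1}$ back to that of $AA^{-1}$ itself is delicate because right piecewise left syndeticity is not automatically preserved under conjugation by a finite set.

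For the non-syndeticity assertion, we invoke the explicit construction of Subsection~\ref{pitfall}: a subset $A$ of the free group on two generators that is $\cL_\mu$-large for every adapted measure $\mu$ and whose difference set $AA^{-1}$ fails to be left syndetic.
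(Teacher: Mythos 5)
Your harmonic-function calculation is correct as far as it goes: $F(g)=\lambda(gA)$ is indeed left $\mu$-harmonic for $\lambda\in\cL_\mu$, the super-level set $A^\sharp$ sits inside $AA^{-1}$, and the bound $\lambda'(A^\sharp)\geq (2\alpha-1)/\alpha$ for every $\lambda'\in\cF_\mu$ follows from $F\leq(1-\alpha)+\alpha\chi_{A^\sharp}$. However, there is a genuine gap even in the regime $\alpha>1/2$. Having an $\cF_\mu$-large set $A^\sharp\subset AA^{-1}$ does not by itself place $AA^{-1}$ in $\PSyn$: to invoke Theorem~\ref{thm1} you need a \emph{pair} $X,Y$ with $X$ being $\cF_\mu$-large, $Y$ being $\cL_\mu$-large, and $XY\subset AA^{-1}$, and you never produce such a $Y$. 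The alternative shortcut of showing $A^\sharp$ is already left syndetic also fails: by Corollary~\ref{mu thick} left syndeticity is governed by the lower $\cL_\mu$-density, whereas your bound controls only $\lambda'(A^\sharp)$ over $\cF_\mu\subsetneq\cL_\mu$, and indeed the very last clause of the corollary (and Subsection~\ref{pitfall}) shows left syndeticity must fail in general.

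For $\alpha\leq 1/2$ the situation is worse. Amplifying to $A^*:=F_0A$ changes the difference set to $A^*(A^*)^{-1}=F_0\,AA^{-1}\,F_0^{-1}$, which is not a subset of $AA^{-1}$, and you correctly flag that you have no way to pass piecewise left syndeticity back from $F_0\,AA^{-1}\,F_0^{-1}$ to $AA^{-1}$. You also cannot secure the amplifier $F_0$ for a generic $\lambda\in\cL_\mu$ without something like Proposition~\ref{ergodicity}, which is stated for $\cF_\mu$-large sets and itself rests on the ergodicity machinery.

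The paper's route avoids both obstacles by working entirely inside $\beta G$: pick an \emph{extremal} $\lambda\in\cL_\mu$ realizing $d^*_{\cL_\mu}(A)$ (Corollary~\ref{max2}), observe that $\olambda$ is an ergodic $\mu$-stationary measure on $\beta G$ (Proposition~\ref{ext erg}), and apply the mean ergodic theorem in the form of Corollary~\ref{positive density} (together with the symmetry of $\mu$) to find $q\in\beta G$ with both $B:=\oA_q$ and $B^{-1}$ $\cF_\mu$-large. The crucial point you are missing is that the approximation lemma (Lemma~\ref{UF2}) then yields $BB^{-1}\subset AA^{-1}$, because every finite piece of $\oA_q$ is a right translate of a finite piece of $A$. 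With $B$ $\cF_\mu$-large and $B^{-1}$ $\cL_\mu$-large, Theorem~\ref{thm1} gives $BB^{-1}\in\PSyn$, and hence $AA^{-1}\in\PSyn$ since $\PSyn$ is closed under passing to supersets. Your citation of Subsection~\ref{pitfall} for the ``not necessarily left syndetic'' half is correct and matches the paper.
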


Our second theorem requires a bit more set up. Let $G$ be a countable group and let 
$B \subset G$. The \emph{Bebutov algebra of $B$}, here denoted by $\cB_B$, is defined
as the smallest left $G$-invariant sub-C*-algebra of $\ell^{\infty}(G)$ containing the constant functions and the indicator function $\chi_B$
of the set $B$.

We shall say that $B$ is \emph{strongly non-paradoxical} if
there exists a left $G$-invariant mean $\eta$ on $\cB_B$ such that $\eta'(B)$ is positive. 
Let $\cL_B$ denote the set of all means on $G$ whose restrictions to $\cB_B$ are left 
$G$-invariant. The \emph{non-paradoxical} density of $B$ is now defined as
\[
d^*_{np}(B) = \sup_{\lambda \in \cL_B} \lambda'(B).
\]
This definition may seem rather far-fetched, so we shall take a moment here to motivate why
we have introduced it. First note that if $G$ is amenable, then $\cB_B$ always admits a 
left $G$-invariant mean and thus, for the left upper Banach density $d^*_{\cL_G}$ on $G$, we have the identity
\[
d^*_{\cL_G}(B) = d^*_{np}(B),
\]
so the reader who wishes to stay within the class of amenable groups (for which the result
which we shall state below is also new) can simply think of the non-paradoxical density as 
a roundabout re-packaging of the standard upper Banach density. \\

In order to better understand which kind of sets we wish to exclude by restricting our attention
to strongly non-paradoxical sets (and to partly motivate the name), we can consider the group
$\bF_2$, the free group on two (free) generators, which we shall denote by $a$ and $b$. Let 
$A$ be the set of all group elements whose reduced form (from left to right) begins with either 
$a$ or $a^{-1}$. Note that 
\[
A \cup a \cdot A \cup a^{-1} \cdot A = \bF_{2},
\]
so in particular, $A$ is left syndetic. Moreover, 
\[
\forall \, m \neq n, \quad b^{m} \cdot A \cap b^{n} \cdot A \neq \emptyset, 
\]
so all the left translates of the form $b^{n} \cdot A$ are \emph{pairwise disjoint}. If there exists a mean 
$\lambda$ on $G$ such that $\lambda'(g \cdot A) = \lambda'(A)$ for all $g \in G$ (that is 
to say, the set $\cL_A$ is non-empty), then 
\[
\lambda'\Big( \bigcup_{k=1}^n b^{k} \cdot A\Big) = n \cdot \lambda'(A) \leq 1,
\]
for all $n$, which clearly forces $\lambda'(A) = 0$. However, we also have
\[
1 = \lambda'(A \cup a \cdot A \cup a^{-1} \cdot A) \leq 3 \cdot \lambda'(A),
\]
and thus $\lambda'(A) \geq \frac{1}{3}$. This contradiction clearly implies that $A$ is 
\emph{not} strongly non-paradoxical. In essence, the notion of strong non-paradoxicality is designed to exclude exactly this kind of sets. \\

On the other hand, every countable group $G$ admits a wealth of 
strongly non-paradoxical sets. One way to see this is to consider
the space $2^G$ of all subsets of $G$, equipped with the (compact) product topology, and the clopen subset
\[
U := \Big\{ A \subset G \, : \, e \in A \Big\} \subset 2^G.
\]
Note that $G$ acts on $2^G$ by translations on the right, and the space $\cP_G(2^G)$ of $G$-invariant Borel probability 
measures is always non-empty. Furthermore, the weak*-open
set
\[
\tilde{U} := \Big\{ \nu \in \cP_G(2^G) \, : \, \nu(U) > 0 \Big\}
\]
always contains uncountably many ergodic measures, and it is not hard to show that for every $\nu$ in $\tilde{U}$, there 
exists a $\nu$-conull Borel set $X_\nu \subset 2^G$ with the 
property that \emph{every} $B \in X_\nu$ is strongly non-paradoxical. In fact, the converse holds as well: If $A \subset G$ is strongly non-paradoxical, then there exist an ergodic $G$-invariant probability measure $\nu$ on $2^G$ and a conull Borel set $X_\nu \subset 2^G$, which contains $A$. \\

For more \emph{explicit} examples of strongly non-paradoxical sets in genuinely \emph{non-amenable} situations, we can consider the free group $G$ on two (or more) generators. 
Note that every compact semisimple Lie group $K$ contains a dense subgroup isomorphic to $G$ (see e.g. \cite{BrGe}). For an explicit construction of dense free subgroups in orthogonal 
groups, we refer the reader to \cite{Sa}. Now, if $U$ is any non-empty open subset of $K$, then it is not hard to show that 
$A := G \cap U$ is a strongly non-paradoxical subset of $G$. \\

The last examples are special cases of a more general construction. Recall that the \emph{Bohr compactification} $(bG,\iota_o)$ of a countable group $G$ consists of a (possibly trivial) 
compact Hausdorff group $bG$ and a homomorphism $\iota_o : G \ra bG$ with dense image such that whenever $K$ is a compact Hausdorff group, together with a homomorphism $\iota : G \ra K$ with dense image, then there exists a surjective homomorphism $\iota_K : bG \ra K$ such that $\iota_K \circ \iota_o = \iota$. Note that a set $B \subset G$ is a Bohr set according to the definition above if and only if there exists a non-empty open set $U \subset bG$ such that $B = \iota_o^{-1}(U)$. Clearly, every Bohr set is strongly non-paradoxical. \\

We need a final definition before we can state our second theorem. If $K$ is a compact 
Hausdorff group and $U$ is a non-empty open set, then there is a finite set $F$ such that
$FU = K$. The minimal cardinality will be referred to as the
\emph{syndeticity index of $U$} in $K$ and denoted by $s_K(U)$. Formally, 
\[
s_K(U) := \min\Big\{ |F| \, : \, F \subset K \qand FU = K \Big\}.
\]
We shall think of $s_K(U)$ as a measurement of the "regularity" of the open set. Intuitively, 
if $U$ is a random union of small disjoint open sets (a "porous" set), then we expect the 
syndeticity index of this set to be quite large relative to the Haar measure of the set, while more 
"connected" subsets (which look more like intervals or open subgroups) are likely to have 
smaller syndeticity indices. \\

We can now state our last theorem which is new already in the case of the integers (where
strong non-paradoxicality translates to $\cL_{\bZ}$-large and $d^*_{np}$ equals the upper
Banach density). 

\begin{theorem}
\label{thm2}
Let $G$ be a countable group. Suppose $A \subset G$ is $\cFS$-large and $B \subset G$ is strongly left non-paradoxical. Then there exist an open set $U \subset bG$ with
\[
s_{bG}(U) \leq \Big\lfloor \frac{1}{d^*_{\cFS}(A) \cdot d^*_{\textrm{np}}(B)} \Big\rfloor
\]
and a right thick set $T \subset G$ such that
\[
AB \supset \iota_o^{-1}(U) \cap T.
\]
In particular, if $G$ is amenable and $A \subset G$ is $\cFS$-large, then 
$AB$ is a right piecewise Bohr set for every $\cL_G$-large set $B \subset G$.
\end{theorem}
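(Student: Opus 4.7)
The plan is to construct a positive Radon measure $\mu$ on the Bohr compactification $bG$ out of the two given means, show that $\mu$ is dominated by the Haar measure $m_{bG}$ with total mass exactly $\alpha\beta := d^*_{\cFS}(A)\cdot d^*_{\textrm{np}}(B)$, and then extract both the open set $U$ and the right thick set $T$ from the resulting Radon--Nikodym density. First I would fix $\lambda_A \in \cFS$ with $\lambda_A(A) = \alpha$ (so $\lambda_A|_{\B(G)} = \lambda_o$) and $\lambda_B \in \cL_B$ with $\lambda_B(B) = \beta$, form the product mean $\Lambda := \lambda_A \otimes \lambda_B$ on $\ell^\infty(G \times G)$, and define
\[
\int_{bG} \tilde\phi \, d\mu \;:=\; \Lambda\bigl(\chi_A(a)\,\chi_B(b)\,\tilde\phi(\iota_o(ab))\bigr), \qquad \tilde\phi \in C(bG).
\]
Positivity and factoring of $\Lambda$ on the constant function make this a positive Radon measure of total mass $\alpha\beta$.

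The central step would be the Haar domination $\mu \leq m_{bG}$, which is where the $\cFS$-hypothesis is genuinely exploited. By Peter--Weyl it suffices to verify the inequality on matrix coefficients $\tilde\phi(y) = \langle \pi(y)v, w \rangle$ of finite-dimensional unitary representations $\pi$ of $bG$. For such $\tilde\phi$, multiplicativity of $\pi$ produces a finite decomposition $\tilde\phi(\iota_o(ab)) = \sum_j u_j(a)\,v_j(b)$ with $u_j, v_j \in \AP(G) \subset \B(G)$, and hence
\[
\Lambda\bigl(\tilde\phi(\iota_o(ab))\bigr) \;=\; \sum_j \lambda_A(u_j)\,\lambda_B(v_j) \;=\; \sum_j \lambda_o(u_j)\,\lambda_B(v_j),
\]
using $\lambda_A|_{\B(G)} = \lambda_o$. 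Left invariance of $m_{bG}$ makes $z \mapsto \int_{bG} \tilde\phi(yz)\,dm_{bG}(y)$ constantly equal to $\int\tilde\phi\,dm_{bG}$, and applying $\lambda_B$ in $z$ collapses the sum to $\int\tilde\phi\,dm_{bG}$. Combined with $\chi_A\chi_B \leq 1$, this gives $\mu \leq m_{bG}$, so by Radon--Nikodym $d\mu = f\,dm_{bG}$ with $0 \leq f \leq 1$ and $\int f\,dm_{bG} = \alpha\beta$.

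To finish, I would regularize $f$ by convolving with a continuous positive approximate identity on $bG$ and select a small threshold producing an open superlevel set $U \subset bG$; a combinatorial covering argument on the compact group $bG$ then yields $s_{bG}(U) \leq \lfloor 1/(\alpha\beta) \rfloor$. For the thick set, positivity of $f$ at $\iota_o(g)$ unwinds via the definition of $\mu$ into positive $\Lambda$-mass of pairs $(a, b) \in A \times B$ with $\iota_o(ab)$ arbitrarily close to $\iota_o(g)$; a pigeonhole argument over growing finite $F \subset G$ then produces, for each such $F$, an element $g \in G$ with $Fg \subset AB \cap \iota_o^{-1}(U)$, yielding a right thick $T \subset AB \cap \iota_o^{-1}(U)$. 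The amenable corollary is then immediate, since every $\cL_G$-large set is strongly non-paradoxical.

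The main obstacle, as I see it, is the Haar domination step. It is the one point at which the $\cFS$-hypothesis on $A$ is really used, and it closes only because the Peter--Weyl reduction funnels everything through the finite-dimensional algebra $\B(G)$ on which $\lambda_A$ is forced to coincide with $\lambda_o$; any attempt to symmetrize the argument in $B$ would fail, because $\lambda_B$ is invariant only on $\cB_B$ and not on $\B(G)$. A subsidiary technical point is the passage from the $L^\infty$-density $f$ to an honest open set $U$ of the prescribed syndeticity.
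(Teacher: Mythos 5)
Your construction of the positive measure $\mu$ on $bG$ as a convolution and the Haar domination $\mu \leq m_{bG}$ are clean and correct. But the final step hides two genuine gaps, and they are exactly where the combinatorial content of the theorem lives.

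\emph{The syndeticity bound does not follow from the integral bound.} Having $f = d\mu/dm_{bG}$ with $0\le f\le 1$ and $\int f\,dm_{bG}=\alpha\beta$ tells you $m(\{f>t\})\ge\alpha\beta-t$, but the syndeticity index of an open set in a compact group is \emph{not} controlled by its Haar measure. (Already in $\bT^2$, the open box $(0,2/3)^2$ has measure $4/9$ but cannot be covered by $3$ translates.) The bound $s_{K}(U)\le\lfloor 1/(m(C)m(D))\rfloor$ in the paper uses the special \emph{autocorrelation} form $U=\{k:m(C\cap kD)>0\}$ via the Steinhaus pigeonhole (Lemmas \ref{help1}--\ref{help3}); a superlevel set of an arbitrary bounded density has no such structure. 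You would need to show that $f$ is a convolution of two bounded densities on $bG$, and that requires establishing absolute continuity of $\mu_B$ (the push-forward of $\chi_B\lambda_B$ along $\iota_o$), which is not automatic from $\lambda_B\in\cL_B$: the Bebutov algebra $\cB_B$ need not contain $\AP(G)$, so $\lambda_B$ has no reason to restrict to Haar on $\AP(G)$. This is precisely the content of the paper's Koopman/Kronecker--Mackey reduction in Section \ref{Bohr}, which produces genuine Borel sets $\tilde A,\tilde B\subset bG$ from $A,B$, not just a density.

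\emph{Positivity of $f$ at $\iota_o(g)$ does not put $g$ in $AB$.} The measure $\mu$ lives on $bG$, a very coarse quotient of $\beta G$: positive $\Lambda$-mass of pairs $(a,b)\in A\times B$ with $\iota_o(ab)$ near $\iota_o(g)$ is a statement about proximity in the Bohr topology, not about the actual product $ab$ equalling anything. A pigeonhole over finite $F$ can produce $g$ with $\iota_o(Fg)$ near some $\iota_o$-image, but it cannot produce $Fg\subset AB$. Bridging this gap is what the paper's $\beta G$ machinery is for: Proposition \ref{bohr appears} produces an inclusion $\overline{AB}_q\supset\iota_o^{-1}(U\cdot k_o)$ at the level of $\beta G$, and Proposition \ref{local inclusion} descends it to $G$ using extreme disconnectedness of $\beta G$, left-saturation of $\cL_\mu$, and Jordan measurability -- none of which appears in your proposal. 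A related red flag: your Haar-domination step only uses that $\lambda_A$ agrees with $\lambda_o$ on $\AP(G)$, a strictly weaker condition than $\lambda_A\in\cFS$ (agreement on all of $\E(G)$). If the argument closed, it would prove a stronger theorem than is stated, and the emphasis the paper (following Griesmer) places on the full $\cFS$-hypothesis strongly suggests such a strengthening is false. The $\E(G)\setminus\AP(G)$ part of the hypothesis is consumed precisely in the step you are missing, in the paper's Corollary \ref{fs cor} and the Eberlein decomposition of Proposition \ref{decomp}, which allow the "shadows" $\tilde A$, $\tilde C$ in $bG$ to be extracted.
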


\subsection{Connection to earlier results}
\label{connections}

We have already mentioned in the introduction the classical papers of Bogliouboff and 
of F\o lner which have motivated a plethora of later works on product sets in groups. In 
this short section, we wish to draw the reader's attention to some more recent works 
which our main results generalize. \\

Theorem \ref{thm1} is inspired by the following result of R. Jin from 2002 which was 
established by a clever non-standard analytic reduction to the setting of the F\o lner Theorem.

\begin{theorem}
\cite{Jin}
Suppose $A, B \subset \bZ$ are $\cL_{\bZ}$-large subsets. Then $AB$ is piecewise syndetic. 
\end{theorem}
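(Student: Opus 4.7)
My plan is to derive Jin's theorem as a direct specialization of Theorem \ref{thm1} to the case $G = \bZ$. The argument proceeds in three short steps.

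First, since $\bZ$ is amenable, I would invoke the standard fact (see for instance \cite{KV} or \cite{Ro}, as recalled in the excerpt) that one can choose an adapted, symmetric probability measure $\mu$ on $\bZ$ with $\cL_\mu = \cL_{\bZ}$. Explicitly, any finitely supported symmetric measure whose support generates $\bZ$ and whose convolution averages cluster at an invariant mean will do.

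Second, I would upgrade this equality to $\cF_\mu = \cL_\mu = \cL_{\bZ}$. The key input here is the Choquet-Deny theorem: on the abelian group $\bZ$, every bounded $\mu$-harmonic function is constant, so $\cH^{\infty}_{l}(\bZ,\mu)$ consists only of the constants. The defining condition $\lambda|_{\cH^{\infty}_{l}(\bZ,\mu)} = \delta_e$ is then vacuous and every element of $\cL_\mu$ automatically belongs to $\cF_\mu$. Consequently the hypothesis that $A$ is $\cL_{\bZ}$-large translates into $A$ being $\cF_\mu$-large, and similarly $B$ is $\cL_\mu$-large.

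Third, I apply Theorem \ref{thm1} to the measured group $(\bZ,\mu)$ with the sets $A$ and $B$, obtaining that $A+B$ is right piecewise left syndetic. Since $\bZ$ is abelian, left and right translation coincide, so right piecewise left syndetic reduces to the classical notion of piecewise syndetic. This is exactly the conclusion of Jin's theorem.

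The main obstacle in the plan is the second step, namely the identification $\cL_{\bZ} \subset \cF_\mu$; everything else is unpacking definitions, and the real analytical and dynamical content is absorbed into Theorem \ref{thm1}. The Choquet-Deny input, while standard, is essential: without it there is no a priori reason for an arbitrary invariant mean to respect the restriction to bounded harmonic functions, and the reduction to Theorem \ref{thm1} would break down.
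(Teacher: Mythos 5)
Your derivation is correct and is exactly the specialization of Theorem \ref{thm1} that the paper implicitly promises when it presents Jin's theorem as a precursor; the paper itself gives no argument (it simply cites \cite{Jin}, where the result was obtained by nonstandard analysis), so deriving it from Theorem \ref{thm1} is the natural route within this framework.

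One small technical comment: the full identity $\cL_\mu = \cL_{\bZ}$ asserted in your first step is more than you need, and the parenthetical ``any finitely supported symmetric measure \dots will do'' is not obviously true as stated (the Kaimanovich--Vershik/Rosenblatt construction does not hand you an arbitrary finitely supported measure). But you never actually use that identity. What the argument requires is only the trivial inclusion $\cL_{\bZ} \subset \cL_\mu$ (an invariant mean is automatically $\rho(\mu)^*$-fixed) together with the Choquet--Deny input in your second step, which gives $\cH^\infty_l(\bZ,\mu) = \bC$ and hence $\cF_\mu = \cL_\mu$. Combining these, $\cL_{\bZ} \subset \cF_\mu = \cL_\mu$, so any $\cL_{\bZ}$-large set is simultaneously $\cF_\mu$-large and $\cL_\mu$-large, and Theorem \ref{thm1} applies to $(A,B)$. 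Since $\bZ$ is abelian, ``right piecewise left syndetic'' collapses to ordinary piecewise syndetic, giving Jin's conclusion. So the proof is sound; you may wish to replace the claimed equality $\cL_\mu = \cL_{\bZ}$ with the inclusion $\cL_{\bZ} \subset \cL_\mu$ to avoid an unnecessary (and slightly delicate) appeal to a stronger fact.
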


More recently, Di Nasso and Lupini \cite{diN} gave the following quantification of Jin's result - for any (not necessarily countable) amenable group - which motivated Theorem \ref{thm2} of this paper. 

\begin{theorem}[\cite{diN}]
Let $G$ be a countable amenable group and suppose $A, B \subset G$ are $\cL_{G}$-large subsets. Then there exists a finite set $F \subset G$ with
\[
|F| \leq  \Big\lfloor \frac{1}{d^*_{\cL_G}(A) \cdot d^*_{\cL_G}(B)} \Big\rfloor
\]
such that $FAB$ is right thick.
\end{theorem}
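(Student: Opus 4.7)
Plan:

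The approach is a two-sided Furstenberg-type correspondence that exploits the strong non-paradoxicality of $B$ to build a classical measure-preserving dynamical system, while exploiting the $\cFS$-hypothesis on $A$ to access the Bohr compactification $bG$ with its Haar measure. The product set $AB$ is then decoded from a joint pigeonhole/recurrence analysis of these two structures.

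Fix $\lambda_A \in \cFS$ with $\lambda_A(\chi_A) = \alpha := d^{*}_{\cFS}(A)$, and a left $G$-invariant mean $\lambda_B$ on $\cB_B$ with $\lambda_B(\chi_B) = \beta := d^{*}_{\textrm{np}}(B)$. Applying the Gelfand functor to the commutative unital $G$-C*-algebra $\cB_B$ yields a pointed compact $G$-space $(X_B, x_0)$, a $G$-invariant Borel probability measure $\nu_B$ corresponding to $\lambda_B |_{\cB_B}$, and a clopen set $\tilde B \subset X_B$ with $\chi_{\tilde B}(gx_0) = \chi_B(g)$ and $\nu_B(\tilde B) = \beta$. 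The Furstenberg-type identity
\[
g \in AB \iff gx_0 \in A\tilde B := \bigcup_{a \in A} a\tilde B
\]
reduces the problem to locating an open $U \subset bG$ and a right-thick $T \subset G$ so that $gx_0 \in A\tilde B$ whenever $\iota_o(g) \in U$ and $g \in T$. On the $A$-side, the hypothesis $\lambda_A \in \cFS$ forces $\lambda_A|_{\B(G)}$ to equal the unique invariant mean, which restricts on $C(bG) \hookrightarrow \AP(G) \subset \B(G)$ to Haar measure $m$. Hence the positive linear functional $f \mapsto \lambda_A(\chi_A \cdot (f \circ \iota_o))$ defines a Radon measure $\mu_A$ on $bG$ dominated by $m$, with Radon--Nikodym density $F_A \in L^\infty(bG,[0,1])$ of total integral $\alpha$.

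The combinatorial core of the argument is a packing/covering estimate in $bG$. Considering the product $G$-space $Y = bG \times X_B$ with invariant measure $\eta = m \otimes \nu_B$ and the function $\Psi(z,x) := F_A(z) \chi_{\tilde B}(x)$ of integral $\alpha\beta$, the goal is to produce an open set $V \subset bG$ of Haar measure at least $\alpha\beta$ whose construction is refined enough that the inclusion $\iota_o^{-1}(VV^{-1}) \cap T \subset AB$ can be verified via a recurrence argument: for $g$ with $\iota_o(g) = v_1 v_2^{-1} \in VV^{-1}$ and $g \in T$, the clopenness of $\tilde B$ together with the density of the $G$-orbit of $(\iota_o(e), x_0)$ in $Y$ should force the existence of $a \in A$ with $a^{-1}g \in B$. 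Setting $U := VV^{-1}$, the classical greedy-packing argument in a compact group — take a maximal family $\{f_i\}$ with $\{f_i V\}$ pairwise disjoint, so $\sum_i m(f_i V) \leq 1$ and $\{f_i\} \cdot VV^{-1} = bG$ — yields $s_{bG}(U) \leq \lfloor 1/m(V) \rfloor \leq \lfloor 1/(\alpha\beta) \rfloor$. The right-thick set $T$ is produced by a standard simultaneous-recurrence argument in $(X_B, x_0, \nu_B)$: for every finite $F \subset G$ the clopen set $\tilde B$, having positive $\nu_B$-measure, gives rise to a return-time set $\{g : Fgx_0 \subset \tilde B\text{-neighborhood}\}$ which is non-empty, from which thickness follows.

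\textbf{Main obstacle.} The hardest step is bridging between the ``averaged'' information encoded in $F_A$ on $bG$ and the existence of actual elements of $A$ with prescribed $bG$-images. Positivity $F_A(z) > 0$ only tells us about $\lambda_A$-averages of $\chi_A$ along fibres of $\iota_o$, not about the combinatorial set $A$ itself. To convert positivity into an honest hit by $A$, one must fully exploit that $\lambda_A$ is invariant not merely on $\AP(G)$ but on all matrix coefficients in $\B(G)$ — presumably through a Fubini-type identity that rewrites $\lambda_A(\chi_A \cdot \rho(g)\chi_{B^{-1}})$ as a correlation between $F_A$ and a $bG$-translate, so that $\iota_o(g) \in VV^{-1}$ together with $g \in T$ forces $A \cap gB^{-1} \neq \emptyset$. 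This identity and its justification (despite the absence of a genuine product mean $\lambda_A \otimes \lambda_B$ in the non-amenable regime) is where the full Fourier--Stieltjes structure enters and is the most delicate part of the argument. The ``in particular'' amenable case follows immediately from the inclusion $\cL_G \subset \cFS$ and the observation that any $\cL_G$-large set is strongly non-paradoxical, with $d^{*}_{\textrm{np}}(B) \geq d^{*}_{\cL_G}(B)$.
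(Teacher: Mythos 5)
Your plan is to deduce the Di Nasso--Lupini statement from the paper's Theorem \ref{thm2}, and you correctly note that the amenable case follows from $\cL_G \subset \cFS$ together with $d^*_{np}(B) = d^*_{\cL_G}(B)$ (one must also observe that a finite $F \subset bG$ with $FU = bG$ can be replaced by $F \subset \iota_o(G)$, using openness of the set of covering $n$-tuples in $bG^n$ and density of $\iota_o(G)$, a step you leave implicit). The problem is that your proposal for Theorem \ref{thm2} itself contains an explicitly acknowledged gap at precisely the place where the real work lies, and the object you set up to attack that gap --- the Radon--Nikodym density $F_A \in L^\infty(bG,[0,1])$ of the measure $f \mapsto \lambda_A(\chi_A \cdot (f\circ\iota_o))$ --- is not the right one. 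Knowing $F_A(z)>0$ for $z$ in some set says nothing combinatorial about $A$ along the $\iota_o$-fibre at $z$; there is no bridge from a density on $bG$ to ``an honest hit by $A$,'' and the Fubini-type identity you invoke (``presumably'') is exactly what is missing.

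The paper's mechanism is different and you should compare yours against it. One does \emph{not} look at $\lambda_A$-mass of $A$ pushed to $bG$. Instead, with $X = \Delta(\cB_B)$, $\eta'$ the $G$-invariant ergodic measure on $X$, and $C' := ((AB)^c)' \subset X$, the assumption $a B \cap (AB)^c = \emptyset$ forces $\eta'(a \cdot B' \cap C') = 0$ for every $a \in A$. The crucial observation is that $\varphi(g) := \eta'(g\cdot B' \cap C') = \langle \rho(g)\chi_{B'},\chi_{C'}\rangle_{L^2(X,\eta')}$ is a \emph{positive matrix coefficient} in $\B(G)$ that vanishes on $A$. Now one uses the two structural lemmata that are absent from your sketch: (i) Proposition \ref{decomp} (via Lemma \ref{compact operator}) showing the almost periodic component of any matrix coefficient is $\varphi_{ap}(g) = \langle \rho(g) P_c\chi_{B'}, P_c\chi_{C'}\rangle$, where $P_c$ projects onto the compact vectors of the Koopman representation, and (ii) Corollary \ref{fs cor}/Proposition \ref{reduction1}, an $L^2$ orthogonal-projection argument that converts ``$\varphi \in \B(G)_+$ vanishes on the $\cFS$-large set $A$'' into ``$\varphi_{ap}$ vanishes on a Borel set $\tilde A \subset bG$ with $m(\tilde A) \geq d^*_{\cFS}(A)$.'' This produces a genuine \emph{set} $\tilde A \subset bG$ (not a density), and then the Kronecker--Mackey factor realizes $P_c\chi_{B'}, P_c\chi_{C'}$ as functions on a quotient of $bG$, so the vanishing of $\varphi_{ap}$ on $\tilde A$ translates into an intersection statement $m(\tilde A \cap k\tilde B^{-1}) = 0$ for $k$ in the relevant translate. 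Your packing estimate ($s_{bG}(U) \leq \lfloor 1/(\alpha\beta)\rfloor$) is correct and matches Lemma \ref{help2}, and the almost-automorphic/Jordan-measurability step to produce $T$ is also broadly the right idea, but without the matrix-coefficient/compact-vector decomposition the proof cannot be closed, and in that sense the proposal is incomplete rather than merely taking a different route.
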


The question whether the product of two sets of integers with positive upper Banach densities contains
translates of every finite piece of a Bohr set was raised and answered in the paper \cite{BFW} by V. Bergelson, H. Furstenberg and B. Weiss, and was later extended in \cite{BBF} to cover the case of 
products of sets in any countable amenable group by V. Bergelson, M. Beiglb\"ock and the second 
author of this paper. \\

Recall that a set $C \subset G$ is \emph{right piecewise Bohr} if there exists a Bohr set $B \subset G$
and a right thick set $T \subset G$ such that 
\[
C \supset B \cap T. 
\]

\begin{theorem}[\cite{BBF}]
Let $G$ be a countable amenable group and suppose $A, B \subset G$ are $\cL_G$-large
sets. Then $AB$ is a right piecewise Bohr set. In particular, $AB$ is right piecewise left syndetic. 
\end{theorem}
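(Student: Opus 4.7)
Write $\alpha = d^{*}_{\cFS}(A)$ and $\beta = d^{*}_{\mathrm{np}}(B)$, both positive. The plan is to marry a Gelfand--Bebutov correspondence for $B$ with the Fourier--Stiltjes condition on $A$: the former incarnates $B$ as a clopen set in a compact $G$-system, while the latter couples everything to the Bohr compactification $bG$. First I would fix $\veps > 0$, choose $\lambda \in \cFS$ with $\lambda(\chi_A) \geq \alpha - \veps$ and $\eta \in \cL_B$ with $\eta(\chi_B) \geq \beta - \veps$, and apply Gelfand duality to $\cB_B$ to obtain a compact $G$-space $X = \mathrm{Spec}(\cB_B)$ with equivariant orbit map $\phi \colon G \to X$, a clopen set $B^{*} \subset X$ satisfying $\phi^{-1}(B^{*}) = B$, and a $G$-invariant Borel probability measure $\nu$ on $X$ with $\nu(B^{*}) = \eta(\chi_B)$. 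The identity
$$
g \in AB \;\Longleftrightarrow\; \phi(g) \in \bigcup_{a \in A} a B^{*}
$$
then motivates the \emph{return density} $N \colon X \to [0,1]$, $N(x) := \lambda\bigl(a \mapsto \chi_A(a)\,\chi_{B^{*}}(a^{-1}x)\bigr)$; crucially $N(\phi(g)) > 0$ forces $g \in AB$. A Fubini manipulation, valid by $G$-invariance of $\nu$ and continuity of each $\chi_{aB^{*}}$ on $X$, gives $\int_{X} N\,d\nu = \lambda(\chi_A)\,\nu(B^{*}) \geq (\alpha-\veps)(\beta-\veps)$.

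Next I would decompose the Koopman representation of $G$ on $L^{2}(X,\nu)$ into its almost-periodic subspace $\cK$ and its orthogonal weakly-mixing complement $\cK^{\perp}$. The $G$-action on $\cK$ factors through a compact group $K$, which by the universality of $bG$ is a quotient via a continuous surjection $\sigma \colon bG \to K$; on the space side this yields a factor map $\tau_{X} \colon X \to K$ for which $\tau_{X} \circ \phi$ agrees with $\sigma \circ \iota_{o}$ up to a harmless right translation by a fixed $k_{0} \in K$. Setting $\widetilde{N} := \bE[N \mid \cK]$, viewed as a $[0,1]$-valued function on $K$ with Haar integral $\int_{X} N\,d\nu$, I would then mollify $\widetilde{N}$ against a continuous non-negative approximate identity on $K$, pass to a super-level set $V := \{\widetilde{N} > \delta\}$ for small $\delta > 0$, and invoke a maximal-disjoint-translates pigeonhole to obtain $s_{K}(V) \leq \lfloor 1/m_{K}(V)\rfloor$. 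Letting $\veps, \delta \downarrow 0$ and setting $U := \sigma^{-1}(V \cdot k_{0}^{-1})$ produces an open $U \subset bG$ with $s_{bG}(U) \leq \lfloor 1/(\alpha\beta)\rfloor$.

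Finally, the right-thick set should be extracted from the weakly-mixing residual $R := N - \widetilde{N} \circ \tau_{X} \in \cK^{\perp}$. The matrix coefficients $g \mapsto \langle \pi(g)R, R\rangle$ on the Koopman representation have no almost-periodic component; combined with the fact that $\lambda \in \cFS$ integrates $\B(G)$ against Haar measure on $bG$, this should force that for every $\delta' > 0$ and every finite $F \subset G$ one can find $g \in G$ with $|R(\phi(fg))| < \delta'$ for all $f \in F$, and a diagonal exhaustion then assembles such $g$'s into a right-thick $T \subset G$ on which $|R \circ \phi| < \delta'$ everywhere. With $\delta' < \delta$, every $g \in \iota_{o}^{-1}(U) \cap T$ will satisfy $N(\phi(g)) \geq \widetilde{N}(\tau_{X}(\phi(g))) - |R(\phi(g))| > 0$, yielding $g \in AB$ and the desired inclusion. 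The hard part will be precisely this last step: in the amenable setting F\o lner averages and the van der Corput lemma routinely furnish the required control on weakly-mixing corrections, but neither tool is available for general countable groups, so the combinatorial thickness of $T$ must be forced through a subtler representation-theoretic argument exploiting the vanishing of almost-periodic matrix coefficients on $\cK^{\perp}$ together with the distinguished behavior of $\lambda$ on $\B(G)$.
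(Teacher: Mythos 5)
Your proposal recasts the problem in the classical BFW/BBF mold: correspondence principle via the Bebutov algebra, Koopman decomposition $L^2(X,\nu) = \cK \oplus \cK^\perp$ into almost periodic and weakly mixing parts, Kronecker--Mackey factor, and then control of the weakly mixing residual $R$ on a thick set. This is a genuinely different route from the paper's proof of the general Theorem~\ref{thm2} (of which the statement at hand is the amenable special case). The paper never attempts pointwise control of $R$; instead it shows (Proposition~\ref{bohr appears}) a ``virtual'' inclusion $\overline{AB}_q \supset \iota_o^{-1}(U\cdot k_o)$ for a single $q \in \beta G$, and then converts that into a piecewise inclusion via Proposition~\ref{local inclusion}, whose proof hinges on topological facts about $\beta G$ (extreme disconnectedness, left-saturated means) and on almost automorphic points of the Kronecker factor. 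The quantitative syndeticity bound comes from a separate pigeonhole in $bG$ (Proposition~\ref{jordan}) applied to a single intersection-level set rather than a super-level set of the conditional expectation.

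Two concrete worries about your version. First, the Fubini step $\int_X N\,d\nu = \lambda(\chi_A)\,\nu(B^*)$ is not automatic: $\lambda$ is a mean, not a $\sigma$-additive measure, and $x \mapsto N(x)$ is not obviously Borel (each $x \mapsto \chi_{aB^*}(x)$ is continuous, but $\lambda$ of the family need not be a pointwise limit of the continuous truncations $N_F$, precisely because $\lambda$ fails countable additivity). The paper sidesteps this by transporting everything to $\beta G$, where $\bar\lambda$ \emph{is} a regular Borel probability measure, and phrasing the key positivity as an $L^2(\beta G,\bar\lambda)$ inner product (Proposition~\ref{reduction1}). Second, $R$ lives in $L^2(X,\nu)$, so the pointwise values $R(\phi(fg))$ are not well-defined, and even after choosing a Borel representative, the simultaneous smallness $|R(\phi(fg))| < \delta'$ for all $f$ in a prescribed finite $F$ is exactly the step you flag as ``the hard part.'' You are right that in the amenable setting F\o lner averaging and van der Corput close this gap (indeed that is what \cite{BBF} does), so as a proof of the quoted amenable-case statement your outline is recoverable. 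But for the approach the paper actually develops --- which must also handle non-amenable groups --- the weakly-mixing control is never attempted; the thick set is produced abstractly from a clopen covering argument in $\beta G$, using that $\cL_\mu$ is left saturated and that the pullback $U'$ of an open Jordan measurable set is $\cL_\mu$-tempered (Lemmas~\ref{lift} and~\ref{local inclusion0}). That topological substitute is the genuinely new ingredient of the paper.
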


In \cite{G}, J. Griesmer established a beautiful and far-reaching generalization of the main theorem in \cite{BBF}. To 
explain his result, we shall need some notation. Let $G$ be a countable \emph{amenable} group. We 
shall say that a \emph{sequence} $(\nu_n)$ of probability measures is \emph{equidistributed} if for \emph{any} unitary representation $(\cH,\rho)$ of $G$, we have
\[
\forall \, x \in \cH , \quad \lim_n \nu_n(\rho)x = P_{\rho} x,  
\]
in the norm topology on $\cH$, where $P_{\rho}$ denotes the projection onto the $\rho$-fixed vectors, and 
\[
\nu_n(\rho) := \int_{G} \rho(g)  \, d\nu_n(g).
\]
In particular, if $G$ is a countable \emph{abelian} group, then $(\nu_n)$ is equidistributed if and only
if the limit $\hat{\nu}_n(\chi) \ra 0$ holds of all non-zero characters $\chi$ in the dual group of $G$ as $n \ra \infty$. Here, $\hat{\nu}_n$ denotes the Fourier transform of the measure $\nu_n$.\\

We shall say that
a subset $A \subset G$ is \emph{large with respect to $(\nu_n)$} if
\[
\limsup_{n \ra \infty} \nu_n(A) > 0.
\]
Griesmer gives a wealth of examples in \cite{G} of subsets of integers which are not $\cL_{\bZ}$-large, 
but large with respect to some equidistributed sequence. 

One observes that upon considering an equidistributed sequence $(\nu_n)$ as a sequence in the dual of 
$\ell^{\infty}(G)$, then its defining property implies that any accumulation point in the set $\cM(G)$ must 
belong to $\cFS$. In particular, every subset of a countable group which is large with respect to an 
equidistributed sequence must be $\cFS$-large, so the following theorem is strictly contained in our
Theorem \ref{thm2}.

\begin{theorem}[\cite{G}]
Let $G$ be a countable amenable group and suppose $A \subset G$ is large with respect to 
an equidistributed \emph{sequence} of probability measures on $G$ and $B \subset G$ is 
$\cL_G$-large. Then $AB$ is a right piecewise Bohr set. 
\end{theorem}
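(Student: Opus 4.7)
The plan is to deduce the statement directly from Theorem \ref{thm2}, by verifying that the pair $(A,B)$ satisfies its two hypotheses: namely, that $A$ is $\cFS$-large and that $B$ is strongly non-paradoxical. Applying Theorem \ref{thm2} will then immediately furnish a non-empty open set $U \subset bG$ and a right thick $T \subset G$ with $AB \supset \iota_o^{-1}(U) \cap T$, which is precisely the assertion that $AB$ is right piecewise Bohr.

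For the first reduction, I would fix a subsequence $(\nu_{n_k})$ along which $\nu_{n_k}(A) \to c$ with $c > 0$, and use weak*-compactness of $\cM(G)$ to extract a further subsequence converging weak* to some $\lambda \in \cM(G)$. Since $\chi_A \in \ell^\infty(G)$, one has $\lambda'(A) = \lim_k \nu_{n_k}(\chi_A) = c > 0$. To see that $\lambda$ belongs to $\cFS$, I would test $\lambda$ against an arbitrary matrix coefficient $\varphi(g) = \langle y, \pi(g) x \rangle$ and compute
\[
\lambda(\varphi) \;=\; \lim_k \langle y, \nu_{n_k}(\pi) x \rangle \;=\; \langle y, P_\pi x \rangle,
\]
where the second equality uses the equidistribution hypothesis applied to the unitary representation $\pi$. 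The right-hand side is manifestly a left $G$-invariant functional on $\B(G)$, so by Ryll-Nardzewski's uniqueness result it must coincide with $\lambda_o$. Hence $\lambda \in \cFS$ and $A$ is $\cFS$-large with $d^*_{\cFS}(A) \geq c$.

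For the second reduction, I would use that $G$ is amenable: pick any $\lambda \in \cL_G$ with $\lambda'(B) > 0$, which exists since $B$ is $\cL_G$-large, and observe that its restriction to the Bebutov algebra $\cB_B$ is a left $G$-invariant mean on $\cB_B$ with $\lambda'(B) > 0$. This gives $d^*_{np}(B) \geq d^*_{\cL_G}(B) > 0$, so $B$ is strongly non-paradoxical. With both hypotheses verified, Theorem \ref{thm2} applies to the pair $(A,B)$ and yields the desired right piecewise Bohr structure inside $AB$.

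The only genuine difficulty in this approach is of course the proof of Theorem \ref{thm2} itself; once that machinery is granted, the present reduction is essentially soft, with the one nontrivial ingredient being the identification of weak* accumulation points of equidistributed sequences as elements of $\cFS$ via Ryll-Nardzewski uniqueness.
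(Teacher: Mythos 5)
Your proposal is correct and matches the paper's own reduction: the authors observe precisely that any weak* accumulation point of an equidistributed sequence lies in $\cFS$ (so $A$ is $\cFS$-large), that amenability of $G$ gives $d^*_{np}(B) \geq d^*_{\cL_G}(B) > 0$, and then invoke the final sentence of Theorem \ref{thm2}. One small technical point: since $\ell^\infty(G)$ is not separable, the weak*-topology on $\cM(G)$ is not metrizable, so you cannot literally extract a weak*-convergent \emph{subsequence}; you should instead pass to a subnet or work directly with an accumulation point $\lambda$ of $(\nu_{n_k})$. This changes nothing in substance, since both $\nu_{n_k}(\chi_A) \to c$ and $\langle y, \nu_n(\pi)x\rangle \to \langle y, P_\pi x\rangle$ are genuine sequential limits, so every accumulation point $\lambda$ automatically satisfies $\lambda'(A) = c$ and $\lambda|_{\B(G)} = \lambda_o$ by the Ryll-Nardzewski uniqueness argument you give.
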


\subsection{An outline of Theorem \ref{thm1}}

We shall now attempt to break down the proof of Theorem \ref{thm1} into two main steps. The 
proofs of each step will be given in Section \ref{Proof of ergodicity} and 
Section \ref{proof of sum} respectively. We refer to these sections for a more detailed discussion about the notions in this chapter. 

\subsubsection{Ergodicity of $\cC$-densities}

The first main step in the proof of Theorem \ref{thm1} consists of the following generalization of 
Lemma 3.2 in \cite{BBF}. This lemma asserts the same property as the proposition below but for the 
set $\cL_G$ of left invariant means on a countable amenable group instead of the set of 
Furstenberg-Poisson means $\cF_\mu$ on a countable (symmetric) measured group $(G,\mu)$. 

Although not explicitly stated in \cite{BBF}, the only property of $\cL_G$ which is used in the proof 
of Lemma 3.2 in \cite{BBF} is what we shall refer to as \emph{left ergodicity}, namely that every extremal
point of the weak*-compact convex subset $\cL_G \subset \cM(G)$ corresponds (under the 
Gelfand-Naimark map) to a left $G$-invariant \emph{ergodic} probability measure on $\beta G$. 

This property is relatively straightforward to verify for the sets $\cL_G$ (when $G$ is amenable) and
$\cL_\mu$ (for a measured group $(G,\mu)$). However, it also holds for the set $\cF_\mu$ but this fact
requires 
a more detailed discussion about the Furstenberg-Poisson boundary of $(G,\mu)$ which will be
outlined in Subection \ref{poisson}.
  
\begin{proposition}
\label{ergodicity}
Suppose $A \subset G$ is $\cF_\mu$-large. Then,
\[
\sup d^*_{\cF_\mu}(FA) = 1,
\]
where the supremum is taken over all \emph{finite} subsets $F \subset G$.
\end{proposition}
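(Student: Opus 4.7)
The plan is to mirror the argument for Lemma~3.2 of \cite{BBF}, with the left ergodicity property of $\cF_\mu$ (to be established in Subsection~\ref{poisson}) substituted for the corresponding property of $\cL_G$. Since $A$ is $\cF_\mu$-large, the affine weak-$*$ continuous functional $\lambda \mapsto \lambda(\chi_A)$ on the non-empty weak-$*$ compact convex set $\cF_\mu$ attains its (positive) supremum at an extreme point by Bauer's maximum principle; fix such an extremal $\lambda_0 \in \cF_\mu$ with $\lambda_0(A) = d^*_{\cF_\mu}(A) > 0$.

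Via the Gelfand--Naimark duality $\ell^\infty(G) \cong C(\beta G)$, represent $\lambda_0$ by a Borel probability measure $\nu_0$ on $\beta G$, and let $\hat A \subset \beta G$ denote the clopen set corresponding to $\chi_A$, so that $\nu_0(\hat A) = \lambda_0(A) > 0$. The left ergodicity of $\cF_\mu$ asserts that $\nu_0$ is $\mu$-stationary and ergodic in the sense that every bounded $\mu$-harmonic Borel function on $\beta G$ is $\nu_0$-a.s.\ constant. The orbit $G \cdot \hat A = \bigcup_{g \in G} g \hat A$ is open, $G$-invariant, and has positive $\nu_0$-measure; since its indicator function is $G$-invariant and hence $\mu$-harmonic, ergodicity forces $\nu_0(G \cdot \hat A) = 1$.

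For any finite $F \subset G$, the left multiplication action of $G$ extends to homeomorphisms of $\beta G$, so $F \cdot \hat A$ is a finite union of clopen sets, hence clopen, and coincides with the spectrum of $\chi_{FA}$. The family $\{F \cdot \hat A\}$ increases to $G \cdot \hat A$ as $F$ ranges over finite subsets of $G$ directed by inclusion; continuity of $\nu_0$ from below yields, for every $\veps > 0$, a finite $F$ with $\lambda_0(FA) = \nu_0(F \cdot \hat A) > 1 - \veps$. Hence $d^*_{\cF_\mu}(FA) > 1 - \veps$, and letting $\veps \to 0$ proves the proposition.

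\textbf{Main obstacle.} The entire argument rests on the left ergodicity of $\cF_\mu$; everything else is a soft topological and measure-theoretic unfolding. Unlike for $\cL_G$ (or $\cL_\mu$), this is not an immediate consequence of extremality in a convex set of invariant or stationary means: its verification requires genuine input from the theory of the Furstenberg--Poisson boundary, which is precisely the step deferred to Subsection~\ref{poisson} and where the real work lies.
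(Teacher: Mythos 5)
Your proof is correct and takes essentially the same route as the paper: extract an extremal $\lambda_0 \in \cF_\mu$ realizing the density via Bauer's maximum principle, pass to the clopen set $\hat A \subset \beta G$, invoke the left ergodicity of $\cF_\mu$ (established in Subsection~\ref{poisson} via the Poisson boundary) to conclude $\nu_0(G\hat A) = 1$, and then use $\sigma$-additivity over finite translates $F\hat A$. The paper abstracts this into a lemma valid for any left ergodic $\cC \subset \cM(G)$ before specializing to $\cF_\mu$, and phrases ergodicity in terms of $G$-invariant Borel sets rather than bounded $\mu$-harmonic functions, but these are cosmetic differences; your note that the real content lies in proving left ergodicity of $\cF_\mu$ matches the paper's emphasis.
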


\subsubsection{A criterion for right thickness of product sets}

The second main step is inspired by Lemma 3.1 in \cite{BBF}, and indeed reduces to this 
lemma in the case when $\cL_G = \cL_\mu$. However, the proof of this step for a general 
measured group necessarily follows a different route than the proof given in \cite{BBF}. 
For instance, we do not know if the proposition holds with $\cF_\mu$ replaced with $\cL_\mu$ 
in the first addend. 

\begin{proposition}
\label{sum}
Suppose $A, B \subset G$ satisfy
\[
d^*_{\cF_\mu}(A) + d^*_{\cL_\mu}(B) > 1.
\]
Then $AB$ is right thick. 
\end{proposition}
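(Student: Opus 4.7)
The plan is an argument by contradiction. Suppose $AB$ is not right thick; by the standard duality between right-thickness and left-syndeticity, there is then a finite $F \subset G$ with $F(AB)^c = G$, equivalently $\bigcap_{f \in F} fAB = \emptyset$. Applying $\lambda_2$ to the pointwise inequality $\sum_{f \in F}\chi_{fAB} \leq |F|-1$ yields the upper estimate
\[
\sum_{f \in F} \lambda_2(fAB) \;\leq\; |F|-1,
\]
and the strategy is to derive a matching lower bound producing a contradiction.

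The heart of the proof is the pointwise-in-$f$ estimate $\lambda_2(AfB) \geq \lambda_1(A) + H_B(f) - 1$, where $H_B(g) := \lambda_2'(gB)$ is the bounded left $\mu$-harmonic function satisfying $H_B(e) = \lambda_2(B)$. To establish it I use the inclusion $afB \subset AfB$ for $a \in A$, which gives $\chi_{AfB}(x) \geq \chi_A(a)\chi_{afB}(x)$ pointwise in $x$; applying $\lambda_2$ in $x$ produces $\lambda_2(AfB) \geq \chi_A(a) H_B(af)$. The crucial observation is that $a \mapsto H_B(af)$ is itself left $\mu$-harmonic, since it is the right-translate of $H_B$ and right-translations commute with $\rho(\mu)$. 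Applying $\lambda_1 \in \cF_\mu$ in the $a$-variable and invoking the Furstenberg--Poisson identity $\lambda_1(L) = L(e)$ for any bounded left $\mu$-harmonic $L$ gives $\lambda_1(H_B(\cdot f)) = H_B(f)$, and combined with the trivial $\chi_A \cdot H \geq \chi_A + H - 1$ (valid for $H \in [0,1]$) we obtain the claimed bound.

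To transfer the estimate from $AfB$ to the $fAB$ appearing in the upper bound, I would use the inversion duality ``$AB$ right thick $\Leftrightarrow$ $B^{-1}A^{-1}$ left thick'' together with the symmetry $\mu(g)=\mu(g^{-1})$, which lets the whole argument be rerun in the inverse setting: applying the derivation to the pair $(B^{-1},A^{-1})$ with Furstenberg--Poisson means in the dual configuration yields a symmetric lower bound that repackages as a lower bound on $\lambda_2(fAB)$. Summing over $f \in F$ and invoking the harmonic averaging identity $\lambda_1(H_B) = \lambda_2(B)$ then gives $\sum_{f \in F}\lambda_2(fAB) \geq |F|\lambda_1(A) + \sum_{f \in F} H_B(f) - |F|$, which, together with the hypothesis $\lambda_1(A) + \lambda_2(B) > 1$, is designed to exceed $|F|-1$ and contradict the upper estimate.

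The main obstacle I expect is this final reconciliation: the fixed finite sum $\sum_{f \in F} H_B(f)$ is not automatically controlled by the $\lambda_1$-mean $\lambda_1(H_B) = \lambda_2(B)$, because $F$ — a specific witness for the failure of right-thickness — need not be ``typical'' for $\lambda_1$. Closing this gap is the technical heart of the argument and must make essential use of the full Furstenberg--Poisson property $\lambda_1|_{\cH^{\infty}_l(G,\mu)} = \delta_e$, rather than mere $\rho(\mu)^*$-invariance. This is consistent with the authors' explicit remark that the statement is unknown if $\cF_\mu$ is weakened to $\cL_\mu$: the $\cF_\mu$-hypothesis on $A$ must be used precisely at this juncture.
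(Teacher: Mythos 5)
Your proposal does not close the gap, and the difficulty is more fundamental than your closing paragraph suggests. Consider the arithmetic. Even granting both the transfer from $AfB$ to $fAB$ and an identity of the form $\sum_{f\in F} H_B(f)=|F|\lambda_2(B)$, the lower bound you are aiming for reads
\[
\sum_{f\in F}\lambda_2(fAB)\ \geq\ |F|\big(\lambda_1(A)+\lambda_2(B)-1\big),
\]
and this exceeds the upper estimate $|F|-1$ only when $\lambda_1(A)+\lambda_2(B) > 2-\tfrac{1}{|F|}$, which is a far stronger hypothesis than $\lambda_1(A)+\lambda_2(B)>1$ once $|F|\geq 2$. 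The loss is introduced at the step $\chi_A\cdot H\geq\chi_A+H-1$: that inequality is tight only where $\chi_A=1$ and throws away all information where $\chi_A=0$, so the resulting estimate $\lambda_2(fAB)\geq\lambda_1(A)+H_B(f)-1$ cannot produce the ``close to $1$'' bound on each $\lambda_2(fAB)$ that your counting argument needs. On top of this, the issue you do flag is genuine: there is no reason for the specific finite set $F$ witnessing failure of right thickness to be one on which $H_B$ is large, and a Furstenberg--Poisson mean gives you $H_B$ at the identity, not pointwise control at an adversarial finite set of group elements. Finally, the $AfB$ versus $fAB$ transfer you defer to an ``inversion duality'' is not a mere cosmetic fix; under inversion the roles of $A$ and $B$ and of $\cF_\mu$ and $\cL_\mu$ are exchanged, and the paper explicitly notes it does not know whether the statement holds with $\cF_\mu$ weakened to $\cL_\mu$, so the symmetry you appeal to is not available.

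The paper proves the proposition by a structurally different route that avoids the lossy product inequality. Setting $C:=(AB)^c$, which is left syndetic when $AB$ is not right thick, and observing that $A^{-1}C\subset B^c$, it suffices to show that left syndeticity of $C$ forces $d_*^{\cL_\mu}(A^{-1}C)\geq d^*_{\cF_\mu}(A)$ (Proposition~\ref{lower bound}); then $d_*^{\cL_\mu}(A^{-1}C)\leq d_*^{\cL_\mu}(B^c)=1-d^*_{\cL_\mu}(B)$ gives the contradiction. The density inequality is proved entirely on $\beta G$: one shows that $q\mapsto\oeta(\oC_q^{-1}\oA)$ is a lower semicontinuous $\mu$-harmonic function (Lemma~\ref{lsc harmonic}), uses the fact that the set of minima of such a function is closed and $G$-invariant (Lemma~\ref{min invariant}), and then exploits left syndeticity of $C$ to locate a minimum $q_o$ with $e\in\oC_{q_o}$, whence $\oeta(\oA)\leq\oeta(\oC_{q_o}^{-1}\oA)\leq\int\oeta(\oC_q^{-1}\oA)\,d\olambda(q)$ for every $\olambda$. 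The Furstenberg--Poisson property of $\eta$ enters exactly where you expect --- evaluating a bounded harmonic function at $\bar e$, via Lemma~\ref{lower bound help4} --- but it is coupled with the minimum-set argument and a Fubini identity rather than a pointwise product bound, and it is that coupling, not the evaluation at $e$ alone, that delivers the sharp estimate. In short, your proposal correctly identifies harmonicity and the Furstenberg--Poisson evaluation as the relevant tools, but the way you combine them is too coarse, and the proposal as written cannot be repaired without replacing the pointwise counting scheme by something closer to the minimum-set mechanism.
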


We shall now combine the two propositions above to give a proof of Theorem \ref{thm1}.

\subsubsection{Proof of Theorem \ref{thm1}} 
Suppose $A \subset G$ is $\cF_\mu$-large and $B \subset G$ is $\cL_\mu$-large. 
We wish to prove that there exists a finite set $F \subset G$ such that the triple 
product set $FAB$ is right thick. By Proposition \ref{sum}, it suffices to prove that
\[
d^*_{\cF_\mu}(FA) + d^*_{\cL_\mu}(B) > 1
\]
for some finite set $F \subset G$, or equivalently,
\[
\sup_F d^*_{\cF_\mu}(FA) > 1 - d^*_{\cL_\mu}(B),
\]
where the supremum is taken over all finite subsets $F \subset G$. The result now
follows from Proposition \ref{ergodicity}.

\subsection{An outline of Theorem \ref{thm2}}

The proof of Theorem \ref{thm2} can be divided into three main steps, where the first
step is the most important one, and the two other steps are more of a standard nature. 
However, since we do not know of a good reference for these steps in the generality 
needed here, we shall provide proofs of the necessary statements.

\subsubsection{Producing some Bohr structure}

The first step will be outlined in Section \ref{Bohr}. For definitions and a more detailed discussion
about the notation in this summary we refer the reader to Subsection \ref{measure theory}.\\

Let $G$ be a countable group and let $\beta G$ denote its Stone-\v{C}ech compactification. 
Given a subset $C \subset G$, we denote by $\overline{C}$ its "closure" in $\beta G$. 
If $q \in \beta G$, then we shall write
\[
\oC_q := \Big\{ g \in G \, : \, g \cdot q \in \oC \Big\} \subset G.
\] 
In particular, if $\bar{e}$ denotes the "image" of the identity element $e$ in $\beta G$,
then $C = \oC_{\bar{e}}$. \\

\begin{proposition}
\label{bohr appears}
Suppose $A \subset G$ is $\cFS$-large and $B \subset G$ is strongly non-paradoxical. 
Then there exist Borel sets $\tilde{A}, \tilde{B} \subset bG$ with
\[
m(\tilde{A}) \geq d^*_{\cFS}(A) \qqand m(\tilde{B}) \geq d^*_{np}(B)
\]
and $q \in \beta G$ and $k_o \in bG$ such that 
\[
\overline{AB}_q \supset \iota^{-1}(U \cdot k_o),
\]
where
\[
U := \Big\{ k \in bG \, : \, m(\tilde{A} \cap k \cdot \tilde{B}^{-1}) > 0 \Big\} \subset bG.
\]
\end{proposition}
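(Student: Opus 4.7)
The plan is to pass to compact topological-dynamical models for $A$ and $B$ via Gelfand duality, to produce probability measures on these models whose push-forwards to $bG$ are Haar, to extract the Borel sets $\tilde A$ and $\tilde B$ by Radon--Nikodym over $bG$, and finally, through a carefully chosen ultrafilter $q \in \beta G$ and Bohr element $k_o$, to transfer the product structure visible on $bG$ back to $G$.

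For $\tilde A$ I would fix $\lambda_A \in \cFS$ with $\lambda_A(\chi_A) = \alpha := d^*_{\cFS}(A)$ and view it as a Borel probability measure on $\beta G$. The canonical $G$-equivariant continuous surjection $\pi : \beta G \to bG$, dual to the inclusion $\AP(G) \hookrightarrow \ell^{\infty}(G)$, pushes $\lambda_A$ forward to Haar measure $m$, since $\AP(G) \subset \B(G)$ and $\lambda_A|_{\B(G)}$ is the unique invariant mean on $\B(G)$. Then $\pi_*(\chi_{\overline{A}} \cdot \lambda_A)$ is absolutely continuous with respect to $m$, bounded by $m$, and of total mass $\alpha$; letting $h_A$ denote its Radon--Nikodym density and setting $\tilde A := \{k : h_A(k) > 0\}$ immediately yields $m(\tilde A) \geq \int h_A\,dm = \alpha$.

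For $\tilde B$ I would pick a $G$-invariant mean $\eta_B$ on $\cB_B$ with $\eta_B(\chi_B) = \beta := d^*_{np}(B)$, producing a $G$-invariant Borel probability $\nu_B$ on the Gelfand spectrum $Y_B$ of $\cB_B$. Let $(K_B, m_{K_B})$ be the Kronecker (maximal equicontinuous) factor of $(Y_B, \nu_B, G)$; by the universal property of $bG$, $K_B$ is a compact group quotient of $bG$ under a surjective continuous homomorphism $\sigma : bG \twoheadrightarrow K_B$. The relatively independent joining $\tilde{\nu}_B := \nu_B \otimes_{K_B} m$ on $Y_B \times_{K_B} bG$ is $G$-invariant with marginals $\nu_B$ and $m$ respectively. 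Applying the same Radon--Nikodym step to the projection onto the $bG$-factor and to the clopen event $\hat B \subset Y_B$ corresponding to $\chi_B$ produces a density $h_B$ on $bG$ with $\int h_B\,dm = \beta$, whence $\tilde B := \{h_B > 0\}$ satisfies $m(\tilde B) \geq \beta$.

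Finally, I would choose $q \in \beta G$ to be an ultrafilter projecting to a $\nu_B$-generic point of $Y_B$, and take $k_o := \pi(q)^{-1} \in bG$. For each $g \in G$ with $\iota_o(g) \in U k_o$ one has $\iota_o(g)\pi(q) \in U$, so the positivity of $m\bigl(\tilde A \cap \iota_o(g)\pi(q)\cdot\tilde B^{-1}\bigr)$, combined with the joint content of the two $bG$-disintegrations above, forces $g q \in \overline{AB}$, i.e. $g \in \overline{AB}_q$. The main obstacle is concentrated in this last step: one must show that a \emph{single} ultrafilter $q$ and a \emph{single} Bohr translate $k_o$ simultaneously witness product representations for every $g \in \iota_o^{-1}(Uk_o)$. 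This requires exhibiting $q$ as a genuinely bi-generic ultrafilter for the joint $A$- and $B$-systems, a task complicated by the fact that $\lambda_A$ is only $G$-invariant on the sub-algebra $\B(G) \subsetneq \ell^{\infty}(G)$, so that the correspondence on the $A$-side must be extracted from a non-invariant measure on $\beta G$.
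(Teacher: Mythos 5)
Your constructions of $\tilde A$ and $\tilde B$ satisfy the required measure bounds, but they carry no information about the product set $AB$, and this is where the argument breaks down, not merely in a final technical step. In the paper's proof, the Borel set $\tilde A \subset bG$ is \emph{not} built from $A$ alone. The argument first passes to $C := (AB)^c$, shows (Corollary~\ref{paradoxical}) that there is a $\mu$-stationary $\lambda \in \cP_\mu(\beta G)$ projecting to a $G$-invariant ergodic measure $\eta'$ on the Bebutov spectrum $\Delta(\cB_B)$ with $\eta'(B') = d^*_{np}(B)$, and finds $C' \subset \Delta(\cB_B)$ covering $\oC$ mod $\lambda$-null sets such that $\eta'(a \cdot B' \cap C') = 0$ for every $a \in A$. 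The function $\varphi(g) = \eta'(g \cdot B' \cap C')$ thus vanishes on $A$ and lies in $\B(G)$, and it is Corollary~\ref{fs cor} -- applied to this specific correlation function -- that produces the set $\tilde A = A_o$ (with $m(A_o) \geq d^*_{\cFS}(A)$) as a set on which the almost periodic part $\varphi_{ap}$ vanishes. Hence $\tilde A$ depends on $B$ and on $C'$ through the Koopman representation of the $B$-system, and it is precisely this dependence that yields $C_o \subset U^c$ modulo $m$-null sets, which is the inclusion that ultimately transfers to $\overline{AB}_q \supset \iota_o^{-1}(U \cdot k_o)$. Your $\tilde A = \{h_A > 0\}$, the support of the Radon--Nikodym density of $\pi_*(\chi_{\oA}\lambda_A)$, knows nothing about $B$ or about where $AB$ fails to cover; there is no reason that positivity of $m(\tilde A \cap k \cdot \tilde B^{-1})$ should force membership in $\overline{AB}_q$.

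A related structural point: the paper does not attempt to produce two separate measures (one witnessing $d^*_{\cFS}(A)$, one witnessing $d^*_{np}(B)$) and then reconcile them by a joining or by a bi-generic ultrafilter. Instead a \emph{single} $\mu$-stationary, hence non-singular, measure $\lambda \in \cP_\mu(\beta G)$ carries the whole argument: it projects onto the $B$-system, so Lemma~\ref{ae inclusion} applies on the nose, and the $\cFS$-largeness of $A$ enters only abstractly via Corollary~\ref{fs cor} applied to the correlation function $\varphi$. Your concern about $\lambda_A$ being invariant only on $\B(G)$ is well-placed -- that is exactly why the paper does not use a non-invariant measure coming from $A$ as the "carrier" measure -- but the resolution is not to make the ultrafilter $q$ bi-generic; it is to not introduce $\lambda_A$ as a measure on $\beta G$ at all. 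In short, the measure-theoretic half of your proposal is heading toward the relatively independent joining $\nu_B \otimes_{K_B} m$, which the paper avoids entirely, and the combinatorial half (the correlation $\varphi(g) = \eta'(gB' \cap C')$ vanishing on $A$, the passage through the Eberlein decomposition, and the resulting inclusion $C_o \subset U^c$) is missing.
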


\subsubsection{Piecewise inclusion}

Let $B, C \subset G$ and let $\oB, \oC \subset \beta G$ denote the corresponding
closures in $\beta G$. Suppose we have an inclusion of the form
\[
\oC_q \supset \oB_q,
\]
for some $q \in \beta G$. The next proposition shows that if 
$\oB$ is the pull-back to $\beta G$ (under a continuous $G$-equivariant map)
of a \emph{Jordan measurable} subset of the Bohr compactification $\bG$, then 
some "virtual" translate of $B$ is "almost" completely contained in $C$. 
Recall that a set $U \subset \bG$ is \emph{Jordan measurable} if its 
topological boundary $\overline{U} \setminus U^o$ has zero Haar measure.

\begin{proposition}
\label{local inclusion}
Let $C \subset G$ and suppose there exist $q \in \beta G$ and an open Jordan measurable
set $U \subset bG$ such that
\[
\overline{C}_q \supset \iota^{-1}(U).
\]
Then there exist a right thick set $T \subset G$ and $k \in bG$ such that
\[
C \supset \iota^{-1}(U \cdot k) \cap T. 
\]
\end{proposition}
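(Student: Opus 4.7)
The plan is to work inside the Stone-\v{C}ech compactification $\beta G$, endowed with its canonical compact right-topological semigroup structure. Let $\iota^\beta : \beta G \twoheadrightarrow bG$ denote the unique continuous extension of $\iota$ (which is a semigroup homomorphism), and set $k_0 := \iota^\beta(q) \in bG$. The first move is to upgrade the hypothesis from $G$ to $\beta G$: the right-multiplication map $R_q : \beta G \to \beta G$, $R_q(x) := xq$, is continuous, so $R_q^{-1}(\overline{C}) \subset \beta G$ is closed. Since $G$ is dense in $\beta G$, the set $\iota^{-1}(U) = G \cap \iota^{\beta,-1}(U)$ is dense in the open set $\iota^{\beta,-1}(U)$, and the hypothesis therefore upgrades to $\iota^{\beta,-1}(U) \subset R_q^{-1}(\overline{C})$.

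Next I exploit the Jordan measurability of $U$. Since $m(\partial U) = 0$ in $bG$ and $\iota(G)$ is countable, the set $\Sigma := \bigcup_{g \in G} (\partial U)^{-1} \iota(g)$ has Haar measure zero, so every open subset of $bG$ meets $bG \setminus \Sigma$. For any $k$ in this complement, every $g \in G$ satisfies either $\iota(g) \in Uk$ or $\iota(g) \in bG \setminus \overline{Uk}$, so $\iota(G)$ stays away from $\partial(Uk)$. I will pick such a $k$ close to $k_0$ and take $T := C \cup \iota^{-1}(bG \setminus Uk)$; since $\iota^{-1}(Uk) \cap T = \iota^{-1}(Uk) \cap C \subset C$, the task reduces to showing that $T$ is right thick.

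To verify right thickness, fix a finite $F \subset G$ and partition it as $F = F_U \sqcup F_{\mathrm{out}}$ using the generic $k$. Pick $\tilde q \in \iota^{\beta,-1}(k^{-1})$ (surjectivity of $\iota^\beta$) and set $p := \tilde q q \in \beta G$, so $\iota^\beta(p) = k^{-1}k_0$. For every $f \in F_U$ one has $\iota^\beta(f\tilde q) = \iota(f) k^{-1} \in U$, so $f\tilde q \in \iota^{\beta,-1}(U) \subset R_q^{-1}(\overline{C})$ by the upgraded hypothesis, meaning $f^{-1}C \in p$; hence $A_F := \bigcap_{f \in F_U} f^{-1} C$ belongs to $p$. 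Choose an open neighborhood $V$ of $k^{-1}k_0$ with $\iota(f) V \subset Uk$ for $f \in F_U$ and $\iota(f) V \cap \overline{Uk} = \emptyset$ for $f \in F_{\mathrm{out}}$ (this is possible for $k$ sufficiently close to $k_0$, using finiteness of $F$ and openness of $Uk$ and $bG \setminus \overline{Uk}$). Then $\iota^{-1}(V) \in p$, so $A_F \cap \iota^{-1}(V)$ is non-empty; any element $h$ of this intersection satisfies $Fh \subset T$: for $f \in F_U$ we have $fh \in C \subset T$, while for $f \in F_{\mathrm{out}}$ we have $\iota(fh) \in \iota(f) V$, which is disjoint from $\overline{Uk}$, so $fh \in \iota^{-1}(bG \setminus Uk) \subset T$.

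The hard part is reconciling the two requirements on $k$: being generic (outside $\Sigma$) and being close enough to $k_0$ that $k^{-1}k_0$ lies in the neighborhood $V$ needed by the partition argument for every finite $F$. A direct analysis shows that the largest admissible $V$ for a given $F$ is an open neighborhood of $e_{bG}$, and as $F$ grows the common nbhd required shrinks toward $\{e_{bG}\}$; the tension with $k \notin \Sigma$ is resolved by a careful diagonal/exhaustion argument, building $T$ as a union $\bigcup_n F_n h_n$ over an exhausting sequence of finite sets, and exploiting the freedom in choosing $\tilde q$ in the fiber $\iota^{\beta,-1}(k^{-1})$ so that boundary contributions are absorbed by the Haar-measure zero nature of $\partial U$ via the Jordan measurability hypothesis.
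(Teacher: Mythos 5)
Your first moves are clean and essentially correct: the upgrade from the hypothesis $\overline{C}_q \supset \iota^{-1}(U)$ to the inclusion $\iota^{\beta,-1}(U) \subset R_q^{-1}(\overline{C})$ (using continuity of right translation in $\beta G$ and density of $G$), the observation that $T := C \cup \iota^{-1}(bG\setminus Uk)$ trivially satisfies $\iota^{-1}(Uk)\cap T\subset C$, and the ultrafilter computation that $f^{-1}C\in p$ for $f\in F_U$ are all fine. Indeed, your $T$ has the right shape: unwinding the proof of Lemma \ref{local inclusion0} in the paper shows that the $T$ they produce is $C\cup\bigl(G\setminus \overline{U'}_{\bar e}\bigr)$, which is a subset of your $T$ (with $k=k_0$). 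The problem is entirely in the verification that $T$ is right thick.

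The gap is real and you have located it yourself, but the proposed fix does not work. The partition $F=F_U\sqcup F_{\mathrm{out}}$ is defined by the location of $\iota(f)$ relative to $Uk$, while the neighbourhood $V$ of $\iota^\beta(p)=k^{-1}k_0$ that you need must satisfy $\iota(f)V\subset Uk$ for $f\in F_U$ and $\iota(f)V\cap\overline{Uk}=\emptyset$ for $f\in F_{\mathrm{out}}$. Since $\iota(f)V$ is a neighbourhood of $\iota(f)k^{-1}k_0$ rather than of $\iota(f)$, these conditions require $\iota(f)k^{-1}k_0\in Uk$ (resp.\ $\notin\overline{Uk}$), which is not implied by $\iota(f)\in Uk$ (resp.\ $\notin\overline{Uk}$) unless $k=k_0$. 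Taking $k=k_0$ clashes with the genericity requirement $k\notin\Sigma$, since the null set $\Sigma$ may well contain $k_0$. Your suggested escape — choosing $\tilde q$ in the fibre $\iota^{\beta,-1}(k_0^{-1})$ so that $f\tilde q\in\overline{\iota^{\beta,-1}(U)}$ also for the boundary elements $f\in F_\partial$ — runs into a finite-intersection-property failure: such a $\tilde q$ exists only if $\bigcap_{f\in F_\partial}\iota(f)^{-1}U$ meets every neighbourhood of $k_0^{-1}$, and for two $f$'s whose images $\iota(f)k_0^{-1}$ lie on ``opposite'' parts of $\partial U$, the intersection $\iota(f_1)^{-1}U\cap\iota(f_2)^{-1}U$ can be disjoint from any neighbourhood of $k_0^{-1}$ (already visible for $bG=\bT$ and $U$ an arc). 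The ``build $T$ as $\bigcup_n F_n h_n$'' alternative also does not close the gap, because that redefined $T$ no longer satisfies $\iota^{-1}(Uk)\cap T\subset C$, which was the whole point of your construction.

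The paper sidesteps all of this by not touching the boundary set combinatorially at all. They take $k=k_0$ outright (via the almost-automorphic point $y_o=k_0^{-1}$ in $bG$ and the joint extension $Z\subset\beta G\times bG$), and then invoke Lemma \ref{local inclusion0}: the pullback $U'$ of $U$ to $\beta G$ is open and $\cL_\mu$-tempered, so in $\beta G$ the ``bad'' set $\overline{U'}\setminus\overline{C}$ is \emph{clopen} by extreme disconnectedness of $\beta G$ and has $\olambda$-measure zero for a suitable $\lambda\in\cL_\mu$ supported on the invariant set $\beta G\setminus G(U'\setminus\overline C)$. Its complement is then $\cL_\mu$-conull, hence right thick by Proposition \ref{thick help}/Corollary \ref{mu thick}. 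This is exactly the mechanism that your genericity argument was trying to emulate combinatorially; the extreme disconnectedness of $\beta G$ plus the left-saturation of $\cL_\mu$ is what makes the boundary ``disappear'' without having to move $k$ off $k_0$. If you want to salvage your approach, the cleanest route is to prove (or cite) that $\overline{\iota^{\beta,-1}(U)}\setminus\overline C$ is a clopen subset of $\beta G$ meeting no finite union $\bigcap_{f\in F}f^{-1}\overline C\cap\iota^{\beta,-1}(V)$ in a set of the relevant ultrafilter, which is essentially Lemma \ref{local inclusion0} in disguise.
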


\subsubsection{Getting Jordan measurability}

The last step is the most standard one and is needed to adapt the outcome of 
Proposition \ref{bohr appears} to the setting of Proposition \ref{local inclusion}. 
The main result here is a generalization to compact Hausdorff groups of the 
well-known fact that every \emph{closed} subset of a compact \emph{metrizable}
group is contained in a Jordan measurable subset with Haar measure arbitrarily close to
the Haar measure of the closed set. The arguments needed to relate this generalization
to the proposition below will be outlined in Section \ref{Jordan}.

\begin{proposition}
\label{jordan}
Let $K$ be a compact Hausdorff group with Haar probability measure $m$. If $A, B \subset K$ 
are Borel sets with positive Haar measures, and
\[
U := \Big\{ k \in K \, : \, m(A \cap k \cdot B) > 0 \Big\} \subset K,
\]
then there exists an open Jordan measurable set $U' \subset U$ with
\[
s_K(U') \leq \Big\lfloor \frac{1}{m(A) \cdot m(B)} \Big\rfloor.
\]
\end{proposition}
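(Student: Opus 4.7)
My plan is to realize $U$ as the open positivity set of an explicit continuous function on $K$, to cover $K$ by a few translates of $U$ via a random-translates argument combined with Steinhaus' inequality, and then to shrink to a Jordan measurable subset by passing to a generic superlevel set. Throughout I write $\alpha = m(A)$, $\beta = m(B)$ and set $n := \lfloor 1/(\alpha \beta) \rfloor$. The starting point is the function $\phi(k) := m(A \cap kB) = \langle \chi_A, \chi_{kB} \rangle_{L^2(K)}$; since left translation is strongly continuous on $L^2$ of a compact Hausdorff group, the map $k \mapsto \chi_{kB}$ is $L^2$-continuous, so $\phi$ is continuous on $K$ and $U = \{\phi > 0\}$ is open.

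The key step is to exhibit a set $F = \{f_1, \ldots, f_n\} \subset K$ with $m\bigl(\bigcup_i f_i A\bigr) > 1 - \beta$. I would produce $F$ by sampling $f_1, \ldots, f_n$ independently and uniformly from $(K, m)$: translation-invariance and independence yield $\bP(x \in \bigcup_i f_i A) = 1 - (1-\alpha)^n$ for each fixed $x \in K$, so by Fubini the expected measure of the union equals $1 - (1-\alpha)^n$. The elementary bound $(1-\alpha)^n \le e^{-n\alpha}$ combined with $n\alpha \ge 1/\beta - \alpha \ge 1/\beta - 1 > \log(1/\beta)$ (the last step being the standard strict inequality $t - 1 > \log t$ for $t > 1$) gives $(1-\alpha)^n < \beta$, so the expected measure strictly exceeds $1-\beta$ and some concrete realization of $(f_1,\ldots,f_n)$ attains this bound. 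With $A_F := \bigcup_i f_i A$ in hand I then invoke the classical Steinhaus-type estimate $m(A_F \cap kB) \ge m(A_F) + m(B) - 1 > 0$, valid uniformly in $k \in K$: this forces some $f_i A \cap kB$ to have positive measure, i.e.\ some $f_i^{-1} k \in U$, so $F \cdot U = K$ and therefore $s_K(U) \le n$.

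To upgrade $U$ to a Jordan measurable subset $U' \subset U$ retaining this covering property, I would consider $\psi(k) := \max_i \phi(f_i^{-1} k)$, which is continuous and strictly positive on the compact set $K$ and hence bounded below by some $c > 0$. For each $\veps \in (0, c)$ the open super-level set $U_\veps := \{\phi > \veps\}$ is contained in $U$ and still satisfies $F \cdot U_\veps = K$; moreover $\partial U_\veps \subset \{\phi = \veps\}$, and since the level sets of $\phi$ are pairwise disjoint subsets of the finite measure space $(K, m)$, at most countably many of them carry positive Haar measure. Choosing a generic $\veps \in (0, c)$ with $m(\{\phi = \veps\}) = 0$ makes $U' := U_\veps$ an open Jordan measurable subset of $U$ with $s_K(U') \le |F| \le n$, as required. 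The main obstacle will be verifying the sharp averaging estimate $(1-\alpha)^n < \beta$ precisely at the borderline choice $n = \lfloor 1/(\alpha\beta)\rfloor$, which ultimately reduces to $t - 1 > \log t$; the Jordan-measurability refinement is then routine, and the degenerate cases $\alpha = 1$ or $\beta = 1$ are handled directly, since in those cases $\phi$ is strictly positive everywhere and $U = K$ already satisfies $s_K(U) = 1$.
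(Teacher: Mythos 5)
Your proof is correct, and it takes a genuinely different route from the paper's on both halves. For the covering bound, the paper is entirely deterministic: Lemma \ref{help1} averages over $K$ to find a single $k_o$ with $m(A\cap k_oB)\ge m(A)m(B)$, and Lemma \ref{help2} then runs a greedy packing argument with translates of a single set $E = A\cap k_oB$. You instead use a probabilistic covering lemma for $A$ alone (expected uncovered measure $(1-\alpha)^n$ after $n$ i.i.d.\ uniform translates) together with the Steinhaus inclusion--exclusion bound $m(A_F\cap kB)\ge m(A_F)+\beta-1$; the whole thing hinges on verifying $(1-\alpha)^n<\beta$ for the borderline $n=\lfloor 1/(\alpha\beta)\rfloor$, which your chain $n\alpha\ge 1/\beta-\alpha\ge 1/\beta-1>\log(1/\beta)$ handles cleanly (with $\alpha=1$ or $\beta=1$ trivially separate). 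Both arguments use Fubini on a product of copies of a possibly non-metrizable compact group, so neither is cleaner on that point. For the Jordan-measurability refinement the difference is larger: the paper proves general-purpose machinery (Proposition \ref{pre-jordan}, a somewhat elaborate nested construction showing every closed subset of a compact Hausdorff space is an intersection of a net of Jordan measurable closed sets, followed by a compactness extraction in Corollary \ref{cor. pre-jordan}), whereas you exploit that $U=\{\phi>0\}$ for the explicit continuous function $\phi(k)=m(A\cap kB)$ (continuity via strong continuity of translation on $L^2(K)$), get a uniform positive lower bound $c$ for $\max_i\phi(f_i^{-1}\cdot)$ by compactness, and pick a generic super-level set $U_\veps=\{\phi>\veps\}$ with $\veps<c$ and $m(\{\phi=\veps\})=0$. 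Your reduction is shorter and more self-contained; the paper's buys a reusable lemma about Jordan approximation in compact Hausdorff spaces that is independent of the group structure. Both yield the same bound $s_K(U')\le\lfloor 1/(m(A)m(B))\rfloor$.
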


We shall now combine the three propositions above to give a proof of Theorem \ref{thm2}.

\subsubsection{Proof of Theorem \ref{thm2}}
Let $G$ be a countable group and suppose $A \subset G$ is $\cFS$-large and $B \subset G$ is 
strongly non-paradoxical. \\

By Proposition \ref{bohr appears} there exist Borel sets $\tilde{A}, \tilde{B} \subset bG$ with
\[
m(\tilde{A}) \geq d^*_{\cFS}(A) \qqand m(\tilde{B}) \geq d^*_{np}(B)
\]
and $q \in \beta G$ and $k_o \in bG$ such that 
\[
\overline{AB}_q \supset \iota^{-1}(U \cdot k_o),
\]
where
\[
U := \Big\{ k \in bG \, : \, m(\tilde{A} \cap k \cdot \tilde{B}^{-1}) > 0 \Big\} \subset bG.
\]
Proposition \ref{jordan} guarantees that we can find an open and \emph{Jordan measurable}
subset $U' \subset U$ with
\[
s_{bG}(U') 
\leq 
\Big\lfloor \frac{1}{m(\tilde{A}) \cdot m(\tilde{B}^{-1})} \Big\rfloor 
\leq 
\Big\lfloor \frac{1}{d^*_{\cFS}(A) \cdot d^*_{\textrm{np}}(B)} \Big\rfloor,
\]
where the last inequality is justified by the fact that $m(\tilde{B}^{-1}) = m(\tilde{B})$. Now, 
\[
\overline{AB}_q \supset \iota^{-1}(U \cdot k_o) \supset \iota^{-1}(U' \cdot k_o),
\]
so Proposition \ref{local inclusion} applies with $C = AB$, and we conclude that there exist a 
right thick set $T \subset G$ and $k \in bG$ such that
\[
AB \supset \iota^{-1}(U' \cdot k_o \cdot k) \cap T,
\]
which finishes the proof.

\subsection{Organization of the paper}
The paper is organized as follows. Section \ref{preliminaries} is devoted to basic facts about
the C*-algebraic theory of sub-algebras of the space of bounded functions on a countable 
group $G$, and to some basic measure theory on $\beta G$. In particular, we give 
characterizations of right thick sets and left syndetic sets in terms of the upper and lower $\cC$-densities
for special classes of sets $\cC \subset \cM(G)$.

In Section \ref{ergodic theory} we develop some basic ergodic theory for measured groups which will 
be needed in the proofs of Proposition \ref{ergodicity} and Proposition \ref{sum}. In the final 
subsection of this section we outline a proof of claim about failure of syndeticity of difference sets in general countable measured groups.  

In Section \ref{Proof of ergodicity} and Section \ref{proof of sum} we give the proofs of the assertions
needed in the proof of Theorem \ref{thm1}. In Section \ref{Bohr} we discuss unitary representations of
countable groups; in particular we make the direct sum of the Eberlein algebra more explicit. This is 
needed for the proof of Proposition \ref{bohr appears}.

In Section \ref{Local} we discuss the relevance of almost automorphic points to 
Proposition \ref{local inclusion}, and give a proof of this proposition. 

In Section \ref{Jordan} we prove some elementary facts about Jordan measurable subsets of 
compact Hausdorff groups needed for Proposition \ref{jordan}.

\subsection{Acknowledgements}
The authors have benefited enormously from discussions with Benjamin Weiss 
during the preparation of this manuscript, and it is a pleasure to thank him for 
sharing his many insights with us. We are also very grateful for the many inspiring
and enlightening discussions we have had with Mathias Beiglb\"ock, Vitaly Bergelson,
Manfred Einsiedler, Hillel Furstenberg, Eli Glasner, John Griesmer, Elon Lindenstrauss, 
Fedja Nazarov, Amos Nevo, Imre Ruzsa and Jean-Paul Thouvenot. 

The present paper is an outgrowth of a series of discussions between the authors 
which took place at Ohio State University, University of Wisconsin, Hebrew University
in Jerusalem, Weizmann institute, IHP Paris, KTH Stockholm and ETH Z\"urich. We 
thank the mathematics departments at these places for their great hospitality. 

The first author acknowledges support from the European Community seventh 
Framework program (FP7/2007-2012) grant agreement 203418 when he was 
a postdoctoral fellow at Hebrew university, and ETH Fellowship FEL-171-03
since January 2011.

\section{Preliminaries and basic notions}
\label{preliminaries}

\subsection{Uniform algebras}
We shall now review some basic elements in the theory of norm closed sub-algebras of 
$\ell^{\infty}(G)$. For more details, including the proofs of the all statements in this subsection, 
we refer the reader to the second chapter of  the book \cite{Ka}; In particular, subsections $2.9$ 
and $2.10$ therein. 

\subsubsection{The Stone-\v{C}ech compactification}
Let $G$ be a countable group, and let $\ell^{\infty}(G)$ denote the algebra of bounded functions 
on $G$. We denote the complex conjugation operator on $\ell^{\infty}(G)$ by $*$. Equipped with this involution
and the supremum norm $\|\cdot\|_\infty$, this algebra carries the structure of 
a unital commutative C*-algebra. As such, its Gelfand spectrum $\beta G$, i.e. the set of multiplicative 
functionals on $\ell^{\infty}(G)$, endowed with the weak*-subspace topology inherited from the 
unit ball of the dual of $\ell^{\infty}(G)$, is a weak*-compact Hausdorff space. Moreover, by 
Gelfand-Naimark's Theorem, we have 
\[
\ell^{\infty}(G) \cong C(\beta G),
\]
where $C(\beta G)$ denotes the space of continuous function on $\beta G$. 

Furthermore, the left and right action of $G$ on itself induce left and right anti-actions on $\ell^{\infty}(G)$, and thus left and right actions by homeomorphisms on $\beta G$. We shall refer to $\beta G$ as 
the \emph{Stone-\v{C}ech compactification} of $G$.

The map $\delta_e$ which evaluates a bounded function on $G$ at the identity element $e$  is a multiplicative functional on $\ell^{\infty}(G)$ and thus corresponds to a point $\bar{e}$ in $\beta G$. One readily shows that this point is $G$-transitive under both the left and right action of $G$ on $\beta G$. Moreover, a left $G$-equivariant isomorphism between $C(\beta G)$ and $\ell^{\infty}(G)$ is implemented by the map
\begin{equation}
\label{sc}
\tau(\varphi)(g) = \varphi(g \cdot \bar{e}), \quad \varphi \in C(\beta G). 
\end{equation}

\subsubsection{Relations with sub-algebras}

More generally, if $\cB \subset \ell^{\infty}(G)$ is a norm-closed unital sub-algebra of $\ell^{\infty}(G)$, then it is again a C*-algebra, and its Gelfand spectrum $\Delta(\cB)$ is a compact Hausdorff space with the property that
\[
\cB \cong C(\Delta(\cB)).
\]
By Urysohn's Lemma, $\Delta(\cB)$ is second countable if and only if $\cB$ is separable. \\

If $\cB$ is invariant under the left anti-action of $G$ on $\ell^{\infty}(G)$, then this anti-action induces 
an action of $G$ by homeomorphisms on $\Delta(\cB)$. For every $g \in G$, the Dirac measure 
$\delta_g$ at $g$ is a multiplicative functional on $\cB$. Let $x_o$ denote the point in $\Delta(\cB)$ corresponding to the evalutation map $\delta_e$ and let $X$ denote the orbit closure of $x_o$ under the induced $G$-action. 
If $X \neq \Delta(\cB)$, then the complement of $X$ is open and we can find a non-zero continuous function $\overline{\varphi}$ on $\Delta(\cB)$  (corresponding to a non-zero element $\varphi$ in $\cB$) which vanishes on $X$. In particular, 
\[
\forall \, g \in G, \quad g \cdot x_o(\varphi) = \varphi(g) = 0,  
\]
which is a contradiction, since $\varphi$ was assumed non-zero, and thus $X = \Delta(\cB)$. We conclude that $x_o$ is a $G$-transitive point in $\Delta(\cB)$. By Gelfand-Naimark's Theorem, the inclusion map $\cB \hookrightarrow \ell^{\infty}(G)$ is induced by a 
surjective continuous map $\pi_{\cB} : \beta G \ra \Delta(\cB)$, i.e. $\iota = \pi_{\cB}^*$, where
\[
\pi_{\cB}^{*}\varphi := \varphi \circ \pi_{\cB}. 
\] 
If $\cB$ is left invariant, then this map is left $G$-equivariant (with respect to the left action on $\beta G$) and 
$\pi_{\cB}(\bar{e}) = x_o$.

\subsubsection{Almost periodic functions}
\label{Bohr stuff}
We shall now consider a few examples of the situation outlined in the previous paragraph. 

Let $\AP(G)$ denote the algebra of \emph{almost periodic functions} on $G$, i.e. the $*$-sub-algebra of 
$\ell^{\infty}(G)$ consisting of those bounded functions $\varphi$ whose left anti-$G$-orbit $G \cdot \varphi$ in 
$\ell^{\infty}(G)$ is pre-compact in the norm topology. Equipped with the conjugation involution and the supremum norm, this $*$-algebra is a sub-C*-algebra of $\ell^{\infty}(G)$, and we shall denote its Gelfand spectrum by $bG$. 

We refer to subsection $2.10$ in the book \cite{Ka} for the proof that the group multiplication on $G$ 
extends to a jointly continuous multiplication on $bG$, i.e. $bG$ can be given the structure of a 
compact Hausdorff group. Again, there is a natural (but this time not necessarily injective) homomorphism $\iota : G \ra bG$ given by 
\[
g \mapsto \delta_g,
\] 
where $\delta_g$ denotes the evaluation map on $\ell^{\infty}(G)$ at the element $g$. Furthermore, this map is universal in the sense that whenever $K$ is a compact Hausdorff group and $\iota_o : G \ra K$ is a homomorphism with a dense image,
then there exists a \emph{continuous} surjective homomorphism $\iota_K : bG \ra K$ such that
\[
\iota_K \circ \iota = \iota_o.
\]
We shall refer to $(bG, \iota) $ as the \emph{Bohr compactification} of $G$, and the \emph{unique} Haar probability measure on $bG$ will be denoted by $m$. Note that by construction, we have
\[
\AP(G) = \iota^*C(\beta G). 
\]
Recall that a state (or mean) on $\ell^{\infty}(G)$ is a positive, norm-one functional on $\ell^{\infty}(G)$. The
set $\cM(G)$ of all states on $\ell^{\infty}(G)$ is a weak*-compact and convex subset of $\ell^{\infty}(G)^*$. 
We shall say that $\lambda \in \cM(G)$ is a \emph{Bohr mean} if the restriction of $\lambda$ to $\AP(G)$
equals the pull-back of the Haar probability measure on $bG$ and denote the \emph{non-empty} weak*-compact and convex subset of $\cM(G)$ consisting of all Bohr means by $\cBo$.

\subsubsection{Fourier-Stiljes algebras}

Let $\B(G)$ denote the \emph{Fourier-Stiltjes algebra} of $G$, i.e. the $*$-sub-algebra of 
$\ell^{\infty}(G)$ generated by all matrix coefficients of \emph{unitary} representations of $G$. 
Hence an element $\varphi$ in $\B(G)$ has the form
\[
\varphi(g) = \big\langle x, \pi(g) y \big\rangle, \quad g \in G,
\]
where $(\cH,\pi)$ is some unitary representation of $G$ on a Hilbert space $\cH$, and $x, y \in \cH$. We
stress that $\B(G)$ is not a norm-closed subspace of $\ell^{\infty}(G)$. Its norm closure, often called the 
\emph{Eberlein algebra}, will here be denoted by $\E(G)$, and its Gelfand spectrum will be denoted by 
$eG$. 

As is well-known (see e.g. Chapter 1.10 in \cite{Gl}), there is a unique left and right invariant state 
$\lambda_o$ on $\E(G)$, and $\E(G)$ admits a direct sum decomposition of the form
\begin{equation}
\label{decomposition}
\E(G) = \AP(G) \oplus \E_o(G),
\end{equation}
where
\[
\E_o(G) := \Big\{ \varphi \in \E(G) \, : \, \lambda_o(\varphi^*\varphi) = 0 \Big\}.
\]
The set $\cFS \subset \cBo(G)$ consisting of those means whose restrictions to $\E(G)$ coincide with
$\lambda_o$ will be referred to as \emph{Fourier-Stiltjes means}.

\subsection{Bauer's Maximum Principle}
Let $G$ be a countable group and let $\cM(G)$ denote the set of states on $\ell^{\infty}(G)$, or alternatively, the set of means on $G$. If 
$\lambda \in \cM(G)$, then we define
\[
\lambda'(C) := \lambda(\chi_C), \quad C \subset G.
\]
One readily checks that $\lambda'$ is a finitely additive (but not necessarily $\sigma$-additive) probability measure on $G$ and for every $C \subset G$, the \emph{affine} map
\[
\lambda \mapsto \lambda'(C), \quad \lambda \in \cM(G)
\] 
is weak*-continuous.\\

Given a set $\cC \subset \cM(G)$, we define the \emph{upper} and \emph{lower} $\cC$-densities of a subset $C \subset G$ by
\[
d^*_{\cC}(C) := \sup_{\lambda \in \cC} \lambda'(C) 
\qand
d^{\cC}_*(C) := \inf_{\lambda \in \cC} \lambda'(C) 
\]
respectively. If $\cC$ is weak*-compact, then the supremum and infimum above are attained. Furthermore,
since the maps involved are affine, we can also say some things about the location of these extrema, as the following result shows.

Before we state the main proposition of this section (which is known as Brauer's Maximum Principle), we recall that the classical Krein-Milman's Theorem asserts that any non-empty weak*-compact and convex subset $\cC$ of the dual of a Banach space has extremal points, i.e. elements $\lambda$ in $\cC$ which do not admit a representation of the form
\[
\lambda = \alpha_1 \cdot \lambda_1 + \alpha_2 \cdot \lambda_2,
\]
where $\lambda_1, \lambda_2 \in \cC$ and $\alpha_1, \alpha_2$ are \emph{positive} numbers with
$\alpha_1 + \alpha_2 = 1$. 

\begin{proposition}[Theorem 7.69 in \cite{AB06}]
Let $\cC$ be a weak*-compact convex subset of the dual of a Banach space and suppose 
$\varphi : \cC \ra \bR$ is an upper semicontinuous convex function. Then $\varphi$
attains its maximum at an extremal point in $\cC$. 
\end{proposition}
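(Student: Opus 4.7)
The plan is to follow the classical proof of Bauer's Maximum Principle via minimal closed extremal subsets, an argument in the spirit of the standard proof of the Krein-Milman theorem but adapted to the maximizer set of $\varphi$.

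First I would observe that $\varphi$ attains its maximum on $\cC$: since $\cC$ is weak*-compact and $\varphi$ is upper semicontinuous, the super-level sets $\{\lambda \in \cC : \varphi(\lambda) \geq c\}$ are weak*-closed (hence weak*-compact), and a standard compactness argument produces some $M_0 := \sup_{\cC} \varphi$ that is attained. Set $F := \{\lambda \in \cC : \varphi(\lambda) = M_0\}$, which is then non-empty and weak*-compact.

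The key structural observation is that $F$ is an \emph{extremal subset} (face) of $\cC$: if $\lambda \in F$ and $\lambda = t \lambda_1 + (1-t)\lambda_2$ with $\lambda_1, \lambda_2 \in \cC$ and $t \in (0,1)$, then $\lambda_1, \lambda_2 \in F$. This is where the convexity of $\varphi$ is used essentially, via the chain of inequalities
\[
M_0 = \varphi(\lambda) \leq t\, \varphi(\lambda_1) + (1-t)\, \varphi(\lambda_2) \leq M_0,
\]
which forces $\varphi(\lambda_1) = \varphi(\lambda_2) = M_0$. Note that $F$ itself need not be convex, so one cannot simply invoke Krein-Milman inside $F$; the face/extremal-subset formalism is exactly what allows us to proceed without convexity of $F$.

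Next I would use Zorn's lemma to extract a minimal non-empty weak*-closed extremal subset $F_0 \subseteq F$ of $\cC$. Such an $F_0$ exists because the non-empty weak*-closed extremal subsets of $F$ form a partially ordered family (under reverse inclusion) in which every chain has a non-empty intersection by the finite intersection property of compact sets, and arbitrary intersections of extremal subsets are again extremal. The main obstacle, and the heart of the argument, is showing that $F_0$ must be a singleton. Assuming $|F_0| \geq 2$, I would pick distinct $\lambda_1, \lambda_2 \in F_0$ and apply the Hahn-Banach separation theorem in its weak*-continuous form to obtain a weak*-continuous linear functional $\ell$ with $\ell(\lambda_1) \neq \ell(\lambda_2)$. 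The restriction of $\ell$ to $F_0$ attains its maximum $L$ on the weak*-compact set $F_0$, and one checks from the extremal property of $F_0$ in $\cC$ that
\[
F_0 \cap \ell^{-1}(L)
\]
is a proper, non-empty, weak*-closed extremal subset of $\cC$, contradicting the minimality of $F_0$. Hence $F_0 = \{\lambda_0\}$ for a unique $\lambda_0$, which is automatically an extremal point of $\cC$ (any non-trivial convex decomposition of $\lambda_0$ inside $\cC$ would have to stay in $\{\lambda_0\}$), and by construction $\varphi(\lambda_0) = M_0$, completing the proof.
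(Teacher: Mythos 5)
Your argument is correct and complete; it is the classical proof of Bauer's Maximum Principle via minimal closed extremal subsets. The paper itself gives no proof of this proposition — it simply cites Theorem 7.69 of Aliprantis–Border — so there is nothing to compare against; your proof fills that gap with the standard textbook argument. One small remark: in the weak* setting you do not actually need Hahn–Banach to separate two distinct points $\lambda_1 \ne \lambda_2$ of $X^*$; the evaluation maps $\lambda \mapsto \lambda(x)$ for $x \in X$ are weak*-continuous and already separate points of $X^*$ by definition, so $\ell$ can be taken to be such an evaluation directly.
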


Applied to the Banach space $X = \ell^{\infty}(G)$, we have the following immediate corollary,
which shall be used frequently. 

\begin{corollary}
\label{max2}
Let $X$ be a Banach space and suppose $\cC \subset X^*$ is a weak*-compact convex set. 
For every $x \in X$, there exist an extremal $\lambda_o \in \cC$ such that
\[
\lambda_o(x) = \sup_{\lambda \in \cC} \lambda(x).
\]
In particular, for any weak*-compact convex set $\cC \subset \cM(G)$ and for any $C \subset G$, there exist extremal elements $\lambda_o$ and $\lambda_1$ in $\cC$ such that
\[
d_{\cC}^*(C) = \lambda_o'(C)
\qqand
d^{\cC}_*(C) = \lambda_1'(C).
\]
\end{corollary}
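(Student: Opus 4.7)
The plan is to derive both assertions as direct applications of Bauer's Maximum Principle stated just above (Theorem 7.69 in \cite{AB06}), together with the observation that evaluation at a fixed vector is an affine and weak*-continuous functional on $X^*$.

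First I would prove the general statement. Fix $x \in X$ and consider the map $\Phi_x : \cC \to \bR$ defined by $\Phi_x(\lambda) = \re \lambda(x)$ (or simply $\lambda(x)$ in the real case). By definition of the weak*-topology on $X^*$, this map is weak*-continuous, hence in particular upper semicontinuous. Moreover, since evaluation at $x$ is linear on $X^*$, its real part is an affine function on $\cC$, and every affine function is automatically convex. Bauer's Maximum Principle therefore guarantees that $\Phi_x$ attains its supremum at some extremal point $\lambda_o$ of $\cC$, yielding the first claim.

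Next I would specialize to the setting relevant for densities. Take $X = \ell^{\infty}(G)$, fix a subset $C \subset G$, and let $x = \chi_C \in X$. Applying the first assertion to this $x$ produces an extremal $\lambda_o \in \cC$ such that
\[
\lambda_o'(C) = \lambda_o(\chi_C) = \sup_{\lambda \in \cC}\lambda(\chi_C) = d^*_{\cC}(C).
\]
For the lower density, the same argument is run with $x' = -\chi_C$: the map $\Phi_{x'}$ is still weak*-continuous and affine, so its supremum, which equals $-d^{\cC}_*(C)$, is attained at some extremal $\lambda_1 \in \cC$; equivalently the infimum of $\lambda \mapsto \lambda'(C)$ is attained at $\lambda_1$. (One could equally well invoke Bauer's principle for lower semicontinuous concave functions, noting that $\Phi_x$ is simultaneously convex and concave.)

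There is no real obstacle here; the only subtle point worth spelling out is the verification that $\lambda \mapsto \lambda(x)$ is weak*-continuous on all of $X^*$ (this is definitional), together with the remark that the weak*-compactness of $\cC$ is needed only to apply Bauer's Maximum Principle, while convexity is needed to ensure that $\cC$ has extremal points in the sense of the Krein--Milman Theorem. Once these two ingredients are in place, both statements follow in a single line each.
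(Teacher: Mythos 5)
Your proof is correct and follows exactly the route the paper intends: the corollary is presented as an immediate consequence of Bauer's Maximum Principle, and your argument — observing that $\lambda \mapsto \re\lambda(x)$ is weak*-continuous (hence upper semicontinuous) and affine (hence convex), applying Bauer to get an extremal maximizer, then specializing to $x = \chi_C$ and $x = -\chi_C$ for the upper and lower densities — is precisely that. Your remark about taking real parts is a sensible clarification of a point the paper leaves implicit, since for means evaluated on $\chi_C$ the value is automatically real.
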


For separable Banach spaces, this corollary can be deduced from Choquet's Theorem. However, 
in the case $X$ is \emph{not} separable (which is of crucial interest to us), there is no reason to expect 
that the set of extremal points of $\cC \subset X^*$ is even Borel measurable, and thus disintegration 
techniques are not directly applicable. 

\subsection{Measure theory on $\beta G$}
\label{measure theory}

There is an important interchange between subsets of the group $G$ and \emph{clopen} subsets 
of $\beta G$ which we now describe.

\subsubsection{Set correspondences}
Recall that the map $\tau : C(\beta G) \ra \ell^{\infty}(G)$ given by
\[
\tau(f)(g) = f(g \cdot \bar{e}), \quad g \in G,
\]
is an isometric $*$-isomorphism. Hence, if $C \subset G$, then there exists a \emph{unique} 
$\varphi$ in $C(\beta G)$ such that $\chi_C = \tau(\varphi)$,
and thus 
\[
\tau(\varphi^* \varphi) = \tau(\varphi)^* \tau(\varphi) = \chi_C^2 = \tau(\varphi),
\]
which implies that $\varphi$ is idempotent in $C(\beta G)$. Since $\tau$ is an isomorphism, this
entails that  $\varphi$ is necessarily equal to the indicator function of a subset $\oC$ of $\beta G$. 
Since the indicator function of this set is continuous, we conclude that $\oC$ is clopen. Conversely, 
if $\oC$ is a clopen set, then one readily checks that $\tau(\chi_{\oC})$ is the
indicator function of a subset $C$ of $G$. Hence we have a one-to-one correspondence 
\[
C \longleftrightarrow \oC
\] 
between subsets of $G$ and \emph{clopen} subsets of $\beta G$. 

\subsubsection{Measure correspondences}

Since the map $\tau$ above is clearly positivity preserving and unital, we see that if $\lambda$
is a mean on $G$, then $\tau^*\lambda$ is a positive unital continuous functional on $C(\beta G)$,
and can thus be thought of as a  $\sigma$-additive Borel probability measure on $\beta G$, via 
Riesz Representation Theorem. Conversely, any Borel probability measure $\olambda$ on 
$\beta G$ induces a linear functional on $C(\beta G)$ by integration, and thus a state 
on $\ell^{\infty}(G)$ (or equivalently, a mean on $G$). Hence there is a one-to-one correspondence
between means on $G$ and probability measures on $\beta G$ given by
\[
\lambda \longleftrightarrow \olambda.
\] 

\subsubsection{Beyond continuous functions}

Note that the map $\tau$ above is clearly defined for \emph{all} functions on $\beta G$, continuous 
or not. However, easy examples show that there may be no relation between the $\olambda$-integral
of a discontinuous function $\varphi$ on $\beta G$ and the $\lambda$-value of $\tau(\varphi)$ for a 
given mean $\lambda$ on $G$. 

However, as we shall see in the following lemma, it is 
possible to relate the two values when $\varphi$ is a \emph{lower semicontinuous} function on $\beta G$. 

\begin{lemma}
\label{lower bound help1}
Suppose that a bounded positive function $\varphi$ on $\beta G$ equals the supremum of an increasing sequence of continuous functions on $\beta G$. Then
\[
\lambda(\tau(\varphi)) \geq \int_{\beta G} \varphi \, d\olambda, \quad \forall \, \lambda \in \cM(G).
\]
\end{lemma}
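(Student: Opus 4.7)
The plan is to exploit the definition of the correspondence $\lambda \leftrightarrow \overline{\lambda}$ (namely $\overline{\lambda} = \tau^*\lambda$, so that $\int_{\beta G} f \, d\overline{\lambda} = \lambda(\tau(f))$ for every continuous $f$ on $\beta G$) together with monotonicity of $\tau$, monotonicity of the state $\lambda$, and the classical monotone convergence theorem for the Borel measure $\overline{\lambda}$.

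First, write $\varphi = \sup_n \varphi_n$ where $(\varphi_n)$ is an increasing sequence in $C(\beta G)$; by replacing $\varphi_n$ with $\max(\varphi_n,0)$ if necessary (and using that $\varphi$ is positive), we may assume each $\varphi_n \geq 0$, and since $\varphi$ is bounded we may also truncate at $\|\varphi\|_\infty$, so the sequence is uniformly bounded. Next, observe that $\tau$ as given by \eqref{sc} is defined pointwise on \emph{all} functions on $\beta G$, not only continuous ones, and is manifestly monotone: $\varphi_n \leq \varphi$ pointwise on $\beta G$ implies $\tau(\varphi_n)(g) = \varphi_n(g\cdot \bar{e}) \leq \varphi(g \cdot \bar{e}) = \tau(\varphi)(g)$ for each $g \in G$. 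Since $\lambda$ is a positive linear functional on $\ell^\infty(G)$, it follows that
\[
\lambda(\tau(\varphi_n)) \leq \lambda(\tau(\varphi)) \qquad \text{for every } n.
\]

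For each fixed $n$, the function $\varphi_n$ is continuous on $\beta G$, so by the defining property of the measure $\overline{\lambda}$ given by the Riesz representation applied to $\tau^*\lambda$, we have
\[
\lambda(\tau(\varphi_n)) = \int_{\beta G} \varphi_n \, d\overline{\lambda}.
\]
Combining this with the previous inequality gives
\[
\int_{\beta G} \varphi_n \, d\overline{\lambda} \leq \lambda(\tau(\varphi)) \qquad \text{for every } n.
\]

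Finally, the sequence $(\varphi_n)$ is nonnegative, increasing, and converges pointwise to $\varphi$ on $\beta G$, so the monotone convergence theorem applied to the bona fide $\sigma$-additive Borel probability measure $\overline{\lambda}$ yields
\[
\int_{\beta G} \varphi \, d\overline{\lambda} = \lim_{n \to \infty} \int_{\beta G} \varphi_n \, d\overline{\lambda} \leq \lambda(\tau(\varphi)),
\]
which is the desired inequality. There is no real obstacle here; the only point worth underlining is the asymmetry — we cannot expect equality in general, because $\tau(\varphi)$ needs not be continuous and the mean $\lambda$ is only finitely additive, so it can assign mass to $\tau(\varphi)$ that is ``missed'' by the countably additive measure $\overline{\lambda}$ (i.e. that escapes to the ``corona'' $\beta G \setminus G$ in a way invisible to $\overline{\lambda}$ on $\varphi_n$).
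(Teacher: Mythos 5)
Your proof is correct and follows essentially the same route as the paper's: write $\varphi = \sup_n \varphi_n$ with $\varphi_n \in C(\beta G)$, use positivity of $\lambda$ and the defining property $\lambda(\tau(\varphi_n)) = \int \varphi_n \, d\overline{\lambda}$ for continuous functions, then apply monotone convergence for the $\sigma$-additive measure $\overline{\lambda}$. The preparatory truncation to make $(\varphi_n)$ nonnegative and uniformly bounded is harmless but not strictly needed (each $\varphi_n$ is already bounded below, being continuous on the compact space $\beta G$), and the closing remark about why equality can fail is a nice sanity check but not part of the argument.
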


\begin{proof}
Let $(\varphi_n)$ be an increasing sequence of continuous functions on $\beta G$ such that
\[
\varphi = \sup_n \varphi_n.
\]
Fix $\lambda \in \cM(G)$. Then, for every $n$, we have
\[
\lambda(\tau(\varphi)) \geq \lambda(\tau(\varphi_n)) = \int_{\beta G} \varphi_n \, d\olambda.
\]
Since $\olambda$ is a $\sigma$-additive Borel probability measure on $\beta G$, 
\[
\sup_n \int_{\beta G} \varphi_n \, d\olambda = \int_{\beta G} \varphi \, d\olambda
\]
by monotone convergence, which finishes the proof.
\end{proof}

\subsubsection{Integrating open sets in $\beta G$}

Let us recall the notion of a \emph{lower} $\cC$-density for a countable
group $G$. If 
$\cC \subset \cM(G)$, we defined the \emph{lower $\cC$-density} of a subset 
$C \subset G$ by
\[
d_*^{\cC}(C) := \inf_{\lambda \in \cC} \lambda'(C).
\]
Now, if $B \subset \beta G$ is any Borel set and $q$ is any point in $\beta G$, 
we can construct "the return time" subset $B_q \subset G$ by
\[
B_q := \Big\{ g \in G \, : \, g \cdot q \in B \Big\}.
\]
In particular, if $C \subset G$ and $\oC$ denotes the corresponding clopen set in 
$\beta G$, then $C = \oC_{\bar{e}}$, and 
\[
\lambda'(\oC_{\bar{e}}) = \olambda(\oC), \quad \forall \, \lambda \in \cM(G).
\]
When $B$ is no longer clopen in $\beta G$, there is no reason to expect any 
relation of this sort. However, as we shall see in the next lemma, there is a 
very important lower bound, in the case when $B$ consists of \emph{countable}
union of clopen sets.  

\begin{lemma}
\label{lower bound help2}
Let $\cC \subset \cM(G)$ be a weak*-compact and convex set and suppose $U \subset \beta G$ is a countable union of clopen sets in $\beta G$. Then there exists an extremal $\lambda \in \cC$ such
that
\[
d_{*}^{\cC}(U_{\bar{e}}) \geq \olambda(U).
\]
\end{lemma}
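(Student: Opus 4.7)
The plan is to combine Lemma \ref{lower bound help1} with Bauer's maximum principle (Corollary \ref{max2}). Write $U = \bigcup_{n \geq 1} \oC_n$, where each $\oC_n \subset \beta G$ is clopen and corresponds to a subset $C_n \subset G$. The partial unions $V_n := \bigcup_{k \leq n} \oC_k$ are finite unions of clopen sets, hence themselves clopen, and $V_n \subset V_{n+1}$. The sequence of continuous functions $\chi_{V_n}$ increases pointwise to $\chi_U$, so $\chi_U$ is the supremum of an increasing sequence of continuous functions on $\beta G$, which is exactly the hypothesis of Lemma \ref{lower bound help1}.

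Next I would identify $\tau(\chi_U)$ as $\chi_{U_{\bar{e}}}$. Indeed, by the definition of $U_{\bar e}$,
\[
\tau(\chi_U)(g) = \chi_U(g \cdot \bar e) = 1 \iff g \cdot \bar e \in U \iff g \in U_{\bar e}.
\]
Applying Lemma \ref{lower bound help1} to $\varphi = \chi_U$ then yields the key pointwise bound: for \emph{every} mean $\mu \in \cM(G)$,
\[
\mu'(U_{\bar e}) \;=\; \mu(\tau(\chi_U)) \;\geq\; \int_{\beta G} \chi_U \, d\overline{\mu} \;=\; \overline{\mu}(U).
\]

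Finally, since $U_{\bar e}$ is an ordinary subset of $G$, the affine map $\mu \mapsto \mu'(U_{\bar e}) = \mu(\chi_{U_{\bar e}})$ is weak*-continuous on $\cM(G)$, and in particular upper semicontinuous when negated. Applying Corollary \ref{max2} to the upper semicontinuous convex function $\mu \mapsto -\mu'(U_{\bar e})$ on the weak*-compact convex set $\cC$, we obtain an extremal point $\lambda \in \cC$ at which this function attains its maximum, equivalently at which $\mu \mapsto \mu'(U_{\bar e})$ attains its infimum:
\[
\lambda'(U_{\bar e}) \;=\; \inf_{\mu \in \cC} \mu'(U_{\bar e}) \;=\; d_*^{\cC}(U_{\bar e}).
\]
Combining the two displayed inequalities at this particular $\lambda$ gives $d_*^{\cC}(U_{\bar e}) = \lambda'(U_{\bar e}) \geq \overline{\lambda}(U)$, which is exactly the claim.

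No step is genuinely hard: the only point worth checking carefully is that the countability of the cover of $U$ by clopens is what makes $\chi_U$ a monotone limit of continuous functions (so that Lemma \ref{lower bound help1} applies), since for an arbitrary open set in $\beta G$ this approximation could fail. Here countability is given by hypothesis, so the argument goes through verbatim.
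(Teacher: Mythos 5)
Your proof is correct and follows the same route as the paper: apply Corollary \ref{max2} to obtain an extremal $\lambda \in \cC$ attaining $d_*^{\cC}(U_{\bar e})$, and apply Lemma \ref{lower bound help1} to $\chi_U$ (noting it is an increasing limit of indicators of clopen sets and that $\tau(\chi_U) = \chi_{U_{\bar e}}$) to bound $\lambda'(U_{\bar e})$ from below by $\olambda(U)$. The only superfluous step is re-deriving the ``in particular'' clause of Corollary \ref{max2} for lower densities, which the paper simply cites.
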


\begin{proof}
By Corollary \ref{max2}, applied to the set $C := U_{\bar{e}}^c$, there exists an extremal 
$\lambda \in \cC$ such that
\[
\lambda'(U_{\bar{e}}) = d_*^{\cC}(U_{\bar{e}}).
\]
By assumption, the indicator function $\varphi := \chi_U$ equals the supremum of an increasing sequence
of indicator functions on clopen sets (which are continuous), so by Lemma \ref{lower bound help1}, we 
have
\[
\lambda'(U_{\bar{e}}) = \lambda(\tau(\chi_{U})) \geq \olambda(U),
\]
which finishes the proof. 
\end{proof}

\subsection{Combinatorics on $\beta G$}

The next two subsections will be concerned with the problem of reading off combinatorial properties 
of a subset $C \subset G$ from topological properties of $\overline{C} \subset \beta G$ and 
dynamical properties of the "return time sets"
\[
\oC_q = \Big\{ g \in G \, : \, g \cdot q \in \oC \Big\} \subset G,
\]
for various choices of $q \in \beta G$. 

\begin{lemma}
\label{UF}
For every $C \subset G$ and finite $F \subset G$, the sets
\[
U_F(C) := \Big\{ q \in \beta G \, : \, F \subset \oC_q \Big\}
\qand
U^F(C) := \Big\{ q \in \beta G \, : \, \oC_q \subset G \setminus F \Big\}
\]
are clopen subsets of $\beta G$. 
\end{lemma}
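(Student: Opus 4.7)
The plan is to reduce the verification of clopenness to the fact that the set correspondence $C \leftrightarrow \oC$ (established in Subsection \ref{measure theory}) already sends subsets of $G$ to clopen subsets of $\beta G$, together with the observation that left translation by elements of $G$ acts on $\beta G$ by homeomorphisms.

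First I would rewrite the defining condition $F \subset \oC_q$. By definition, $\oC_q = \{g \in G : g \cdot q \in \oC\}$, so $F \subset \oC_q$ is equivalent to $f \cdot q \in \oC$ for every $f \in F$, which in turn is equivalent to $q \in f^{-1} \cdot \oC$ for every $f \in F$. This gives the identity
\[
U_F(C) = \bigcap_{f \in F} f^{-1} \cdot \oC.
\]
An entirely analogous rewriting handles the second set. The condition $\oC_q \subset G \setminus F$ says that $f \notin \oC_q$ for every $f \in F$, that is, $f \cdot q \notin \oC$, or equivalently $q \in f^{-1} \cdot (\beta G \setminus \oC)$. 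Hence
\[
U^F(C) = \bigcap_{f \in F} f^{-1} \cdot (\beta G \setminus \oC).
\]

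The rest is immediate. Since $C \subset G$, the correspondence between subsets of $G$ and clopen subsets of $\beta G$ gives that $\oC$ is clopen, and therefore so is $\beta G \setminus \oC$. Because the left $G$-action on $\beta G$ consists of homeomorphisms, each translate $f^{-1} \cdot \oC$ and each $f^{-1} \cdot (\beta G \setminus \oC)$ is again clopen. Finally, $F$ is finite, so the displayed intersections above are finite intersections of clopen sets, hence clopen.

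There is no real obstacle here; the statement is a direct bookkeeping exercise once one unravels the definition of $\oC_q$ and invokes the set correspondence together with continuity (and in fact homeomorphism property) of the left action on $\beta G$. The only point worth a moment's care is keeping the direction of the translation straight — the condition $f \cdot q \in \oC$ becomes $q \in f^{-1} \cdot \oC$, not $q \in f \cdot \oC$ — but once this is done correctly, the conclusion is immediate.
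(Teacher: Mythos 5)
Your proof is correct and follows essentially the same route as the paper: both reduce the claim to the identity $U_F(C) = \bigcap_{f \in F} f^{-1}\cdot\oC$ and then invoke that $\oC$ is clopen, the left $G$-action on $\beta G$ is by homeomorphisms, and $F$ is finite. The only cosmetic difference is that the paper disposes of $U^F(C)$ via the shortcut $U^F(C) = U_F(C^c)$, whereas you write out $U^F(C) = \bigcap_{f\in F} f^{-1}\cdot(\beta G \setminus \oC)$ directly; these are the same computation.
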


\begin{proof}
Since $\oC \subset \beta G$ is clopen and
\[
U^F(C) = U_F(C^c),
\]
it suffices to prove that $U_F(C)$ is clopen in $\beta G$ for every $C \subset G$ and finite $F \subset G$.
Note that
\[
U_F(C) = \bigcap_{f \in F} U_{\{f\}}(C),
\]
and
\[
\forall \, f \in F, \quad U_{\{f\}}(C) = \Big\{ q \in \beta G \, : \, f \in \oC_q \Big\} = f^{-1} \cdot \oC. 
\]
Since the latter sets are all clopen and $U_F(C)$ is a finite intersection of such sets, we conclude that
$U_F(C)$ is clopen as well. 
\end{proof}

\begin{lemma}
\label{UF2}
For every $C \subset G$, $q \in \beta G$ and finite $F \subset G$, there exists $g \in G$
such that
\[
C \cdot g^{-1} \cap F = \oC_{q} \cap F.
\]
In fact, for every $q \in \beta G$, the set of $p \in \beta G$ such that 
\[
\oC_{p} \cap F = \oC_{q} \cap F
\]
is clopen. 
\end{lemma}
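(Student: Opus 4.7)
The plan is to reduce everything to Lemma \ref{UF} together with the fact that the orbit $G \cdot \bar{e}$ is dense in $\beta G$ (recorded earlier in the paper via the $G$-transitivity of $\bar{e}$). The key observation, which I would verify first, is that for every $g \in G$ the set $C \cdot g^{-1}$ equals the return time set $\oC_{g \cdot \bar{e}}$: indeed $h \in C \cdot g^{-1}$ iff $hg \in C$ iff $hg \cdot \bar{e} \in \oC$ iff $h \in \oC_{g \cdot \bar{e}}$, where the middle equivalence uses the identification $\chi_C = \tau(\chi_{\oC})$. So the first assertion of the lemma is really asking us to produce a point of the form $g \cdot \bar{e}$ inside the set of those $p$ with $\oC_p \cap F = \oC_q \cap F$, and this is exactly the content of the second assertion combined with density of the orbit.

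Next I would set $E := \oC_q \cap F$ and write $F = E \sqcup (F \setminus E)$. Then for an arbitrary $p \in \beta G$, the condition $\oC_p \cap F = E$ amounts to the conjunction
\[
E \subset \oC_p \qquad \text{and} \qquad \oC_p \cap (F \setminus E) = \emptyset,
\]
which in the notation of Lemma \ref{UF} is precisely $p \in U_E(C) \cap U^{F \setminus E}(C)$. Since $U_E(C)$ and $U^{F \setminus E}(C)$ are both clopen in $\beta G$ by Lemma \ref{UF}, their intersection
\[
V := U_E(C) \cap U^{F \setminus E}(C) \;=\; \bigl\{ p \in \beta G \,:\, \oC_p \cap F = \oC_q \cap F \bigr\}
\]
is clopen. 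This establishes the second statement of the lemma.

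Finally, to obtain the element $g$ requested in the first statement, note that $q \in V$ by construction, so $V$ is a nonempty open subset of $\beta G$. Since $\bar{e}$ is $G$-transitive under the left action of $G$ on $\beta G$, the orbit $\{g \cdot \bar{e} : g \in G\}$ is dense, and hence meets $V$. Picking any $g \in G$ with $g \cdot \bar{e} \in V$ and combining with the identification $C \cdot g^{-1} = \oC_{g \cdot \bar{e}}$ from the opening paragraph yields
\[
C \cdot g^{-1} \cap F \;=\; \oC_{g \cdot \bar{e}} \cap F \;=\; \oC_q \cap F,
\]
as required. There is no real obstacle here: the whole proof is essentially a bookkeeping exercise once one notices that the translates $Cg^{-1}$ are exactly the return time sets along the dense orbit $G \cdot \bar{e}$, and that Lemma \ref{UF} already delivers the clopen sets needed to detect a prescribed trace on the finite set $F$.
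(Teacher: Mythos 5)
Your proof is correct and follows essentially the same route as the paper's: both decompose $F$ into the trace $\oC_q \cap F$ and its complement in $F$, express the set $\{p : \oC_p \cap F = \oC_q \cap F\}$ as an intersection $U_I(C) \cap U^J(C)$ of the clopen sets from Lemma~\ref{UF}, and then invoke $G$-transitivity of $\bar{e}$ to pick out a point $g \cdot \bar{e}$ in that nonempty clopen set. The only cosmetic difference is that you explicitly write out the identification $C \cdot g^{-1} = \oC_{g \cdot \bar{e}}$, which the paper leaves implicit.
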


\begin{proof}
Fix $q \in \beta G$ and a finite set $F \subset G$ and define
\[
I := F \cap \oC_q \qand J := F \cap (\oC_q)^c
\]
By Lemma \ref{UF} the set
\[
E_F(q) :=  
\Big\{ p \in \beta G  \, : \, \oC_p \cap F = \oC_q \cap F \Big\}
=
U_{I}(C) \cap U^{J}(C)
\]
is clopen and clearly contains $q$. Since $\bar{e} \in \beta G$ is $G$-transitive, we can find
$g \in G$ such that $g \cdot \bar{e} \in E_F(q)$, or equivalently,
\[
\oC_{g \cdot \bar{e}} \cap F = C \cdot g^{-1} \cap F = \oC_q \cap F,
\]
which finishes the proof. 
\end{proof}

\subsubsection{Jordan measurability and local inclusion}

Recall that if $\nu$ is a Borel probability measure on a compact Hausdorff space $X$, then a 
Borel set $C \subset X$ is \emph{$\nu$-Jordan measurable} if the $\nu$-measure of the 
\emph{topological boundary}
\[
\partial B := \oB \setminus B^o \subset X
\]
is zero, where $B^o$ denotes the interior of $B$. 

In many of the applications in this paper, we shall require a stronger form of Jordan measurability. 
Let $\cP(X)$ denote the space of Borel probability measures on $X$, identified with the set of states
of the C*-algebra $C(X)$ via the Riesz Representation Theorem. Given a subset $\cC \subset \cP(X)$
we shall say that a set $C \subset X$ is \emph{$\cC$-tempered} if $C$ is $\nu$-Jordan measurable for 
\emph{all} $\nu \in \cC$.  

The property of $\cC$-temperedness behaves well under liftings as the following lemma shows. 

\begin{lemma}
\label{lift}
Let $\cC \subset \cP(X)$ and suppose $\pi : X \ra Y$ is a continuous map. 
If $B \subset Y$ is $\pi_*\cC$-tempered, then $\pi^{-1}(B) \subset X$ is 
$\cC$-tempered. 
\end{lemma}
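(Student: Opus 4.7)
The plan is to reduce $\cC$-temperedness of $\pi^{-1}(B)$ to a topological inclusion between boundaries. Specifically, I would first show that for any set $B \subset Y$ and any continuous $\pi \colon X \to Y$, one has
\[
\partial\bigl(\pi^{-1}(B)\bigr) \subset \pi^{-1}(\partial B),
\]
and then push this inclusion through each $\nu \in \cC$ via the change-of-variables identity $\nu(\pi^{-1}(E)) = (\pi_*\nu)(E)$.

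The first (and essentially only nontrivial) step uses continuity in two ways. Since $\pi^{-1}(\overline{B})$ is closed and contains $\pi^{-1}(B)$, we have
\[
\overline{\pi^{-1}(B)} \subset \pi^{-1}(\overline{B}).
\]
Dually, $\pi^{-1}(B^o)$ is open and contained in $\pi^{-1}(B)$, hence
\[
\pi^{-1}(B^o) \subset \bigl(\pi^{-1}(B)\bigr)^o.
\]
Subtracting the second inclusion from the first gives
\[
\partial\bigl(\pi^{-1}(B)\bigr) = \overline{\pi^{-1}(B)} \setminus \bigl(\pi^{-1}(B)\bigr)^o \subset \pi^{-1}(\overline{B}) \setminus \pi^{-1}(B^o) = \pi^{-1}(\partial B),
\]
which is the desired boundary inclusion.

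Now pick any $\nu \in \cC$. By the hypothesis that $B$ is $\pi_*\cC$-tempered, the set $B \subset Y$ is $\pi_*\nu$-Jordan measurable, so $(\pi_*\nu)(\partial B) = 0$. Combining with the boundary inclusion,
\[
\nu\bigl(\partial(\pi^{-1}(B))\bigr) \leq \nu\bigl(\pi^{-1}(\partial B)\bigr) = (\pi_*\nu)(\partial B) = 0,
\]
so $\pi^{-1}(B)$ is $\nu$-Jordan measurable. As $\nu \in \cC$ was arbitrary, $\pi^{-1}(B)$ is $\cC$-tempered.

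I do not expect any genuine obstacle here: the statement is essentially a tautology once one writes down the definition of Jordan measurability and uses that continuous preimages of open (resp.\ closed) sets are open (resp.\ closed). The only mild subtlety is that $\overline{\pi^{-1}(B)}$ and $\pi^{-1}(\overline{B})$ need not agree (and similarly for interiors), but the one-sided inclusions above are precisely what is needed and hold without any compactness or surjectivity assumption on $\pi$.
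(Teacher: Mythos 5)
Your proof is correct and follows essentially the same route as the paper: the paper's proof consists exactly of the chain $\pi^{-1}(B^o)\subset\pi^{-1}(B)^o\subset\pi^{-1}(B)\subset\overline{\pi^{-1}(B)}\subset\pi^{-1}(\overline{B})$ plus monotonicity, which is precisely your boundary inclusion $\partial(\pi^{-1}(B))\subset\pi^{-1}(\partial B)$ followed by pushing forward through $\nu$.
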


\begin{proof}
This is an immediate consequence of the inclusions
\[
\pi^{-1}(B^o) \subset \pi^{-1}(B)^o \subset \pi^{-1}(B) \subset \overline{\pi^{-1}(B)} \subset \pi^{-1}(\overline{B}),
\]
and the monotonicity of positive measures. 
\end{proof}

The next result will be a crucial ingredient in the proof of Proposition \ref{local inclusion}. A key point 
in the proof is the \emph{extreme disconnectedness} of $\beta G$. Recall that a compact Hausdorff space is 
\emph{extremely disconnected} if the closure of any open subset is \emph{clopen}. The classical 
fact that $\beta G$ is extremely disconnected for \emph{any} discrete group $G$ can be found in \cite{Jo}.\\

Also recall that a set $\cC \subset \cM(G)$ is \emph{left saturated} if 
every closed left-$G$-invariant subset $Y \subset \beta G$ supports
a probability measure of the form $\olambda$ for some $\lambda$ in 
$\cC$ with the property that whenever $A \subset \beta G$ is a Borel
set with $\olambda(A) = 0$, then $\olambda(g \cdot A) = 0$ for all 
$g \in G$.

\begin{lemma}
\label{local inclusion0}
Let $\cC \subset \cM(G)$ be a left saturated set and denote by $\overline{\cC}$ its image in $\cP(\beta G)$.
Let $C \subset G$ and suppose $U \subset \beta G$ is an open $\overline{\cC}$-tempered set. If 
\[
\oC_q \supset U_q
\]
for some $q \in \beta G$, then there exists a $\cC$-conull set $T \subset G$ such that
\[
C \supset U_{\bar{e}} \cap T.
\]
\end{lemma}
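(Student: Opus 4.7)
The plan is to work with the open ``bad'' set $W := U \setminus \overline{C}$ inside $\beta G$, and to exploit both the extreme disconnectedness of $\beta G$ and the saturation hypothesis on $\cC$. The openness of $W$ is immediate because $\overline{C}$ is clopen; from the hypothesis $\overline{C}_q \supset U_q$ one reads off that $G \cdot q \cap W = \emptyset$, and since $W$ is open while $G \cdot q$ is dense in the orbit closure $Y := \overline{G \cdot q}$, this upgrades to $Y \cap W = \emptyset$.

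The first technical observation I would record is the boundary inclusion $\partial W := \overline{W} \setminus W \subset \partial U$: indeed $W = U \cap \overline{C}^c$ with $\overline{C}^c$ clopen, so the boundary of $W$ can only come from the boundary of $U$. Combined with $Y \cap W = \emptyset$, this yields $\overline{W} \cap Y \subset \partial U$. Next I invoke left saturation applied to the closed $G$-invariant set $Y$: this produces a $\lambda \in \cC$ such that $\overline{\lambda}$ is supported on $Y$ and satisfies the quasi-invariance clause, and since $U$ is $\overline{\cC}$-tempered, $\overline{\lambda}(\partial U) = 0$. Chaining these facts, $\overline{\lambda}(\overline{W}) = \overline{\lambda}(\overline{W} \cap Y) \leq \overline{\lambda}(\partial U) = 0$.

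Extreme disconnectedness of $\beta G$ now enters: the closure $\overline{W}$ of the open set $W$ is clopen, so under the clopen--subset correspondence it equals the closure of $D := \overline{W}_{\bar{e}} \subset G$, and the previous inequality translates to $\lambda'(D) = 0$. Setting $T := G \setminus D$ I obtain $\lambda'(T) = 1$, and the quasi-invariance clause, applied to the $\overline{\lambda}$-null set $\overline{W}$, propagates this to $\lambda'(g \cdot T) = 1$ for every $g \in G$, which is the desired $\cC$-conullity. The final inclusion is then routine: for $g \in U_{\bar{e}} \cap T$ one has $g \cdot \bar{e} \in U$ and $g \cdot \bar{e} \notin \overline{W} \supset W = U \setminus \overline{C}$, forcing $g \cdot \bar{e} \in \overline{C}$, i.e.\ $g \in C$. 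The main obstacle is concentrated in the boundary computation $\partial W \subset \partial U$ together with $\overline{\cC}$-temperedness: that is exactly what converts the orbit-wise hypothesis $U_q \subset \overline{C}_q$ into a measure-theoretic statement valid up to an $\overline{\lambda}$-null set, after which extreme disconnectedness delivers the combinatorial conclusion on $G$.
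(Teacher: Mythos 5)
Your proof is correct and follows essentially the same route as the paper's: both introduce the bad open set $U \setminus \oC$, upgrade it to a clopen set via extreme disconnectedness of $\beta G$, and then combine left saturation (applied to a closed $G$-invariant set disjoint from the bad set) with $\overline{\cC}$-temperedness of $U$ to conclude that this clopen set is $\olambda$-null for a suitable $\lambda \in \cC$. Two cosmetic differences worth noting: the paper works with the clopen set $\oU \setminus \oC$ rather than your $\overline{U \setminus \oC}$ (both are clopen and the latter is contained in the former, so either works), and your closing quasi-invariance step deducing $\lambda'(g \cdot T) = 1$ for all $g \in G$ is superfluous, since $\cC$-conullity in this paper only requires $\lambda'(T) = 1$ for a single $\lambda \in \cC$, which you had already obtained.
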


\begin{proof}
Define the open set $D := U \setminus \oC$. By assumption, $D_q$ is empty, and thus
\[
Y := \beta G \setminus GD \subset \beta G
\]
is a non-empty, closed and $G$-invariant set. Since $\cC$ is left saturated, there exists
$\lambda \in \cC$ such that $\olambda(Y) = 1$, and thus $\olambda(D) = 0$. Since 
$\oC$ is clopen and $U$ is Jordan measurable with respect to $\olambda$, we have
\[
\olambda(\oU) = \olambda(U) = \olambda(U \cap \oC) = \olambda(\oU \cap \oC),
\]
and hence $\olambda(\oU \setminus \oC) = 0$. Since $\beta G$ is extremely disconnected and
$U$ is open, $\oU \setminus \oC$ is clopen, and thus
\[
\lambda((\oU \setminus \oC)_{\bar{e}}) = \olambda(\oU \setminus \oC).
\]
Define the sets
\[
\oT := \beta G \setminus \big( \oU \setminus \oC \big) \qand T := \oT_{\bar{e}}.
\]
Then $\lambda(T) = \olambda(\oT) = 1$, so $T$ is $\cC$-conull, and
\[
\oC_{\bar{e}} \supset \oU_{\bar{e}} \cap T \supset U_{\bar{e}} \cap T,
\]
which finishes the proof. 
\end{proof}

\subsection{Properties of right  thick sets}

Recall that a subset $T \subset G$ is \emph{right thick} if for \emph{every} finite subset $F \subset G$
there exists $g \in G$ such that
\[
F g^{-1} \subset T.
\]
The following lemma is a simple characterization of right thickness in a countable group $G$ 
in terms of clopen subsets of $\beta G$.

\begin{lemma}
\label{thick lemma}
A set $T \subset G$ is right thick if and only if $\oT_q = G$ for some $q \in \beta G$. 
\end{lemma}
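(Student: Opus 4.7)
The plan is to translate right thickness of $T$ into a finite intersection property of the clopen sets $U_F(T) = \{q \in \beta G : F \subset \overline{T}_q\}$ provided by Lemma \ref{UF}, and then to deduce the existence of $q$ with $\overline{T}_q = G$ by compactness of $\beta G$.

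First I would unpack what the equation $F \subset \overline{T}_{g \cdot \bar{e}}$ says for $g \in G$. Using the identification $\overline{T}_{g \cdot \bar{e}} = T g^{-1}$ (which follows at once from $\tau(\chi_{\overline{T}})(h) = \chi_{\overline{T}}(h \cdot \bar{e}) = \chi_T(h)$ together with associativity of the left $G$-action on $\beta G$), the inclusion $F \subset \overline{T}_{g \cdot \bar{e}}$ is equivalent to $F g \subset T$. In particular, the definition of right thickness is exactly the statement that $U_F(T) \cap (G \cdot \bar{e}) \neq \emptyset$ for every finite $F \subset G$. Since $U_F(T)$ is clopen and $G \cdot \bar{e}$ is dense in $\beta G$, this is equivalent to $U_F(T) \neq \emptyset$ for every finite $F$.

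For the forward implication, I would observe that the family $\{U_F(T)\}_{F}$, indexed over finite subsets of $G$, has the finite intersection property: indeed $U_{F_1}(T) \cap U_{F_2}(T) = U_{F_1 \cup F_2}(T)$, which is non-empty by the reformulation above. Each $U_F(T)$ is closed in $\beta G$ (being clopen), and $\beta G$ is compact, so the intersection
\[
\bigcap_{F \subset G \text{ finite}} U_F(T)
\]
is non-empty. Any $q$ in this intersection satisfies $F \subset \overline{T}_q$ for every finite $F \subset G$, hence $\overline{T}_q = G$.

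For the converse, assume $q \in \beta G$ with $\overline{T}_q = G$. Then $q \in U_F(T)$ for every finite $F \subset G$, so each $U_F(T)$ is a non-empty clopen set. Either invoking Lemma \ref{UF2} directly (applied to $C = T$) or using that $G \cdot \bar{e}$ is dense in $\beta G$, we obtain $g \in G$ with $g \cdot \bar{e} \in U_F(T)$, i.e.\ $F \subset \overline{T}_{g \cdot \bar{e}} = T g^{-1}$, equivalently $F g \subset T$. Since $F$ was arbitrary, $T$ is right thick.

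There is no real obstacle here; the only subtlety is recognising that right thickness is really a statement about the orbit $G \cdot \bar{e}$ meeting each $U_F(T)$, whereupon density of this orbit lets one replace it by the full set $U_F(T)$, which is exactly what opens the door to a compactness argument.
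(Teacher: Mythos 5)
Your proof is correct and uses essentially the same ingredients as the paper: the clopen sets $U_F(T)$ from Lemma \ref{UF}, the identification $\oT_{g\cdot\bar e}=Tg^{-1}$, and compactness of $\beta G$. The only difference is cosmetic: you package the compactness step as a finite-intersection-property argument (noting $U_{F_1}(T)\cap U_{F_2}(T)=U_{F_1\cup F_2}(T)$), whereas the paper fixes an exhausting sequence $(F_n)$, produces witnesses $g_n$, and extracts a limit point of the net $(g_n^{-1}\cdot\bar e)$; your version is arguably a touch cleaner, but it is the same idea.
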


\begin{proof}
Suppose $\oT_q = G$ for some $q \in \beta G$ and fix a finite set $F$ in $G$. By Lemma
\ref{UF2}, there exists $g \in G$ such that
\[
T \cdot g^{-1} \supset F,
\] 
and hence $T$ is right thick.

Now suppose that $T$ is right thick and choose an increasing sequence $(F_n)$ of finite sets with 
$\cup_n F_n = G$. Since $T$ is right thick, we can find a sequence $(g_n)$ in $G$ such
that 
\[
T = \oT_{\bar{e}} \supset F_n \cdot g_n^{-1},
\]
or equivalently $\oT_{g_n^{-1} \cdot \bar{e}} \supset F_n$, for all $n$. Extract a convergent
subset $(g_{n_\alpha}^{-1} \cdot \bar{e})$ with limit point $q$. We claim that $\oT_q = G$. Indeed, fix any 
finite subset $F \subset G$. By Lemma \ref{UF}, the set $U_F(T)$ is clopen. Note that 
$g_{n_\alpha} \cdot \bar{e} \in U_F(T)$ for all $n_\alpha$ with $F \subset F_{n_\alpha}$. In particular, 
$q \in U_F(T)$ since $U_F(T)$ is closed. Now, $F$ was chosen arbitrary, so we conclude that $\oT_q = G$.
\end{proof}

\subsubsection{Density characterizations of right thick sets}

If $G$ is a countable \emph{amenable} group, then right thickness has a simple description in terms of 
invariant means. Namely, suppose $T \subset G$ is right thick and choose a left F\o lner sequence 
$(F_n)$ in $G$. Since $T$ is right thick, we can find $(g_n)$ such that
\[
F_n \cdot g_n^{-1} \subset T, \quad \forall \, n.
\] 
Note that the sequence $F_n' := F_n g_n^{-1}$ is still left F\o lner, so any left invariant mean constructed
as a weak*-accumulation point of the averages over $(F_n')$ will give measure one to the set $T$. In 
our terminology, $T$ is $\cL_G$-conull, where $\cL_G$ denotes the set of left-invariant means on $G$. It is not hard to establish the converse as 
well; that is to say, a subset of a countable amenable group is 
right thick if and only if it is $\cL_G$-conull. 

Recall that a set $\cC(G) \subset \cM(G)$ is left saturated if every closed $G$-invariant
subset of $\beta G$ supports a probability measure of the form $\olambda$ for some $\lambda$ in $\cC$ which has the property that
if $A \subset \beta G$ is a Borel set with $\olambda(A) = 0$, then 
$\olambda(g \cdot A) = 0$ for all $g \in G$. Note that amenability
of a countable group $G$ is equivalent to $\cL_G$ being left saturated.

\begin{proposition}
\label{thick help}
Suppose $\cC \subset \cM(G)$ is a left saturated set. Then $T \subset G$ is right thick if and only if it
is $\cC$-conull, i.e. $d^*_{\cC}(T) = 1$.
\end{proposition}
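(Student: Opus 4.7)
The plan is to recast the condition ``$T$ is right thick'' via Lemma \ref{thick lemma} into the non-emptiness of the set
\[
W := \bigcap_{g \in G} g^{-1}\overline{T} \;\subset\; \beta G,
\]
which one checks immediately to be closed (as an intersection of the closed sets $g^{-1}\overline{T}$, noting $\overline{T}$ is clopen) and left-$G$-invariant (if $q \in W$ and $h \in G$ then $g(hq) = (gh)q \in \overline{T}$ for all $g$, so $hq \in W$). The whole proof is then a matter of matching left-saturation against this set $W$.

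For the forward direction, I would take $T$ right thick, obtain via Lemma \ref{thick lemma} some $q \in \beta G$ with $\overline{T}_q = G$, observe that this says exactly $q \in W$ so $W$ is non-empty, and then apply the left-saturation hypothesis to the closed $G$-invariant set $Y = W$ itself to produce $\lambda \in \cC$ with $\overline{\lambda}(W) = 1$. Since $W \subset \overline{T}$, this gives $\lambda'(T) = \overline{\lambda}(\overline{T}) = 1$, and hence $d^*_{\cC}(T) = 1$.

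For the converse, assume $d^*_{\cC}(T) = 1$. Using Corollary \ref{max2} applied to the weak*-compact convex set $\cC$, I would attain the supremum at some (extremal) $\lambda \in \cC$, so that $\overline{\lambda}(\overline{T}^{\,c}) = 0$. The quasi-invariance property that left saturation attaches to $\lambda$ then propagates this null set through all of $G$: $\overline{\lambda}(g^{-1}\overline{T}^{\,c}) = 0$ for every $g \in G$. Crucially, $G$ is countable, so taking a countable union is allowed, yielding
\[
\overline{\lambda}\Bigl(\bigcup_{g \in G} g^{-1}\overline{T}^{\,c}\Bigr) = 0,
\]
equivalently $\overline{\lambda}(W) = 1$. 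In particular $W$ is non-empty, and Lemma \ref{thick lemma} concludes that $T$ is right thick.

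The most delicate point I expect is the converse direction: one must ensure that the mean at which the supremum is attained is itself quasi-invariant, so that the null-set transport above is legitimate. In all the left-saturated examples considered in the paper ($\cL_G$ for amenable $G$, $\cL_\mu$, $\cF_\mu$, $\cFS$), $\cC$ is weak*-compact convex and every element satisfies the quasi-invariance condition by construction, so Corollary \ref{max2} readily delivers such a $\lambda$; if one takes a strict reading of the left-saturation axiom, one can instead invoke it on the closed $G$-invariant support of $\overline{\lambda}$ to produce a quasi-invariant replacement carrying the same full mass on $\overline{T}$, and then run the same argument.
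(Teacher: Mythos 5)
Your proof is correct and follows essentially the same route as the paper. In the forward direction the paper takes the orbit closure of the point $q$ furnished by Lemma~\ref{thick lemma}, while you take the ``maximal core'' $W = \bigcap_{g\in G} g^{-1}\overline{T}$; both are nonempty, closed, $G$-invariant subsets of $\overline{T}$ containing $q$, so left saturation applies to either. In the backward direction the paper passes to the support of $\overline{\lambda}$ (which, given non-singularity, is closed, $G$-invariant, and contained in $\overline{T}$), whereas you transport the null set $\overline{T}^{\,c}$ around by the countable group and conclude $\overline{\lambda}(W)=1$; these are two phrasings of the same argument. One caution about your closing remark: the proposed workaround of ``invoking left saturation on the closed $G$-invariant support of $\overline{\lambda}$'' does not quite work, because if $\lambda$ is not already quasi-invariant its support need not be $G$-invariant, and applying left saturation to the $G$-invariant hull of the support would produce a new measure with no guarantee of full mass on $\overline{T}$. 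You are right that there is a subtlety here, but the paper has exactly the same implicit gap: its proof simply asserts ``Since $\cC$ is left saturated, $\overline{\lambda}$ is non-singular,'' which does not follow from the literal definition. In all instances used in the paper ($\cL_G$, $\cL_\mu$, $\cF_\mu$) every element of $\cC$ is non-singular by construction (Proposition~\ref{existence}), so the proposition holds as applied; the honest fix is to build quasi-invariance of every member of $\cC$ into the definition of ``left saturated,'' rather than the support-replacement maneuver you suggest.
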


\begin{proof}
Suppose $T$ is right thick. By Lemma \ref{thick lemma} there exists $q \in \beta G$ such that 
$\oT_q = G$. Let $Z$ denote the orbit closure of $q$ in $\beta G$ under the left action by
$G$ on $\beta G$. Note that $\oT_q = G$ implies that $Z \subset \oT$. Since $\cC$ is 
left saturated, we can find $\lambda \in \cC$ with $\olambda(Z) = 1$, and thus 
$\lambda(T) = 1$, which shows that $T$ is $\cC$-conull. 

Now suppose that $T$ is $\cC$-conull and fix $\lambda \in \cC$ with $\olambda(\oT) = 1$.
Since $\cC$ is left saturated, $\olambda$ is non-singular under the left action of $G$ on $\beta G$.
In particular, the support $Z$ of $\olambda$ is a closed $G$-invariant set, and $Z \subset \oT$.
Hence, for all $q \in Z$, we have $\oT_q = G$, which by Lemma \ref{thick lemma} implies that $T$
is right thick. 
\end{proof}

\subsection{Properties of left syndetic sets}

Recall that a set $C \subset G$ is \emph{left syndetic} if there exists a finite set $F \subset G$ such
that $FC = G$. The following lemma isolates an important property of these sets.

\begin{lemma}
\label{syndetic lemma}
A set $C \subset G$ is left syndetic if and only if $\oC_q$ is non-empty for all $q \in \beta G$.
\end{lemma}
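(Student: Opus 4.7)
The plan is to translate both sides of the equivalence into a covering condition on $\beta G$ by left translates of the clopen set $\oC \subset \beta G$ corresponding to $C$, and then invoke compactness of $\beta G$ in the direction where it is needed.

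The starting observation is a direct unwinding of the definitions: for $g \in G$ and $q \in \beta G$,
\[
g \in \oC_q \;\Longleftrightarrow\; g \cdot q \in \oC \;\Longleftrightarrow\; q \in g^{-1} \cdot \oC.
\]
Reindexing $h = g^{-1}$, this shows that the condition ``$\oC_q$ is non-empty for every $q \in \beta G$'' is equivalent to the covering condition
\[
\bigcup_{h \in G} h \cdot \oC \;=\; \beta G.
\]

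For the forward direction, I suppose $FC = G$ with $F \subset G$ finite. A short computation using the $*$-isomorphism $\tau : C(\beta G) \to \ell^{\infty}(G)$ from equation \eqref{sc} yields $\tau(\chi_{f \cdot \oC}) = \chi_{fC}$ for every $f \in G$: indeed, $\tau(\chi_{f \cdot \oC})(g) = \chi_{\oC}(f^{-1}g \cdot \bar e) = \chi_C(f^{-1}g) = \chi_{fC}(g)$. Taking the pointwise maximum over $f \in F$ and using that $\tau$ is a lattice isomorphism, the identity $\max_{f \in F} \chi_{fC} = \chi_G$ in $\ell^{\infty}(G)$ lifts to $\bigcup_{f \in F} f \cdot \oC = \beta G$, which is already the required covering, so $\oC_q \neq \emptyset$ for every $q$.

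For the converse, assume $\bigcup_{h \in G} h \cdot \oC = \beta G$. Each $h \cdot \oC$ is clopen (the left $G$-action on $\beta G$ is by homeomorphisms and $\oC$ is clopen), so by compactness of $\beta G$ there is a finite subcover $\bigcup_{f \in F} f \cdot \oC = \beta G$ for some finite $F \subset G$. Running the $\tau$-correspondence of the previous paragraph in reverse yields $\bigcup_{f \in F} fC = G$, i.e.\ $FC = G$, so $C$ is left syndetic. The only genuinely non-trivial ingredient is the compactness of $\beta G$, which promotes the set-theoretic covering to a finite one; everything else is bookkeeping with the clopen/subset correspondence.
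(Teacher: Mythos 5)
Your proof is correct and follows essentially the same route as the paper's: both directions reduce to the observation that ``$\oC_q$ nonempty for all $q$'' is a covering of $\beta G$ by the clopen translates $h\cdot\oC$, with compactness of $\beta G$ furnishing the finite subcover that yields $FC = G$. The paper phrases the converse contrapositively (introducing the closed $G$-invariant set $\beta G \setminus G\oC$), but the underlying argument is identical to yours.
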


\begin{proof}
Suppose $C$ is left syndetic and fix a finite set $F \subset G$ with $FC = G$. Then
\[
F \oC_q = (F\oC)_q = \overline{FC}_q = G
\]
for all $q \in \beta G$. In particular, $\oC_q$ is non-empty for all $q \in \beta G$. 

Suppose $C$ is \emph{not} left syndetic and note that the set 
\[
Z := \beta G \setminus G \oC
\]
is a non-empty, closed and $G$-invariant set. Indeed, if it was empty, then by compactness of $\beta G$
we could find a finite set $F$ in $G$ such that $F \oC = \beta G$, or equivalently, $FC = G$. However,
this means that $C$ is left syndetic, which contradicts our assumption. Now, for every $q \in Z$, we see
that $\oC_q$ is empty, which finishes the proof. 
\end{proof}

This lemma, combined with Lemma \ref{thick lemma}, now gives:

\begin{proposition}
\label{thick and syndetic}
A set $C \subset G$ is left syndetic if and only if its complement $C^c$ is \emph{not} right thick. 
\end{proposition}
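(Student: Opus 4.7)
The plan is to derive this directly from Lemma \ref{thick lemma} and Lemma \ref{syndetic lemma}, which together characterize right thickness and left syndeticity in terms of return-time sets in $\beta G$. The only bridge needed between them is the observation that taking complements in $G$ corresponds to taking complements of clopen sets in $\beta G$.

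First I would note that under the set correspondence $C \longleftrightarrow \overline{C}$, the indicator function $\chi_{C^c} = 1 - \chi_C$ is sent under $\tau$ to $1 - \chi_{\overline{C}}$, so $\overline{C^c} = \beta G \setminus \overline{C}$. Consequently, for every $q \in \beta G$,
\[
\overline{C^c}_q = \{ g \in G : g \cdot q \in \beta G \setminus \overline{C}\} = G \setminus \overline{C}_q.
\]

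Now I would apply the two characterization lemmas. By Lemma \ref{thick lemma}, $C^c$ is right thick if and only if there exists $q \in \beta G$ with $\overline{C^c}_q = G$, which by the identity above is equivalent to the existence of some $q \in \beta G$ with $\overline{C}_q = \emptyset$. By Lemma \ref{syndetic lemma}, $C$ is left syndetic if and only if $\overline{C}_q \neq \emptyset$ for \emph{every} $q \in \beta G$. These two statements are exact negations of each other, so $C$ is left syndetic if and only if $C^c$ is not right thick.

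There is no real obstacle here; the proposition is essentially a formal consequence of the two preceding lemmas, and the only substantive point is that the set correspondence $C \leftrightarrow \overline{C}$ respects complementation, which in turn follows from the fact that $\tau$ is a unital $*$-isomorphism sending idempotents to idempotents.
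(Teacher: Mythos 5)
Your proof is correct and takes exactly the route the paper intends: the paper states this proposition as an immediate consequence of Lemma \ref{thick lemma} and Lemma \ref{syndetic lemma} without writing out the details, and you have supplied precisely the missing verification that $\overline{C^c}_q = G \setminus \overline{C}_q$ and that the two characterizations are then contrapositives of each other.
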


\subsubsection{Density characterizations of left syndetic sets}

We can also establish the following analogue of Proposition \ref{thick help}.

\begin{proposition}
\label{syndetic help}
Suppose $\cC \subset \cM(G)$ is left saturated. Then $C \subset G$ is left syndetic if and only if the lower $\cC$-density of $C$ is positive, i.e. $d_*^{\cC}(C) > 0$.
\end{proposition}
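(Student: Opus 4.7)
The plan is to deduce this directly from the two preceding results: the duality between left syndeticity and right thickness encoded in Proposition \ref{thick and syndetic}, and the density characterization of right thick sets in Proposition \ref{thick help}. The key algebraic identity is that for any $\lambda \in \cM(G)$ and $C \subset G$,
\[
\lambda'(C^c) = 1 - \lambda'(C),
\]
which upon taking suprema and infima over $\cC$ yields the complement relation
\[
d^*_{\cC}(C^c) = 1 - d_*^{\cC}(C).
\]

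First I would observe that by Proposition \ref{thick and syndetic}, $C$ is left syndetic if and only if $C^c$ is not right thick. Next, since $\cC$ is left saturated, Proposition \ref{thick help} applies and tells us that $C^c$ fails to be right thick precisely when $d^*_{\cC}(C^c) < 1$. Combining these two equivalences with the complement identity above, we get
\[
C \text{ is left syndetic} \iff d^*_{\cC}(C^c) < 1 \iff 1 - d_*^{\cC}(C) < 1 \iff d_*^{\cC}(C) > 0,
\]
which is exactly the desired conclusion.

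There is no real obstacle here: both of the ingredients have been established, and the argument is a purely formal transfer through complementation. The only point to verify carefully is that $\sup_{\lambda \in \cC}(1 - \lambda'(C)) = 1 - \inf_{\lambda \in \cC}\lambda'(C)$, which is immediate from the definition of supremum and infimum, together with the fact that the map $\lambda \mapsto \lambda'(C)$ is weak*-continuous and affine so that (by Corollary \ref{max2}) the extrema are attained on the weak*-compact convex set $\cC$ whenever needed. Thus the proof is a short deduction of a few lines.
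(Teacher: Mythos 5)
Your argument is correct, and it takes a genuinely different (and cleaner) route than the paper's own proof. You derive the statement purely formally from Proposition \ref{thick and syndetic} ($C$ left syndetic $\iff$ $C^c$ not right thick) and Proposition \ref{thick help} ($T$ right thick $\iff$ $d^*_{\cC}(T)=1$), combined with the elementary complementation identity $d^*_{\cC}(C^c) = 1 - d_*^{\cC}(C)$. The paper instead re-proves both directions from scratch: in the forward direction it uses the subadditivity bound $1 = \lambda'(FC) \leq \sum_{f\in F}\lambda'(fC)$ together with left non-singularity of extremal elements of $\cC$ to get a contradiction from $d_*^{\cC}(C) = 0$; in the reverse direction it runs the same orbit-closure argument appearing in Lemma \ref{syndetic lemma} and in Proposition \ref{thick help}. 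Both proofs ultimately rest on the same left-saturation ingredients, but yours makes the duality between thickness and syndeticity explicit and avoids repeating the support-of-$\olambda$ argument, which is a modest gain in modularity. One cosmetic remark: you invoke Corollary \ref{max2} to say the extrema are "attained whenever needed," but in fact your chain of equivalences never needs attainment --- the inequality $d^*_{\cC}(C^c) < 1$ versus the equality $d^*_{\cC}(C^c) = 1$ is all that is used, so that appeal can be dropped. (The same compactness question is already implicitly handled inside the proof of Proposition \ref{thick help}.)
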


\begin{proof}
Let $C$ be a left syndetic set and choose a finite set $F$ in $G$ such that $FC = G$. Suppose 
$d_*^{\cC}(C) = 0$, or equivalently, there exists $\lambda \in \cC$ with $\lambda(C) = 0$. Since
\[
1 = \lambda(FC) \leq \sum_{f \in F} \lambda(f \cdot C),
\]
we conclude that $\lambda(f \cdot C) > 0$ for at least one $f \in F$. Since $\cC$ is left saturated
and thus in particular left non-singular, we conclude that $\lambda(C) > 0$ as well, which contradicts
our assumption, and hence $d_*^{\cC}(C) > 0$.

Now suppose that $\lambda(C) > 0$ for all $\lambda \in \cC$. We claim that in this case, $\oC_q$
is non-empty for all $q \in \beta G$. By Lemma \ref{syndetic lemma} this implies that $C$ is left syndetic. 
Note that $\oC_q$ is empty if and only if $q$ belongs to the set
\[
Z := \beta G \setminus G \oC.
\] 
Since $\cC$ is left saturated, the set $Z$, being both closed and $G$-invariant, is non-empty if and only
there exists $\lambda \in \cC$ with $\olambda(Z) = 1$, or equivalently (since $\olambda$ is 
non-singular), $\lambda(\oC) = 0$. This finishes the proof. 
\end{proof}

\section{Ergodic theory of measured groups}
\label{ergodic theory}

The aim of this section is to establish some basic properties of measured groups which will be used
in the proofs of Proposition \ref{ergodicity} and Proposition \ref{sum}. 

\subsection{Affine $\mu$-spaces}
Let $G$ be a countable group and assume that $X$ is a Banach space equipped with a 
homomorphism
\[
\rho : G \ra \Isom(X),
\]
where $\Isom(X)$ denotes the group of all \emph{linear} isometries of $X$. Given a probability
measure $\mu$ on $G$, we can define a bounded operator $\rho(\mu)$ on $X$ by the formula
\[
\big\langle \lambda, \rho(\mu)x \big\rangle = \int_G \big\langle \lambda, \rho(g) x \big\rangle \, d\mu(g),
\]
for all $x \in X$ and $\lambda \in X^*$. If $K \subset X^*$ is a weak*-compact and convex subset 
such that 
\[
\rho(\mu)^*K \subset K,
\]
then we shall say that $K$ is an \emph{affine $\mu$-space}. Note that we do \emph{not} assume that the representation $\rho^*$ itself preserves the set $K$. \\

We include a proof of the following standard lemma (sometimes referred to as the Kakutani-Markov Lemma) for completeness. 

\begin{lemma}
\label{fix}
Every affine $\mu$-space admits a fixed point. 
\end{lemma}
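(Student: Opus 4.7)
The plan is to run the standard Kakutani--Markov Cesàro-averaging argument, adapted to the fact that our operator is the convolution operator $\rho(\mu)^\ast$ rather than a single isometry. First I would fix an arbitrary $\lambda_0 \in K$ and form the Cesàro averages
\[
\lambda_n := \frac{1}{n}\sum_{k=0}^{n-1} (\rho(\mu)^\ast)^k \lambda_0, \qquad n \geq 1.
\]
Since $K$ is convex and $\rho(\mu)^\ast K \subset K$, an easy induction shows $(\rho(\mu)^\ast)^k \lambda_0 \in K$ for every $k \geq 0$, and so $\lambda_n \in K$ by convexity. Weak*-compactness of $K$ then produces a weak*-convergent subnet $\lambda_{n_\alpha} \to \lambda \in K$.

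It then remains to verify that $\rho(\mu)^\ast \lambda = \lambda$, and this is where the isometry hypothesis on $\rho$ enters. Since each $\rho(g)$ is a linear isometry of $X$, the operator $\rho(\mu)$ defined by $\langle \eta, \rho(\mu)x\rangle = \int_G \langle \eta, \rho(g)x\rangle\, d\mu(g)$ satisfies $\|\rho(\mu)\|_{\mathrm{op}} \leq 1$, and hence the dual $\rho(\mu)^\ast$ also has operator norm at most $1$. The telescoping identity
\[
\rho(\mu)^\ast \lambda_n - \lambda_n = \frac{1}{n}\Big((\rho(\mu)^\ast)^n \lambda_0 - \lambda_0\Big)
\]
then yields the norm bound $\|\rho(\mu)^\ast \lambda_n - \lambda_n\| \leq \frac{2\|\lambda_0\|}{n}$, which goes to zero.

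Finally I would pair against an arbitrary $x \in X$: weak*-continuity of $\rho(\mu)^\ast$ (which is just the identity $\langle \rho(\mu)^\ast \eta, x\rangle = \langle \eta, \rho(\mu)x\rangle$ applied to the subnet) gives $\langle \rho(\mu)^\ast \lambda_{n_\alpha}, x\rangle \to \langle \rho(\mu)^\ast \lambda, x\rangle$, while the norm estimate above forces $\langle \rho(\mu)^\ast \lambda_{n_\alpha} - \lambda_{n_\alpha}, x\rangle \to 0$. Subtracting the two statements yields $\langle \rho(\mu)^\ast \lambda, x\rangle = \langle \lambda, x\rangle$ for all $x \in X$, i.e.\ $\rho(\mu)^\ast \lambda = \lambda$. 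The only subtle point worth double-checking is the uniform operator-norm bound on $\rho(\mu)^\ast$, since the $\rho$-image lands in $\Isom(X)$ but $\rho(\mu)$ itself is merely a convex combination of isometries, not an isometry; it is precisely here that the affine structure of the problem is used.
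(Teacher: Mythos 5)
Your proof is correct and follows essentially the same Kakutani--Markov Ces\`aro-averaging argument as the paper; the paper simply states that the fixed-point property of the cluster point is ``readily checked,'' whereas you supply the telescoping estimate and the weak*-continuity step that verify it.
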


\begin{proof}
Suppose $K \subset X^*$ is an affine $\mu$-space, where $X$ is a Banach space, equipped with a representation $\rho : G \ra \Isom(X)$. Fix $\lambda \in K$. Since $\rho(\mu)^*K \subset K$, the 
sequence
\[
\lambda_n := \frac{1}{n} \sum_{k=1}^n \rho(\mu)^{*k}\lambda 
\]
belongs to $K$, and thus admits a weak*-cluster point $\lambda_o \in K$ by weak*-compactness. It is readily checked that $\rho(\mu)^*\lambda_o = \lambda_o$, which finishes the proof. 
\end{proof}

The lemma above especially applies to the regular representation on $X = C(Z)$, where $Z$ is a compact Hausdorff space, equipped with an action of $G$ by homeomorphisms. One readily checks that the space of regular Borel probability measures $\cP(Z)$ is preserved by $\rho(\mu)^{*}$, and identified with the states on $C(Z)$, this is a weak*-compact and convex subset. The set of $\rho(\mu)^{*}$-fixed points in $\cP(Z)$ will be denoted by $\cP_\mu(Z)$, and the elements in $\cP_\mu(Z)$ will be referred to as \emph{$\mu$-stationary} (or $\mu$-harmonic) probability measures.

\begin{corollary}
\label{relative}
Let $X$ and $Y$ be compact $G$-spaces and suppose $\pi : X \ra Y$ is a $G$-equivariant continuous surjection. For every $\mu \in \cP(G)$ and $\eta \in \cP_\mu(Y)$, there exists $\nu \in \cP_\mu(X)$ such that $\pi_*\nu = \eta$. 
\end{corollary}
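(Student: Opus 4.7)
The plan is to exhibit the set of lifts of $\eta$ as an affine $\mu$-space and invoke Lemma \ref{fix}. Concretely, consider the set
\[
K := \Big\{ \nu \in \cP(X) \, : \, \pi_*\nu = \eta \Big\} \subset \cP(X).
\]
The first thing to verify is that $K$ is non-empty. For this I would observe that $\pi^* : C(Y) \hra C(X)$ is an isometric embedding of Banach spaces, and $\eta$, viewed as a positive unital functional on $\pi^*C(Y)$, extends to a positive unital functional on $C(X)$ by a standard Hahn-Banach/Riesz argument; any such extension corresponds to a Borel probability measure $\nu_o$ on $X$ with $\pi_*\nu_o = \eta$. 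Weak*-compactness and convexity of $K$ are immediate: $\pi_* : \cP(X) \ra \cP(Y)$ is weak*-continuous and affine, and $K$ is the preimage of the singleton $\{\eta\}$.

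The next step is to check that $K$ is an affine $\mu$-space for the left regular action of $G$ on $C(X)$. Unwinding the definitions, for $\nu \in \cP(X)$ we have $\rho(\mu)^*\nu = \int_G g_*\nu \, d\mu(g)$, where $g_*\nu$ denotes pushforward under the homeomorphism $x \mapsto g \cdot x$. Using the $G$-equivariance of $\pi$, one obtains
\[
\pi_*(g_*\nu) = g_*(\pi_*\nu), \quad \forall \, g \in G,
\]
and integrating in $g$ with respect to $\mu$ yields $\pi_*(\rho(\mu)^*\nu) = \rho(\mu)^*(\pi_*\nu)$. If $\nu \in K$ then the right-hand side equals $\rho(\mu)^*\eta = \eta$ because $\eta$ is $\mu$-stationary, showing $\rho(\mu)^*\nu \in K$.

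Having verified that $K$ is weak*-compact, convex, non-empty, and preserved by $\rho(\mu)^*$, Lemma \ref{fix} produces a $\rho(\mu)^*$-fixed point $\nu \in K$, which is precisely a $\mu$-stationary probability measure on $X$ with $\pi_*\nu = \eta$. The only mild subtlety is the bookkeeping in the second step: one must be careful that the duality $\rho(g)\varphi(x) = \varphi(g^{-1}x)$ induces $g_*$ on measures in the right direction so that $G$-equivariance of $\pi$ transfers cleanly through the integral against $\mu$; once this identification is made, the rest is formal.
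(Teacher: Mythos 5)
Your proposal is correct and follows exactly the paper's own argument: define the fiber $K$ of lifts of $\eta$, observe it is a non-empty weak*-compact convex affine $\mu$-space (using $G$-equivariance of $\pi$ and $\mu$-stationarity of $\eta$), and apply Lemma \ref{fix}. You simply make explicit the Hahn--Banach step for non-emptiness and the pushforward bookkeeping that the paper labels ``standard arguments.''
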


\begin{proof}
Fix $\mu$ and $\eta$ and define
\[
\cC := \Big\{ \nu \in \cP(X) \, : \, \pi_*\nu = \eta \Big\}.
\]
This set is non-empty by standard arguments, and clearly weak*-compact and convex.
Moreover, since $\eta$ is $\mu$-stationary and $\pi$ is $G$-equivariant, this set is also an 
affine $\mu$-space (however, it is not an affine $G$-space!). By Lemma \ref{fix} there exists a 
$\mu$-fixed $\nu$ point in $\cC$, i.e. a $\mu$-stationary measure on $X$ which projects onto $\eta$
under $\pi$.
\end{proof}

\subsubsection{Properties of $\mu$-stationary measures}

Recall that if $Z$ is a $G$-space, i.e. a compact Hausdorff space equipped with an action of $G$ by 
homeomorphisms, then a Borel probability measure $\nu$ on $Z$ is said to be \emph{non-singular}
(or quasi-invariant) if $\nu(g \cdot N) = 0$ for all $g \in G$, whenever $N \subset Z$ is a $\nu$-null set
of $Z$. We now have the following proposition. 

\begin{proposition}
\label{existence}
For every measured group $(G,\mu)$ and compact $G$-space $Z$, the set $\cP_\mu(Z)$ of 
$\rho(\mu)^*$-fixed probability measures is non-empty. Furthermore, every $\nu \in \cP_\mu(Z)$ 
is non-singular. 
\end{proposition}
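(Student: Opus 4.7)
\medskip
\noindent\textbf{Proof proposal for Proposition \ref{existence}.}

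The plan is to dispatch the two claims separately. For the existence part, I would simply verify that the weak*-compact convex set $\cC := \cP(Z) \subset C(Z)^{*}$ is an affine $\mu$-space in the sense of the previous subsection, and then invoke Lemma \ref{fix}. The operator $\rho(\mu)^{*}$ is a convex combination of the operators $\rho(g)^{*}$, each of which is the push-forward by $z \mapsto g \cdot z$; since push-forward by a homeomorphism preserves positivity and total mass, the set $\cP(Z)$ is stable under $\rho(\mu)^{*}$, so Lemma \ref{fix} produces a fixed point $\nu \in \cP_\mu(Z)$.

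For the non-singularity claim, the idea is to exploit the identity
\[
\nu(A) \;=\; \int_{G} (g_{*}\nu)(A) \, d\mu(g) \;=\; \int_{G} \nu(g^{-1} A)\, d\mu(g), \qquad A \subset Z \ \text{Borel},
\]
which is just the stationarity equation $\rho(\mu)^{*}\nu = \nu$ unpacked. If $N \subset Z$ is a $\nu$-null Borel set, then taking $A = N$ shows that the non-negative integrand $g \mapsto \nu(g^{-1}N)$ integrates to zero, and hence vanishes for every $g$ in the support of $\mu$. Because $\mu$ is assumed \emph{adapted}, i.e.\ symmetric, this yields $\nu(g N) = 0$ for every $g \in \supp(\mu)$ as well.

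The remaining step is an iteration: applying the same argument to the null set $gN$ in place of $N$, I obtain $\nu(hgN) = 0$ for every $h \in \supp(\mu)$, and more generally $\nu(w N) = 0$ for every $w$ in the sub-semigroup of $G$ generated by $\supp(\mu)$. Here is the place where the full adaptedness hypothesis is used: symmetry together with the assumption that $\supp(\mu)$ generates $G$ as a group forces this sub-semigroup to be all of $G$, so $\nu(g N) = 0$ for every $g \in G$, proving non-singularity.

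The only mildly delicate point is this last iteration, and it is really just bookkeeping: one must be careful to pass between $g$ and $g^{-1}$ (which is harmless by the symmetry of $\mu$) and to note that ``$\mu$-almost every $g$'' means ``every $g \in \supp(\mu)$'' since $\mu$ is a discrete measure. Everything else is a direct application of Lemma \ref{fix} and of the definition of stationarity.
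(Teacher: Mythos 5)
Your proposal is correct and follows essentially the same route as the paper: existence via the Markov--Kakutani argument (the paper invokes Corollary \ref{relative} with a one-point base, which is the same application of Lemma \ref{fix} to $\cP(Z)$), and non-singularity by unpacking $\rho(\mu)^{*}\nu = \nu$ to get $\nu(g^{-1}N)=0$ on $\supp(\mu)$ and then propagating over the semigroup generated by $\supp(\mu)$. The paper compresses your iteration into a single line by applying $\rho(\mu)^{*n}$ and using $\supp(\mu^{*n})=\supp(\mu)^n$, but the content is identical, and your explicit remark that symmetry of $\mu$ makes the generated semigroup equal to the generated group matches the hypothesis the paper invokes.
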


\begin{proof}
The first assertion is contained in Corollary \ref{relative} with $Y$ equal to a one-point space, so we shall only prove that every 
element in $\cP_\mu(Y)$ is non-singular. 

Fix $\nu \in \cP_\mu(Z)$ and suppose $B \subset Z$ is a Borel set with $\nu(B) = 0$. 
Since 
\[
\rho(\mu)^{*n}\nu(B) = \int_{G} g_*\nu(B) \, d\mu^{*n}(g) = \nu(B)
\]
we see that $\nu(g^{-1}B) = 0$ for all $g \in \supp(\mu)^n$ and $n \geq 1$ ($G$ is countable). 
Since the support $\supp(\mu)$ is assumed to generate $G$ as a semigroup (by admissibility), 
we conclude that $\nu(g^{-1} \cdot B) = 0$ for all $g \in G$, and thus $\nu$ is non-singular. 
\end{proof}

\subsubsection{Basic properties of the set $\cL_\mu$}

Recall that a set $\cC \subset \cM(G)$ is \emph{left saturated} if every (non-empty) closed left $G$-invariant subset of $\beta G$ supports at least one \emph{non-singular} probability measure of the form $\olambda$ for some $\lambda \in \cC$. Also recall that $\cL_\mu$
denotes the set of left $\mu$-stationary means on $G$.

\begin{corollary}
\label{left saturated}
For every measured group $(G,\mu)$, the set $\cL_\mu$ is left saturated.
\end{corollary}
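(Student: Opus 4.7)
The plan is to apply Proposition~\ref{existence} directly to $Z$ itself, viewed as a compact Hausdorff $G$-space under the restricted left action, and then transport the resulting $\mu$-stationary probability measure back to a mean on $G$ through the measure correspondence $\lambda \leftrightarrow \olambda$ of Subsection~\ref{measure theory}. This is the natural route because Proposition~\ref{existence} already packages precisely the two ingredients demanded by left saturatedness: existence of a $\mu$-stationary probability measure, together with non-singularity.

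Concretely, I would proceed as follows. Given a non-empty closed left $G$-invariant set $Z \subset \beta G$, Proposition~\ref{existence} produces a $\mu$-stationary, non-singular Borel probability measure $\nu$ on $Z$. I extend $\nu$ to a Borel probability measure $\olambda$ on all of $\beta G$ by push-forward along the inclusion $Z \hookrightarrow \beta G$, so that $\olambda$ is supported on $Z$. Because $Z$ is $G$-invariant, for any Borel $N \subset \beta G$ and any $g \in G$ one has $g \cdot (N \cap Z) = (g \cdot N) \cap Z$, so non-singularity of $\nu$ transfers immediately to non-singularity of $\olambda$ on $\beta G$. The measure correspondence then hands back a mean $\lambda \in \cM(G)$ associated to $\olambda$, and it only remains to verify that $\lambda$ is $\mu$-stationary, i.e., that $\lambda \in \cL_\mu$.

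This last step is the only place where any bookkeeping is needed, and I would flag it as the main obstacle. A short Fubini computation shows that, for every $f \in C(\beta G)$,
\[
(\rho(\mu)^*\olambda)(f) = \int_G \int_{\beta G} f(g^{-1}x)\, d\olambda(x)\, d\mu(g) = \Big( \int_G g_*\olambda \, d\mu(g) \Big)(f),
\]
so $\mu$-stationarity of $\olambda$ as a Borel measure on $\beta G$ is equivalent to the identity $\rho(\mu)^*\olambda = \olambda$. Since the inclusion $Z \hookrightarrow \beta G$ is $G$-equivariant and $\nu$ is $\mu$-stationary on $Z$, its push-forward $\olambda$ inherits $\mu$-stationarity on $\beta G$. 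Because $\tau : C(\beta G) \to \ell^\infty(G)$ is a $G$-equivariant isometric $*$-isomorphism, this translates directly into $\rho(\mu)^*\lambda = \lambda$, so $\lambda \in \cL_\mu$, completing the verification that $\cL_\mu$ is left saturated.
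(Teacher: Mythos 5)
Your proof is correct and follows essentially the same route as the paper. The paper compresses the whole argument into the single instruction ``apply Proposition~\ref{existence} to non-empty closed $G$-invariant subsets of $\beta G$''; you have simply unpacked the details that this leaves implicit: that the push-forward of the $\mu$-stationary, non-singular measure on $Z$ along the inclusion $Z \hookrightarrow \beta G$ inherits both properties because $Z$ is $G$-invariant, and that the measure correspondence of Subsection~\ref{measure theory} is $G$-equivariant, so the resulting mean indeed lies in $\cL_\mu$.
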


\begin{proof}
Apply the proposition above to non-empty closed $G$-invariant subsets of the $G$-space $Z = \beta G$.
\end{proof}

In view of Proposition \ref{thick help} and Proposition \ref{syndetic help}, we have also proved the 
following corollary.

\begin{corollary}
\label{mu thick}
For every measured group $(G,\mu)$, a set $T \subset G$ is right thick if and only if it is $\cL_\mu$-conull
and a set $C \subset G$ is left syndetic if and only if the lower $\cL_\mu$-density is positive. 
\end{corollary}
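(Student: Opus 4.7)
The plan is essentially to observe that this corollary is an immediate combination of results already established in the preceding text, with no additional work required beyond invoking them in the right order. The statement has two parts (the thickness characterization and the syndeticity characterization), and both will be handled by the same two-step maneuver: first identify $\cL_\mu$ as a left saturated subset of $\cM(G)$, then feed this into the general density characterizations that have already been proved for arbitrary left saturated sets.

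More precisely, the first step is to note that Corollary \ref{left saturated} (which in turn was obtained by applying Proposition \ref{existence} to closed $G$-invariant subsets of $\beta G$) tells us that $\cL_\mu \subset \cM(G)$ is left saturated for every measured group $(G,\mu)$. This is the only input specific to the measured group setting; the rest is purely formal.

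The second step is then simply to apply the two general propositions. Applying Proposition \ref{thick help} with $\cC = \cL_\mu$ yields the first half: a set $T \subset G$ is right thick if and only if $d^*_{\cL_\mu}(T) = 1$, i.e. if and only if $T$ is $\cL_\mu$-conull. Similarly, applying Proposition \ref{syndetic help} with $\cC = \cL_\mu$ yields the second half: $C \subset G$ is left syndetic if and only if $d_*^{\cL_\mu}(C) > 0$.

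There is no real obstacle here, since both propositions have already been proved in full generality for left saturated subsets of $\cM(G)$; the only thing one needs to verify is the left saturation property of $\cL_\mu$, which has already been handled separately. The proof will therefore consist of one or two sentences. If anything is worth emphasizing in the write-up, it is that this is precisely the payoff of having abstracted the notion of left saturation in the first place: amenability of $G$ is not needed, only the existence and non-singularity of $\mu$-stationary measures on closed invariant subsets of $\beta G$, which holds for every adapted $\mu$.
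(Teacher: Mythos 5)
Your proposal is correct and matches the paper's argument exactly: the paper derives this corollary by noting $\cL_\mu$ is left saturated (Corollary~\ref{left saturated}) and then invoking Propositions~\ref{thick help} and~\ref{syndetic help} with $\cC = \cL_\mu$. Nothing further is needed.
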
 

\subsection{Basic properties of $\mu$-harmonic functions on $G$-spaces}

Let $Y$ be a compact $G$-space, and fix an \emph{admissible} (symmetric) probability measure $\mu$ on $G$, i.e. a probability measure such that $\mu(g) = \mu(g^{-1})$ for all $g$ in $G$ and whose support generates 
$G$ as a semigroup. We shall say that a Borel function $\varphi : Y \ra \bR$ is \emph{$\mu$-harmonic} if
\[
\mu * \varphi(y) := \int_{Y} \varphi(g \cdot y) \, d\mu(g) = \varphi(y), \quad \forall \, y \in Y.
\] 
The following lemma shows that $\mu$-harmonic semicontinuous functions exhibit substantial 
$G$-invariance. This will be a crucial ingredient in the proof of Proposition \ref{sum}.

\begin{lemma}
\label{min invariant}
Suppose $\varphi$ is a lower semicontinuos and $\mu$-harmonic function on $Y$. Then the set of minima 
for $\varphi$ is non-empty, closed and $G$-invariant.
\end{lemma}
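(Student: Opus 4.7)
The plan is to handle the three properties in turn: non-emptiness and closedness follow purely from compactness of $Y$ and lower semicontinuity of $\varphi$, while $G$-invariance is where the $\mu$-harmonicity and the adaptedness of $\mu$ really enter.

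First I would observe that a lower semicontinuous function on a compact space attains its infimum. Writing $c := \min_{y \in Y} \varphi(y)$, we have $M := \{y \in Y : \varphi(y) = c\} = \{y \in Y : \varphi(y) \le c\}$, which is non-empty. Since $\varphi$ is lower semicontinuous, its sublevel set $\{\varphi \le c\}$ is closed, and hence $M$ is closed.

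For $G$-invariance, the key observation is that the integrand in the harmonic equation is pointwise bounded below by $c$. Fix $y_0 \in M$. Since $G$ is countable, $\mu$-harmonicity reads
\[
c \;=\; \varphi(y_0) \;=\; \sum_{g \in G} \mu(g)\, \varphi(g \cdot y_0) \;\geq\; c \sum_{g \in G} \mu(g) \;=\; c,
\]
and equality throughout forces $\varphi(g \cdot y_0) = c$ for every $g \in \supp(\mu)$. Hence $\supp(\mu) \cdot M \subset M$.

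To upgrade this to full $G$-invariance, I would iterate: by induction on $n \geq 1$, the inclusion $\supp(\mu)^n \cdot M \subset M$ holds, since once $g \cdot y_0 \in M$ we may repeat the previous argument with $y_0$ replaced by $g \cdot y_0$. Because $\mu$ is adapted — i.e. $\supp(\mu)$ is symmetric and generates $G$ — every element of $G$ lies in $\bigcup_{n \geq 1} \supp(\mu)^n$, and therefore $G \cdot M \subset M$. There is no genuine obstacle here; the only thing to be careful about is invoking the adaptedness hypothesis (symmetry together with generation of $G$) to convert the semigroup closure property $\supp(\mu) \cdot M \subset M$ into true $G$-invariance.
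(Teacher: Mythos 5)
Your proof is correct and follows essentially the same approach as the paper's. The paper argues the $G$-invariance in one shot by passing to the $n$-fold convolution $\mu^{*n}$ and noting that $g \in \supp(\mu^{*n})$ for some $n$, whereas you first prove the one-step inclusion $\supp(\mu)\cdot M \subset M$ from the single-step harmonic equation and then iterate; this makes the implicit fact that $\mu^{*n} * \varphi = \varphi$ for all $n$ explicit at the set level, but the underlying idea — equality in the averaging identity forces the values at the support to equal the minimum, and adaptedness propagates this to all of $G$ — is the same.
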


\begin{proof}
Let $Z$ denote the set of minima for $\varphi$. Since $\varphi$ is lower semicontinuous, $Z$ is non-empty
and closed, so it suffices to show that $Z$ is $G$-invariant. Fix $y \in Z$ and suppose $g \cdot y \notin Z$. 
Since the support of $\mu$ is assumed to generate $G$ as a semigroup, there exists $n \geq 1$ such that $g$ belongs to 
\[
\supp(\mu^{*n}) = \supp(\mu)^n
\] 
and thus
\[
\int_G \varphi(h \cdot y) \, d\mu^{*n}(h) > \varphi(y), 
\]
which contradicts our assumption that $\varphi$ is $\mu$-harmonic. Hence $g \cdot y \in Z$ and $Z$ is 
$G$-invariant. 
\end{proof}

For $\mu$-harmonic and \emph{continuous} functions, the lemma above can be sharpened as follows.

\begin{lemma}
\label{harmonic invariant}
Let $\nu \in \cP_\mu(Y)$ and suppose $\varphi$ is a continuous and $\mu$-harmonic function on 
the $G$-space $Y$. Then the restriction of $\varphi$ to the support of $\nu$ is $G$-invariant.
\end{lemma}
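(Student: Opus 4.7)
The plan is to derive $G$-invariance of $\varphi$ on $\supp(\nu)$ from a Cauchy--Schwarz/Jensen argument applied to $\varphi^2$, then transfer an a.e. statement to the topological support using continuity, and finally upgrade invariance under $\supp(\mu)$ to invariance under all of $G$.

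First I would record the stationarity identity in dual form: for every $\psi \in C(Y)$,
\[
\int_Y \psi \, d\nu = \int_Y (\mu * \psi)(y) \, d\nu(y), \qquad (\mu*\psi)(y) := \int_G \psi(g \cdot y) \, d\mu(g),
\]
which is just the definition of $\rho(\mu)^*\nu = \nu$ unpacked against a test function. Then I would apply this to $\psi = \varphi^2$ and combine it with the pointwise Cauchy--Schwarz inequality
\[
\varphi(y)^2 = (\mu*\varphi)(y)^2 \leq (\mu*\varphi^2)(y),
\]
where the first equality uses that $\varphi$ is $\mu$-harmonic. Integrating against $\nu$ gives equality of the two sides, and hence equality in the Cauchy--Schwarz inequality for $\nu$-almost every $y$. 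At such a $y$, the random variable $g \mapsto \varphi(g\cdot y)$ on $(G,\mu)$ must be $\mu$-a.s. constant, with constant value equal to $(\mu*\varphi)(y) = \varphi(y)$. In other words, $\varphi(g\cdot y) = \varphi(y)$ for every $g \in \supp(\mu)$.

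The second step is the transfer from $\nu$-a.e. to $\supp(\nu)$. For each fixed $g \in G$ the set $E_g := \{y \in Y : \varphi(g\cdot y) = \varphi(y)\}$ is closed because $\varphi$ is continuous and $G$ acts by homeomorphisms. Since $G$ is countable and $\supp(\mu) \subseteq G$, the intersection $E := \bigcap_{g \in \supp(\mu)} E_g$ is again closed, and by the previous step $\nu(E) = 1$. Therefore $\supp(\nu) \subseteq E$, so $\varphi(g\cdot y) = \varphi(y)$ for all $y \in \supp(\nu)$ and all $g \in \supp(\mu)$.

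Finally I would bootstrap to all of $G$ by a standard iteration argument. Proposition \ref{existence} tells us that $\nu$ is non-singular, which (combined with the homeomorphism action) forces $g\cdot \supp(\nu) = \supp(\nu)$ for every $g \in G$. Thus if $y \in \supp(\nu)$ and $g_1, \dots, g_n \in \supp(\mu)$, then each partial product $g_k \cdots g_1 \cdot y$ still lies in $\supp(\nu)$, so we may telescope
\[
\varphi(g_n \cdots g_1 \cdot y) = \varphi(g_{n-1} \cdots g_1 \cdot y) = \cdots = \varphi(y).
\]
Since $\mu$ is adapted (symmetric, with $\supp(\mu)$ generating $G$ as a semigroup, and hence as a group), the set $\bigcup_{n\geq 1} \supp(\mu)^n$ equals $G$, completing the proof.

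The only step that requires genuine care is the passage from a $\nu$-almost-everywhere statement to a statement on $\supp(\nu)$; this is where continuity of $\varphi$ and countability of $\supp(\mu)$ are essential. The rest is a mechanical combination of stationarity, Cauchy--Schwarz, non-singularity (Proposition \ref{existence}), and admissibility of $\mu$.
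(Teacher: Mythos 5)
Your proof is correct, but it takes a genuinely different route from the paper's. The paper establishes $G$-invariance of $\varphi$ on $\supp(\nu)$ by first reducing to level sets $Z_\alpha = \{\varphi = \alpha\}$, introducing the sub-harmonic cutoffs $\varphi_\alpha := \max(\alpha, \varphi)$, using stationarity of $\nu$ to show $\mu * \varphi_\alpha = \varphi_\alpha$ on $\supp(\nu)$, and then invoking Lemma \ref{min invariant} (minima of lower semicontinuous $\mu$-harmonic functions form a $G$-invariant set) to conclude. You instead apply Jensen/Cauchy--Schwarz to $\varphi^2$, so that stationarity of $\nu$ plus harmonicity of $\varphi$ forces equality a.e., hence $\varphi(g\cdot y) = \varphi(y)$ for all $g \in \supp(\mu)$ and $\nu$-a.e.\ $y$, and then you transfer from a.e.\ to $\supp(\nu)$ by closedness and bootstrap to all of $G$ by telescoping along the $G$-invariant set $\supp(\nu)$. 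The common kernel is the observation ``nonnegative function with zero $\nu$-integral vanishes a.e., then continuity pushes the equality down to $\supp(\nu)$,'' but you avoid the level-set reduction and Lemma \ref{min invariant} entirely, and your explicit telescoping makes transparent exactly where symmetry of $\mu$ and non-singularity of $\nu$ (Proposition \ref{existence}) are used, whereas the paper buries that iteration inside the auxiliary lemma. Both arguments are of comparable length; yours is somewhat more self-contained, while the paper's makes Lemma \ref{min invariant} do double duty since it is also invoked independently in the proof of Proposition \ref{sum}.
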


\begin{proof}
It suffices to prove that for all $\alpha \in \bR$, the intersection of the support of $\nu$ and the set
\[
Z_\alpha := \Big\{ y \in Y \, : \, \varphi(y) = \alpha \Big\} \subset Y
\]
is $G$-invariant. For fixed $\alpha$, we define the function 
\[
\varphi_\alpha(y) := \max(\alpha, \varphi(y)), \quad y \in Y.
\]
Note that $\mu * \varphi_\alpha \geq \varphi_\alpha$ and that the set $Z_\alpha$ coincides with the set of 
minima for $\varphi_\alpha$ for all $\alpha$. If we can show that the restriction of $\varphi_\alpha$ to 
the support of $\nu$ is  $\mu$-harmonic, then Lemma \ref{min invariant} implies that the intersection of 
$Z_\alpha$ with the support of $\nu$ is $G$-invariant. 

However, since $\mu * \varphi_\alpha - \varphi_\alpha \geq 0$ for all $\alpha$, and
\[
\int_Y \big( \mu * \varphi_\alpha - \varphi_\alpha \Big) \, d\nu = 0, \quad \forall \, \alpha \in \bR,
\]
by $\mu$-stationarity of $\nu$, we conclude that $\mu * \varphi_\alpha = \varphi_\alpha$ on the 
support of $\nu$.
\end{proof}

\begin{corollary}
\label{cor invariant}
Let $\nu \in \cP_\mu(Y)$ and suppose $\varphi \in L^{\infty}(Y,\nu)$ satisfies $\rho(\mu) \varphi = \varphi$. Then $\varphi$ is essentially $G$-invariant. 
\end{corollary}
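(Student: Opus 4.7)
The plan is to use a Cauchy--Schwarz / conditional-variance computation together with the $\mu$-stationarity of $\nu$ to force the map $g \mapsto \varphi(g \cdot y)$ to be $\mu$-almost surely constant for $\nu$-a.e.\ $y$, and then to bootstrap from a single random-walk step to all of $G$ using the admissibility (support-generating) hypothesis on $\mu$.

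First I would fix a bounded Borel representative of $\varphi$; this is legitimate because Proposition \ref{existence} guarantees that $g_{*}\nu \ll \nu$ for every $g \in G$, so the defining integral $\rho(\mu)\varphi(y) = \int_{G}\varphi(g\cdot y)\,d\mu(g)$ is well-defined $\nu$-a.e.\ and does not depend on the representative modulo $\nu$. Cauchy--Schwarz for the probability measure $\mu$ gives the pointwise inequality
\[
|\rho(\mu)\varphi(y)|^{2} \;\leq\; \rho(\mu)\bigl(|\varphi|^{2}\bigr)(y),
\]
which combined with $\rho(\mu)\varphi = \varphi$ produces $|\varphi|^{2} \leq \rho(\mu)(|\varphi|^{2})$ $\nu$-a.e. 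Integrating against $\nu$ and using $\mu$-stationarity yields $\int \rho(\mu)(|\varphi|^{2})\,d\nu = \int |\varphi|^{2}\,d\nu$, forcing the inequality to be an equality $\nu$-a.e. Expanding the square gives the algebraic identity
\[
\rho(\mu)\bigl(|\varphi|^{2}\bigr)(y) - |\rho(\mu)\varphi(y)|^{2}
\;=\; \int_{G} \bigl|\varphi(g\cdot y) - \rho(\mu)\varphi(y)\bigr|^{2}\, d\mu(g),
\]
so the right-hand side vanishes for $\nu$-a.e.\ $y$. Since $G$ is countable and every $g \in \supp(\mu)$ carries strictly positive $\mu$-mass, this forces $\varphi(g\cdot y) = \varphi(y)$ for $\nu$-a.e.\ $y$ and every $g \in \supp(\mu)$.

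The bootstrap is immediate: applying $\rho(\mu)$ iteratively gives $\rho(\mu^{*n})\varphi = \varphi$ for all $n \geq 1$, and rerunning the previous argument with $\mu^{*n}$ in place of $\mu$ shows that for each $g \in \supp(\mu^{*n}) = \supp(\mu)^{n}$, $\varphi(g\cdot y) = \varphi(y)$ for $\nu$-a.e.\ $y$. By admissibility of $\mu$, $\bigcup_{n} \supp(\mu)^{n} = G$, and a countable union of $\nu$-null sets is $\nu$-null, so $\varphi$ is essentially $G$-invariant. The only real subtlety is bookkeeping between the pointwise identities along the random walk and the $\nu$-a.e.\ identities they imply; this is handled uniformly by the non-singularity statement in Proposition \ref{existence}, which ensures that exceptional $\nu$-null sets remain $\nu$-null after translation by any $g \in G$.
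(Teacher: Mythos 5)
Your proof is correct, and it takes a genuinely different route from the paper. The paper's argument passes through the Gelfand spectrum $Z$ of the commutative C*-algebra $L^{\infty}(Y,\nu)$, upgrading the $L^\infty$ class of $\varphi$ to a \emph{continuous} $\mu$-harmonic function on $Z$ and then invoking Lemma \ref{harmonic invariant} (which in turn rests on the minimum-set argument of Lemma \ref{min invariant} for lower semicontinuous harmonic functions). Your approach stays entirely on $(Y,\nu)$ and runs the standard conditional-variance computation: Cauchy--Schwarz plus $\mu$-stationarity force the inequality $|\varphi|^2 \leq \rho(\mu)(|\varphi|^2)$ to be an equality $\nu$-a.e., the variance identity
\[
\rho(\mu)\bigl(|\varphi|^{2}\bigr)(y) - |\rho(\mu)\varphi(y)|^{2}
= \int_{G} \bigl|\varphi(g\cdot y) - \rho(\mu)\varphi(y)\bigr|^{2}\, d\mu(g)
\]
then forces $\varphi(g\cdot y)=\varphi(y)$ for all $g\in\supp(\mu)$ and $\nu$-a.e.\ $y$ (here the countability of $G$ and strict positivity of $\mu$ on its support are exactly what is needed), and the bootstrap via $\rho(\mu^{*n})\varphi=\varphi$ together with $\bigcup_n\supp(\mu)^n=G$ (using symmetry and adaptedness) finishes the job. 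You also correctly flag that the pointwise conclusion along translated null sets is stable under change of representative precisely because of the non-singularity supplied by Proposition \ref{existence}, which is the one point where a careless version of this argument would be gappy. The trade-off: the paper's Gelfand-spectrum detour dispenses with all representative-chasing at the cost of bringing in the C*-machinery it has already built; your $L^2$-variance argument is more elementary and self-contained, but has to manage the bookkeeping of exceptional null sets by hand.
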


\begin{proof}
Since $L^{\infty}(Y,\nu)$ is a commutative unital C*-algebra, 
the Geland-Naimark Theorem asserts that it is isometrically 
isomorphic to the space of continuous functions on its
(compact) Gelfand spectrum $Z$. Furthermore, the isometric
$G$-action on $L^{\infty}(Y,\nu)$ translates to an action of 
$G$ by homeomorphisms on $Z$. 

Note that $\nu \in \cP_\mu(Y)$ can be thought of as a 
$\rho(\mu)$-invariant state on $L^\infty(Y,\nu)$, and thus 
as a regular $\mu$-stationary Borel probability measure $\bar{\nu}$ on $Z$. One readily checks that $\bar{\nu}$ has full support. 

If $\rho(\mu) \varphi = \varphi$ for some $\varphi \in L^\infty(Y,\nu)$, then the image $\overline{\varphi}$ of $\varphi$ under the Gelfand map is a continuous $\mu$-harmonic function on 
$Z$, so by Lemma \ref{harmonic invariant}, it must be $G$-invariant on $Z$ (since $\bar{\nu}$ has full support). This clearly implies that $\varphi$ is $G$-invariant, which finishes the proof. 
\end{proof}

\subsection{An ergodic theorem for $(G,\mu)$-spaces}
\label{ss ergodic theorem}

Recall that if $Y$ is a $G$-space, then a non-singular Borel probability 
measure $\nu$ on $Y$ is said to be \emph{ergodic} if a $G$-invariant Borel set
either has $\nu$-measure zero or one. In particular, if $\varphi$ is a $G$-invariant Borel function on $Y$, then
\[
\varphi = \int_{Y} \, \varphi \, d\nu, \quad \textrm{a.e. w.r.t. $\nu$}.
\]
Fix a probability measure $\mu$ on $G$ and a $\mu$-stationary Borel probability measure $\nu$ on $Y$. Consider the operator $\rho(\mu)$ on $C(Y)$ defined by
\[
\rho(\mu) \varphi(y) = \int_{G} \varphi(g \cdot y) \, d\mu(g), \quad y \in Y.
\]
Note that since $\nu$ is $\mu$-stationary, $\rho(\mu)$ extends to a bounded operator $\rho_\nu(\mu)$ on $L^1(Y,\nu)$ of norm at most one and thus also defines a bounded operator on $L^2(Y,\nu)$ of norm at most one. However, we stress that the $G$-action itself is far from isometric on $L^1(Y,\nu)$. 

For this operator, we have the following analogue of von Neumann's Ergodic Theorem.

\begin{lemma}
\label{ergodic theorem}
Suppose $\nu \in \cP_\mu(Y)$ is ergodic. Then the $L^2$-limit
\[
\lim_{n \ra \infty} \frac{1}{n} \sum_{k=1}^n \rho_{\nu}(\mu)^k \varphi = \int_{Y} \varphi \, d\nu,
\]
exists for all $\varphi \in L^{2}(Y,\nu)$.
\end{lemma}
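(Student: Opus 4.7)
The plan is to invoke the Hilbert space mean ergodic theorem (von Neumann) for the operator $T := \rho_\nu(\mu)$ on $L^2(Y,\nu)$. To apply it, I first verify that $T$ is a contraction: by Jensen's inequality applied pointwise and the $\mu$-stationarity of $\nu$,
\[
\|T\varphi\|_{L^2(\nu)}^2 \leq \int_Y \int_G |\varphi(gy)|^2 \, d\mu(g)\, d\nu(y) = \int_Y |\varphi|^2 \, d\nu = \|\varphi\|_{L^2(\nu)}^2.
\]
The classical mean ergodic theorem for Hilbert space contractions then gives convergence of the Ces\`aro averages $\frac{1}{n}\sum_{k=1}^n T^k \varphi$ in $L^2(Y,\nu)$ to the orthogonal projection $P\varphi$ of $\varphi$ onto $\ker(I-T)$.

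It remains to identify $\ker(I-T)$ with the one-dimensional subspace of constants. Corollary \ref{cor invariant} provides this characterization only for bounded functions, so the main task is to upgrade the statement from $L^\infty$ to $L^2$. Suppose $T\varphi = \varphi$ for some $\varphi \in L^2(Y,\nu)$. Tracing back through the contraction estimate above, the equality $\|T\varphi\|_2 = \|\varphi\|_2$ forces pointwise equality in Jensen, which means that for $\nu$-a.e.\ $y$ the function $g \mapsto \varphi(gy)$ is $\mu$-essentially constant, with constant value equal to $T\varphi(y) = \varphi(y)$. In particular, $\varphi(gy) = \varphi(y)$ for every $g \in \supp(\mu)$ and every $y$ outside some $\nu$-null set. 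Using non-singularity of $\nu$ (Proposition \ref{existence}) together with the countability of $G$, a standard union-of-null-sets argument then propagates this relation to every $g$ in the semigroup generated by $\supp(\mu)$; by admissibility this semigroup is all of $G$, so $\varphi$ is essentially $G$-invariant. Ergodicity of $\nu$ forces $\varphi$ to be constant.

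Consequently $P\varphi$ is the constant function with value $\int_Y \varphi \, d\nu$, which yields the stated $L^2$-limit. The main obstacle is the upgrade of the fixed-space characterization from $L^\infty$ to $L^2$, which is handled cleanly via the equality case in Jensen's inequality; a naive truncation approach would not work, since truncating a $T$-fixed function need not preserve $T$-invariance.
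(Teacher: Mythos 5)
Your proposal is correct, and it takes a genuinely different route from the paper at the key step. Both start the same way: $\rho_\nu(\mu)$ is an $L^2$-contraction, and the mean ergodic theorem (the paper cites this as Lorch's theorem; your version for Hilbert-space contractions is the same result, sometimes also called the Riesz--Yosida--Kakutani mean ergodic theorem) gives convergence of the Ces\`aro averages to a projection onto $\ker(I - \rho_\nu(\mu))$. The divergence is in identifying that fixed space with the constants. The paper's proof simply cites Corollary \ref{cor invariant}, which is obtained via Gelfand--Naimark and the semigroup-invariance of minimum sets of lower semicontinuous $\mu$-harmonic functions (Lemmas \ref{min invariant} and \ref{harmonic invariant}). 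You instead argue directly from the equality case in Jensen/Cauchy--Schwarz: if $T\varphi = \varphi$ then the contraction estimate is tight, forcing $g \mapsto \varphi(gy)$ to be $\mu$-a.e.\ constant with value $\varphi(y)$ for $\nu$-a.e.\ $y$, and then non-singularity of $\nu$ plus countability of $G$ propagates this along the semigroup generated by $\supp(\mu)$, which is all of $G$ by admissibility; ergodicity finishes it.

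Your approach buys something real here: you correctly observe that Corollary \ref{cor invariant} is stated only for $\varphi \in L^\infty(Y,\nu)$, whereas the ergodic-theorem limit is a priori only an $L^2$ function, so the paper's appeal to that corollary is formally a half-step short as written. (The gap is fillable in the paper's framework --- apply the corollary to bounded $\varphi$, for which the Ces\`aro averages stay uniformly bounded so the $L^2$ limit is again in $L^\infty$, then extend by density --- but the paper does not say this.) Your Jensen-equality argument lands directly in $L^2$ and avoids the issue entirely. It is also more elementary and self-contained than routing through the Gelfand spectrum of $L^\infty(Y,\nu)$. What the paper's route buys is Corollary \ref{cor invariant} itself as a reusable statement, proved via machinery (Lemma \ref{harmonic invariant}) that the authors need anyway for the Poisson-boundary material; your argument is tailored to this lemma. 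Both are correct; yours is arguably the cleaner proof of this particular statement.
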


\begin{proof}
Note that the operator norm of $\rho_\nu(\mu)$ on $L^2(Y,\nu)$ is bounded by one. Hence, by Lorch's Ergodic Theorem (Theorem 1.2, Chapter 2 in \cite{Kr}), the $L^2$-limit exists for all $\varphi$ and is 
$\rho_\nu(\mu)$-invariant. Since $\nu$ is ergodic, Corollary \ref{cor invariant} implies that the limit is 
essentially constant and thus equal to the integral. 
\end{proof}

By a simple approximation argument, using the fact that $L^2(Y,\nu)$ is a dense subspace of 
$L^1(Y,\nu)$, we can extend the convergence asserted in the previous lemma to $L^1$-convergence
for all $L^1$-functions. Furthermore, one can also prove statements about almost everywhere convergence upon 
referring to classical results. 

\subsubsection{Estimating the upper $\cF_\mu$-density}

Recall that 
\[
\cF_\mu := \Big\{ \lambda \in \cL_\mu \, : \, \lambda |_{\cH^{\infty}(G,\mu)} = \delta_{e} \Big\}.
\]
We have already noticed that every weak*-accumulation point in $\ell^{\infty}(G)^*$ of the sequence 
\[
\lambda_n := \frac{1}{n} \sum_{k=1}^{n} \mu^{*n},
\]
belongs to $\cF_\mu$. In particular, if $Y$ is a compact $G$-space, then
\[
d^*_{\cF_\mu}(B_y) \geq \limsup_{n \ra \infty} \frac{1}{n} \sum_{k=1}^{n} \mu^{*n}(B_y)
\]
for all $y \in Y$, where 
\[
B_y = \Big\{ g \in G \, : \, g \cdot y \in B \Big\}, \quad y \in Y.
\]
The following corollary shows that there is a simple lower bound for the upper $\cF_\mu$-density
(and thus for the upper $\cL_\mu$-density as well) for these kinds of subsets. 

\begin{corollary}
\label{positive density}
Suppose $\nu \in \cP_\mu(Y)$ is ergodic. Then, for every Borel set $B \subset Y$, there exists 
a $\nu$-conull Borel set $Y' \subset Y$ such that
\[
d^*_{\cF_\mu}(B_y) \geq \nu(B), \quad \forall \, y \in Y'.
\]
\end{corollary}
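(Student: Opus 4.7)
The plan is to combine the $L^2$-ergodic theorem of Lemma \ref{ergodic theorem} with the pointwise lower bound
\[
d^*_{\cF_\mu}(B_y) \;\geq\; \limsup_{n \to \infty} \frac{1}{n}\sum_{k=1}^{n}\mu^{*k}(B_y), \qquad y \in Y,
\]
recorded in the paragraph immediately preceding the corollary. The ergodic theorem will force the Cesàro averages on the right to converge (along a subsequence) to $\nu(B)$ for $\nu$-almost every $y$, which instantly upgrades this abstract bound into the desired concrete inequality $d^*_{\cF_\mu}(B_y) \geq \nu(B)$.

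Concretely, I would apply Lemma \ref{ergodic theorem} to $\varphi = \chi_B \in L^2(Y,\nu)$. Ergodicity and $\mu$-stationarity of $\nu$ then yield $L^2$-convergence
\[
\frac{1}{n}\sum_{k=1}^{n} \rho_\nu(\mu)^k \chi_B \;\longrightarrow\; \nu(B).
\]
Passing to a subsequence $n_j \to \infty$ along which the convergence holds $\nu$-a.e., I obtain a $\nu$-conull Borel set $Y' \subset Y$ on which the averages converge to $\nu(B)$. Identifying the natural pointwise representative
\[
\rho_\nu(\mu)^k \chi_B(y) \;=\; \int_G \chi_B(g \cdot y)\, d\mu^{*k}(g) \;=\; \mu^{*k}(B_y)
\]
(by iterated Fubini) then lets me plug this a.e.\ limit into the displayed lower bound, concluding that $d^*_{\cF_\mu}(B_y) \geq \nu(B)$ for every $y \in Y'$.

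The only genuine obstacle here is the routine bookkeeping of checking that $y \mapsto \mu^{*k}(B_y)$ is a legitimate pointwise representative of the $L^2$-class $\rho_\nu(\mu)^k \chi_B$ on a common $\nu$-conull set valid for every $k$ simultaneously, so that the subsequential a.e.\ convergence supplied by the ergodic theorem can really be substituted into the pre-established density inequality. Beyond this technical step, the proof is essentially formal; one could also derive full (not just subsequential) a.e.\ convergence by invoking a suitable pointwise ergodic theorem for $\rho_\nu(\mu)$, but the subsequence version is all that the corollary requires.
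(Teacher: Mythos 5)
Your argument is correct and coincides with the paper's proof almost line for line: both apply Lemma \ref{ergodic theorem} to $\chi_B$, identify $\mu^{*k}(B_y)$ with $\rho_\nu(\mu)^k\chi_B(y)$, extract an a.e.-convergent subsequence from the $L^2$-convergence, and feed the resulting $\nu$-a.e.\ limit $\nu(B)$ into the pre-established lower bound $d^*_{\cF_\mu}(B_y) \geq \limsup_n \frac{1}{n}\sum_{k=1}^n \mu^{*k}(B_y)$. The Fubini bookkeeping you flag is routine and is left implicit in the paper as well, so no gap remains.
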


\begin{proof}
It suffices to prove that there exists a conull subset $Y' \subset Y$ such that
\[
\limsup_{n \ra \infty} \frac{1}{n} \sum_{k=1}^n \mu^{*k}(B_y) > 0
\]
for all $y \in Y'$. Since 
\[
\sum_{k=1}^n \mu^{*k}(B_y) = \sum_{k=1}^n \rho(\mu)^{*k} \chi_{B}(y),
\]
where $\rho(\mu)$ denotes the bounded operator on $L^2(Y,\nu)$ introduced above,
we have by Lemma \ref{ergodic theorem}, 
\[
\lim_n \frac{1}{n} \sum_{k=1}^n \rho(\mu)^{*k} \chi_{B} = \nu(B) > 0
\]
in the \emph{$L^2$-norm}. Hence we can extract a subsequence such that the limit
exists almost everywhere along this subsequence.
\end{proof}

Note that since $\mu$ is assumed to be \emph{symmetric}, the proof of the previous
corollary also implies that sets of the form $B_y^{-1}$ are $\cF_\mu$-large, for all 
$y \in Y'$.  This has an important consequence as the following corollary shows.

\begin{corollary}
Suppose $A \subset G$ is $\cL_\mu$-large. Then there exists a $\cF_\mu$-large set
$B \subset G$ such that $B^{-1}$ is $\cF_\mu$-large as well and 
\[
AA^{-1} \supset BB^{-1}.
\]
\end{corollary}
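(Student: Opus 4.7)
The plan is to apply Corollary \ref{positive density} (together with the symmetric-$\mu$ remark immediately preceding the statement to be proved) on a suitable metrizable $G$-factor of the Stone--\v{C}ech compactification $\beta G$, equipped with its canonical left $G$-action. The crucial point --- which drives the whole argument --- is the following geometric observation: for \emph{every} $q \in \beta G$, the return-time set $\oA_q = \{g \in G : g \cdot q \in \oA\}$ automatically satisfies
\[
\oA_q \cdot \oA_q^{-1} \subset A A^{-1}.
\]
Indeed, if $g_1, g_2 \in \oA_q$, then $q \in g_1^{-1} \oA \cap g_2^{-1} \oA$. Since each $g_i^{-1} \oA$ is the clopen subset of $\beta G$ corresponding to $g_i^{-1} A \subset G$, their intersection is the clopen set corresponding to $g_1^{-1} A \cap g_2^{-1} A$; non-emptiness in $\beta G$ thus forces non-emptiness on the level of $G$, producing some $x \in g_1^{-1} A \cap g_2^{-1} A$, and hence $g_1 g_2^{-1} = (g_1 x)(g_2 x)^{-1} \in A A^{-1}$.

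Since $A$ is $\cL_\mu$-large, pick $\lambda \in \cL_\mu$ with $\lambda'(A) > 0$ and let $\olambda$ denote the corresponding Borel probability measure on $\beta G$. From the identifications in Subsection \ref{measure theory} together with the definition of $\cL_\mu$, $\olambda$ is $\mu$-stationary for the left $G$-action and $\olambda(\oA) = \lambda'(A) > 0$. To extract an \emph{ergodic} $\mu$-stationary measure --- which Corollary \ref{positive density} requires --- I would circumvent the non-metrizability of $\beta G$ by choosing a separable, left $G$-invariant, closed sub-C*-algebra $\cB \subset \ell^{\infty}(G)$ containing $\chi_A$. Its Gelfand spectrum $Y$ is a compact \emph{metrizable} $G$-space carrying a $G$-equivariant continuous surjection $\pi : \beta G \to Y$, and there is a distinguished clopen set $\tilde A \subset Y$ with $\pi^{-1}(\tilde A) = \oA$. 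Pushing $\olambda$ forward yields $\nu := \pi_* \olambda \in \cP_\mu(Y)$ with $\nu(\tilde A) > 0$, and a standard ergodic decomposition of $\cP_\mu(Y)$ (available because $Y$ is metrizable, so extreme points of $\cP_\mu(Y)$ are ergodic and Choquet's theorem applies) produces an ergodic $\nu_0 \in \cP_\mu(Y)$ with $\nu_0(\tilde A) > 0$.

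Corollary \ref{positive density}, applied to $\tilde A \subset Y$ and $\nu_0$, together with the symmetry-of-$\mu$ remark immediately preceding our statement, produces a $\nu_0$-conull Borel set $Y' \subset Y$ on which both $\tilde A_y$ and $\tilde A_y^{-1}$ are $\cF_\mu$-large. Fix any $y \in Y'$ and any lift $q \in \pi^{-1}(y) \subset \beta G$. By $G$-equivariance of $\pi$ and $\pi^{-1}(\tilde A) = \oA$,
\[
\tilde A_y = \{g \in G : g \cdot y \in \tilde A\} = \{g \in G : \pi(g \cdot q) \in \tilde A\} = \{g \in G : g \cdot q \in \oA\} = \oA_q.
\]
Setting $B := \oA_q = \tilde A_y$, both $B$ and $B^{-1}$ are $\cF_\mu$-large, while the opening observation gives $B B^{-1} \subset A A^{-1}$, finishing the proof.

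The main obstacle is really the extraction of an ergodic $\mu$-stationary measure charging (the image of) $\oA$: one cannot invoke Corollary \ref{positive density} on $\olambda$ itself because $\olambda$ need not be ergodic, and on the non-metrizable space $\beta G$ there is no ready-made ergodic decomposition. Passing to a separable metrizable $G$-factor resolves this cleanly. By contrast, the containment $\oA_q \oA_q^{-1} \subset A A^{-1}$ valid at \emph{every} $q \in \beta G$ is essentially automatic given the clopen correspondence between subsets of $G$ and clopen subsets of $\beta G$, so no careful selection of $q$ is required on that side.
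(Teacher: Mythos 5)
Your proof is correct, but it takes a genuinely different route to the key ergodicity step. The paper's proof invokes Corollary \ref{max2} (Bauer's Maximum Principle) to pick an \emph{extremal} $\lambda \in \cL_\mu$ maximizing $\lambda'(A)$, then appeals to Proposition \ref{ext erg} (extremal $=$ ergodic) directly on the non-metrizable space $\beta G$ to conclude that $\olambda$ is an ergodic $\mu$-stationary measure charging $\oA$. You instead sidestep the non-separable functional-analytic machinery by pushing forward to the Gelfand spectrum $Y$ of a separable left-invariant sub-C*-algebra containing $\chi_A$ (e.g.\ the Bebutov algebra of $A$), where $Y$ is compact metrizable, Choquet-type ergodic decomposition is available, and an ergodic $\nu_0 \in \cP_\mu(Y)$ charging the clopen $\tilde{A}$ can be extracted; you then lift back to $\beta G$. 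Both strategies are valid; the paper's is slightly shorter given that Bauer's principle and Proposition \ref{ext erg} are already on the table, while yours avoids reasoning about extreme points in a non-separable dual and may feel more concrete. A second, minor difference: for the containment $\oA_q \oA_q^{-1} \subset AA^{-1}$, the paper runs through Lemma \ref{UF2} with an exhaustion $(F_n)$ of $G$ to realize $\oA_q \cap F_n$ as a translate of a finite piece of $A$, whereas you argue directly from the clopen correspondence that $g_1^{-1}\oA \cap g_2^{-1}\oA \ni q$ forces $g_1^{-1}A \cap g_2^{-1}A \neq \emptyset$. Your version is arguably cleaner and proves the inclusion for \emph{every} $q$, which is not needed but costs nothing.
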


\begin{proof}
Let $\lambda$ be an element in $\cL_\mu$ such that $\olambda$ is an \emph{ergodic}
$\mu$-stationary probability measure on $\beta G$ and the set $\oA \subset \beta G$ has positive $\olambda$-measure (such elements exist 
by Proposition \ref{ext erg} below). The previous corollary guarantees the existence of $q \in \beta G$ such that both 
sets $\oA_q$ and $\oA_q^{-1}$ are $\cF_\mu$-large. \\

Fix an increasing exhaustion $(F_n)$ of $G$ by finite sets. For every $n$, 
there exists by Lemma \ref{UF2} an element $g_n \in G$ such that
\[
\oA_q \cap F_n = (A \cap F_n \cdot g_n) \cdot g_n^{-1}
\]
In particular, 
\[
AA^{-1} \supset (A \cap F_n \cdot g_n)(A \cap F_n \cdot g_n)^{-1} = (\oA_q \cap F_n)(\oA_q \cap F_n)^{-1}
\]
for all $n$, and thus
\[
AA^{-1} \supset \oA_q \oA_q^{-1}. 
\]
Hence, if we set $B := \oA_q$, then this set satisfies the desired properties. 
\end{proof}

\subsection{Poisson boundaries of a measured groups}

Recall that a \emph{bounded} function $f$ on $G$ is \emph{left $\mu$-harmonic} (note that we shall always assume
that $\mu$ is \emph{symmetric}) if the equation
\[
\mu * f(g) := \int_G f(hg) \, d\mu(h) = f(g)
\] 
holds for all $g \in G$. Clearly, a constant function is always left $\mu$-harmonic, and there are instances
when constants are the only (bounded) $\mu$-harmonic functions (e.g. when $G$ is a countable abelian group and $\mu$ is any admissible probability measure on $G$). 

We shall denote by $\cH^{\infty}_l(G,\mu)$ the space of all bounded left $\mu$-harmonic 
functions on $G$. Easy examples show that the pointwise product of two left $\mu$-harmonic functions
may fail to be left $\mu$-harmonic, so $\cH^{\infty}_l(G,\mu)$ is \emph{not} a sub-algebra of 
$\ell^{\infty}(G)$, and thus do not quite fit onto the framework we have set up so far. 

However, there is another product (which is a special case of a more general construction in operator
system theory called the \emph{Choi-Effros} product), which does turn $\cH^{\infty}_l(G,\mu)$ into an
algebra. In our setting, this product, here denoted by $\square$, can be defined by
\[
(\varphi \, \square \, \psi)(g) := \lim_{n \ra \infty} \int_{G} \varphi(hg) \, \psi(hg) \, d\mu^{*n}(h), \quad g \in G,
\]
for $\varphi, \psi \in \cH^{\infty}(G,\mu)$. One can show that the limit exists by a simple use of the Martingale
Convergence Theorem. We refer the reader to \cite{Bab} for details about this product; especially the fact that 
$(\cH^{\infty}_l(G,\mu), \square)$ is a commutative C*-algebra. 

Another serious deviation from the set up so far is that $\cH^{\infty}_l(G,\mu)$ is invariant under the right
regular representation $r$ on $\ell^{\infty}(G)$, but \emph{not} necessarily under the left regular representation. This right action on $\cH_l^{\infty}(G,\mu)$ gives rise to a (right) action of $G$ by homeomorphisms on the Gelfand spectrum 
\[
B := \Delta(\cH_l^{\infty}(G,\mu)),
\]
which we shall consistently write as a left action, i.e. $g \cdot b := bg^{-1}$, for $b \in B$ and $g \in G$. 
 
Note that the evaluation map $\delta_e$ on $\cH^{\infty}_l(G,\mu)$ is a $r(\mu)$-invariant state, and thus
corresponds to a $\mu$-stationary probability measure $\nu_o$ on $B$. Indeed, 
\[
\big\langle r(\mu)^*\delta_e, f \big\rangle = \int_{G} f(h) \, d\mu(h) = \big\langle \delta_e , f \big\rangle,
\quad \forall \, f \in \cH^{\infty}(G,\mu).
\]
If $f \in C(B)$, we define $\hat{f} \in \ell^{\infty}(G)$ by
\[
\hat{f}(g) := \int_B f(g^{-1} \cdot b) \, d\nu_o(b), \quad g \in G.
\]
Since $\mu$ is symmetric, $\hat{f}$ belongs to $\cH_l^{\infty}(G,\mu)$ as the following calculation shows. 
\begin{eqnarray*}
\int_{G} \hat{f}(hg) \, d\mu(h) 
&=& 
\int_{G} \int_B f(g^{-1} \cdot h^{-1} \cdot b) \, d\nu_o(b) \, d\mu(h) \\
&=&
\int_B f(g^{-1} \cdot b) \, d(\mu * \nu_o)(b)  = \hat{f}(g), \quad \forall \, g \in G.
\end{eqnarray*}
Note that we can also write $\hat{f}$ as
\[
\hat{f}(g) = \big\langle \nu, r(g)f \big\rangle_{C(B)} 
= 
\big\langle r(g)^*\delta_e, f' \big\rangle_{\cH^{\infty}_l(G,\mu)} = f'(g), \quad \forall \, g \in G,
\]
where $f'$ denotes the image of $f \in C(B)$ in $\cH_l^{\infty}(G,\mu)$ under the Gelfand map (for
the Choi-Effros product), so we conclude that $\hat{f}$ completely determines $f$. This observation
suggests the following proposition, whose proof can be found in \cite{Bab}.

\begin{proposition}
\label{poissonx}
The $\mu$-stationary probability measure $\nu_o$ on $B$ is ergodic and the map 
\[
f \mapsto \hat{f}
\]
is an isometric $*$-isomorphism between $L^{\infty}(B,\nu_o)$ and $\cH^{\infty}(G,\mu)$.
\end{proposition}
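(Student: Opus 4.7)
The plan is to establish three assertions in sequence: (i) the map $f \mapsto \hat{f}$, extended by the same Poisson-integral formula to all of $L^{\infty}(B,\nu_{o})$, is a well-defined isometric $*$-monomorphism into $\cH^{\infty}_{l}(G,\mu)$; (ii) it is surjective; (iii) $\nu_{o}$ is ergodic. The main tool is the Martingale Convergence Theorem applied to the random walk on $G$ generated by $\mu$, together with the fundamental boundary-convergence property whose detailed proof lies in \cite{Bab}.

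First I would extend $\hat{\cdot}$ from $C(B)$ to $L^{\infty}(B,\nu_{o})$. The formula $\hat{f}(g) = \int_{B} f(g^{-1} \cdot b) \, d\nu_{o}(b)$ remains meaningful for a general $f \in L^{\infty}(B,\nu_{o})$ because $\nu_{o}$ is $G$-nonsingular by Proposition \ref{existence}, so the value does not depend on the choice of representative. The calculation given in the excerpt, which uses only symmetry of $\mu$ and $\mu$-stationarity of $\nu_{o}$, shows that $\hat{f}$ is left $\mu$-harmonic, and the contraction bound $\|\hat{f}\|_{\infty} \leq \|f\|_{\infty}$ is immediate.

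Next I would invoke the Martingale Convergence Theorem for the random walk $(X_{n})$ on $G$ driven by $\mu$, with left/right conventions chosen so that $n \mapsto \varphi(X_{n})$ is a martingale for every left $\mu$-harmonic $\varphi$. The defining property of the Poisson boundary $(B,\nu_{o})$, proven in \cite{Bab}, is that the random measures $X_{n} \cdot \nu_{o}$ converge almost surely in the weak-$*$ topology to a random Dirac mass $\delta_{Y_{\infty}}$, where $Y_{\infty}$ is a $B$-valued random variable whose law is $\nu_{o}$. Consequently, for each $f \in L^{\infty}(B,\nu_{o})$, the martingale $\hat{f}(X_{n})$ converges almost surely to $f(Y_{\infty})$, which simultaneously yields the isometry $\|\hat{f}\|_{\infty} = \|f\|_{\infty}$ and a recipe for inverting the map: given $\varphi \in \cH^{\infty}_{l}(G,\mu)$, the almost-sure tail limit of $\varphi(X_{n})$ is a bounded random variable which, by the same boundary-convergence property, factors through $Y_{\infty}$ as $f(Y_{\infty})$ for a unique $f \in L^{\infty}(B,\nu_{o})$; taking conditional expectation at the basepoint $g$ then gives $\varphi(g) = \hat{f}(g)$. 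The fact that $\hat{\cdot}$ intertwines the Choi-Effros product on $\cH^{\infty}_{l}(G,\mu)$ with pointwise multiplication on $L^{\infty}(B,\nu_{o})$ is essentially the defining property of that product, since it is designed to be the unique one making the Gelfand map a $*$-algebra morphism.

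Finally, for ergodicity I would argue as follows. Suppose $E \subset B$ is a Borel set that is invariant under the left $G$-action $g \cdot b = b g^{-1}$ on $B$; then $\chi_{E}(bh) = \chi_{E}(b)$ for $\nu_{o}$-a.e.\ $b$ and every $h \in G$, so a direct computation gives $\hat{\chi_{E}}(g) = \int_{B} \chi_{E}(bg) \, d\nu_{o}(b) = \nu_{o}(E)$, a constant function of $g$. By the injectivity of $\hat{\cdot}$ established in step (i), the function $\chi_{E}$ must be $\nu_{o}$-essentially constant, which forces $\nu_{o}(E) \in \{0,1\}$. The principal obstacle is the boundary-convergence statement $X_{n} \cdot \nu_{o} \to \delta_{Y_{\infty}}$ identifying $B$ with the Poisson boundary; this is the deep content of the classical theory and is precisely what is carried out in \cite{Bab}. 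Without it one only obtains that $\hat{\cdot}$ is a norm-decreasing $*$-homomorphism, with neither isometry, surjectivity nor ergodicity in reach.
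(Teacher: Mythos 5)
The paper does not actually prove Proposition~\ref{poissonx}; immediately after stating it, the authors write that its proof can be found in \cite{Bab}. So there is no internal argument to compare against, and your proposal stands or falls on its own.

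Your route — extend $f\mapsto\hat f$ to all of $L^\infty(B,\nu_o)$, invoke martingale convergence together with the boundary convergence $X_n\cdot\nu_o\to\delta_{Y_\infty}$ to obtain the isometry and, in particular, injectivity, and then derive ergodicity from injectivity applied to $\chi_E$ — is the standard probabilistic approach and is consistent with the way the paper sets up the Poisson boundary. Two remarks on the writeup. First, you credit injectivity to ``step (i),'' but step (i) only gives a well-defined positive unital contraction into $\cH^\infty_l(G,\mu)$; injectivity is established in step (ii) via the isometry. Second, and more substantively, the convergence $X_n\cdot\nu_o\to\delta_{Y_\infty}$ \emph{in the weak-$*$ topology} yields $\hat f(X_n)\to f(Y_\infty)$ only for $f\in C(B)$. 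For a general $f\in L^\infty(B,\nu_o)$ the bounded martingale $\hat f(X_n)$ converges a.s., but identifying its limit as $f(Y_\infty)$ is precisely the nontrivial content of the Poisson-boundary theory (the identification of $B$ with the exit/tail boundary); as stated, your boundary-convergence hypothesis does not close this gap, and the needed stronger statement should be attributed explicitly to \cite{Bab}. You could also streamline surjectivity: the paper has already observed that $f\mapsto\hat f$ on $C(B)$ is the inverse Gelfand map, hence a bijection $C(B)\to\cH^\infty_l(G,\mu)$; once injectivity of the extension to $L^\infty(B,\nu_o)$ is in hand, it follows at once that $C(B)=L^\infty(B,\nu_o)$ and surjectivity is automatic, with no need to construct an inverse via tail limits. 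The ergodicity argument itself is fine (with the reference fixed): an invariant $E$ makes $\hat\chi_E\equiv\nu_o(E)$, and injectivity forces $\chi_E$ to be a.e.\ constant, so $\nu_o(E)\in\{0,1\}$.
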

\begin{remark}
The Gelfand spectrum of $\cH_l^{\infty}(G,\mu)$ with the Choi-Effros product, together with the measure 
$\nu_o$, is sometimes referred to as
the \emph{Poisson boundary of $(G,\mu)$}. However, since this measure space is only standard when
$\cH^{\infty}_l(G,\mu)$ reduces to the constants, one often prefers to choose other (countably generated) Borel representations which will yield the same $L^\infty$-space, and then such a space is also referred to as the Poisson boundary of $(G,\mu)$. In this paper, we shall only work with the above realization of the Poisson boundary. 
\end{remark}

\subsection{Left ergodicity of $\cF_\mu \subset \cM(G)$}
\label{poisson}
As we have seen, the space $\cP_\mu(Y)$ of $\mu$-stationary Borel measures on a $G$-space $Y$,
is always a non-empty, weak*-compact and convex set. In particular, by Krein-Milman's Theorem, it has extremal points. We shall denote the 
set of extremal points by $\Ext(\cP_\mu(Y))$. \\

Let $\Erg_\mu(Y)$ denote the set of $\mu$-stationary \emph{ergodic} probability measures on $Y$.
In the weak*-topology, this is a nice $G_\delta$-set \emph{if} $Y$ is second countable. When $Y$ is
not second countable, then there is no reason to expect this set to be even Borel measurable. Particularly, this is the case for the Poisson boundary $B$ defined in the previous subsection. In any case,
we do have the following equality of sets.  

\begin{proposition}
\label{ext erg}
For every measured group $(G,\mu)$ and compact $G$-space $Y$, we have 
\[
\Ext(\cP_\mu(Y)) = \Erg_\mu(Y).
\] 
In particular, $\Erg_\mu(Y)$ is non-empty. 
\end{proposition}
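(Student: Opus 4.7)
The plan is to prove the two inclusions of the claimed equality separately, and then deduce non-emptiness from Krein--Milman. The whole argument rests on two earlier inputs: non-singularity of $\mu$-stationary measures (Proposition \ref{existence}) and the mean ergodic theorem for $\rho_\nu(\mu)$ on $L^2(Y,\nu)$ when $\nu$ is ergodic (Lemma \ref{ergodic theorem}).

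For $\Ext(\cP_\mu(Y)) \subset \Erg_\mu(Y)$ I would argue by contrapositive. If $\nu \in \cP_\mu(Y)$ is not ergodic, there is a $G$-invariant Borel set $A \subset Y$ with $0 < \nu(A) < 1$. I define the conditional measures $\nu_A(\cdot) := \nu(\,\cdot \cap A)/\nu(A)$ and $\nu_{A^c}$ analogously, and verify that both lie in $\cP_\mu(Y)$. The key point is the pointwise identity $\rho(\mu)\phi \cdot \chi_A = \rho(\mu)(\phi \cdot \chi_A)$, which holds for every $\phi \in C(Y)$ because the strict $G$-invariance of $A$ gives $\chi_A(gy) = \chi_A(y)$ for all $g \in G$. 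Integrating against $\nu$ and using $\mu$-stationarity of $\nu$ yields $\int \rho(\mu)\phi \, d\nu_A = \int \phi \, d\nu_A$. Then $\nu = \nu(A)\nu_A + \nu(A^c)\nu_{A^c}$ is a non-trivial convex decomposition with $\nu_A \neq \nu_{A^c}$, so $\nu$ is not extremal.

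For the reverse inclusion, let $\nu \in \Erg_\mu(Y)$ and suppose $\nu = \alpha \nu_1 + (1-\alpha)\nu_2$ with $\alpha \in (0,1)$ and $\nu_1, \nu_2 \in \cP_\mu(Y)$. From $\nu_1 \leq \alpha^{-1}\nu$ we get $\nu_1 \ll \nu$, and Radon--Nikodym supplies $f \in L^\infty(Y,\nu)$ with $0 \leq f \leq \alpha^{-1}$, $\int f \, d\nu = 1$ and $\nu_1 = f\nu$. For any $\phi \in C(Y)$, iterating the $\mu$-stationarity of $\nu_1$ gives
\[
\int \phi \, d\nu_1 \;=\; \int \rho(\mu)^k\phi \, d\nu_1 \;=\; \int \rho(\mu)^k\phi \cdot f \, d\nu
\]
for every $k \geq 1$. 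Averaging in $k$ from $1$ to $n$ and invoking Lemma \ref{ergodic theorem} (together with $f \in L^\infty \subset L^2$, since $\nu$ is a probability measure) yields
\[
\int \phi \, d\nu_1 \;=\; \lim_{n \to \infty} \int \Bigl( \tfrac{1}{n} \sum_{k=1}^n \rho(\mu)^k \phi \Bigr) f \, d\nu \;=\; \Bigl( \int \phi \, d\nu \Bigr) \int f \, d\nu \;=\; \int \phi \, d\nu.
\]
Hence $\nu_1 = \nu$, and similarly $\nu_2 = \nu$, so $\nu$ is extremal. Non-emptiness of $\Erg_\mu(Y)$ then follows from Krein--Milman applied to the non-empty, weak*-compact, convex set $\cP_\mu(Y)$.

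The main subtlety, and the principal obstacle to a more direct proof of the second inclusion, is that the density $f$ does not satisfy the straightforward harmonicity equation $\rho(\mu)f = f$ to which Corollary \ref{cor invariant} could be applied: because the $G$-action on $(Y,\nu)$ is only non-singular rather than measure preserving, $f$ only satisfies a dual, cocycle-weighted harmonic equation. The workaround above sidesteps this by never invoking harmonicity of $f$ directly, pushing all time averaging onto the continuous side (where Lemma \ref{ergodic theorem} applies cleanly) and pairing against $f \in L^2$ only after passing to the limit.
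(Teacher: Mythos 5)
Your proof is correct and follows essentially the same route as the paper's: the extremal-implies-ergodic direction via restriction to an invariant set, and the ergodic-implies-extremal direction via Lemma \ref{ergodic theorem} applied to ergodic averages of continuous functions paired against the Radon--Nikodym density. The only difference is a small tidying on your part: the paper takes the density merely in $L^1(Y,\nu)$ and introduces a cut-off $\rho_r$ to land in $L^2$ before invoking the ergodic theorem (arriving first at $\rho\geq 1$ a.e.\ and then using $\int\rho\,d\nu=1$), whereas you observe that $\nu_1\leq\alpha^{-1}\nu$ forces $f=d\nu_1/d\nu\in L^\infty(Y,\nu)\subset L^2(Y,\nu)$, so no cut-off is needed and equality $\nu_1=\nu$ drops out directly.
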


\begin{proof}
First note that if $\nu \in \cP_\mu(Y)$ and $X \subset Y$ is a $G$-invariant $\nu$-measurable set with 
measure 
\[
0 < \nu(X) < 1,
\]
then the re-normalized restriction of $\nu$ to $X$ clearly belongs to $\cP_\mu(Y)$. Hence, if $\nu$ is 
\emph{not} ergodic, it \emph{cannot} be extremal in $\cP_\mu(Y)$. 

The converse is slightly more difficult. Suppose $\nu$ is not extremal, but still ergodic, so that we can write 
\[
\nu = \alpha_1 \cdot \nu_1 + \alpha_2 \cdot \nu_2,
\]
for some positive $\alpha_1, \alpha_2$ with $\alpha_1 + \alpha_2 = 1$, where $\nu_1, \nu_2 \in \cP_\mu(Y)$. 

Note that both $\nu_1$ and $\nu_2$ must be absolutely continuous with respect to $\nu$, so 
by Radon-Nikodym's Theorem, it suffices to prove that if $\rho \in L^1(Y,\nu)$ has the property that the $d\nu' = \rho d\nu$ is a $\mu$-stationary, probability measure, then $\rho$ is equal to 
one $\nu$-almost everywhere. 

Let $r > 0$ and let $\rho_r$ denote the cut-off of $\rho$ for values greater than $r$ so that $\rho_r$ is a bounded $\nu$-measurable function. Note that
\[
\int_Y \varphi \, d\nu' 
=
\int_Y A_n \varphi \, d\nu' 
\geq
\int_{Y} \big( A_n \varphi \big) \, \rho_r \, d\nu
\]
for all $\varphi \in L^\infty(Y,\nu)_{+}$, where 
\[
A_n \varphi := \frac{1}{n} \sum_{k=1}^n \rho(\mu)^{*k} \varphi. 
\]
Here, $\rho(\mu)$ is the bounded operator on $L^2(Y,\nu)$ introduced in Lemma \ref{ergodic theorem}.
Since $\nu$ is ergodic, this lemma applies, and thus the sequence $A_n \varphi$ converges to the 
$\nu$-integral of $\varphi$ in the $L^2$-norm. Hence, for all $\varphi \in L^\infty(Y, \nu)_{+}$, we have
\[
\int_Y \varphi \, d\nu' \geq \lim_n \big\langle \rho_r, A_n \varphi \big\rangle_{L^2(Y,\nu)} 
=  
\Big(\int_{Y} \rho_r \, d\nu \Big) \cdot \int_Y \varphi \, \, d\nu.
\]
Upon letting $r$ tend to infinity, we see that 
\[
\int_Y \varphi \, d\nu' \geq \int_Y \varphi \, \, d\nu, \quad \forall \, \varphi \in L^\infty(Y,\nu)_+
\]
which readily implies that $\rho \geq 1$ $\nu$-almost everywhere. However, since the $\nu$-integral
equals one, we conclude that $\rho$ must in fact be $\nu$-almost everywhere equal to one, which finishes the proof. 
\end{proof}

Recall that a subset $\cC \subset \cL_\mu$ is \emph{left ergodic} if $\tau^*\lambda$ 
is an \emph{ergodic} $\mu$-stationary Borel probability measure on $\beta G$ whenever $\lambda$ 
is an \emph{extremal} point in $\cC$, where $\tau : C(\beta G) \ra \ell^{\infty}(G)$ is the $G$-equivariant
map defined by
\[
\tau(\varphi)(g) := \varphi(g \cdot \bar{e}), \quad g \in G.
\] 
The set of \emph{Furstenberg-Poisson means} is defined by
\[
\cF_\mu := \Big\{ \lambda \in \cL_\mu \, : \, \lambda |_{\cH^{\infty}_l(G,\mu)} = \delta_e \Big\}.
\]
The following corollary isolates one of the most important properties of $\cF_\mu$ which will be 
used in the proof of Theorem \ref{thm1}.

\begin{corollary}
\label{left ergodic}
For every measured group $(G,\mu)$, the set $\cF_\mu \subset \cM(G)$ is left ergodic. 
\end{corollary}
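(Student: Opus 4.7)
The plan is to show that extremality in $\cF_\mu$ forces $\olambda$ to be extremal in $\cP_\mu(\beta G)$, and then invoke Proposition \ref{ext erg} (which identifies extremal stationary measures with ergodic ones). The bridge between the two notions of extremality will be the Furstenberg-Poisson boundary $B$ together with a push-forward argument.

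First, I would build the natural continuous $G$-equivariant surjection $\pi : \beta G \to B$. Concretely, the Gelfand-Naimark isomorphism between $(\cH^{\infty}_l(G,\mu), \square)$ and $C(B)$ gives, for each $f \in C(B)$, a harmonic function $f' \in \cH^{\infty}_l(G,\mu) \subset \ell^{\infty}(G)$, and the assignment $g \mapsto g \cdot b_o$ (where $b_o \in B$ corresponds to $\delta_e$) dualizes to such a $\pi$. The key feature of this map is that for any $\lambda \in \cF_\mu$, the defining condition $\lambda|_{\cH^{\infty}_l(G,\mu)} = \delta_e$ translates, via the Riesz correspondence $\lambda \leftrightarrow \olambda$ and Proposition \ref{poissonx}, into the identity
\[
\pi_* \olambda = \nu_o.
\]
In other words, $\cF_\mu$ is precisely the set of means whose associated Borel probability measures on $\beta G$ project to $\nu_o$ under $\pi$.

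Now let $\lambda$ be an extremal point of $\cF_\mu$ and suppose, for contradiction, that $\olambda$ is \emph{not} ergodic as a $\mu$-stationary measure on $\beta G$. By Proposition \ref{ext erg}, $\olambda$ fails to be extremal in $\cP_\mu(\beta G)$, so there exist $\nu_1, \nu_2 \in \cP_\mu(\beta G)$ with $\nu_1 \neq \nu_2$ and positive weights $\alpha_1 + \alpha_2 = 1$ such that $\olambda = \alpha_1 \nu_1 + \alpha_2 \nu_2$. Pushing forward by the $G$-equivariant continuous map $\pi$ yields
\[
\nu_o = \pi_* \olambda = \alpha_1 \, \pi_* \nu_1 + \alpha_2 \, \pi_* \nu_2,
\]
with $\pi_* \nu_i \in \cP_\mu(B)$. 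By Proposition \ref{poissonx}, $\nu_o$ is ergodic, and so by Proposition \ref{ext erg} it is extremal in $\cP_\mu(B)$. Consequently $\pi_* \nu_1 = \pi_* \nu_2 = \nu_o$, which means that the means $\lambda_1, \lambda_2$ corresponding to $\nu_1, \nu_2$ both lie in $\cF_\mu$. But then $\lambda = \alpha_1 \lambda_1 + \alpha_2 \lambda_2$ is a non-trivial convex decomposition inside $\cF_\mu$, contradicting the extremality of $\lambda$.

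The main (and only genuinely non-trivial) obstacle is the identification $\pi_* \olambda = \nu_o$ for $\lambda \in \cF_\mu$; once this intertwining is in place, the rest of the argument is a short push-forward together with Propositions \ref{poissonx} and \ref{ext erg}. Everything else — the construction of $\pi$ from Gelfand duality, $G$-equivariance, and continuity — is formal C*-algebraic bookkeeping that follows from the results already assembled in the previous subsection.
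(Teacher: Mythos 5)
Your proposal reduces the corollary to the pushforward of a non-trivial convex decomposition $\olambda = \alpha_1\nu_1 + \alpha_2\nu_2$ under a continuous $G$-equivariant map $\pi : \beta G \to B$ with $\pi_*\olambda = \nu_o$ for all $\lambda \in \cF_\mu$. The overall architecture (decompose in $\cP_\mu(\beta G)$, transfer to $B$, exploit ergodicity of $\nu_o$ via Proposition \ref{ext erg} and \ref{poissonx}) is the same as the paper's, but the bridge you use does not exist, and this is exactly where the paper has to do real work.

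The first problem is the existence of $b_o$. The evaluation $\delta_e$ is a \emph{state} on the C*-algebra $(\cH^\infty_l(G,\mu),\square)$, not a $\square$-multiplicative functional: for bounded left $\mu$-harmonic $\varphi,\psi$, one has $(\varphi\square\psi)(e) = \lim_n \int \varphi\psi\,d\mu^{*n} = \E[\varphi_\infty\psi_\infty]$ while $\varphi(e)\psi(e) = \E[\varphi_\infty]\E[\psi_\infty]$, and these agree for all pairs only in the Liouville case. So $\delta_e$ corresponds to the measure $\nu_o$ on $B = \Delta(\cH^\infty_l(G,\mu))$, not to a point $b_o \in B$; there is no orbit map $g \mapsto g\cdot b_o$ to extend. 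Relatedly, the linear map $C(B) \to \ell^\infty(G)$, $f \mapsto \hat f$, furnished by Proposition \ref{poissonx} is unital and positive but \emph{not} a $*$-homomorphism (the $\square$-product and the pointwise product differ), so it does not Gelfand-dualize to any continuous surjection $\beta G \to B$. Your sentence "the assignment $g \mapsto g\cdot b_o$ dualizes to such a $\pi$" is the unjustified step.

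The second, more structural problem is a left/right mismatch that your argument cannot avoid even if some map $\pi$ were produced. The algebra $\cH^\infty_l(G,\mu)$ is invariant under the \emph{right} regular representation, not the left, so any natural map to $B$ intertwines the right $G$-action on $\beta G$ with the $G$-action on $B$, while the stationarity of $\nu_1,\nu_2 \in \cP_\mu(\beta G)$ is with respect to the \emph{left} $G$-action. Concretely: a map determined by $\pi^*f = \hat f$ satisfies $\pi_*\olambda = \nu_o$ for $\lambda \in \cF_\mu$ but fails to be left-equivariant, so $\pi_*\nu_i$ is not automatically $\mu$-stationary; a left-equivariant map would introduce an inversion that turns $\pi^*f$ into a \emph{right}-harmonic function, so $\pi_*\olambda = \nu_o$ fails. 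This is precisely the obstruction the paper circumvents with the Cesàro averaging of $\lambda_i|_{\cH^\infty_l(G,\mu)}$ under $r(\mu)$ before invoking extremality of $\nu_o$; your proof skips that step and therefore has no way to conclude $\pi_*\nu_i \in \cP_\mu(B)$. You would need to supply this averaging (or an equivalent argument) to close the gap.
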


\begin{proof}
Let $\lambda \in \cF_\mu$ and suppose that the $\mu$-stationary probability measure
$\olambda$ on $\beta G$ is \emph{not} ergodic. By Lemma \ref{ext erg}, $\olambda$ 
is not extremal in the set $\cP_\mu(\beta G)$, so we can write
\[
\olambda = \alpha \cdot \olambda_1 + \alpha_2 \cdot \olambda_2,
\]
for some $\olambda_1, \olambda_2 \in \cP_\mu(Y)$ and \emph{positive} $\alpha_1, \alpha_2$
with $\alpha_1 + \alpha_2 = 1$. We claim that the corresponding means $\lambda_1$ and $\lambda_2$ 
must belong to $\cF_\mu$, and thus $\lambda$ is \emph{not extremal $\cF_\mu$}, which finishes the proof. \\

To prove this claim, we first note that restriction of $\lambda$ to $\cH_l^{\infty}(G,\mu)$ is 
$\mu$-stationary  \emph{on the right}, i.e.
\[
\lambda(r(\mu)f) = \lambda(f), \quad \forall \, f \in \cH^{\infty}_l(G,\mu),
\]
where $r$ denotes the right regular representation on $\cH^{\infty}_l(G,\mu)$. Note that if $f$ is left $\mu$-harmonic, then so is $r(\mu)^{k}f$ for all $k$ and thus
\[
f(e) = \alpha_1 \cdot \lambda_1(r(\mu)^k f) + \alpha_2 \cdot \lambda_2(r(\mu)^k f)
\]
for all left $\mu$-harmonic functions $f$ and for all $k$. In particular, 
upon averaging over $k$, we may assume that the \emph{restrictions} to $\cH^{\infty}_{l}(G,\mu)$ of both $\lambda_1$ and $\lambda_2$ are fixed by $r(\mu)$. \\

Recall that the map $f \mapsto \hat{f}$ from $L^{\infty}(B,\nu_o)$ to $\cH^{\infty}_l(G,\mu)$ defined in Proposition \ref{poissonx} is an isometry, so the functionals 
\[
\nu_1(f) := \lambda_1(\hat{f}) \qand \nu_2(f) := \lambda_2(\hat{f}) 
\]
can be viewed as regular Borel probability measures on the Poisson boundary $B$ of $(G,\mu)$.
Furthermore, 
\[
\lambda(\hat{f}) = \hat{f}(e) = \nu_o(f), \quad \forall \, f \in C(B).
\]
We wish to prove that 
\[
\nu_o = \nu_1 = \nu_2.
\]
Note that since 
\[
\nu_o = \alpha_1 \cdot \nu_1 + \alpha_2 \cdot \nu_2
\]
and $\nu_o$ is ergodic by Proposition \ref{poissonx}, it suffices to show that $\nu_1$ and 
$\nu_2$ are $\mu$-stationary. Indeed, by Proposition \ref{ext erg}, $\nu_o$ is extremal in $\cP_\mu(B)$, so such an equation must force all measures to be equal. 

Since the restrictions of $\lambda_1$ and $\lambda_2$ to $\cH_l^{\infty}(G,\mu)$ satisfy 
\[
r(\mu)^*\lambda_i = \lambda_i, \quad i = 1, 2,
\]
we have
\begin{eqnarray*}
\mu * \nu_i(f) 
&=& 
\int_G \int_B f(g \cdot b) \, d\nu_i(b) \, d\mu(g)\\
&=& 
\int_G \int_G \Big( \int_B f(g \cdot h^{-1} \cdot b) \, d\nu_o(b) \Big) \, d\lambda_i(h) \, d\mu(g) \\
&=&
\int_G \int_G \Big( \int_B f((h \cdot g^{-1})^{-1} \cdot b) \, d\nu_o(b) \Big) \, d\lambda_i(h) \, d\mu(g) \\
&=&
\int_G \Big( \int_B f(h^{-1} \cdot b) \, d\nu_o(b) \Big) \, d(r(\mu)^*\lambda_i)(h) \\
&=&
r(\mu)^*\lambda_i(\hat{f}) = \lambda_i(\hat{f}) = \nu_i(f),
\end{eqnarray*}
for $i = 1, 2$ and all $f \in C(B)$. Thus $\nu_1$ and $\nu_2$ belong to $\cP_\mu(B)$, which finishes 
the proof. 
\end{proof}

\subsection{Failure of left syndeticity for difference sets}
\label{pitfall}
We shall now construct examples of countable measured groups 
$(G,\mu)$ and $\cF_\mu$-large sets $A \subset G$ with the property 
that their difference sets $AA^{-1}$ are right piecewise left syndetic but \emph{not} left syndetic. 

\subsubsection{Basic setup}
Let $G$ denote the free group on the free symbols $\{a,b\}$. Let 
$\bT = \bR/\bZ$ and fix an irrational number in $\bT$. Also
choose a homeomorphism $T$ on $\bT$ which fixes two distinct 
points $x_{+}$ and $x_{-}$ on $\bT$ and has the property that 
for every open neighborhood $U$ of $x_{+}$ and every closed 
subset $B \subset \bT$ which does not contain $x_{-}$, there 
exists $n$ such that
\[
T^{n}(B) \subset U.
\]
We define an action of $G$ on $\bT$ by
\[
a \cdot x = x + \alpha
\qand
b \cdot x = T(x), \quad x \in \bT.
\]

\begin{lemma}
\label{fail1}
For every non-empty open set $U \subset \bT$ and proper closed subset 
$B \subset \bT$, there exists $g \in G$ such that $g \cdot B \subset U$.
\end{lemma}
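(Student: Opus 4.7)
The plan is to combine the density property of the irrational rotation $a$ with the \emph{north--south} style dynamics of $b = T$, namely that any closed set missing the repelling fixed point $x_-$ can be pushed by iteration of $T$ into any prescribed neighborhood of the attracting fixed point $x_+$. The element $g \in G$ which does the job will have the concrete three--syllable form $g = a^m b^n a^k$.

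First, since $B$ is a proper closed subset of $\bT$, the complement $B^c$ is a non--empty open set. The orbit $\{x_- - k\alpha : k \in \bZ\}$ is dense in $\bT$ because $\alpha$ is irrational, so I can pick $k \in \bZ$ with $x_- - k\alpha \in B^c$. Set $B' := a^{k} \cdot B = B + k\alpha$; then $x_- \notin B'$ while $B'$ is still closed since $a$ acts by homeomorphism.

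Next I prepare the target. Fix any point $p \in U$. By density of $\{p - m\alpha\}$ in $\bT$, or equivalently $\{x_+ + m\alpha\}$, I can choose $m \in \bZ$ such that $a^{m} \cdot x_+ = x_+ + m\alpha \in U$. Because $U$ is open and the translation by $m\alpha$ is a homeomorphism, there exists an open neighborhood $V$ of $x_+$ with
\[
a^{m} \cdot V = V + m\alpha \subset U.
\]

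Now the assumed dynamical property of $T$ applies: $B'$ is closed and does not contain $x_-$, and $V$ is an open neighborhood of $x_+$, so there exists $n \in \bN$ with
\[
b^{n} \cdot B' = T^{n}(B') \subset V.
\]
Setting $g := a^{m} b^{n} a^{k} \in G$, we conclude
\[
g \cdot B = a^{m} b^{n} (a^{k} \cdot B) = a^{m}\bigl(T^n(B')\bigr) \subset a^{m} \cdot V \subset U,
\]
which is exactly the statement to be proved. The main point to verify carefully is that the three ingredients combine in the correct order (rotate to avoid $x_-$, then contract toward $x_+$, then rotate into $U$); there is no genuine obstacle beyond assembling them.
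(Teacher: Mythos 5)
Your proof is correct and is essentially the same argument as the paper's: rotate $B$ by an appropriate power of $a$ to miss $x_-$, apply a power of $b=T$ to contract the result into a small neighborhood of $x_+$, then rotate that neighborhood into $U$ by another power of $a$, yielding an element of the form $a^{m}b^{n}a^{k}$. The only differences are cosmetic (you take $V := a^{-m}\cdot U$ explicitly and track the indices slightly more carefully; the paper's printed indices in the final displayed inclusion appear to contain a small typo that your bookkeeping avoids).
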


\begin{proof}
Since $\alpha$ is irrational, the restricted action by the subgroup
$a^{\bZ}$ is minimal, and thus we can find $m, n \in \bZ$ such 
that  $a^{m} \cdot U$ is an open neighborhood of $x_{+}$ and 
$a^{n} \cdot B$ does not contain $x_{-}$. By the defining property
of $T$, we can now find $k \in \bZ$ such that
\[
b^{k} \cdot (a^{m} \cdot B) \subset a^{n} \cdot U,
\]
or equivalently, $(a^{-n} \, b^{k} \, a^{m}) \cdot B \subset U$. 
\end{proof}

\begin{corollary}
\label{fail2}
For every finite set $F \subset G$ and closed subset $A \subset \bT$ 
with empty interior, there exists $g \in G$ such that $FA \cap g \cdot A$ 
is an empty subset of $\bT$.
\end{corollary}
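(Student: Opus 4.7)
The plan is to reduce the statement directly to Lemma \ref{fail1} by verifying that the hypothesis ``$U$ non-empty open and $B$ proper closed'' applies in our situation, where $U := \bT \setminus FA$ and $B := A$.

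First I would argue that $FA$ is a closed subset of $\bT$ with empty interior. Indeed, each element $f \in G$ acts on $\bT$ by a homeomorphism (a rotation or a power of $T$, or a composition thereof), so $f \cdot A$ is closed with empty interior for every $f \in F$. Since $F$ is finite, $FA = \bigcup_{f \in F} f \cdot A$ is a finite union of closed sets, hence closed. To see that $FA$ has empty interior, suppose toward a contradiction that some non-empty open set $V \subset \bT$ is contained in $FA$; enumerating $F = \{f_1,\dots,f_n\}$, the set $V \setminus (f_1 \cdot A \cup \cdots \cup f_{n-1} \cdot A)$ is open and contained in $f_n \cdot A$, which would contradict the fact that $f_n \cdot A$ has empty interior unless this set is empty. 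Iterating this argument downward forces $V \subset f_1 \cdot A$, again contradicting empty interior.

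Next I would set $U := \bT \setminus FA$, which by the previous step is non-empty and open. Since $A$ itself has empty interior and $\bT$ is non-empty and open, $A$ is a proper closed subset of $\bT$. Therefore Lemma \ref{fail1} applies to the pair $(U, A)$ and furnishes some $g \in G$ with
\[
g \cdot A \subset U = \bT \setminus FA,
\]
which is precisely the statement that $FA \cap g \cdot A = \emptyset$.

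No step here looks like a genuine obstacle; the whole content of the corollary is packaged into Lemma \ref{fail1}, and the only item to verify by hand is the elementary fact that a finite union of nowhere dense closed sets is nowhere dense. The only point worth being careful about is to check that each translate $f \cdot A$ really is closed with empty interior, which rests on the group $G$ acting by homeomorphisms of $\bT$.
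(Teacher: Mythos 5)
Your proof is correct and takes essentially the same route as the paper: reduce to Lemma \ref{fail1} with $B := A$ and $U := (FA)^c$ after checking that $FA$ is a proper closed subset. The only cosmetic difference is that where the paper invokes Baire's Category Theorem to conclude $FA \neq \bT$, you give a hands-on inductive argument showing more generally that a finite union of closed nowhere-dense sets is nowhere dense; both yield the needed properness, and your version is elementary where the paper's is a one-line appeal to Baire.
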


\begin{proof}
By Lemma \ref{fail1} it suffices to show that $FA$ is a proper (closed is immediate) subset for 
every finite set $F \subset G$. Indeed, if this is the case, then we can choose 
\[
B := A \qand U:= (FA)^c 
\]
and find (using the previous lemma) an element $g \in G$ such that $g \cdot B \subset U$, or equivalently, 
\[
FA \cap g \cdot A = \emptyset.
\]
However, if $FA = \bT$ for some finite set, then by Baire's Category 
Theorem, at least one of $fA$, with $f \in F$, has non-empty interior. Since $G$ acts by homeomorphisms, this would imply that $A$ has non-empty interior, which we have assumed it does not have. 
\end{proof}

Fix an admissible (symmetric) $\mu \in \cP(G)$ and choose an 
ergodic $\mu$-stationary Borel probability measure $\nu$ on 
$\bT$. The support of $\nu$ is a non-empty closed $G$-invariant
subset, so in particular it is invariant under the \emph{dense} 
subgroup $\bZ \cdot \alpha \subset \bT$, and must thus be equal
to $\bT$.

Recall that if $A$ is any subset of $\bT$ and $x$ is a 
point in $\bT$, then 
\[
A_{x} := \Big\{ g \in G \, : \, g \cdot x \in A \Big\} \subset G. 
\]
The following lemma is now an immediate consequence of Corollary \ref{positive density}. 

\begin{lemma}
\label{posmeasure}
For every Borel set $A \subset \bT$ with positive $\nu$-measure, 
there exists a $\nu$-conull Borel set $X \subset \bT$ such that 
$A_x \subset G$ is $\cF_\mu$-large for all $x \in X$. 
\end{lemma}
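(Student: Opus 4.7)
The plan is to apply Corollary \ref{positive density} essentially verbatim, so this is a direct deduction rather than a genuinely new argument. First I would record that the hypotheses of Corollary \ref{positive density} are already in place from the preceding paragraph: an admissible symmetric measure $\mu \in \cP(G)$ has been fixed, and $\nu$ was chosen to be an ergodic $\mu$-stationary Borel probability measure on the compact $G$-space $Y = \bT$. Thus $\nu \in \Erg_\mu(\bT) \subset \cP_\mu(\bT)$.

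Next I would invoke Corollary \ref{positive density} with $Y = \bT$ and $B = A$. It produces a $\nu$-conull Borel set $X \subset \bT$ with the property that
\[
d^*_{\cF_\mu}(A_x) \geq \nu(A), \quad \forall \, x \in X.
\]
Since by hypothesis $\nu(A) > 0$, this inequality says precisely that $A_x$ is $\cF_\mu$-large for every $x \in X$, which is what we wanted.

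The only point that might require a small comment is to confirm that the set $X$ produced by Corollary \ref{positive density} is indeed Borel (and not merely $\nu$-measurable), but this is already built into the statement of that corollary. There is essentially no obstacle here; the lemma is a packaging step that records the consequence of the ergodic theorem (Lemma \ref{ergodic theorem}), previously applied to $\rho(\mu)$ acting on $L^2(\bT,\nu)$, in the specific form that will be needed in Subsection \ref{pitfall} to construct the promised pathological example.
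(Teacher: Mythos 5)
Your proposal is correct and matches the paper exactly: the paper also presents Lemma~\ref{posmeasure} as an immediate consequence of Corollary~\ref{positive density}, applied with $Y = \bT$, $B = A$, and the previously fixed ergodic $\nu \in \cP_\mu(\bT)$, using $\nu(A) > 0$ to conclude $\cF_\mu$-largeness.
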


Before we can construct our examples, we need the following general
result.

\begin{lemma}
Let $X$ be a compact separable Hausdorff space and let $\nu$ be a non-atomic regular Borel probability measure on $X$ with full support. Then
there exists a closed set $B \subset X$ with empty interior and positive
$\nu$-measure. 
\end{lemma}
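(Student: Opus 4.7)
The plan is to build $B$ as the complement of a small-measure open dense set, in the spirit of a fat Cantor set. First I would choose a countable dense sequence $(x_n)_{n \geq 1}$ in $X$, using separability. Because $\nu$ is non-atomic, each singleton $\{x_n\}$ has measure zero, and by outer regularity of $\nu$ (which is built into the definition of a regular Borel measure) each $\{x_n\}$ has open neighbourhoods of arbitrarily small measure. So I can select, for each $n$, an open set $U_n \ni x_n$ with
\[
\nu(U_n) < \frac{1}{2^{n+1}}.
\]

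Set $U := \bigcup_{n \geq 1} U_n$. Then $U$ is open and contains the dense set $\{x_n : n \geq 1\}$, so $U$ is a dense open subset of $X$. Its complement $B := X \setminus U$ is therefore closed with empty interior, since any non-empty open subset of $B$ would be disjoint from the dense set $U$. Finally, by $\sigma$-subadditivity,
\[
\nu(U) \leq \sum_{n \geq 1} \nu(U_n) < \sum_{n \geq 1} \frac{1}{2^{n+1}} = \frac{1}{2},
\]
so $\nu(B) = 1 - \nu(U) > \tfrac{1}{2} > 0$, as required.

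Honestly there is no real obstacle here; the only things one has to be careful about are that separability is enough to produce the countable dense sequence (we do not need metrizability or second countability), and that outer regularity of $\nu$ applies to the individual singletons $\{x_n\}$, which are automatically Borel. The full-support hypothesis is not actually used in this construction; non-atomicity alone suffices to shrink each $U_n$ enough to make the total measure of $U$ less than $1$.
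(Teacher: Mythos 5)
Your proof is correct and is essentially the same argument the paper gives: choose a countable dense set, shrink open neighbourhoods of the dense points to have measures summing to less than $1$, and take the complement of the resulting dense open set. Your remark about the hypotheses is also a fair catch: the paper's proof of this lemma oddly attributes the ability to choose small-measure neighbourhoods to the full-support assumption ("Since $\nu$ has full support, we can, for every $n$, choose a neighborhood $U_n$ of $y_n$ with $\nu$-measure at most $q_n$"), whereas, exactly as you say, this step follows from non-atomicity together with outer regularity, and the full-support hypothesis plays no role in the construction.
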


\begin{proof}
Since $\nu$ is a non-atomic and regular Borel probability measure, every point $x$ in $X$ admits a decreasing sequence $(U_n)$ of open neighborhoods with the property that
\[
\forall \, n,  \quad\nu(U_n) \leq \frac{1}{n}.
\] 
Let $Y$ be a countable dense subset of $X$ and fix an enumeration 
$(y_n)$ of the elements in the set $Y$. Let $(q_n)$ be a sequence of positive
real numbers with
\[
\sum_{n} q_n < 1.
\]
Since $\nu$ has full support, we can, for every $n$, choose a neighborhood $U_n$ of $y_n$ with $\nu$-measure at most $q_n$. 
If we let
\[
U = \bigcup_{n} U_n,
\] 
then, since $Y$ is dense, $U$ is open and dense, and thus $B = U^c$ is closed with no interior and
\[
\nu(B) = 1 - \mu(B) \geq 1 - \sum_{n} q_n > 0,
\]
which finishes the proof.
\end{proof}

\begin{remark}
The assumption that $X$ is separable is crucial. Indeed, let $(Z,\eta)$ be
any non-atomic probability measure space and let $X$ denote the Gelfand
spectrum of $L^{\infty}(Z,\eta)$. Then $X$ is a compact (non-separable) Hausdorff space and the corresponding probability measure $\bar{\eta}$ on $X$ (viewing $\eta$ as a positive linear functional on 
$L^{\infty}(Z,\eta)$) is a (non-atomic) \emph{normal} measure 
(with full support), i.e. for every Borel set $B \subset X$, we have
\[
\bar{\eta}(B^{o}) = \bar{\eta}(B).
\]
In particular, a Borel subset of $X$ has positive $\nu$-measure if and only if $B$ has non-empty interior. See Chapter 1 in \cite{Gam} for references.
\end{remark}

Since $\nu$ has full support and $\bT$ is separable, the previous lemma asserts that there is a proper closed subset $A \subset \bT$ with positive $\nu$-measure, but empty interior. By Lemma \ref{posmeasure} there 
exists a $\nu$-conull Borel set $X \subset \bT$ such that the sets 
$A_{x} \subset G$ are $\cF_\mu$-large for all $x \in X$. 

We claim that the difference set $A_{x}A_{x}^{-1}$ is \emph{not} left syndetic. Indeed, assume that $FA_x A_x^{-1} = G$ for some $x \in X$. Then, for all $g \in G$, we have
\[
FA_x \cap g \cdot A_x = (FA \cap g \cdot A)_x \neq \emptyset. 
\]
In particular, the set $FA \cap g \cdot A \subset S^1$ is non-empty for all $g \in G$. However, this contradicts Corollary \ref{fail2}.

\section{Proof of Proposition \ref{ergodicity}}
\label{Proof of ergodicity}

Let $(G,\mu)$ be a countable measured group. A set $\cC \subset \cL_\mu$ is \emph{left ergodic}
if every extremal element in $\cC$ corresponds to an \emph{ergodic} $\mu$-stationary regular Borel probability measure on the Stone-\v{C}ech compactification $\beta G$ (with respect to the induced left action) via the Gelfand map
\[
\tau(\varphi)(g) = \varphi(g \cdot \bar{e}), \quad g \in G,
\]
By Corollary \ref{left ergodic}, the set 
$\cF_\mu \subset \cL_\mu$ of Furstenberg-Poisson means on $G$ is left ergodic, so Proposition \ref{ergodicity} follows immediately
from the following lemma.

\begin{lemma}
Suppose $\cC \subset \cM(G)$ is left ergodic and $A \subset G$ is $\cC$-large. Then, 
\[
\sup d_{\cC}^*(FA) = 1,
\]
where the supremum is taken over all finite subsets $F \subset G$.
\end{lemma}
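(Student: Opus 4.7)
The plan is to combine Bauer's (Brauer's) maximum principle with the ergodicity hypothesis, together with the standard identification of clopen subsets of $\beta G$ with subsets of $G$.

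First I would invoke Corollary \ref{max2} applied to the set $C = A$ to produce an \emph{extremal} point $\lambda \in \cC$ with
\[
\lambda'(A) \;=\; d^*_{\cC}(A) \;>\; 0,
\]
the positivity coming from the hypothesis that $A$ is $\cC$-large. The point of extremality is that, by the left ergodicity of $\cC$, the associated Borel probability measure $\olambda := \tau^*\lambda$ on $\beta G$ is ergodic for the left $G$-action on $\beta G$ and, in particular, non-singular.

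Now I would pass to the clopen set $\oA \subset \beta G$ corresponding to $A$ and use the basic measure correspondence $\lambda'(A) = \olambda(\oA)$. Since $\olambda(\oA) > 0$, and the set
\[
G \cdot \oA \;=\; \bigcup_{g \in G} g \cdot \oA \;\subset\; \beta G
\]
is Borel and $G$-invariant, ergodicity of $\olambda$ forces $\olambda(G \cdot \oA) = 1$. Exhausting $G$ by an increasing sequence of finite sets $F_n$ and using countable additivity, one obtains $\olambda(F_n \cdot \oA) \to 1$. Since each $F_n \cdot \oA = \bigcup_{f \in F_n} f \cdot \oA$ is a \emph{finite} union of clopen sets, it coincides with $\overline{F_n A}$, the clopen set in $\beta G$ corresponding to the subset $F_n A \subset G$. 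Therefore
\[
\lambda'(F_n A) \;=\; \olambda(\overline{F_n A}) \;=\; \olambda(F_n \cdot \oA) \;\longrightarrow\; 1.
\]

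Consequently $d^*_{\cC}(F_n A) \geq \lambda'(F_n A) \to 1$, yielding $\sup_F d^*_{\cC}(FA) = 1$ with the supremum taken over finite subsets $F \subset G$. There is no serious obstacle: the whole argument is a direct exploitation of ergodicity on the Gelfand spectrum once one has secured the existence of an extremal $\lambda$ at which the supremum $d^*_{\cC}(A)$ is attained. The only point that requires care is the identification $\overline{F_n A} = F_n \cdot \oA$, which holds because closure in $\beta G$ commutes with finite unions and the action of $G$ on $\beta G$ is by homeomorphisms preserving clopen sets.
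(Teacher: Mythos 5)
Your proof is correct and essentially identical to the paper's: both extract an extremal $\lambda \in \cC$ realizing $d^*_{\cC}(A)$ via Corollary \ref{max2}, use left ergodicity to get $\olambda(G\oA) = 1$, and then pass through $\sigma$-additivity and the clopen-set correspondence to conclude $\sup_F d^*_{\cC}(FA) = 1$. The only cosmetic difference is that you phrase the monotone-limit step via an explicit exhaustion $F_n \uparrow G$ where the paper writes the supremum directly.
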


\begin{proof}
By Corollary \ref{max2}, there exists an extremal $\lambda \in \cC$ such that 
\[
d^*_{\cC}(A) = \lambda'(A) = \overline{\lambda}(\overline{A}) > 0.
\]
Since $\cC$ is left ergodic, $\overline{\lambda}$ is ergodic for the 
induced left action of $G$ on $\beta G$. In particular, by the $\sigma$-additivity 
of $\overline{\lambda}$, we have
\[
1 = \overline{\lambda}(G\overline{A}) = \sup \overline{\lambda}(F\overline{A}),
\]
where the supremum is taken over all finite subsets $F \subset G$. Since $F \overline{A}$ is 
clopen in $\beta G$ for every finite $F$, we conclude that
\[
1 = \sup \overline{\lambda}(F\overline{A}) = \sup \lambda'(FA) \leq \sup d^*_{\cC}(FA),
\]
which finishes the proof.
\end{proof}

\section{Proof of Proposition \ref{sum}}
\label{proof of sum}

Recall that a set $C \subset G$ is \emph{left syndetic} (or left relatively dense) if there exists a 
finite set $F \subset G$ such that $FC = G$. By Lemma \ref{syndetic lemma}, this is equivalent 
to positivity of the lower $\cL_\mu$-density. 

\begin{proposition}
\label{lower bound}
Suppose $C \subset G$ is left syndetic. Then
\[
d_*^{\cL_\mu}(A^{-1}C) \geq d^*_{\cF_\mu}(A)
\]
for all $A \subset G$.
\end{proposition}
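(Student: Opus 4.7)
Plan: Fix $\lambda \in \cL_\mu$ and $\eta \in \cF_\mu$; I will prove the pointwise inequality $\lambda'(A^{-1}C) \geq \eta'(A)$, which yields the proposition after taking the supremum over $\eta \in \cF_\mu$ and the infimum over $\lambda \in \cL_\mu$.

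The engine is a single $\mu$-harmonic identity produced by $\lambda$ and then consumed by $\eta$. For any subset $D \subset G$, the function
\[
h_D(g) \;:=\; \lambda\bigl(\rho(g)\chi_D\bigr) \;=\; \lambda'(gD)
\]
is bounded and left $\mu$-harmonic, since the $\mu$-stationarity of $\lambda$ together with $\rho(h)\rho(g) = \rho(hg)$ gives
\[
(\mu * h_D)(g) \;=\; \int_G \lambda\bigl(\rho(hg)\chi_D\bigr)\,d\mu(h) \;=\; \lambda\bigl(\rho(\mu)\rho(g)\chi_D\bigr) \;=\; \lambda\bigl(\rho(g)\chi_D\bigr) \;=\; h_D(g).
\]
Applying this to $D := A^{-1}C$ and invoking the defining property $\eta|_{\cH^{\infty}_l(G,\mu)} = \delta_e$ yields the master identity
\[
\lambda'(A^{-1}C) \;=\; h_{A^{-1}C}(e) \;=\; \eta(h_{A^{-1}C}) \;=\; \int_G \lambda'(g \cdot A^{-1}C)\,d\eta(g).
\]
The task is thus reduced to a pointwise lower bound on the integrand, ideally $\lambda'(gA^{-1}C) \geq \chi_A(g)$ for $\eta$-almost every $g$.

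The elementary starting point is the set containment $C \subset aA^{-1}C$ for every $a \in A$, which follows from the tautology $c = a \cdot a^{-1} \cdot c$ and yields $\lambda'(aA^{-1}C) \geq \lambda'(C)$. This weak form by itself only gives $\lambda'(A^{-1}C) \geq \lambda'(C) \cdot \eta'(A)$, losing the factor $\lambda'(C)$. To close this gap, I would exploit the full strength of syndeticity $FC = G$ together with ergodicity: by Corollary \ref{max2} one can assume $\lambda$ extremal in $\cL_\mu$, so $\bar\lambda$ on $\beta G$ is $\mu$-stationary and ergodic by Proposition \ref{ext erg}. Passing to the Poisson-boundary realization of Proposition \ref{poissonx}, the harmonic function $h_{aA^{-1}C}$ corresponds to an element of $L^{\infty}(B,\nu_o)$; the covering identity $\sum_{f \in F} h_{fC} \geq 1$ on $G$, combined with the ergodic $G$-action on $(B,\nu_o)$, forces this element to concentrate near $1$, yielding the sharp bound on $A$ required for
\[
\lambda'(A^{-1}C) \;=\; \int_G \lambda'(gA^{-1}C)\,d\eta(g) \;\geq\; \int_G \chi_A(g)\,d\eta(g) \;=\; \eta'(A).
\]

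The hard part will be this last upgrade. The bounded $\mu$-harmonic function $h_{aA^{-1}C}$ is not in general continuous on $\beta G$, so Lemma \ref{harmonic invariant} cannot be invoked verbatim; it must be transferred through the Poisson boundary via Proposition \ref{poissonx}, and one must carefully combine the syndeticity hypothesis with the $G$-equivariant ergodic structure of $(B,\nu_o)$ to extract the quantitative bound matching $\eta'(A)$ rather than only $\lambda'(C) \cdot \eta'(A)$.
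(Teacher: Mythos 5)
The proposal correctly computes that $h_D(g) := \lambda(\rho(g)\chi_D) = \lambda'(gD)$ is a bounded left $\mu$-harmonic function and that $\eta \in \cF_\mu$ acts on it by evaluation at $e$; this much is sound. But the strategy built on it has a fatal structural flaw. After you invoke $\eta|_{\cH^\infty_l(G,\mu)} = \delta_e$, the ``master identity'' reads $\lambda'(A^{-1}C) = h_{A^{-1}C}(e) = \eta(h_{A^{-1}C})$, which is an \emph{exact} evaluation of the quantity you need to lower-bound. There is therefore no independent ``integrand'' left to estimate: demanding $\eta(h_{A^{-1}C}) \geq \eta(\chi_A)$ is literally the same inequality you started with, and trying to force it via a pointwise domination $h_{A^{-1}C} \geq \chi_A$ asks for something strictly stronger.

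That pointwise bound is in fact false, and not in an edge case. Take $G = \bZ$, $\mu$ the symmetric measure on $\{\pm 1\}$, and $A = C = 2\bZ$, a syndetic set. Then $A^{-1}C = 2\bZ$, and $\mu$-stationarity forces $\lambda'(2\bZ) = \lambda'(1+2\bZ) = \tfrac{1}{2}$ for \emph{every} $\lambda \in \cL_\mu$, extremal or not. Thus $h_{A^{-1}C}$ is the constant function $\tfrac{1}{2}$, so $h_{A^{-1}C}(a) = \tfrac12 < 1 = \chi_A(a)$ for every $a\in A$ --- the inequality you need fails on all of $A$, yet the proposition holds (with equality $\tfrac12 \geq \tfrac12$). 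Extremality, ergodicity of $\olambda$, the Poisson-boundary realization and the covering inequality $\sum_{f\in F} h_{fC} \geq 1$ cannot rescue this: they cannot push a harmonic function that is provably $\equiv \tfrac12$ up to $1$ on $A$. The gap is not a missing technical lemma but a wrong reduction.

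The paper's argument is genuinely different. It does \emph{not} work with $h_{A^{-1}C}$ on $G$, but with two lower semicontinuous $\mu$-harmonic functions on $\beta G$,
\[
\psi_1(q) := \olambda(\oA_q^{-1}\oC), \qquad \psi_2(q) := \oeta(\oC_q^{-1}\oA),
\]
(Lemma \ref{lsc harmonic}). After choosing $\lambda$ extremal so that $d_*^{\cL_\mu}(A^{-1}C) = \lambda'(A^{-1}C) \geq \olambda(A^{-1}\oC) = \psi_1(\bar e)$ via Lemma \ref{lower bound help2}, one applies the $\cF_\mu$-harmonic evaluation to $\psi_1$ and gets $\psi_1(\bar e) \geq \int_{\beta G}\psi_1\,d\oeta$ (Lemma \ref{lower bound help4}); the crucial step is then the Fubini switch $\int\psi_1\,d\oeta = \int\psi_2\,d\olambda$, which passes from an expression controlled by $\eta$ to one controlled by $\lambda$. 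Only now does syndeticity of $C$ enter: the set of minima of the l.s.c.~harmonic function $\psi_2$ is closed and $G$-invariant (Lemma \ref{min invariant}), and since $\oC_q \neq \emptyset$ for every $q$, there is a point $q_o$ in that set with $e\in\oC_{q_o}$, whence $\int\psi_2\,d\olambda \geq \psi_2(q_o) \geq \oeta(\oA) = \eta'(A)$. Note also that the restriction of $\psi_1$ to $G$, namely $g\mapsto\olambda(gA^{-1}\oC)$, is pointwise $\leq$ your $h_{A^{-1}C}(g)=\lambda'(gA^{-1}C)$ --- the finitely additive $\lambda'$ can strictly exceed the $\sigma$-additive $\olambda$ on a countable union of clopens --- and it is this \emph{smaller}, $\beta G$-compatible harmonic function for which the Fubini identity is available. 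Your proposal collapses these two distinct harmonic functions into one and thereby loses exactly the degree of freedom needed to convert syndeticity of $C$ into a lower bound by $\eta'(A)$.
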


This proposition can be used to give a quick proof of Proposition \ref{sum}.

\begin{proof}[Proof of Proposition \ref{sum}]
Let $(G,\mu)$ be a countable measured group, and assume for contradiction that $A, B \subset G$ satisfy
\[
d^*_{\cF_\mu}(A) + d^*_{\cL_\mu}(B) > 1,
\]
but the product set $AB$ is not right thick. Then, by Proposition \ref{thick and syndetic}, the complementary 
set $C := (AB)^c$ is syndetic,
and since $A^{-1}C \subset B^c$, we have
\[
d_*^{\cL_\mu}(A^{-1}C) \leq d_*^{\cL_\mu}(B^c) = 1 - d^*_{\cL_\mu}(B) < d^*_{\cF_\mu}(A).
\]
However, this contradicts Proposition \ref{lower bound}.
\end{proof}

\subsection{Proof of Proposition \ref{lower bound}}

Recall that if $A$ is a subset of a countable group $G$, we denote by $\oA$ the corresponding 
"closure" set in the Stone-\v{C}ech compactification $\beta G$. If $q \in \beta G$, we define
\[
\oA_q := \Big\{ g \in G \, : \, g \cdot q \in \oA \Big\} \subset G.
\]
In particular, $A = \oA_{\bar{e}}$. 

\begin{lemma}
\label{lsc harmonic}
Let $A \subset G$ and $\eta \in \cL_\mu$. For every Borel set $D \subset \beta G$, the function
\[
\psi(q) := \oeta(\oA^{-1}_q D), \quad q \in \beta G,
\]
is $\mu$-harmonic and equal to the supremum of an increasing sequence of continuous functions on 
$\beta G$.
\end{lemma}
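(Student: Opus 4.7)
The plan is to verify the two assertions by direct computation, exploiting two basic facts: the identity $\oA_{g\cdot q} = \oA_q \cdot g^{-1}$, which lets us translate the left $G$-action on $\beta G$ into a right translation on $\oA_q$, and the clopeness of the sets $\{q \in \beta G : h \in \oA_q^{-1}\}$ for every $h \in G$.

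For the $\mu$-harmonicity, I will first unwind the definition of $\oA_q$: for $h \in G$, $h \in \oA_{g\cdot q}$ iff $hg\cdot q \in \oA$ iff $hg \in \oA_q$, so $\oA_{g\cdot q} = \oA_q g^{-1}$ and consequently $\oA_{g\cdot q}^{-1} = g \oA_q^{-1}$. Setting $E_q := \oA_q^{-1} D \subset \beta G$, this gives $\oA_{g\cdot q}^{-1} D = g \cdot E_q$, so $\psi(g\cdot q) = \oeta(g \cdot E_q)$. Recalling that for a Borel set $E \subset \beta G$ we have $(\rho(g)^{*}\oeta)(E) = \oeta(g\cdot E)$, integration against $\mu$ yields
\[
\int_G \psi(g\cdot q)\, d\mu(g) = \int_G (\rho(g)^{*}\oeta)(E_q)\, d\mu(g) = (\rho(\mu)^{*}\oeta)(E_q) = \oeta(E_q) = \psi(q),
\]
where the second-to-last equality uses $\eta \in \cL_\mu$. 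This is precisely the $\mu$-harmonicity asserted.

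For the semicontinuous approximation, fix an enumeration $G = \{g_1, g_2, \ldots\}$ and set $F_n := \{g_1,\dots,g_n\}$. Define
\[
\psi_n(q) := \oeta\Big( \bigcup_{h \in F_n \cap \oA_q^{-1}} h \cdot D \Big).
\]
For each $h \in G$, the set $\{q \in \beta G : h \in \oA_q^{-1}\} = \{q : h^{-1}\cdot q \in \oA\} = h\cdot \oA$ is clopen (as $\oA$ is clopen and the left action is by homeomorphisms). Consequently, for each subset $I \subset F_n$, the set
\[
E_{n,I} := \Big\{ q \in \beta G : F_n \cap \oA_q^{-1} = I \Big\} = \bigcap_{h \in I} h\cdot \oA \, \cap \, \bigcap_{h \in F_n \setminus I}(h\cdot \oA)^c
\]
is clopen, and $\psi_n = \sum_{I \subset F_n} \oeta\big(\bigcup_{h\in I} h\cdot D\big) \cdot \chi_{E_{n,I}}$ is a finite linear combination of indicator functions of clopen sets, hence continuous on $\beta G$.

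Finally, $F_n \subset F_{n+1}$ forces $\psi_n \leq \psi_{n+1}$ pointwise, and the increasing union $\bigcup_{h \in F_n \cap \oA_q^{-1}} h\cdot D \nearrow \bigcup_{h \in \oA_q^{-1}} h\cdot D = \oA_q^{-1} D$ combined with $\sigma$-additivity of $\oeta$ gives $\psi_n(q) \nearrow \psi(q)$ for every $q \in \beta G$. I do not expect any real obstacle here; the only point requiring a moment's thought is getting the identity $\oA_{g\cdot q}^{-1} = g\oA_q^{-1}$ on the correct side, and noting that continuity of $\psi_n$ rests on the extreme disconnectedness inherited by $\beta G$ together with clopeness of $\oA$.
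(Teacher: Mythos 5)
Your proof is correct and follows essentially the same structure as the paper's: to get continuity of the approximants $\psi_n$ you partition $\beta G$ into the clopen sets $E_{n,I}$ (this is exactly what the paper's Lemma \ref{UF2} packages), and for harmonicity you use the identity $\oA_{g\cdot q} = \oA_q g^{-1}$ together with $\mu$-stationarity of $\oeta$. Two small remarks are in order. First, your harmonicity computation is actually a hair cleaner than the paper's: by working directly with $\psi(g\cdot q) = \oeta(g\cdot E_q)$ and the identity $(\rho(g)^*\oeta)(E)=\oeta(g\cdot E)$, you reach $\rho(\mu)^*\oeta = \oeta$ without ever invoking symmetry of $\mu$, whereas the paper computes $\psi(g^{-1}\cdot q)$ and then has to appeal to symmetry to conclude. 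Second, the closing aside that continuity of $\psi_n$ ``rests on the extreme disconnectedness inherited by $\beta G$'' is not right: extreme disconnectedness is irrelevant here. Clopeness of $\oA$ (hence of each $h\cdot\oA$) is an immediate consequence of the bijective correspondence between arbitrary subsets of $G$ and clopen subsets of $\beta G$, which in turn comes from the $*$-isomorphism $\ell^\infty(G)\cong C(\beta G)$ and the fact that idempotents in $C(\beta G)$ are indicators of clopen sets. This inaccuracy is harmless to the argument, but worth flagging since extreme disconnectedness \emph{is} genuinely used elsewhere in the paper (in Lemma \ref{local inclusion0}) and conflating the two could cause confusion.
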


\begin{proof}
Fix an increasing exhaustion $(F_n)$ of $G$ by finite sets, and define
\[
\psi_n(q) := \oeta((\oA_q \cap F_n)^{-1} D), \quad q \in \beta G,
\]
for all $n$. By $\sigma$-additivity, $\psi = \sup_n \psi_n$, so it suffices to show that 
$\psi_n$ is continuous for every $n$. 

Fix $q \in \beta G$ and $n$. By Lemma \ref{UF2}, the set of $p \in \beta G$ such that 
\[
\oA_p \cap F_n = \oA_q \cap F_n
\]
and thus $\psi_n(p) = \psi_n(q)$, is clopen. This clearly implies that $\psi_n$ is continuous. 

To show that $\psi$ is $\mu$-harmonic, we first note that 
\[
\oA_{g \cdot q} = \oA_{q} \cdot g^{-1}, \quad \forall \, g \in G,
\]
and thus,
\[
\psi(g^{-1} \cdot q) = \oeta(g^{-1} \cdot \oA_q^{-1} D) = \int_{\beta G} \chi_{\oA^{-1}_q D}(g \cdot p) \, d\oeta(p).
\]
Since $\mu * \oeta = \oeta$, we have $\mu * \psi = \psi$ (remember that $\mu$ is symmetric), which finishes the proof. 
\end{proof}

\begin{lemma}
\label{lower bound help3}
Suppose $A \subset G$. If $C \subset G$ is left syndetic, then
\[
\sup_{\eta \in \cF_\mu} \int_{\beta G} \oeta(\oC_{q}^{-1} \oA) \, d\olambda(q) \geq d^*_{\cF_\mu}(A),
\]
for all $\lambda \in \cL_\mu$.
\end{lemma}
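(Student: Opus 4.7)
The plan is to combine a $\mu$-harmonicity property of a carefully chosen integrand with the syndeticity of $C$, in order to realize $\Psi(q)$ as the value of $\Psi$ at a translate lying in $\oC$, where a trivial inclusion kicks in. Fix $\lambda \in \cL_\mu$. Since $\cF_\mu$ is weak-$\ast$-compact and convex, Corollary \ref{max2} yields $\eta_0 \in \cF_\mu$ with $\oeta_0(\oA) = d^*_{\cF_\mu}(A)$, and I define
\[
\Psi(q) := \oeta_0(\oC_q^{-1}\oA), \qquad q \in \beta G.
\]
By Lemma \ref{lsc harmonic}, applied with the roles of $A$ and $D$ there played by $C$ and $\oA$ and using $\eta_0 \in \cL_\mu$, the function $\Psi$ is $\mu$-harmonic on $\beta G$ and equal to the supremum of an increasing sequence of continuous functions; in particular $\Psi \in L^\infty(\beta G, \olambda)$.

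The key step is to apply Corollary \ref{cor invariant} with $\nu = \olambda$ and $\varphi = \Psi$: since $\mu$ is symmetric, the $\mu$-harmonicity of $\Psi$ is equivalent to $\rho(\mu)\Psi = \Psi$, and the corollary therefore asserts that $\Psi$ is essentially $G$-invariant as an element of $L^\infty(\beta G, \olambda)$. Intersecting the resulting null sets over the countable group $G$ produces a single $\olambda$-conull Borel set $N \subset \beta G$ with $\Psi(gq) = \Psi(q)$ for every $g \in G$ and every $q \in N$.

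Now I enter the syndeticity of $C$. Fix a finite $F \subset G$ with $FC = G$; then $\bigcup_{f \in F} f \oC = \beta G$, so for every $q \in N$ there exists some $f \in F$ with $fq \in \oC$. For such an $f$, $e \in \oC_{fq}$, hence $e \in \oC_{fq}^{-1}$, and consequently $\oA \subset \oC_{fq}^{-1}\oA$. Combining this with the $G$-invariance of $\Psi$ on $N$ gives
\[
\Psi(q) = \Psi(fq) = \oeta_0(\oC_{fq}^{-1}\oA) \geq \oeta_0(\oA) = d^*_{\cF_\mu}(A)
\]
for every $q \in N$. Integrating against $\olambda$ and using $\olambda(N) = 1$, I conclude
\[
\sup_{\eta \in \cF_\mu} \int_{\beta G}\oeta(\oC_q^{-1}\oA)\,d\olambda(q) \;\geq\; \int_{\beta G} \Psi \, d\olambda \;\geq\; d^*_{\cF_\mu}(A).
\]
The main subtlety lies in justifying the $L^\infty$ identity $\rho(\mu)\Psi = \Psi$ needed for Corollary \ref{cor invariant}, where the symmetry of $\mu$ reconciles the two conventions for $\mu \ast \Psi$, and in passing from the $g$-dependent essential $G$-invariance to simultaneous pointwise invariance on the single conull set $N$ via countability of $G$; once that is set up, the rest of the argument is elementary.
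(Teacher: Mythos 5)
Your proof is correct, but it routes through different machinery than the paper's. The paper also begins with Lemma \ref{lsc harmonic}, but then applies Lemma \ref{min invariant} directly: the set $Z$ of minima of the lower semicontinuous $\mu$-harmonic function $\psi(q) = \oeta(\oC_q^{-1}\oA)$ is non-empty, closed and $G$-invariant, and since $\oC_q \neq \emptyset$ for every $q$ (Lemma \ref{syndetic lemma}), one may translate a point of $Z$ into $\oC$ to obtain $q_o \in Z$ with $e \in \oC_{q_o}$; then $\min \psi = \psi(q_o) \geq \oeta(\oA)$, and the trivial bound $\int_{\beta G} \psi \, d\olambda \geq \min\psi$ finishes the argument. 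You instead invoke Corollary \ref{cor invariant} to get essential $G$-invariance of $\Psi$ with respect to $\olambda$ and then exploit syndeticity pointwise on a conull set. Both are legitimate; note, however, that Corollary \ref{cor invariant} is itself deduced from Lemma \ref{min invariant} (via Lemma \ref{harmonic invariant}), so your route is a longer path through the same underlying principle, and it is slightly less general: since the paper's bound $\psi \geq \oeta(\oA)$ holds everywhere on $\beta G$, its conclusion holds for an arbitrary mean $\lambda$, whereas your argument genuinely requires $\lambda \in \cL_\mu$ so that $\olambda$ is $\mu$-stationary and Corollary \ref{cor invariant} applies (this is all the lemma asks for, so nothing is lost). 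Two minor remarks: the paper's definitions of $\mu * \varphi$ and $\rho(\mu)\varphi$ for functions on a $G$-space are literally the same formula, so no symmetry of $\mu$ is needed to ``reconcile conventions'' at the point you flag (symmetry is already consumed inside the proof of Lemma \ref{lsc harmonic}); and the paper does not fix an optimal $\eta_0$ at the outset but proves the inequality for every $\eta \in \cL_\mu$ and takes the supremum at the end, which is immaterial here.
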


\begin{proof}
Fix $A, C \subset G$, with $C$ left syndetic, and $\lambda$ and $\eta$ in $\cL_\mu$. By 
Lemma \ref{lsc harmonic}, the function
\[
\psi(q) := \oeta(\oC_{q}^{-1} \oA), \quad q \in \beta G,
\]
is a lower semicontinuous $\mu$-harmonic function on $\beta G$. By Lemma \ref{min invariant}, 
the set $Z$ of minima for $\psi$ is a non-empty, closed and $G$-invariant set, and since $C$ is left
syndetic, $\oC_q$ is non-empty for all $q \in \beta G$ by Lemma \ref{syndetic lemma}. \\

In particular, there exists $q_o \in Z$ such that $e \in \oC_{q_o}$. Hence, 
\[
\oeta(\oA) \leq \oeta(\oC_{q_o}^{-1} \oA) \leq \int_{\beta G} \oeta(\oC_{q}^{-1} \oA) \, d\olambda(q).
\]
Since $\lambda, \eta \in \cL_\mu$ are arbitrary, we are done. 
\end{proof}

The following proposition is the main result of this subsection. Combined 
with Lemma \ref{lower bound help3} above, it immediately implies Proposition \ref{lower bound2}.

\begin{proposition}
\label{lower bound2}
Suppose $A, C \subset G$. Then there exists an extremal $\lambda \in \cL_\mu$ such that
\[
d_*^{\cL_\mu}(A^{-1}C) \geq \sup_{\eta \in \cF_\mu} \, \int_{\beta G} \oeta(\oC_q^{-1} \oA) \, d\olambda(q).
\]
\end{proposition}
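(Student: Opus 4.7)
The plan is to apply Corollary \ref{max2} first to produce an extremal $\lambda \in \cL_\mu$ with $\lambda'(A^{-1}C) = d_*^{\cL_\mu}(A^{-1}C) = \olambda(\overline{A^{-1}C})$, and then to show, for this single $\lambda$ and for every $\eta \in \cF_\mu$, a chain of three inequalities bounding $\olambda(\overline{A^{-1}C})$ from below by $\int_{\beta G} \oeta(\oC_q^{-1}\oA) \, d\olambda(q)$. The pivot in the chain will be the quantity $\olambda(A^{-1}\oC)$, where $A^{-1}\oC := \bigcup_{a \in A} a^{-1}\oC$ is viewed as an open subset of $\beta G$.

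First I would prove the set inclusion $A^{-1}\oC \subset \overline{A^{-1}C}$, which by monotonicity of $\olambda$ gives $\olambda(\overline{A^{-1}C}) \geq \olambda(A^{-1}\oC)$. Indeed, if $p \in a^{-1}\oC$ for some $a \in A$ and $g_\al \bar{e} \to p$ is any net in $G$ converging to $p$, then $a^{-1}\oC$ is a clopen neighbourhood of $p$, so $a g_\al \in C$ eventually, which places $g_\al \bar{e} \in \overline{A^{-1}C}$ eventually and forces $p \in \overline{A^{-1}C}$ by closedness.

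The second inequality is the heart of the argument and is where the Furstenberg-Poisson hypothesis on $\eta$ enters. I would introduce
\[
\psi : \beta G \ra [0,1], \qquad \psi(p) := \olambda\big(\oA_p^{-1} \oC\big),
\]
noting that $\psi(\bar{e}) = \olambda(A^{-1}\oC)$. By Lemma \ref{lsc harmonic}, applied with $\olambda$ in place of $\oeta$ and with $D = \oC$, the function $\psi$ is $\mu$-harmonic on $\beta G$ and equals the supremum of an increasing sequence of continuous functions on $\beta G$. The restriction $\tau(\psi) = \psi|_G \in \ell^\infty(G)$ is therefore left $\mu$-harmonic on $G$, so the defining property of $\cF_\mu$ gives $\eta(\tau(\psi)) = \tau(\psi)(e) = \psi(\bar{e})$. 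On the other hand, Lemma \ref{lower bound help1} applied to the lower semicontinuous function $\psi$ yields $\eta(\tau(\psi)) \geq \int_{\beta G} \psi \, d\oeta$, and combining these two relations gives
\[
\olambda(A^{-1}\oC) \;=\; \psi(\bar{e}) \;\geq\; \int_{\beta G} \olambda\big(\oA_p^{-1}\oC\big) \, d\oeta(p).
\]

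The final step is a routine Fubini argument. The set
\[
F := \Big\{(p,q) \in \beta G \times \beta G \, : \, \oA_p \cap \oC_q \neq \emptyset\Big\} \;=\; \bigcup_{h \in G} \, h^{-1}\oA \times h^{-1}\oC
\]
is a countable union of clopen rectangles, hence Borel, and unwinding definitions shows that its $p$-section is exactly $\oA_p^{-1}\oC$ while its $q$-section is exactly $\oC_q^{-1}\oA$. Fubini applied to $\oeta \otimes \olambda$ therefore yields
\[
\int_{\beta G} \olambda(\oA_p^{-1}\oC) \, d\oeta(p) \;=\; (\oeta \otimes \olambda)(F) \;=\; \int_{\beta G} \oeta(\oC_q^{-1}\oA) \, d\olambda(q),
\]
closing the chain. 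The main subtlety will be in the second step, where $\psi$ is merely lower semicontinuous rather than continuous on $\beta G$; the one-sided inequality produced by Lemma \ref{lower bound help1} is harmless precisely because $\cF_\mu$ is designed to pin down the value of $\tau(\psi)$ at $e$, which exactly compensates for the slack.
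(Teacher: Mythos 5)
Your proposal is correct and follows essentially the same route as the paper's proof: fix the extremal $\lambda$ via Corollary \ref{max2}, bound $d_*^{\cL_\mu}(A^{-1}C)$ from below by $\olambda(A^{-1}\oC)$, identify this as $\psi(\bar e)$ for the lower-semicontinuous $\mu$-harmonic function $\psi$ of Lemma \ref{lsc harmonic}, use the Furstenberg--Poisson property of $\eta$ together with Lemma \ref{lower bound help1} to pass to $\int_{\beta G}\psi\,d\oeta$, and close with Fubini. The only (harmless) departures are cosmetic: you justify $\olambda(\overline{A^{-1}C}) \geq \olambda(A^{-1}\oC)$ by the direct inclusion $A^{-1}\oC \subset \overline{A^{-1}C}$ rather than invoking Lemma \ref{lower bound help2}, and you inline the content of Lemma \ref{lower bound help4} instead of citing it.
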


\subsection{Proof of Proposition \ref{lower bound2}}

\begin{lemma}
\label{lower bound help4}
Suppose $\psi$ is a bounded $\mu$-harmonic function on $\beta G$, equal to the supremum of an 
increasing sequence of continuous functions. Then,
\[
\psi(\bar{e}) \geq \sup_{\eta \in \cF_\mu} \, \int_{\beta G} \psi \, d\oeta.
\]
\end{lemma}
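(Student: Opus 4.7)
The plan is to push the inequality from $\beta G$ down to $G$ by pulling $\psi$ back to a bounded left $\mu$-harmonic function via the $G$-equivariant map $\tau$, and then use the defining property of the Furstenberg-Poisson means $\cF_\mu$ in combination with Lemma \ref{lower bound help1}.

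Concretely, set $f := \tau(\psi) \in \ell^{\infty}(G)$, that is, $f(g) := \psi(g \cdot \bar{e})$. Since $\psi$ is bounded, so is $f$. Because the $G$-action on $\beta G$ is a left action satisfying $(hg) \cdot \bar{e} = h \cdot (g \cdot \bar{e})$, the $\mu$-harmonicity of $\psi$ on $\beta G$ yields
\[
\int_G f(hg) \, d\mu(h) = \int_G \psi\bigl(h \cdot (g \cdot \bar{e})\bigr) \, d\mu(h) = \psi(g \cdot \bar{e}) = f(g),
\]
so $f \in \cH^{\infty}_l(G,\mu)$.

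Now fix an arbitrary $\eta \in \cF_\mu$. The defining property $\eta|_{\cH^{\infty}_l(G,\mu)} = \delta_e$ gives
\[
\eta(f) = f(e) = \psi(\bar{e}).
\]
To compare $\eta(f)$ with $\int_{\beta G} \psi \, d\oeta$ we invoke Lemma \ref{lower bound help1}; the only mild issue is that the hypothesis of that lemma requires positivity of the function. Since $\psi$ is bounded, choose $c \geq 0$ large enough that $\widetilde\psi := \psi + c \geq 0$. Then $\widetilde\psi$ is still the supremum of the increasing sequence $(\varphi_n + c)$ of continuous functions, and $\tau(\widetilde\psi) = f + c$, so Lemma \ref{lower bound help1} yields
\[
\eta(f) + c = \eta(f+c) = \eta(\tau(\widetilde\psi)) \geq \int_{\beta G} \widetilde\psi \, d\oeta = \int_{\beta G} \psi \, d\oeta + c.
\]
Subtracting $c$ gives $\psi(\bar{e}) = \eta(f) \geq \int_{\beta G} \psi \, d\oeta$, and taking the supremum over $\eta \in \cF_\mu$ yields the claim.

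The key steps are, in order: (i) the pullback $\tau$ transports $\mu$-harmonicity on $\beta G$ to left $\mu$-harmonicity on $G$; (ii) evaluation of an $\cF_\mu$-mean on such a function returns its value at the identity; and (iii) Lemma \ref{lower bound help1} compares the mean value of the pullback with the $\oeta$-integral of the original function. There is no real obstacle; the only subtlety worth flagging is the harmless reduction to the positive case so that Lemma \ref{lower bound help1} applies verbatim.
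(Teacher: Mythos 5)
Your proof is correct and follows essentially the same route as the paper's: pull $\psi$ back along $\tau$ to a bounded left $\mu$-harmonic function on $G$, use the defining property of $\cF_\mu$ to evaluate the mean at $e$, and then invoke Lemma \ref{lower bound help1}. The only differences are that you fill in two details the paper leaves implicit, namely the verification that $\tau(\psi)$ is left $\mu$-harmonic and the harmless shift by a constant to satisfy the positivity hypothesis of Lemma \ref{lower bound help1}; both are sensible additions.
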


\begin{proof}
Note that the function
\[
\varphi(g) := \psi(g \cdot \bar{e}), \quad g \in G,
\]
belongs to $\cH^{\infty}_l(G,\mu)$, and thus
\[
\eta(\varphi) = \varphi(e) = \psi(\bar{e}),
\]
for all $\eta \in \cF_\mu$. By Lemma \ref{lower bound help1}, 
\[
\eta(\varphi) \geq \int_{\beta G} \psi(q) \, d\oeta(q),
\]
for all $\eta \in \cF_\mu$, which finishes the proof. 
\end{proof}

\begin{proof}[Proof of Proposition \ref{lower bound2}]
Fix $A, C \subset G$. By Corollary \ref{max2}, there exists an extremal $\lambda \in \cL_\mu$
such that
\[
d_*^{\cL_\mu}(A^{-1}C) = \lambda'(A^{-1}C) = \lambda'((A^{-1} \oC)_{\bar{e}}).
\]
Since $A^{-1}\oC$ is a countable union of clopen sets in $\beta G$, Lemma \ref{lower bound help2}
applies and
\[
\lambda'((A^{-1} \oC)_{\bar{e}}) \geq \olambda(A^{-1} \oC).
\]
Define the function
\[
\psi(q) := \olambda(\oA_q^{-1} \oC), \quad q \in \beta G.
\]
By Lemma \ref{lsc harmonic}, $\psi$ is $\mu$-harmonic and equal to the supremum of an increasing
sequence of continuous functions. Hence, Lemma \ref{lower bound help4} applies, and
\[
\psi(e) = \olambda(\oA^{-1}_{\bar{e}} \oC) \geq \int_{\beta G} \olambda(\oA^{-1}_q \oC) \, d\oeta(q),
\]
for all $\eta \in \cF_\mu$. By Fubini's Theorem, we have
\[
\int_{\beta G} \olambda(\oA^{-1}_q \oC) \, d\oeta(q) 
=
\int_{\beta G} \int_{\beta G} \chi_{G(\oA \times \oC)} \, 
d\olambda \otimes \oeta 
=
\int_{\beta G} \oeta(\oC^{-1}_q \oA)  \, d\olambda(q),
\]
which finishes the proof.
\end{proof}

\section{Proof of Proposition \ref{bohr appears}}
\label{Bohr}

The proof of Proposition \ref{bohr appears} will be done in several steps. Some of these steps are 
quite standard and well-known to experts, at least for amenable groups, but we still give the proofs
for completeness. 
 
\subsection{Reductions}

We shall try to re-cast Proposition \ref{bohr appears} in a more C*-algebraic language. This will 
simplify, and hopefully conceptualize, the approach. 

Recall that if $\cB$ is a left $G$-invariant unital sub-C*-algebra of $\ell^{\infty}(G)$, then the 
Gelfand space $\Delta(\cB)$ of $\cB$ is a compact Hausdorff space, equipped with an action
of $G$ by homeomorphisms, and admitting a $G$-transitive point $x_o$. Furthermore, there 
is a left $G$-equivariant $*$-isomorphism
\[
\tau_{\cB} : C(\Delta(\cB)) \ra \cB
\]
given by
\[
\tau_{\cB}(f)(g) := f(g \cdot x_o), \quad g \in G.
\]
The composition of $\tau_{\cB}$ and the inclusion map of $\cB$ into $\ell^{\infty}(G)$ gives rise to an
left $G$-equivariant injective $*$-morphism and thus, by the Gelfand-Naimark correspondence,
to a left $G$-equivariant surjection 
\[
\pi_{\cB} : \beta G \ra \Delta(\cB),
\]
where $\beta G$ denotes the Stone-\v{C}ech compactification of $G$. Recall that the set
$\cM(G)$ of means on $G$ (or equivalently, of states on $\ell^{\infty}(G)$) corresponds
to the set $\cP(\beta G)$ of regular Borel probability measures on $\beta G$. Note that 
the map $\pi_{\cB}$ also induces a continuous affine map 
\[
(\pi_{\cB})_* : \cP(\beta G) \ra \cP(\Delta(\cB)).
\]
We denote the set of all positive elements in $\cB$ (i.e. the set of all non-negative functions on $G$
which belong to $\cB$) by $\cB_{+}$. Note that the transpose map
\[
\pi_{\cB}^* : C(\Delta(\cB)) \ra C(\beta G)
\]
is positive, i.e. $\pi_{\cB}^*\varphi$ is positive whenever $\varphi$ belongs to $\cB_{+}$.

\begin{proposition}
\label{reduction1}
Let $\cB \subset \ell^{\infty}(G)$ be a unital sub-C*-algebra and suppose $C \subset G$. For every 
$\lambda \in \cP(\beta G)$, there exists a $\lambda$-measurable set $C' \subset \Delta(\cB)$ with 
\[
\overline{C} \subset \pi_{\cB}^{-1}(C'),
\]
modulo $\lambda$-null sets, such that whenever $\varphi \in \cB_{+}$ satisfies
\[
\big\langle \pi_{\cB}^*\varphi , \chi_{\oC} \big\rangle_{L^2(\beta G,\lambda)} = 0,
\]
then
\[
\int_{C'} \varphi \, d(\pi_{\cB})_*\olambda = 0. 
\]
\end{proposition}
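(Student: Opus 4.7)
The plan is to construct $C'$ as the ``essential image'' of $\oC$ under $\pi_{\cB}$ with respect to $\lambda$, realized as the support of an appropriate Radon-Nikodym derivative. Specifically, I would introduce the pushforward $\mu := (\pi_{\cB})_*\lambda$ on $\Delta(\cB)$ together with the finite Borel measure $\nu$ on $\Delta(\cB)$ defined by
\[
\nu(E) := \lambda\big(\oC \cap \pi_{\cB}^{-1}(E)\big), \quad E \subset \Delta(\cB) \text{ Borel}.
\]
Since $\nu \leq \mu$ as measures, the Radon-Nikodym derivative $h := d\nu/d\mu$ exists as a bounded non-negative $\mu$-measurable function, and my candidate is
\[
C' := \{x \in \Delta(\cB) \, : \, h(x) > 0\}.
\]

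With this choice, the two conclusions of the proposition reduce to short verifications. For the containment $\oC \subset \pi_{\cB}^{-1}(C')$ modulo $\lambda$-null sets, I would compute
\[
\lambda\big(\oC \setminus \pi_{\cB}^{-1}(C')\big) \;=\; \nu\big(\{h = 0\}\big) \;=\; \int_{\{h=0\}} h \, d\mu \;=\; 0.
\]
For the vanishing statement, the change-of-variables formula rewrites the pairing as
\[
\big\langle \pi_{\cB}^*\varphi, \chi_{\oC} \big\rangle_{L^2(\beta G, \lambda)} \;=\; \int_{\oC} \varphi \circ \pi_{\cB} \, d\lambda \;=\; \int_{\Delta(\cB)} \varphi \, d\nu \;=\; \int_{\Delta(\cB)} \varphi \, h \, d\mu,
\]
and its vanishing forces $\varphi h = 0$ $\mu$-almost everywhere, since both $\varphi$ and $h$ are non-negative. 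This means $\varphi$ vanishes $\mu$-almost everywhere on $C' = \{h > 0\}$, which immediately yields $\int_{C'} \varphi \, d\mu = 0$ as required.

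Once the correct candidate $C'$ is identified, the argument is essentially routine. The main conceptual point --- and the only real obstacle --- is to resist the temptation to take $C' = \pi_{\cB}(\oC)$ directly. Since $\cB$ need not be separable, $\Delta(\cB)$ is in general non-metrizable, so continuous images of Borel sets need not be Borel; moreover, the naive image is typically strictly larger than the essential image captured by $\{h > 0\}$ and would not give the precise correspondence with the $L^2$-pairing required in the statement. Passing through the Radon-Nikodym derivative circumvents both difficulties simultaneously.
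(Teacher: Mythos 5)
Your proof is correct, and the underlying object is exactly the one the paper uses, just packaged in different language. The paper defines $\rho_{\cB}$ as the orthogonal projection of $\chi_{\oC}$ in $L^2(\beta G,\lambda)$ onto the closed subspace $V$ of $\pi_{\cB}$-pulled-back functions, notes that this projection preserves positivity, and sets $C' := \{\rho_{\cB} > 0\}$. That projection is precisely the conditional expectation onto the sub-$\sigma$-algebra $\pi_{\cB}^{-1}(\mathrm{Borel}(\Delta(\cB)))$, which in turn is, by the defining property of conditional expectations, exactly your Radon--Nikodym derivative $h = d\nu/d\mu$ (composed with $\pi_{\cB}$): for every Borel $E \subset \Delta(\cB)$, $\int_{E} h\, d\mu = \nu(E) = \lambda(\oC \cap \pi_{\cB}^{-1}(E)) = \int_{\pi_{\cB}^{-1}(E)} \rho_{\cB}\, d\lambda$. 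So your $C'$ and the paper's $C'$ coincide up to null sets. What your reformulation buys is that positivity of $h$ is automatic from the definition of Radon--Nikodym derivatives, whereas the paper has to invoke (somewhat tersely) that the $L^2$-projection in question is positivity-preserving, which is true for conditional expectations but is not a general feature of orthogonal projections; your version makes that point moot. The paper's $L^2$-formulation fits more naturally into the ambient C*-algebraic and Hilbert-space machinery it is already running. Both verifications in your write-up are correct, and your remark about why $\pi_{\cB}(\oC)$ is the wrong candidate (non-metrizability of $\Delta(\cB)$, and the distinction between image and essential image) is well taken.
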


\begin{proof}
Fix $\lambda \in \cP(\beta G)$ and consider the indicator function $\chi_{\oC}$ as 
an element in $L^2(\beta G,\lambda)$. Note that  
\[
V := L^2(\Delta(\cB),(\pi_{\cB})_*\lambda)^{\pi_{\cB}} \subset L^2(\beta G,\olambda)
\]
is a closed subspace, consisting of those elements in $L^{2}(\beta G,\olambda)$ which are pull-backs of square-integrable elements on 
$\Delta(\cB)$ under the map $\pi_{\cB}^{*}$, so we can write 
\[
\chi_{\oC} = \rho_{\cB} + \rho_o
\]
where $\rho_{\cB}$ denotes the orthogonal projection of $\chi_{\oC}$ onto $V$. 
Since this projection is positive, the function $\rho_{\cB}$ is non-negative 
and 
\[
\big\langle (\pi_{\cB})^*\varphi, \chi_{\oC} \big\rangle_{\lambda} 
= 
\big\langle \varphi , \rho_{\cB} \big\rangle_{(\pi_{\cB})_*\olambda} = 0.
\]
Choose a $\lambda$-measurable representative of $\rho_{\cB}$ and define 
\[
C' := \Big\{ x \in \Delta(\cB) \, : \, \rho_{\cB}(x) > 0 \Big\} \subset \Delta(\cB).
\]
Clearly, the inclusion $\oC \subset \pi_{\cB}^{-1}(C')$ holds modulo $\lambda$-null sets
in $\beta G$ and since $\varphi$ is non-negative, we have
\[
\int_{C'} \varphi \, d(\pi_{\cB})_*\olambda = 0,
\]
which finishes the proof. 
\end{proof}
\subsubsection{Left amenable sub-C*-algebras}

We shall say that a left $G$-invariant sub-C*-algebra $\cB$ of $\ell^{\infty}(G)$ is 
\emph{left amenable} if there exists a left $G$-invariant state on $\cB$. Via the 
Gelfand-Naimark's correspondence, this is equivalent to saying that there exists
a $G$-invariant regular Borel probability measure on the Gelfand space $\Delta(\cB)$
of $\cB$. 

For an amenable sub-C*-algebra $\cB$ of $\ell^{\infty}(G)$, we let $\cL_{\cB}$ denote the 
(non-empty) weak*-compact and convex set of all states on $\ell^{\infty}(G)$ which restricts 
to a left invariant state on the algebra $\cB$. 

Let $B \subset G$ and let $\cB$ denote the \emph{Bebutov algebra} 
of $B$, i.e. the smallest left $G$-invariant unital sub-C*-algebra of $\ell^{\infty}(G)$ 
which contains the indicator function of $B$. In the introduction, we 
called a set $B$ \emph{strongly non-paradoxical} if the Bebutov algebra 
of $B$ is amenable and there exists $\lambda$ in $\cL_{\cB}$ which gives
positive measure to $B$. 

Since $\chi_B$ is idempotent in $\cB$, there exists a clopen subset $B' \subset \Delta(\cB)$ 
such that 
\[
\tau_{\cB}(\chi_{B'}) = \chi_{B}. 
\]
Recall from Subsection \ref{measure theory} that we also introduced the notation 
$\oB$ for the clopen set which $B$ corresponds to via the isomorphism between 
$\ell^{\infty}(G)$ and $C(\beta G)$. From the discussions above, it should be clear 
that 
\[
\oB = \pi_{\cB}^{-1}(B').
\]
Recall that the \emph{non-paradoxical density} of $B$ was defined earlier by
\[
d^*_{np}(B) := d^*_{\cL_\cB}(B).
\]
By Corollary \ref{max2}, there always exists a state $\lambda \in \cL_{\cB}$ which maximizes 
the right hand side. Let $\olambda$ denote the corresponding regular Borel probability measure
on $\beta G$ and set 
\[
\eta' := (\pi_{\cB})_*\olambda.
\]
Note that $\eta'$ is a $G$-invariant Borel probability measure on $\Delta(\cB)$. It is not hard to see
that $\eta'$ must be ergodic (since $\lambda$ is extremal) and
\[
d^*_{np}(B) = \eta'(B') = \sup_{\nu} \nu(B'),
\]
where the supremum is taken over all $G$-invariant probability measures on $\Delta(\cB)$.

\begin{corollary}
\label{paradoxical}
Let $A, B \subset G$ and suppose $B$ is strongly non-paradoxical. For every admissible 
$\mu \in \cP(G)$, there exist $\lambda \in \cP_\mu(\beta G)$ such that 
the measure $\eta' := (\pi_{\cB})_*\lambda$ on $\Delta(\cB)$ is $G$-invariant and ergodic with
\[
d^*_{np}(B) = \eta'(B') 
\]
and a Borel set $C' \subset \Delta(\cB)$ with
\[
\overline{AB} \supset \pi_{\cB}^{-1}(C')^c
\]
modulo $\lambda$-null sets in $\beta G$, such that
\[
\eta'(a \cdot B' \cap C') = 0, \quad \forall \, a \in A.
\]
\end{corollary}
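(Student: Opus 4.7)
The plan is to apply Proposition \ref{reduction1} to the set $C := (AB)^c$, paired with a carefully chosen $\mu$-stationary $\lambda$ on $\beta G$ whose pushforward under $\pi_{\cB}$ is an ergodic $G$-invariant measure $\eta'$ on $\Delta(\cB)$ maximizing the mass of $B'$, and then to feed in the test functions $\chi_{a \cdot B'}$ for $a \in A$ to extract the final clause.

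First I would construct $\eta'$. By strong non-paradoxicality the weak*-compact convex set $\cI \subset \cP(\Delta(\cB))$ of $G$-invariant Borel probability measures is non-empty, and the affine map $\nu \mapsto \nu(B')$ attains its supremum $d^*_{np}(B)$ on $\cI$ by Corollary \ref{max2}. The level set where this supremum is attained is a weak*-compact convex face of $\cI$, and any extremal point $\eta'$ of this face is also extremal in $\cI$; the standard Radon--Nikodym argument (an exact analogue of Proposition \ref{ext erg} for $G$-invariant measures) shows that such extremal points are ergodic. Since $\eta'$ is $G$-invariant, it is in particular $\mu$-stationary, so Corollary \ref{relative} applied to the $G$-equivariant surjection $\pi_{\cB} : \beta G \to \Delta(\cB)$ lifts $\eta'$ to some $\lambda \in \cP_\mu(\beta G)$ with $(\pi_{\cB})_*\lambda = \eta'$.

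Next I would run Proposition \ref{reduction1} with this $\lambda$ and $C = (AB)^c$ to obtain a $\lambda$-measurable $C' \subset \Delta(\cB)$ with $\overline{(AB)^c} \subset \pi_{\cB}^{-1}(C')$ modulo $\lambda$-null sets. Since the Gelfand correspondence sends $X \subset G$ to a clopen $\overline{X} \subset \beta G$ and satisfies $\overline{X^c} = (\overline{X})^c$, taking complements in $\beta G$ immediately yields the claimed inclusion $\overline{AB} \supset \pi_{\cB}^{-1}(C')^c$ modulo $\lambda$-null sets.

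Finally, for the vanishing $\eta'(a \cdot B' \cap C') = 0$, I would invoke the second clause of Proposition \ref{reduction1} with $\varphi := \chi_{a \cdot B'}$, viewed as an element of $\cB_{+}$ via the isomorphism $\tau_{\cB} : C(\Delta(\cB)) \to \cB$; this is well-defined because $a \cdot B'$ is clopen in $\Delta(\cB)$. Unwinding definitions gives $\tau_{\cB}(\chi_{a \cdot B'})(g) = \chi_{a \cdot B'}(g \cdot x_o) = \chi_{aB}(g)$, so $\pi_{\cB}^{*}\chi_{a \cdot B'} = \chi_{\overline{aB}}$ as continuous functions on $\beta G$. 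Because $aB \subset AB$, the clopen sets $\overline{aB}$ and $\overline{(AB)^c} = (\overline{AB})^c$ are disjoint, whence $\langle \pi_{\cB}^{*}\varphi, \chi_{\overline{(AB)^c}} \rangle_{L^2(\beta G,\lambda)} = 0$, and Proposition \ref{reduction1} delivers $\eta'(a \cdot B' \cap C') = \int_{C'} \chi_{a \cdot B'} \, d\eta' = 0$. The only step that is more than direct bookkeeping is the ergodicity upgrade for $\eta'$, handled cleanly by recognizing the optimal level set as a face; everything else is Gelfand-duality translation between $\cB$, $\Delta(\cB)$, and $\beta G$.
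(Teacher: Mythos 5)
Your proposal is correct and follows essentially the same route as the paper: take an ergodic $G$-invariant $\eta'$ on $\Delta(\cB)$ with $\eta'(B')=d^*_{np}(B)$, lift it to a $\mu$-stationary $\lambda$ on $\beta G$ via Corollary \ref{relative}, then apply Proposition \ref{reduction1} with $C=(AB)^c$ and the test functions $\chi_{a\cdot B'}$ (whose pull-backs are $\chi_{\overline{aB}}$, disjoint from $\overline{(AB)^c}$ since $aB\subset AB$). The one cosmetic difference is in obtaining ergodicity: you pass to an extremal point of the optimal face $\{\nu\in\cI : \nu(B')=d^*_{np}(B)\}$ directly inside the $G$-invariant measures on $\Delta(\cB)$, whereas the paper takes an extremal $\lambda$ in $\cL_\cB$ via Corollary \ref{max2} and remarks that its pushforward is ergodic; both are valid, and your face argument is arguably the cleaner of the two.
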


\begin{proof}
From the discussion above, we know that there exists an ergodic and $G$-invariant
regular Borel probability measure $\eta'$ on $\Delta(\cB_B)$ with
\[
d^*_{np}(B) = \eta'(B').
\]
Fix $\mu \in \cP(G)$. Since $\eta'$ clearly belongs to $\cP_\mu(\Delta(\cB))$, Corollary 
\ref{relative} guarantees the existence of $\lambda \in \cP_\mu(\beta G)$ such that 
\[
(\pi_\cB)_*\lambda = \eta'.
\]
Since $\cB_B$ is left $G$-invariant, the closed subspace
\[
V := L^2(\Delta(\cB),(\pi_{\cB})_*\lambda)^{\pi_{\cB}} \subset L^2(\beta G,\lambda)
\]
is $G$-invariant as well, so the function $\pi_{\cB}^*\varphi_g$ belongs to $V$ for
all $g \in G$, where
\[
\varphi_g := \chi_{B'}(g^{-1} \cdot).
\]
Let $C := (AB)^c$ and note that
\[
\lambda(a \cdot \oB \cap \oC) =\big\langle \pi^*_{\cB} \, \varphi_a,  \chi_{\oC} \big\rangle_{L^2(\beta G,\lambda)} = 0,
\]
for all $a \in A$. By Proposition \ref{reduction1}, there exists a $\lambda$-measurable set
$C' \subset \Delta(\cB_B)$ with
\[
\oC \subset \pi_{\cB}^{-1}(C')
\]
modulo $\lambda$-null sets in $\beta G$ such that
\[
\int_{C'} \varphi_a \, d\eta' = \eta'(a \cdot B' \cap C') = 0
\]
for all $a \in A$.
\end{proof}

Recall from Section \ref{preliminaries} that the Fourier-Stiltjes algebra $\B(G)$ of a countable group 
$G$ is defined as the $*$-algebra generated by all matrix coefficients of unitary representations of $G$. 
The uniform closure of $\B(G)$ in $\ell^{\infty}(G)$ will here be referred to as the \emph{Eberlein algebra} of $G$ and we shall denote its (compact) Gelfand spectrum by $eG$. Let $\lambda_o$ denote the unique left and 
right invariant state on $\E(G)$ and write
\[
\E(G) = \AP(G) \oplus \E_o(G),
\]
where
\[
\E_o(G) := \Big\{ \varphi \in \E(G) \, : \, \lambda_o(|\varphi|^2) = 0 \Big\}.
\]
For a given $\varphi \in \E(G)$, we shall write 
\[
\varphi = \varphi_{ap} + \varphi_o,
\]
with $\varphi_{ap} \in \AP(G)$. 

Here, $\AP(G)$ denotes the C*-algebra of almost periodic functions on $G$. Its Gelfand spectrum $bG$ we shall refer to as the 
\emph{Bohr compactification} of $G$. As was mentioned in Section \ref{preliminaries}, this compact Hausdorff space can be endowed with a jointly continuous multiplication which gives $bG$ the structure of a 
compact Hausdorff group. In particuar, $bG$ admits a Haar probability measure $m$. 

Because of the decomposition above, we see that
\[
L^2(eG,\overline{\lambda}_o) \simeq L^2(bG,m)^{\pi},
\]
where $\pi : eG \ra bG$ is the $G$-equivariant map implementing 
the inclusion of $\AP(G)$ in $\E(G)$. Indeed, the function $\varphi_o$
above is  equivalent to the null function in $L^{2}(eG,\olambda_o)$. 

Furthermore, since $m$ is the unique $G$-invariant measure on $bG$, we have  
\[
\pi_*\overline{\lambda}_o = m.
\]
In particular, if $A \subset eG$ is a Borel set and $\varphi \in C(eG)$ is non-negative, 
with the property that
\[
\int_{A} \varphi \, d\overline{\lambda}_o = 0,
\]
then there exists a Borel set $A' \subset bG$ with $A \subset \pi^{-1}(A')$ such that
\[
\int_{A'} \varphi_{ap} \, dm = 0,
\]
In particular, there exists a Borel set $A'' \subset A'$ with 
$m(A'') = m(A')$ on which $\varphi_{ap}$ vanishes identically. \\

Recall that $\cFS$ denotes the weak*-compact and convex set of means on $G$ which restricts 
to $\lambda_o$ on $\B(G)$ (and thus on $\E(G)$). The following corollary is now an almost 
immediate consequence of the discussion above. 

\begin{corollary}
\label{fs cor}
Let $A \subset G$ be $\cFS$-large and suppose $\varphi \in \B(G)_{+}$ vanishes on $A$. Then
there exists a Borel set $A' \subset bG$ with
\[
m(A') \geq d^*_{\cFS}(A)
\]
such that $\varphi_{ap}$ vanishes on $A'$.
\end{corollary}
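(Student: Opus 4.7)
The plan is to apply Proposition~\ref{reduction1} to the sub-C*-algebra $\cB = \E(G) \subset \ell^{\infty}(G)$, whose Gelfand spectrum is $eG$ with structure map $\pi_{\E}\colon \beta G \to eG$, and then to push the resulting information down to $bG$ using the decomposition $\E(G) = \AP(G) \oplus \E_o(G)$ together with $\pi_* \olambda_o = m$, where $\pi\colon eG \to bG$ is the canonical projection. By Corollary~\ref{max2} I would first pick an extremal $\lambda \in \cFS$ with $\lambda'(A) = d^*_{\cFS}(A)$; since $\lambda|_{\E(G)} = \lambda_o$, the pushforward $(\pi_{\E})_*\olambda$ equals $\olambda_o$ on $eG$, while $\olambda(\oA) = d^*_{\cFS}(A)$. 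Letting $\hat{\varphi} \in C(eG)_+$ correspond to $\varphi$ under the Gelfand isomorphism, the pullback $\pi_{\E}^{*}\hat{\varphi}$ recovers $\varphi(g)$ at the evaluation point $\bar{g} \in \beta G$; since $\{\bar{g} : g \in A\}$ is dense in the clopen set $\oA$ and $\pi_{\E}^{*}\hat{\varphi}$ is continuous, the hypothesis that $\varphi$ vanishes on $A$ forces $\pi_{\E}^{*}\hat{\varphi}$ to vanish on all of $\oA$, giving $\langle \pi_{\E}^{*}\hat{\varphi}, \chi_{\oA}\rangle_{L^2(\beta G, \olambda)} = 0$.

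Proposition~\ref{reduction1} then furnishes an $\olambda$-measurable set $A'' \subset eG$ with $\oA \subset \pi_{\E}^{-1}(A'')$ modulo $\olambda$-null sets and $\int_{A''}\hat{\varphi}\,d\olambda_o = 0$; the measurable inclusion yields $\olambda_o(A'') \geq \olambda(\oA) = d^*_{\cFS}(A)$. Writing $\hat{\varphi} = \pi^{*}\tilde{\varphi}_{ap} + \hat{\varphi}_o$, where $\tilde{\varphi}_{ap} \in C(bG)$ corresponds to $\varphi_{ap}$ and $\hat{\varphi}_o \in \E_o(G)$ satisfies $\lambda_o(|\hat{\varphi}_o|^2) = 0$, I note that $\hat{\varphi}_o$ vanishes outside an $\olambda_o$-null set. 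Combined with the fact that $\hat{\varphi} \geq 0$ together with $\int_{A''}\hat{\varphi}\,d\olambda_o = 0$ force $\hat{\varphi} = 0$ $\olambda_o$-almost everywhere on $A''$, this yields $\pi^{*}\tilde{\varphi}_{ap} = 0$ $\olambda_o$-almost everywhere on $A''$.

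To conclude, I would set $A' := \{y \in bG : \tilde{\varphi}_{ap}(y) = 0\}$, which is closed (hence Borel) since $\tilde{\varphi}_{ap}$ is continuous, and on which $\varphi_{ap}$ vanishes by definition. Then $\pi^{-1}(A') = \{\pi^{*}\tilde{\varphi}_{ap} = 0\}$ contains $A''$ outside an $\olambda_o$-null set, so
\[
m(A') = \olambda_o(\pi^{-1}(A')) \geq \olambda_o(A'') \geq d^*_{\cFS}(A),
\]
which is the required bound. I do not expect any single hard step; the main subtlety is the careful bookkeeping across the three spectra $\beta G$, $eG$, $bG$, and the essential point that makes the argument fit on a single line is that the summand $\hat{\varphi}_o$ is $\olambda_o$-null \emph{globally}, which allows the vanishing of $\hat{\varphi}$ on $A''$ to transfer directly to its almost-periodic part without any further regularity or Jordan-measurability hypothesis.
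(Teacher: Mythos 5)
Your proposal is correct and follows essentially the same route as the paper: pick $\lambda \in \cFS$ with $\lambda'(A) = d^*_{\cFS}(A)$ by Corollary~\ref{max2}, apply Proposition~\ref{reduction1} with $\cB = \E(G)$ and $(\pi_{\E})_*\olambda = \olambda_o$ to get the measurable set in $eG$, then descend to $bG$ via the decomposition $\E(G) = \AP(G) \oplus \E_o(G)$, using that the $\E_o$-part is $\olambda_o$-null and that $\pi_*\olambda_o = m$. Your final choice $A' := \{\tilde{\varphi}_{ap} = 0\}$ is a slightly more direct packaging of the last step, but the substance is identical to the paper's argument.
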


\begin{proof}
By Corollary \ref{max2}, there exists $\lambda \in \cFS$ such that
\[
d^*_{\cFS}(A) = \lambda'(A).
\]
Let $\bar{\varphi}$ denote the continuous function on $\beta G$ corresponding to $\varphi$. Note that $\bar{\varphi}$ is non-negative.
Since $\varphi \in \E(G)$, we  can write 
\[
\bar{\varphi} = \pi_{\E(G)}^* \, \varphi'
\]
for some $\varphi' \in C(eG)$, which is again non-negative. By assumption, 
\[
\big\langle \pi_{\E(G)}^*\varphi', \chi_{\oA} \rangle_{L^2(\beta G,\olambda)} = 0,
\]
and $\olambda_o = (\pi_{\E(G)})_*\olambda$. Hence, by Proposition \eqref{reduction1}, there exists a measurable set $A' \subset eG$ 
with  
\[
\olambda_o(A') \geq \overline{\lambda}(\oA) = d^*_{\cFS}(A)
\]
such that
\[
\int_{A'} \varphi' \, d\olambda_o = 0.
\]
The corollary now follows from the discussion above. 
\end{proof}

\subsection{Compact vectors and the Eberlein decomposition}
The aim of this subsection is to explicate the decomposition \ref{decomposition} for elements 
in $\B(G)$. We stress that the material covered here is quite standard (at least for 
abelian, or more generally amenable, groups), and is certainly well-known to experts.
We collect here a few basic results for completeness. \\
 
The basic setting can be described as follows. Let $(\cH,\rho)$ be a unitary representation on a Hilbert space $\cH$ (not necessarily separable). Let $\cH^{\rho}$ denote the closed subspace of $\rho$-fixed vectors and write $P_{\rho}$ for the corresponding orthogonal projection. 

Given $x,y \in \cH$, we define
the \emph{matrix coefficient at $(x,y)$} by
\[
\varphi^{\rho}_{x,y}(g) = \big\langle x, \rho(g) y \big\rangle, \quad g \in G.
\]
Recall that $\lambda_o$ denotes the unique left and right invariant mean on $\B(G)$. \\

The following extension of the (weak) von Neumann's Ergodic Theorem will be quite useful. 

\begin{lemma}
\label{mean ergodic}
For all $x, y \in \cH$, we have
\[
\lambda_o(\varphi^{\rho}_{x,y}) = \big\langle P_\rho x , P_{\rho} y \big\rangle.
\]
\end{lemma}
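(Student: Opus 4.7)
The plan is to reduce the statement to a decomposition argument by introducing the sesquilinear form $\sigma(x,y) := \lambda_o(\varphi^{\rho}_{x,y})$ on $\cH \times \cH$ and showing that $\sigma(x,y) = \langle P_\rho x, P_\rho y\rangle$. The form is well-defined since matrix coefficients lie in $\B(G)$, and the estimate $|\sigma(x,y)| \leq \|\varphi^{\rho}_{x,y}\|_\infty \leq \|x\|\,\|y\|$ shows that $\sigma$ is bounded and sesquilinear.

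The first key step will be to exploit the fact that $\lambda_o$ is both left and right invariant on $\E(G)$. A direct computation gives, for every $h \in G$,
\[
\varphi^{\rho}_{x,y}(h^{-1}g) = \langle x, \rho(h^{-1}g) y\rangle = \langle \rho(h)x, \rho(g)y\rangle = \varphi^{\rho}_{\rho(h)x,\,y}(g),
\]
so the left translate of $\varphi^{\rho}_{x,y}$ coincides with $\varphi^{\rho}_{\rho(h)x,\,y}$. Applying $\lambda_o$ and using its left invariance yields $\sigma(\rho(h)x,y) = \sigma(x,y)$. A symmetric computation with the right regular representation gives $\sigma(x,\rho(h)y) = \sigma(x,y)$.

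Next, I would exploit this invariance to push $x$ and $y$ into the invariant subspace. From $\sigma(\rho(h)x - x, y) = 0$ and boundedness of $\sigma$, it follows that $\sigma(v,y) = 0$ for every $v$ in the closed linear span $V := \overline{\operatorname{span}}\{\rho(h)x - x : h \in G,\, x \in \cH\}$. A classical computation (the orthogonal complement of $V$ consists precisely of the vectors $u$ with $\langle \rho(h^{-1})u - u,\, x\rangle = 0$ for all $h,x$, i.e.\ $u \in \cH^{\rho}$) identifies $V = (\cH^{\rho})^{\perp}$. Writing $x = P_\rho x + (I - P_\rho)x$ therefore gives $\sigma(x,y) = \sigma(P_\rho x, y)$, and the symmetric argument in the second variable gives $\sigma(x,y) = \sigma(P_\rho x, P_\rho y)$.

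Finally, if $x, y \in \cH^{\rho}$, then $\rho(g)y = y$ for all $g$, so $\varphi^{\rho}_{x,y}$ is the constant function equal to $\langle x, y\rangle$. Since $\lambda_o$ is a unital positive functional, $\lambda_o(c \cdot \mathbf{1}) = c$, and hence $\sigma(P_\rho x, P_\rho y) = \langle P_\rho x, P_\rho y\rangle$, which is the desired identity. The only nonroutine ingredient is the identification $V = (\cH^\rho)^\perp$, which is standard; everything else is a clean bookkeeping exercise in the two-sided invariance of $\lambda_o$.
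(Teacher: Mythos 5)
Your proof is correct and follows essentially the same route as the paper's: decompose $\cH = \cH^\rho \oplus \overline{\operatorname{span}}\{\rho(g)z - z : g\in G,\, z\in\cH\}$, use the invariance of $\lambda_o$ to show the form $\sigma$ annihilates the second summand, and evaluate on $\cH^\rho$ where the matrix coefficient is constant. The only cosmetic difference is that the paper economizes by applying left invariance in the first variable alone and observing $\langle P_\rho x, y\rangle = \langle P_\rho x, P_\rho y\rangle$ directly, whereas you symmetrically invoke both left and right invariance to project out each slot separately.
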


\begin{proof}
It suffices to establish the identity on a dense subspace of $\cH$. Note that the identity is 
trivial if either $x$ or $y$ belongs to $\cH^{\rho}$, and if $x$ (or $y$) is of the form
\[
x = z - \rho(g_o)^{-1}z
\] 
for some $z \in \cH$ and $g_o \in G$, then
\[
\varphi_{x,y}^{\rho}(g) = \langle z,\rho(g)y \rangle_{\cH} - \langle z,\rho(g_o \cdot g)y \rangle_{\cH} 
\]
for all $g \in G$. Since $\lambda_o$ is left-invariant, we see that in this case,
\[
\lambda_o(\varphi^{\rho}_{x,y}) = 0.
\]
Hence it suffices to show that if $x \in \cH$ and
\[
\langle x , z - \rho(g)^{-1} z \rangle_{\cH} = 0
\]
for all $z \in \cH$ and $g \in G$, then $x \in \cH^{\rho}$. Equivalently, 
for all $g \in G$ and $z \in \cH$, 
\[
\langle x - \rho(g)x , z \rangle_{\cH} = 0,
\]
which clearly implies that $\rho(g)x = x$ for all $g \in G$ and thus $x \in \cH^{\rho}$.
\end{proof}

\begin{remark}
Note that the proof does not utilize the uniqueness of $\lambda_o$. In fact,
this uniqueness is a consequence of the proof. 
\end{remark}

Recall that a vector $x \in \cH$ is \emph{compact} (with respect to $\rho$) if the orbit $\rho(G)x \subset \cH$ is pre-compact. Alternatively, the closure of the cyclic span of $x$ can be decomposed into a direct sum of finite-dimensional representations (by the Peter-Weyl Theorem). We denote the \emph{closed} subspace of $\rho$-compact vectors by $\cH_c$.

The following lemma provides a convenient way of producing compact vectors for a general representation. 

\begin{lemma}
\label{compact operator}
Suppose $z \in (\cH \otimes \cH)^{\rho \otimes \rho}$. Then the operator $K_z : \cH \ra \cH$ defined by
\[
\big\langle K_z x, y \big\rangle = \big\langle z, x \otimes y \big\rangle
\]
is compact and $\rho$-intertwining. In particular, $\textrm{Im}(K_z) \subset \cH_{c}$. 
\end{lemma}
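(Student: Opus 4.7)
The plan is to identify $\cH \otimes \cH$ (the Hilbert space tensor product) with the space of Hilbert--Schmidt operators on $\cH$ and then read off the three conclusions (compactness, intertwining, and the inclusion $\textrm{Im}(K_z) \subset \cH_c$) from basic properties of that identification.

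First I would check that the defining formula $\langle K_z x, y\rangle = \langle z, x \otimes y\rangle$ really does determine a bounded operator on $\cH$ with $\|K_z\|_{\mathrm{op}} \leq \|z\|$; this is immediate since $\|x \otimes y\| = \|x\|\cdot\|y\|$ in the Hilbert tensor product, so the right-hand side is controlled by $\|z\|\cdot\|x\|\cdot\|y\|$. Next, on simple tensors $z = u \otimes v$ the formula gives a rank-one operator (up to a conjugation of variables in whatever convention is being used), and by sesquilinearity and continuity the assignment $z \mapsto K_z$ coincides, on the dense subspace of finite sums of simple tensors, with the standard isomorphism $\cH \otimes \cH \cong HS(\cH)$. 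Since this map is an isometry from $\cH \otimes \cH$ onto the Hilbert--Schmidt class, every $K_z$ is the operator-norm limit of finite-rank operators, and therefore compact.

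For the intertwining property, I would use the $\rho \otimes \rho$-invariance of $z$ directly inside the defining formula. Concretely, for $g \in G$ and $x, y \in \cH$, unitarity of $\rho(g)$ gives
\[
\langle K_z \rho(g) x, y\rangle = \langle z, \rho(g) x \otimes y\rangle = \langle (\rho(g^{-1}) \otimes I) z, x \otimes y\rangle,
\]
and the hypothesis $(\rho(g) \otimes \rho(g)) z = z$ allows me to replace $\rho(g^{-1}) \otimes I$ with $I \otimes \rho(g)$. Running the chain of identities backwards produces $\langle \rho(g) K_z x, y\rangle$, and since $y$ is arbitrary this yields $K_z \rho(g) = \rho(g) K_z$.

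Finally, for any $x \in \cH$ the orbit $\rho(G)x$ is bounded (norm $\|x\|$), so its image $K_z(\rho(G)x)$ is pre-compact by compactness of $K_z$; but this image equals $\rho(G)(K_z x)$ by the intertwining relation, which is exactly the statement that $K_z x$ is a $\rho$-compact vector. Hence $\textrm{Im}(K_z) \subset \cH_c$. The only step that requires any genuine care is the compactness claim, since it depends on the (standard, but easily mis-stated) identification of $\cH \otimes \cH$ with $HS(\cH)$ and the fact that Hilbert--Schmidt operators are norm limits of finite-rank operators; the rest is a direct computation with the defining formula.
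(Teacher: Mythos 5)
Your proposal is correct and takes essentially the same approach as the paper: the intertwining computation is identical, and your appeal to the isometry $\cH \otimes \cH \cong \mathrm{HS}(\cH)$ is just a packaged form of the paper's explicit estimate $\|K_{z_\alpha} - K_z\|_{\mathrm{op}} \leq \|z_\alpha - z\|_{\cH \otimes \cH}$ for finite-rank approximants $K_{z_\alpha}$. You add an explicit (and correct) justification of the final claim $\textrm{Im}(K_z) \subset \cH_c$ via the intertwining relation and the fact that compact operators send bounded sets to pre-compact sets, a step the paper leaves as immediate.
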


\begin{proof}
Since $\cH \otimes \cH$ is the Hilbert space completion of the algebraic tensor product, there
exists a net $(z_\alpha)$ of finite sums of basic tensors such that $z_\alpha \ra z$ in the Hilbert 
tensor norm. In particular,
\begin{eqnarray*}
\sup_{\|x\|_{\cH} = 1} \|K_{z_\alpha}x - K_z x\|_{\cH} 
&=& 
\sup_{\|x\|_{\cH} = 1} \Big( \sup_{\|y\|_{\cH} = 1} 
\big| \langle z_{\alpha} - z, x \otimes y \rangle_{\cH \otimes \cH} \big| \Big) \\
&\leq&
\sup_{\|x\|_{\cH} = 1} \Big( \sup_{\|y\|_{\cH} = 1} 
\| z_\alpha - z \|_{\cH \otimes \cH} \cdot \|x \otimes y \|_{\cH \otimes \cH} \Big) \\
&=&
\| z_\alpha - z \|_{\cH \otimes \cH} \ra 0.
\end{eqnarray*}
Note that if
\[
z_\alpha = \sum_{i \in F_\alpha} x_{\alpha,i} \otimes y_{\alpha,i}
\]
for some finite set $F_\alpha$ and $x_{\alpha,i}, y_{\alpha,i} \in \cH$, then
\[
K_{z_\alpha} x = \sum_{i \in F_\alpha} \langle x_{\alpha_i} , x \rangle_{\cH} \cdot y_{\alpha_i},
\]
so in particular $K_{z_\alpha}$ has finite rank for all $\alpha$. Hence $K_{z_\alpha}$ is compact 
for every $\alpha$ and $K_z$
is the norm limit of a net of such operators, and thus compact. Finally, if 
$z \in (\cH \otimes \cH)^{\rho \otimes \rho}$, then
\begin{eqnarray*}
\langle K_z \rho(g)x , y \rangle_{\cH} 
&=& 
\langle z , \rho(g)x \otimes y \rangle_{\cH \otimes \cH} \\
&=&
\langle (\rho \otimes \rho)(g) z , \rho(g) x \otimes y \rangle_{\cH \otimes \cH} \\
&=&
\langle z , x \otimes \rho(g)^{-1} y \rangle_{\cH \otimes \cH} \\
&=&
\langle K_z x , \rho(g)^{-1}y \rangle_{\cH} \\
&=&
\langle \rho(g) K_z x , y \rangle_{\cH} 
\end{eqnarray*}
for all $g \in G$ and $x,y \in \cH$. We conclude that $K_z : \cH \ra \cH$ is $\rho$-intertwining,
which finishes the proof. 
\end{proof}

We can now prove an explicit form of the decomposition \ref{decomposition} for elements in 
$\B(G)$.

\begin{proposition}
\label{decomp}
Suppose $(\rho,\cH)$ is a unitary representation of $G$. If $v,w \in \cH$ and
\[
\varphi(g) = \big\langle \rho(g) v, w \big\rangle_{\cH} ,\quad g \in G,
\]
then $\varphi \in \B(G)$ and 
\[
\varphi_{ap}(g) = \big\langle \rho(g)P_cv, P_c w \big\rangle_{\cH}, \quad g \in G,
\]
where $P_c$ denotes the orthogonal projection onto $\cH_c$.
\end{proposition}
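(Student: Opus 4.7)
The plan is to verify that $\psi(g) := \langle \rho(g) P_c v, P_c w\rangle$ satisfies $\psi \in \AP(G)$ and $\varphi - \psi \in \E_o(G)$; the uniqueness of the Eberlein decomposition \eqref{decomposition} then forces $\varphi_{ap} = \psi$. (That $\varphi \in \B(G)$ is immediate from the definition of the Fourier-Stiltjes algebra.) Since $\cH_c$ is a closed $\rho$-invariant subspace, $P_c$ commutes with every $\rho(g)$; writing $v = P_c v + P_c^\perp v$ and $w = P_c w + P_c^\perp w$ and expanding, the two cross terms vanish because $\rho(g) P_c v \in \cH_c$ is orthogonal to $P_c^\perp w \in \cH_c^\perp$ and symmetrically for the other term. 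Hence $\varphi = \psi + \eta$, where $\eta(g) := \langle \rho(g) P_c^\perp v,\, P_c^\perp w\rangle$.

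For the almost periodicity of $\psi$, both vectors $P_c v$ and $P_c w$ have precompact $\rho$-orbits, so the cyclic $\rho$-subrepresentation $\cK$ they generate inside $\cH_c$ decomposes as a Hilbert sum of finite-dimensional $\rho$-invariant subspaces (the closure of $\rho(G)$ in the strong operator topology on $\U(\cK)$ is compact, so Peter-Weyl applies). Matrix coefficients of finite-dimensional unitary representations factor through the Bohr compactification $bG$ and therefore lie in $\AP(G)$; since $\AP(G)$ is norm-closed and $\psi$ is a uniform limit of matrix coefficients coming from the finite-dimensional summands, $\psi \in \AP(G)$.

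The essential point is $\eta \in \E_o(G)$, i.e.\ $\lambda_o(|\eta|^2) = 0$. A short computation identifies $|\eta(g)|^2$ with the matrix coefficient of the unitary representation $\pi := \rho \otimes \bar\rho$ on $\cH \otimes \bar\cH$ at the pair $(P_c^\perp v \otimes \overline{P_c^\perp v},\; P_c^\perp w \otimes \overline{P_c^\perp w})$; applying Lemma \ref{mean ergodic} to $\pi$ (together with the bi-invariance of $\lambda_o$) gives
\[
\lambda_o(|\eta|^2) \;=\; \bigl\langle P_\pi(P_c^\perp v \otimes \overline{P_c^\perp v}),\; P_\pi(P_c^\perp w \otimes \overline{P_c^\perp w})\bigr\rangle.
\]
It thus suffices to prove $z_x := P_\pi(x \otimes \bar x) = 0$ for every $x \in \cH_c^\perp$. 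The argument of Lemma \ref{compact operator} adapts directly (now producing a genuinely linear, rather than anti-linear, operator $T_z : \cH \to \cH$ via $\langle T_z a, b\rangle := \langle z, b \otimes \bar a\rangle$): each $\pi$-fixed $z$ defines a compact, $\rho$-intertwining Hilbert-Schmidt operator $T_z$, so $T_z^* T_z$ is compact, positive, and $\rho$-intertwining. By the spectral theorem its positive eigenspaces are finite-dimensional and $\rho$-invariant, hence contained in $\cH_c$; consequently $\cH_c^\perp \subset \ker T_z$. Applying this with $z = z_x$ yields
\[
\|z_x\|^2 \;=\; \langle z_x,\, x \otimes \bar x\rangle \;=\; \langle T_{z_x} x,\, x\rangle \;=\; 0,
\]
where the first equality uses self-adjointness and idempotence of $P_\pi$. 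This identification of $\ker T_z$ is precisely the main obstacle; everything else reduces to standard Peter-Weyl and orthogonality considerations.
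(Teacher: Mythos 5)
Your proof takes essentially the same route as the paper's: decompose $\varphi$ using the orthogonal, $\rho$-invariant splitting $\cH = \cH_c \oplus \cH_c^\perp$, show the remainder $\eta$ has $\lambda_o(|\eta|^2)=0$ by applying Lemma~\ref{mean ergodic} to the tensor-square representation, and then kill the resulting inner product of $\pi$-invariant vectors via a compact intertwining operator whose range lies in $\cH_c$ (Lemma~\ref{compact operator}). The differences are cosmetic: you correctly use the conjugate representation $\rho\otimes\bar\rho$ and the conjugate Hilbert space where the paper writes $\rho\otimes\rho$ and $\cH\otimes\cH$ (a notational imprecision that does not affect the argument, since the lemma applies verbatim to $\bar\rho$); you state and prove the sharper intermediate claim $P_\pi(x\otimes\bar x)=0$ for $x\in\cH_c^\perp$ rather than the paper's direct evaluation $\lambda_o(|\phi_o|^2)=\langle K_z y_o, y_o\rangle = 0$; and you spell out the verification that $\psi\in\AP(G)$ and the ``$\mathrm{Im}(K_z)\subset\cH_c$'' step of Lemma~\ref{compact operator} (via $T_z^*T_z$ and the spectral theorem), both of which the paper treats as standard. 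None of this changes the underlying mechanism; the two proofs are the same argument presented with slightly different bookkeeping.
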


\begin{proof}
We wish to show that if $x, y \in \cH$, then the function defined by
\[
\phi_o(g) :=  \big\langle x, \rho(g) y \big\rangle_{\cH}
-
\langle P_c(x), \rho(g) P_c(y) \rangle_{\cH}
\]
belongs to $B_o(G)$. Since the function clearly belongs to $B(G)$, this is 
amounts to show that
\[
\lambda_o(|\phi_o|^2) = 0. 
\]
Define
\[
x_o := x - P_c x 
\qand
y_o := y - P_c y,
\]
and note that
\[
|\phi_o(g)|^2 
= 
\langle 
x_o \otimes x_o, 
(\rho \otimes \rho)(g) y_o \otimes y_o 
\rangle_{\cH \otimes \cH}.
\]
By Lemma \ref{mean ergodic}, we have
\[
\lambda_o(|\phi_o|^2) 
= 
\langle 
P_{\rho \otimes \rho} \, x_o \otimes x_o, 
P_{\rho \otimes \rho} \, y_o \otimes y_o
\rangle.
\]
Let $z := P_{\rho \otimes \rho}( x_o \otimes x_o)$ and note that 
we can now write 
\[
\lambda_o(|\phi_o|^2) = \langle K_z y_o, y_o \rangle,
\]
where the operator $K_z : \cH \ra \cH$ is defined as in Lemma \ref{compact operator}. 
By the same lemma, $K_z y_o \in \cH_c$, and since $y_o$ is orthogonal to $\cH_c$,
we conclude that
\[
\lambda_o(|\phi_o|^2) = \langle K_z y_o, y_o \rangle = 0,
\]
which finishes the proof.
\end{proof}

\subsection{Applications to product sets}

Let us now connect all this to our previous discussions about product sets. 

\subsubsection{Koopman representations}

Let $X$ be a compact $G$-space and suppose $\nu$ is a $G$-invariant and ergodic Borel probability
measure on $X$. The $G$-action on $X$ gives rise to a unitary representation $\rho$ 
(the so called \emph{Koopman representation}) of $G$ on $L^2(X,\nu)$ by 
\[
\rho(g) f = f(g^{-1} \cdot ), \quad g \in G.
\]
If $B, C \subset X$ are $\nu$-measurable sets, then the function
\[
\varphi(g) := \nu(g \cdot B \cap C) = \big\langle \rho(g)\chi_B, \chi_C \big\rangle_{L^2(X,\nu)}, \quad g \in G,
\]
is an element in $\B(G)$ (and hence in $\E(G)$) and by Proposition \label{decomp} we have
\[
\varphi_{ap}(g) = \big\langle \rho(g)P_c \chi_B, P_c \chi_C \big\rangle_{L^2(X,\nu)},
\] 
where $P_c$ denotes the projection onto the $\rho$-compact vectors in $L^2(X,\nu)$. 

\subsubsection{Kronecker-Mackey factors}

We shall need the following classical result of Mackey. 

\begin{proposition}[\cite{Ma}]
There exists a $\nu$-conull set $X' \subset X$, a compactification $(K,\iota_K)$ of $G$, a closed 
subgroup $K_o < K$ and a $G$-equivariant measurable map $\pi : X' \ra K/K_o$ such that 
\[
L^2(X,\mu)_{c} = L^2(K/K_o,m_o)^{\pi},
\]
where $m_o$ denotes the Haar probability measure on $K/K_o$. 
\end{proposition}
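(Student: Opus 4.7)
The plan is to construct the factor $\pi : X' \to K/K_o$ in three stages: first exhibit the compact vectors as a module over a $G$-invariant sub-$C^*$-algebra of $L^\infty(X,\nu)$, then pass to a measurable factor via point realization, and finally identify this factor with a compact homogeneous space.

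First I would consider the set
\[
\cA := \Big\{ f \in L^\infty(X,\nu) \, : \, \textrm{$\{\rho(g)f : g \in G\}$ is precompact in $L^2(X,\nu)$} \Big\}.
\]
This is clearly a $G$-invariant, conjugation-closed, norm-closed subspace of $L^\infty(X,\nu)$, and it is closed under pointwise products because if $\rho(g_n)f$ and $\rho(g_n)h$ are $L^2$-Cauchy and $f, h \in L^\infty$, then the uniform bounds $\|f\|_\infty, \|h\|_\infty$ together with the identity $\rho(g)(fh) = \rho(g)f \cdot \rho(g)h$ force $\rho(g_n)(fh)$ to be $L^2$-Cauchy as well. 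Hence $\cA$ is a $G$-invariant unital sub-$C^*$-algebra of $L^\infty(X,\nu)$. Using the Peter--Weyl decomposition of $L^2(X,\nu)_c$ into isotypic components for finite-dimensional irreducible unitary representations of $G$ (which applies since each $\rho$-orbit in $L^2(X,\nu)_c$ has precompact closed span), together with a standard truncation argument, one concludes that $\cA$ is $L^2$-dense in $L^2(X,\nu)_c$, so $L^2(X,\nu)_c = \overline{\cA}^{L^2}$.

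Second, I would apply Mackey's point realization theorem to $\cA$ (which is countably generated since $G$ is countable and $\cA$ is the closure of a countable collection of isotypic components): there exist a standard Borel $G$-space $(Y, \mu_Y)$, a $\nu$-conull Borel subset $X' \subset X$, and a Borel $G$-equivariant map $\tilde\pi : X' \to Y$ with $\tilde\pi_*\nu = \mu_Y$ and $\tilde\pi^* L^\infty(Y,\mu_Y) = \cA$. Consequently $L^2(X,\nu)_c = \tilde\pi^* L^2(Y,\mu_Y)$, and the Koopman representation of $G$ on $L^2(Y,\mu_Y)$ consists \emph{entirely} of compact vectors.

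Third, I would identify $(Y,\mu_Y)$ with a coset space of a compact group. Since the Koopman representation of $G$ on $L^2(Y,\mu_Y)$ is a Hilbert sum of finite-dimensional subrepresentations, the image of $G$ in the unitary group of $L^2(Y,\mu_Y)$ has compact closure in the strong operator topology. Let $K$ denote the closure, inside the Polish group of $\mu_Y$-preserving transformations of $Y$ (with the weak topology), of the image of $G$; then $K$ is a compact Hausdorff group, $\iota_K : G \to K$ has dense image, and $K$ acts measurably and measure-preservingly on $(Y,\mu_Y)$. The $K$-invariant elements of $L^\infty(Y,\mu_Y)$ coincide with the $G$-invariant ones (by density and continuity of the $K$-action on $L^2$), hence reduce to the constants by ergodicity of $\nu$. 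Pick a generic $y_0 \in Y$, let $K_o < K$ be its closed stabilizer, and define $\pi : X' \to K/K_o$ as the composition of $\tilde\pi$ with the orbit map $k \mapsto k \cdot y_0$. The pushforward $m_o$ of Haar measure on $K$ to $K/K_o$ is $K$-invariant, and by ergodicity of the $K$-action on $(Y,\mu_Y)$ it coincides with $\mu_Y$ under this identification, giving the required equality $L^2(X,\nu)_c = L^2(K/K_o, m_o)^\pi$.

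The main obstacle will be the third step: establishing that the compact group $K$ acts transitively on $Y$ modulo null sets and that $\mu_Y$ corresponds to Haar measure on $K/K_o$. The smoothness of this passage from an ergodic measure-preserving action with purely compact Koopman spectrum to a homogeneous rotation is the heart of the Halmos--von Neumann--Mackey theorem; the key technical point is that ergodicity of the \emph{$G$-action}, together with the fact that $\iota_K(G)$ is dense in $K$ and acts continuously on $L^2(Y,\mu_Y)$, transfers to ergodicity of the \emph{$K$-action}, and a compact Hausdorff group acting ergodically on a standard probability space is essentially transitive.
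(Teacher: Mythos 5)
The paper does not actually prove this proposition; it states it as a citation to Mackey's 1964 paper \cite{Ma} and immediately moves on, so there is no internal argument to compare yours against. That said, your sketch is a correct outline of the standard Halmos--von Neumann--Mackey argument and the three stages are exactly the ones that a careful proof would go through. A few remarks on the details.

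Your Step 1 is sound: closure of $\cA$ under products follows from the estimate $\norm{\rho(g_n)(fh)-\rho(g_m)(fh)}_2 \le \norm{f}_\infty\norm{\rho(g_n)h-\rho(g_m)h}_2 + \norm{h}_\infty\norm{\rho(g_n)f-\rho(g_m)f}_2$, and density of $\cA$ in $L^2(X,\nu)_c$ follows from the fact that truncation $T_M f = \max(-M,\min(M,f))$ commutes with the measure-preserving $G$-action and is $L^2$-Lipschitz, hence carries a precompact orbit to a precompact orbit and $T_M f \to f$ in $L^2$. However, the parenthetical remark that ``each $\rho$-orbit in $L^2(X,\nu)_c$ has precompact \emph{closed span}'' is wrong as stated: the cyclic subspace generated by a compact vector can be infinite-dimensional and hence is never precompact. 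What you mean, and what you need, is that the \emph{orbit} $\rho(G)f$ itself is precompact, which by the Peter--Weyl theorem forces the cyclic subspace to split as a Hilbert sum of finite-dimensional irreducible subrepresentations. The argument goes through with that correction.

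Step 2 needs separability: point realization applies here because in the paper's setting $X$ is the Gelfand spectrum of the Bebutov algebra $\cB_B$, which is a \emph{separable} $C^*$-algebra (it is generated by the translates of a single indicator function over the countable group $G$), so $X$ is second countable and $L^2(X,\nu)$ is separable; this is worth saying explicitly since much of the rest of the paper works deliberately with non-separable algebras such as $\ell^\infty(G)$.

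For Step 3, the chain of deductions is right: $\rho|_{L^2(Y,\mu_Y)}$ is a Hilbert sum of finite-dimensional unitaries, so the closure of $\rho(G)$ in the strong operator topology is a closed subgroup of a product of finite-dimensional unitary groups and hence compact by Tychonoff; the set of Koopman unitaries of $\mu_Y$-preserving transformations is SOT-closed, so $K$ acts on $(Y,\mu_Y)$ by measure-preserving transformations; and since $G \subset K$, any $K$-invariant set is $G$-invariant, so ergodicity of the $G$-action trivially passes to the $K$-action. The final reduction from an ergodic measure-preserving action of a compact group to a transitive action on a homogeneous coset space, with the invariant measure identified as Haar measure, is the one genuinely non-elementary ingredient, and you are right to flag it as the crux; it is precisely the content of the Mackey reference that the paper defers to.

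So: no genuine gap, one misstatement in Step 1 that should be repaired, and a proof that does supply the argument the paper chooses not to reproduce.
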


The compact homogeneous space $K/K_o$ is often referred to as the \emph{Kronecker} (or 
\emph{Kronecker-Mackey}) factor of $(X,\nu)$. \\

We shall use Mackey's result in the following way. Let $\psi_B$ and $\psi_C$ denote 
$m_o$-measurable pointwise realizations of $P_c \chi_B$ and $P_c \chi_C$ respectively,
and define
\[
\tilde{B} := \Big\{ k \in K/K_o \, : \psi_B(k) > 0 \Big\}
\qand
\tilde{C} := \Big\{ k \in K/K_o \, : \psi_C(k) > 0 \Big\}.
\]
Now assume that there is a set $A \subset G$ on which $\varphi_{ap}$ vanishes, i.e.
\[
\int_{K/K_o} \psi_B(\iota_K(a)^{-1} \cdot k) \, \psi_C(k) \, dm_o(k) = 0, \quad \forall \, a \in A.
\]
Since $\psi_B$ and $\psi_C$ are both non-negative functions, we must also have
\[
m_o(\iota_K(a) \cdot \tilde{B} \cap \tilde{C}) = 0, \quad \forall \, a \in A.
\]
Note that the function
\[
\psi(g) := m(\iota_K(g) \tilde{B} \cap \tilde{C}), \quad g \in G,
\]
is almost periodic. 

\subsubsection{Connections to $bG$}
By the universal property of the Bohr compactification $(bG,\iota_o)$, 
there exists a continuous surjective homomorphism 
\[
\iota' : bG \ra K
\]
such that $\iota_K = \iota' \circ \iota_o$. Let $p : K \ra K/K_o$ denote the canonical quotient map and set $p' := p \circ \iota'$. Then $p' : bG \ra K/K_o$ is a continuous and 
$G$-equivariant surjection, so in particular $p'_*m = m_o$, and the identity above can
be lifted to the identity
\[
m(\iota_o(a) \cdot p'^{-1}(\tilde{B}) \cap p'^{-1}(\tilde{C})) = 0, \quad \forall \, a \in A. 
\]
Now suppose that $A$ is a $\cFS$-large set. Since the function
\[
\varphi'(g) := m(\iota_o(g) \cdot p'^{-1}(\tilde{B}) \cap p'^{-1}(\tilde{C}))
\]
is almost periodic function on $G$, Corollary \ref{fs cor} implies that there exists a Borel
set $A_o \subset bG$ with 
\[
m(A_o) \geq d^*_{\cFS}(A)
\]
such that $\varphi'$ vanishes on $A_o$, so in particular, 
\[
\int_{bG} \int_{bG} \chi_{A_o}(a) \chi_{B_o}(a^{-1} \cdot k) \chi_{C_o}(k) \, dm(k) \, dm(a),
\]
where $B_o$ and $C_o$ denote the sets $p'^{-1}(\tilde{B})$ and $p'^{-1}(\tilde{C})$ respectively. Note that
\[
m(B_o) = m_o(\tilde{B}) \qand m(C_o) = m_o(\tilde{C}).
\]
Upon changing the order of integration, we get
\[
\int_{C_o} m(A_o \cap k \cdot B_o^{-1}) \, dm(k) = 0, 
\]
and thus $C_o \subset U^c$, modulo $m$-null sets in $bG$, where
\[
U := \Big\{ k \in bG \, : \, m(A_o \cap k \cdot B_o^{-1}) > 0 \Big\}. 
\]

\subsubsection{Almost everywhere inclusions}

Let us summarize the discussion so far. Given a $\cFS$-large set $A \subset G$ and 
$\nu$-measurable sets $B, C \subset X$, we have constructed a $m_o$-measurable
set $\tilde{C} \subset K/K_o$ such that
\[
C \subset \pi^{-1}(\tilde{C})
\] 
modulo $\nu$-null sets in $X$ and an open set $U \subset bG$ (of the explicit form above)
such that 
\[
p'^{-1}(\tilde{C}) \subset U^c
\]
modulo $m$-null sets in $bG$. 

If $Y$ is a $G$-space and $C$ is a subset of $Y$, we define
\[
C_y := \Big\{ g \in G \, : \, g \cdot y \in C \Big\}, \quad y \in Y. 
\] 
The following lemma will be useful. 

\begin{lemma}
\label{ae inclusion}
Let $Y$ be a compact $G$-space and suppose $\nu$ is a non-singular probability measure
on $Y$. If $A, B \subset Y$ are $\nu$-measurable sets with $\nu(A \setminus B) = 0$, then there 
exists a $\nu$-conull subset $Y' \subset Y$ such that
\[
A_y \subset B_y, \quad \forall \, y \in Y'.
\]
\end{lemma}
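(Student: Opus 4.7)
The plan is to reduce the statement to the fact that a countable union of $\nu$-null sets is $\nu$-null, using the non-singularity of $\nu$ in an essential way.

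First I would unwind the definitions. The inclusion $A_y \subset B_y$ says that for every $g \in G$, whenever $g \cdot y \in A$ then $g \cdot y \in B$; equivalently, $g \cdot y \notin A \setminus B$ for every $g \in G$. So, setting $N := A \setminus B$, I want a $\nu$-conull set $Y' \subset Y$ such that $g \cdot y \notin N$ for all $y \in Y'$ and all $g \in G$, i.e.\ such that $y \notin g^{-1} \cdot N$ for all $g \in G$.

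By hypothesis, $\nu(N) = 0$. Since $\nu$ is non-singular with respect to the $G$-action, every translate $g^{-1} \cdot N$ is again $\nu$-measurable and $\nu$-null. As $G$ is countable (all our groups in this paper are), the set
\[
N' := \bigcup_{g \in G} g^{-1} \cdot N
\]
is a $\nu$-measurable $\nu$-null set. Define $Y' := Y \setminus N'$; then $Y'$ is $\nu$-conull, and for every $y \in Y'$ and every $g \in G$ we have $g \cdot y \notin N$, which is precisely $A_y \subset B_y$. This finishes the proof.

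There is no real obstacle here; the lemma is essentially a two-line consequence of countability of $G$ and the definition of non-singularity, and the only thing worth emphasizing is where non-singularity is used (to ensure each individual translate $g^{-1} \cdot N$ remains $\nu$-null, so that the countable union stays null).
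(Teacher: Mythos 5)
Your proof is correct and follows essentially the same route as the paper: both reduce to showing that for the null set $N := A \setminus B$, the union of all $G$-translates of $N$ (which is exactly $\{y : N_y \neq \emptyset\}$) remains $\nu$-null by countability of $G$ and non-singularity of $\nu$.
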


\begin{proof}
It suffices to prove that if $N \subset Y$ is a $\nu$-null set, then $N_y$ is empty for a conull set
of $y$ in $Y$. Note that
\[
\Big\{ y \in Y \, : \, N_y \neq \emptyset \Big\} = G \cdot N.
\]
Since $G$ is countable and $\nu$ is non-singular, we conclude that $\nu(G \cdot N) = 0$, which 
finishes the proof. 
\end{proof}

The Bohr compactification $(bG,\iota_o)$ can be viewed as a $G$-space in two ways (using the 
left or right action). In this paper, we shall use the left action, so if $g \in G$, we write
\[
g \cdot k := \iota_o(g) \cdot k, \quad k \in bG,
\]
so that if $V \subset bG$ is any subset, then 
\[
V_k = \Big\{ g \in G \, : \, g \cdot k \in V \Big\} = \iota_o^{-1}(V \cdot k^{-1}).
\]
The lemma above implies that there exists a $\nu$-conull subset $X'' \subset X'$ such that
\[
C_x \subset \pi^{-1}(\tilde{C})_x = \tilde{C}_{\pi(x)}, \quad \forall \, x \in X''
\]
and a $m$-conull set $Z \subset bG$ such that
\[
\tilde{C}_{p'(k)} = p'^{-1}(\tilde{C})_k \subset U_k^c = \iota_o^{-1}(U^c \cdot k^{-1}), \quad \forall \, k \in Z.
\]
In particular, for $\nu$-almost every $x \in X$, there exists $k \in bG$ such that
\[
C_x \subset \iota_o^{-1}(U^c \cdot k).
\]
We summarize this observation in the following proposition.
\begin{proposition}
\label{main help}
Suppose $A \subset G$ is $\cFS$-large and $B, C \subset X$ are $\nu$-measurable sets such
that the intersection $AB \cap C$ is a $\nu$-null set. Then there exist $m$-mesaurable sets 
$A_o, B_o \subset bG$ with
\[
m(A_o) \geq d^*_{\cFS}(A) \qand m(B_o) \geq \nu(B) 
\]
and a $\nu$-conull set $X'' \subset X$ with the property that for all $x \in X''$, there exists $k \in bG$
such that 
\[
C_x \subset \iota_o^{-1}(U^c \cdot k),
\]
where 
\[
U := \Big\{ k \in bG \, : \, m(A_o \cap k \cdot B_o^{-1}) > 0 \Big\}. 
\]
\end{proposition}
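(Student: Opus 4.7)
The proof is essentially a synthesis of the chain of reductions developed in the preceding subsection ``Applications to product sets,'' so the plan is to sequence those ingredients and then combine two almost-everywhere inclusions at the end. First I would encode the hypothesis $\nu(AB \cap C) = 0$ as the vanishing on $A$ of a single matrix coefficient. Letting $\rho$ denote the Koopman representation of $G$ on $L^2(X,\nu)$, set
\[
\varphi(g) := \langle \rho(g)\chi_B, \chi_C \rangle_{L^2(X,\nu)} = \nu(g \cdot B \cap C).
\]
This is a non-negative element of $\B(G)$ which by assumption vanishes identically on $A$, and by Proposition \ref{decomp} its almost periodic part is $\varphi_{ap}(g) = \langle \rho(g) P_c \chi_B, P_c \chi_C \rangle$, where $P_c$ denotes the projection onto the $\rho$-compact subspace of $L^2(X,\nu)$.

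Next I would invoke Mackey's proposition to produce a $\nu$-conull $X' \subset X$, a group compactification $(K,\iota_K)$, a closed subgroup $K_o \leq K$, and a $G$-equivariant measurable map $\pi: X' \to K/K_o$ under which $L^2(X,\nu)_c$ is identified with $L^2(K/K_o, m_o)^{\pi}$. Choose measurable representatives $\psi_B,\psi_C$ of $P_c\chi_B, P_c\chi_C$; since $P_c$ is the conditional expectation onto the Kronecker $\sigma$-algebra it preserves $[0,1]$-valued functions, so $0 \leq \psi_B, \psi_C \leq 1$. Setting $\tilde{B} := \{\psi_B > 0\}$ and $\tilde{C} := \{\psi_C > 0\}$, the identity $\int \psi_B\,dm_o = \nu(B)$ together with $\psi_B \leq \chi_{\tilde B}$ gives $m_o(\tilde B) \geq \nu(B)$, and $C \subset \pi^{-1}(\tilde C)$ modulo $\nu$-null sets in $X$. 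The universal property of the Bohr compactification then provides a continuous $G$-equivariant surjection $p': bG \to K/K_o$ with $p'_*m = m_o$; put $B_o := p'^{-1}(\tilde B)$, $C_o := p'^{-1}(\tilde C)$, and observe that the continuous function $\bar\varphi(k) := m(k \cdot B_o \cap C_o)$ on $bG$ pulls back under $\iota_o$ to $\varphi_{ap}$. Corollary \ref{fs cor} now yields a Borel set $A_o \subset bG$ with $m(A_o) \geq d^*_{\cFS}(A)$ on which $\bar\varphi$ vanishes, so that an application of Fubini gives
\[
\int_{C_o} m(A_o \cap k \cdot B_o^{-1}) \, dm(k) = 0,
\]
i.e.\ $C_o \subset U^c$ modulo $m$-null sets in $bG$, with $U$ as defined in the statement.

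The final task is to convert these measure-theoretic inclusions into the pointwise inclusion claimed in the proposition. Applying Lemma \ref{ae inclusion} on $X$ yields a $\nu$-conull $X_1 \subset X$ with $C_x \subset \tilde C_{\pi(x)}$ for every $x \in X_1$; applying it on the $G$-space $bG$ yields an $m$-conull $Z \subset bG$ with $(C_o)_k = \tilde C_{p'(k)} \subset \iota_o^{-1}(U^c \cdot k^{-1})$ for every $k \in Z$. The main obstacle is to align these two almost-everywhere statements: for $\nu$-almost every $x \in X_1$ I need to produce some $k \in Z$ with $p'(k) = \pi(x)$. The key observation here is that $\pi_*\nu = m_o = p'_*m$, the first equality because $\pi_*\nu$ is a Borel probability measure on $K/K_o$ invariant under the dense subgroup $\iota_K(G) \subset K$ and hence under $K$ by continuity, and therefore equal to the unique normalised Haar measure $m_o$. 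A standard disintegration of $m$ along the continuous surjection $p'$ then shows that $m_o$-almost every fibre $(p')^{-1}(y)$ has full (disintegrated) measure in $Z$; taking $X''$ to be the intersection of $X_1$ with the $\nu$-conull $\pi$-preimage of this $m_o$-conull set of good fibres, and relabelling $k \mapsto k^{-1}$, furnishes for each $x \in X''$ the required $k \in bG$ with $C_x \subset \iota_o^{-1}(U^c \cdot k)$.
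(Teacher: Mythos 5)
Your proof is essentially the same route as the paper's, and it fills in several details the paper leaves implicit: the verification that $m_o(\tilde B)\geq\nu(B)$ (via the conditional-expectation interpretation of $P_c$ and the inequality $\psi_B\leq\chi_{\tilde B}$), the observation that $\pi_*\nu=m_o$ by uniqueness of Haar measure on $K/K_o$, and the final alignment of the two almost-everywhere inclusions by descending the $m$-conull set $Z\subset bG$ to an $m_o$-conull set of fibres. These additions are genuine improvements in rigor over the text, which simply asserts the conclusion "for $\nu$-almost every $x\in X$, there exists $k\in bG$ such that $C_x\subset\iota_o^{-1}(U^c\cdot k)$" without explaining why the two conull sets can be matched up.

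One intermediate claim is, however, literally false: the continuous function $\bar\varphi(k):=m(k\cdot B_o\cap C_o)$ does \emph{not} pull back under $\iota_o$ to $\varphi_{ap}$. Indeed
\[
\bar\varphi(\iota_o(g))=\int_{K/K_o}\chi_{\tilde B}\bigl(\iota_K(g)^{-1}y\bigr)\,\chi_{\tilde C}(y)\,dm_o(y),
\qquad
\varphi_{ap}(g)=\int_{K/K_o}\psi_B\bigl(\iota_K(g)^{-1}y\bigr)\,\psi_C(y)\,dm_o(y),
\]
and $\chi_{\tilde B}\neq\psi_B$ in general (the latter is $[0,1]$-valued, not $\{0,1\}$-valued). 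What \emph{is} true, and what your argument actually needs, is that the two functions have the same zero set on $bG$: since $\psi_B,\psi_C\geq0$ and $\tilde B=\{\psi_B>0\}$, $\tilde C=\{\psi_C>0\}$, the integrand $\psi_B(\iota'(k)^{-1}y)\psi_C(y)$ vanishes $m_o$-a.e.\ if and only if $m_o(\iota'(k)\tilde B\cap\tilde C)=0$. So applying Corollary \ref{fs cor} to $\varphi$ and then using non-negativity to pass from ``$\varphi_{ap}$ vanishes on $A_o$'' to ``$\bar\varphi$ vanishes on $A_o$'' recovers exactly the conclusion you state; replace the pullback identity by this support-equality argument and the proof is sound.
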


\subsection{Proof of Proposition \ref{bohr appears}}

Let us first briefly recall the setting of Proposition \ref{bohr appears}. Let $G$ be a countable 
group and fix an admissible probability measure $\mu$ on $G$ once and for all. Let 
$A \subset G$ be a Fourier-Stiltjes large set and let $B \subset G$ a strongly non-paradoxical
set. Denote by $\cB_B$ the Bebutov algebra of $B$ and by $X$ the Gelfand spectrum of 
$\cB_B$. Since $\cB$ is unital and separable, $X$ is compact and second countable. Let 
$B' \subset X$ denote the clopen set which corresponds to $B$ under the Gelfand map. \\

\subsubsection{Reducing the measure-preserving spaces}
By Corollary \ref{paradoxical} there exist $\lambda \in \cP_\mu(\beta G)$ such that the 
pushed forward probability measure $\eta' := (\pi_{\cB})_*\lambda$ on $X$ is $G$-invariant and ergodic
with
\[
d^*_{np}(B) = \eta'(B')
\]
and a $\lambda$-measurable set $C' \subset X$ with
\[
\overline{AB} \supset \pi_\cB^{-1}(C')^c
\]
modulo $\lambda$-null sets in $\beta G$, so in particular, we have 
\[
\eta'(a \cdot B' \cap C') = 0, \quad \, \forall \, a \in A.
\]
By Proposition \ref{existence}, $\lambda$ is non-singular and thus  Lemma \ref{ae inclusion} 
guarantees that we can find a $\lambda$-conull set $Z \subset \beta G$ such that 
\[
\overline{AB}_q \supset (\pi_\cB^{-1}(C')^c)_q = (C'^c)_{x},
\]
where $x = \pi_{\cB}(q)$, for all $q \in Z$.

\subsubsection{Reducing to $bG$}

By Proposition \ref{main help} applied with $\nu = \eta'$, there exist $m$-measurable 
sets $\tilde{A}, \tilde{B} \subset bG$ with
\[
m(\tilde{A}) \geq d^*_{\cFS}(A) \qand m(\tilde{B}) \geq \eta'(B')
\]
and a $\eta'$-conull subset $X' \subset X$ with the property that for all $x \in X'$, there is 
$k \in bG$ such that
\[
C'_x \subset \iota_o^{-1}(U^{c} \cdot k),
\]
where
\[
U := \Big\{ k \in bG \, : \, m(\tilde{A} \cap k \cdot \tilde{B}^{-1}) > 0 \Big\}. 
\]
Combining the two assertions above, we conclude that there exists a $\lambda$-conull set
$Z' \subset Z$ such that for all $q \in Z'$, there exists $k \in bG$ such that
\[
\overline{AB}_q \supset  C_x^{'c} \supset \iota_o^{-1}(U \cdot k),
\]
where $x = \pi_{\cB}(q)$, and this finishes the proof. 

\section{Proof of Proposition \ref{local inclusion}}
\label{Local}

We shall now outline the proof of Proposition \ref{local inclusion}. After some preliminary 
work, we will see that it easily follows from Corollary \ref{local inclusion0}.

\subsubsection{Almost automorphic points}

Let $Y$ be a compact $G$-space. A point $y \in Y$ is \emph{almost automorphic}
if whenever we have $g_\alpha \cdot y \ra y_o$ for some net $(g_\alpha)$ in $G$, then 
$g_\alpha^{-1} \cdot y_o \ra y$. 

We stress that this is a very special kind of point and such points do not always exist. However, 
it is easy to see that if $(K,\iota_o)$ is a compactification of $G$ and $Y := K/K_o$ for some 
closed subgroup $K_o < K$, where $G$ acts by
\[
g \cdot k K_o = g k K_o, \quad kK_o \in K/K_o,
\] 
then \emph{every} point in $Y$ is almost automorphic. 

\subsubsection{Constructing joint extensions}
Now let $X$ be another compact $G$-space, equipped with a $G$-transitive point $x_o$. 
If $Y$ admits an almost automorphic point, we shall see that it is easy to construct joint 
extensions of the two systems.

\begin{lemma}
\label{aa}
Suppose $y \in Y$ is an almost automorphic point. Then, for all $x \in X$,
there exists $y_o \in Y$ such that
\[
(x,y) \in \overline{G(x_o,y_o)}.
\]
\end{lemma}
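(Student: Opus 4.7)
The plan is to exploit the $G$-transitivity of $x_o$ to select a convergent net, and then use the definition of almost automorphy to transfer the convergence on $X$ into a simultaneous convergence on $X \times Y$. Concretely, I would start by picking a net $(g_\alpha)$ in $G$ with $g_\alpha \cdot x_o \to x$; such a net exists because the orbit $G \cdot x_o$ is dense in $X$. By compactness of $Y$, after passing to a subnet I may assume that $g_\alpha^{-1} \cdot y$ converges in $Y$ to some point $y_o$. This $y_o$ will be the candidate witness for the lemma.

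The key step is now to use the almost automorphy of $y$. Setting $h_\alpha := g_\alpha^{-1}$, the relation $h_\alpha \cdot y \to y_o$ falls precisely into the hypothesis of the definition of almost automorphy (note that almost automorphy is applied at $y$, not at $y_o$: we are given convergence of a translate of $y$ to $y_o$, and we conclude convergence of the reverse translate of $y_o$ back to $y$). Thus $h_\alpha^{-1} \cdot y_o \to y$, which is to say $g_\alpha \cdot y_o \to y$ in $Y$.

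Combining this with the starting property $g_\alpha \cdot x_o \to x$, we deduce, in the product $G$-space $X \times Y$, that
\[
g_\alpha \cdot (x_o, y_o) = (g_\alpha \cdot x_o,\, g_\alpha \cdot y_o) \longrightarrow (x,y),
\]
so $(x,y) \in \overline{G \cdot (x_o,y_o)}$ as required.

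The only step requiring any care is the direction in which almost automorphy is applied: one has to remember that the relevant net is $(g_\alpha^{-1})$, and that almost automorphy then gives back the net $(g_\alpha)$ acting on $y_o$. Everything else is just compactness of $Y$ and the definition of $G$-transitivity of $x_o$; no further appeal to the structure of $X$ or $Y$ is needed.
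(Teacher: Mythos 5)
Your proof is correct and follows the same line as the paper: choose a net with $g_\alpha \cdot x_o \to x$, pass to a subnet so that $g_\alpha^{-1}\cdot y$ converges to some $y_o$, and then use almost automorphy at $y$ (with the net $g_\alpha^{-1}$) to conclude $g_\alpha \cdot y_o \to y$. Your remark about applying the definition to $(g_\alpha^{-1})$ rather than $(g_\alpha)$ is exactly the point, and the rest of the argument matches the paper's proof.
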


\begin{proof}
Choose a net $(g_\alpha)$ in $G$ such that $g_\alpha \cdot x_o \ra x$ and extract a subnet
$(g_\beta)$ with the property that $g_\beta^{-1} \cdot y$ converges to a point $y_o \in Y$. 
Since $y$ is almost automorphic, we have $g_\beta \cdot y_o \ra y$, and thus
\[
g_\beta \cdot (x_o,y_o) \ra (x,y),
\]
along this subnet. 
\end{proof}

Fix $x \in X$ and an almost automorphic point $y \in Y$ and let 
\[
Y' := \overline{G \cdot y_o} \qand 
Z := \overline{G(x_o,y_o)},
\]
where $y_o$ is the point guaranteed by the previous lemma. In particular,
$z := (x,y) \in Z$. 

Let $z_o := (x_o,y_o)$ and denote by $\pi_X$ and $\pi_{Y'}$ the canonical projections 
of $Z$ onto $X$ and $Y'$ respectively. Let $\pi : \beta G \ra Z$ be the (left) $G$-equivariant 
map with $\pi(\bar{e}) = z_o$ induced from the inclusion $C(Z) \hookrightarrow C(\beta G)$.

\subsubsection{Small boundary property}

Fix an admissible probability measure $\mu$ on $G$ and suppose $U \subset Y'$  
is Jordan measurable with respect to all $\mu$-stationary probability measures on $Y'$. 
In particular, if $Y'$ is a compact homogeneous space as above, then this condition reduces 
to $U$ being Jordan measurable with respect to the Haar measure (since every 
$\mu$-stationary Borel probability measure on a compact homogenous space is invariant - see e.g. \cite{FG}).

Let $B \subset X$ be a clopen set and suppose
\[
B_x \supset U_y.
\]
Define
\[
B' := (\pi_{X} \circ \pi)^{-1}(B) \qand U' :=  (\pi_{Y'} \circ \pi)^{-1}(U)
\]
and choose $q \in \beta G$ such that $\pi(q) = z$. Then
\[
B'_q \supset U'_q.
\]
Note that $B' \subset \beta G$ is clopen and since $U'$ is the pull-back under a $G$-equivariant 
continuous map from $\beta G$ onto $Z$, Lemma \ref{lift} implies that $U'$ is Jordan measurable
with respect all elements in $\cL_\mu$.

In particular, by Corollary \ref{local inclusion0} (recall that $\cL_\mu$ is left saturated by Lemma \ref{left saturated}), there exists a $\cL_\mu$-conull set $T \subset G$ such that
\[
B'_{\bar{e}} \supset U'_{\bar{e}} \cap T.
\]
Since $T$ is right thick by Corollary \ref{mu thick}, we have proved 
Proposition \ref{local inclusion}.

\section{Proof of Proposition \ref{jordan}}
\label{Jordan}

Recall that if $K$ is a compact Hausdorff group and $U \subset K$ is an open set, then
there exists a finite set $F \subset K$ such that $FU = K$. We define the 
\emph{syndeticity index} $s_K(U)$ as the minimal cardinality of such a finite set. 

\subsection{Syndeticity index of product sets}

Let $K$ be a compact Hausdorff group. By Steinhaus Lemma, whenever $A, B \subset K$
are Borel sets with positive Haar measures, then the product set $AB$ has non-empty 
interior. We wish to bound the syndeticity index of this interior in terms of the Haar
measures of $A$ and $B$. 

We begin with a few simple lemmata.

\begin{lemma}
\label{help1}
Suppose $C, D \subset K$ are Borel sets. Then there exists $k_o \in K$
such that
\[
m(C \cap k_o \cdot D) \geq m(C) \cdot m(D).
\]
\end{lemma}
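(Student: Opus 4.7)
The plan is to use a standard averaging argument. I would consider the function
\[
\Phi(k) := m(C \cap k \cdot D), \quad k \in K,
\]
and show that its Haar integral over $K$ equals exactly $m(C) \cdot m(D)$. Once this is established, the conclusion follows from the elementary observation that any integrable function must attain a value at least as large as its average, so there exists $k_o \in K$ with $\Phi(k_o) \geq \int_K \Phi \, dm = m(C) \cdot m(D)$.

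To compute the integral, I would write
\[
\int_K m(C \cap k \cdot D) \, dm(k) = \int_K \int_K \chi_C(x) \, \chi_D(k^{-1} x) \, dm(x) \, dm(k),
\]
observe that the joint integrand is non-negative and jointly Borel measurable (hence Fubini applies), and interchange the order of integration to get
\[
\int_K \chi_C(x) \Big( \int_K \chi_D(k^{-1} x) \, dm(k) \Big) \, dm(x).
\]
For each fixed $x$, the inner integral equals $m(D)$, since Haar measure on a compact Hausdorff group is both left-invariant and inversion-invariant (the compact group is unimodular and the modular function is trivial, so $\int_K \chi_D(k^{-1}x) \, dm(k) = \int_K \chi_D(k) \, dm(k) = m(D)$). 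Pulling this constant out yields $m(C) \cdot m(D)$ as desired.

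The only genuinely delicate point is measurability of the function $k \mapsto m(C \cap k \cdot D)$, which is not automatic for arbitrary Borel sets in a compact Hausdorff group that is not assumed second countable. I would handle this by noting that the function is the Haar integral of a jointly Borel measurable non-negative function $(k,x) \mapsto \chi_C(x) \chi_D(k^{-1}x)$ on $K \times K$, so a version of Fubini for Radon measures on compact Hausdorff spaces (applied to the Radon product of $m$ with itself) guarantees that the iterated integrals are well-defined and equal, and in particular $\Phi$ is measurable with respect to the completion of $m$. Since the integrand is bounded and non-negative, this is more than enough to conclude that some $k_o$ satisfies $\Phi(k_o) \geq m(C) \cdot m(D)$. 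This is really the main (minor) obstacle; the algebraic identity itself is immediate from invariance of the Haar measure.
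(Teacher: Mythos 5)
Your proof is correct and takes essentially the same route as the paper, which simply states the identity $\int_K m(C \cap k\cdot D)\,dm(k) = m(C)\cdot m(D)$ and declares the lemma immediate from it. You have spelled out the Fubini computation and the unimodularity/inversion-invariance that make the inner integral equal $m(D)$, and your attention to measurability for a non-metrizable compact Hausdorff group is a reasonable extra precaution that the paper does not bother to mention.
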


\begin{proof}
This is immediate from the identity 
\[
\int_K m(C \cap k \cdot D) \, dm(k) = m(C) \cdot m(D).
\]
\end{proof}

\begin{lemma}
\label{help2}
Suppose $E \subset K$ is a Borel set with positive Haar measure and define the open 
set
\[
U := \Big\{ k \in K \, : \, m(E \cap k \cdot E) > 0 \Big\} \subset K.
\]
Then,
\[
s_K(U) \leq \Big\lfloor \frac{1}{m(E)} \Big\rfloor.
\]
\end{lemma}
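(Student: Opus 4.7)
The plan is to construct a maximal ``packing'' of $K$ by essentially disjoint left translates of $E$ and then convert maximality into a covering of $K$ by translates of $U$.

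First I would record two elementary properties of $U$. The set $U$ is symmetric: if $k \in U$, then by left-invariance of $m$,
\[
m(k^{-1}E \cap E) = m(E \cap kE) > 0,
\]
so $k^{-1} \in U$. Also $e \in U$ because $m(E) > 0$.

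Next I would build a finite family $F = \{k_1, \ldots, k_n\} \subset K$ with $k_1 = e$ which is maximal with respect to the property that the translates $k_i E$ are pairwise essentially disjoint, i.e.\ $m(k_i E \cap k_j E) = 0$ for all $i \neq j$. Any such family is automatically finite, since pairwise essential disjointness together with $m(k_i E) = m(E)$ forces
\[
n \cdot m(E) = \sum_{i=1}^n m(k_i E) \leq m(K) = 1,
\]
so $n \leq \lfloor 1/m(E) \rfloor$. Maximality can be obtained either by greedy iteration (which must terminate) or by Zorn's lemma.

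Finally I would show $FU = K$. Fix any $k \in K$. By maximality of $F$, the family $F \cup \{k\}$ is not pairwise essentially disjoint, so there exists $k_i \in F$ with $m(kE \cap k_i E) > 0$ (if $k \in F$ this is immediate with $i$ chosen so that $k_i = k$, since $m(kE) = m(E) > 0$). Left-translating by $k^{-1}$ yields $m(E \cap k^{-1} k_i E) > 0$, so $k^{-1} k_i \in U$; by symmetry of $U$, $k_i^{-1} k \in U$, that is, $k \in k_i U \subset FU$. Hence $FU = K$, which gives $s_K(U) \leq |F| \leq \lfloor 1/m(E) \rfloor$. I do not anticipate any real obstacle here: the only delicate point is to extract the cardinality bound, and the packing inequality above handles it with no measure-theoretic subtleties beyond left-invariance of Haar measure.
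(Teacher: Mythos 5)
Your proof is correct, and it is essentially the same argument as the paper's, just presented from the dual viewpoint. The paper greedily selects points $k_1, k_2, \ldots$ with each $k_n$ taken outside $U \cup \bigcup_{j<n} k_j U$, observes that this forces the translates $E, k_1E, \ldots$ to be pairwise essentially disjoint, and then uses the measure count to see that the process must stop within $\lfloor 1/m(E)\rfloor$ steps, at which point the union of translates of $U$ already covers $K$. You instead take a maximal family of pairwise essentially disjoint translates of $E$, bound its size by the same measure count, and then convert maximality into the covering $FU = K$; for the last step you make the symmetry of $U$ explicit, whereas the paper bypasses it because the disjointness condition $m(k_iE\cap k_jE)=0$ is already symmetric in $i,j$. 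The two formulations are logically equivalent (each is the contrapositive of the other), and neither requires anything beyond left-invariance of Haar measure, as you note. No gap.
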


\begin{proof}
If $U = K$, there is nothing to prove, so assume there exists $k_1 \notin U$. 
For $n \geq 2$, we choose 
\[
k_n \notin U \cup \Big( \bigcup_{j = 1}^{n-1} k_j \cdot U \Big),
\]
provided the right hand side does not equal $K$. Note that $k_i^{-1} \cdot k_j \notin U$ for 
all $i < j$, so in particular,
\[
m(k_i \cdot E \cap k_j \cdot E) = m(E \cap k_i^{-1} \cdot k_j \cdot E) = 0,
\]
for all $i \neq j$. Hence
\[
m\Big( E \cup \Big( \bigcup_{j=1}^{n-1} k_j E \Big)\Big) = m(E) + \sum_{j=1}^{n-1} m(k_j \cdot E) = n \cdot m(E),
\]
for all $n$, provided that 
\[
K \neq U \cup\Big(  \bigcup_{j=1}^{n-1} k_j \cdot U \Big).
\]
Since $E \subset K$ has positive measure, there exists $n \geq 1$, such that
\[
n \cdot m(E) \leq 1 \qand (n+1) \cdot m(E) > 1.
\]
For this $n$, the set 
\[
F := \big\{e,k_1,\ldots,k_{n-1}\big\} \subset K
\]
must satisfy $F \cdot U = K$, and hence 
\[
s_K(U) \leq n \leq \frac{1}{m(E)},
\]
which finishes the proof.
\end{proof}

\begin{proposition}
\label{help3}
Suppose $C, D \subset K$ are Borel sets with positive Haar measures and define the
open set
\[
U := \Big\{ k \in K \, : \, m(C \cap k \cdot D) > 0 \Big\} \subset K.
\]
Then,
\[
s_K(U) \leq \Big\lfloor \frac{1}{m(C) \cdot m(D)} \Big\rfloor.
\]
\end{proposition}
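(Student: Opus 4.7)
The plan is to reduce Proposition \ref{help3} to the special case $C = D$ already handled in Lemma \ref{help2}, by first replacing the pair $(C,D)$ with a single set $E$ whose Haar measure we can control from below.

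First, I would invoke Lemma \ref{help1} to select an element $k_o \in K$ with
\[
m(C \cap k_o \cdot D) \geq m(C) \cdot m(D),
\]
and then set $E := C \cap k_o \cdot D$. By construction $E \subset C$ and $E \subset k_o \cdot D$, and moreover $m(E) \geq m(C) \cdot m(D) > 0$, so $E$ is an eligible input for Lemma \ref{help2}. Applying that lemma to $E$, the open set
\[
V := \Big\{ k \in K \, : \, m(E \cap k \cdot E) > 0 \Big\}
\]
satisfies
\[
s_K(V) \leq \Big\lfloor \frac{1}{m(E)} \Big\rfloor \leq \Big\lfloor \frac{1}{m(C) \cdot m(D)} \Big\rfloor.
\]

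The remaining step is to translate this bound from $V$ to $U$. For any $k \in V$, the two inclusions $E \subset C$ and $k \cdot E \subset k k_o \cdot D$ give $E \cap k \cdot E \subset C \cap k k_o \cdot D$, hence $m(C \cap k k_o \cdot D) > 0$, which means $k k_o \in U$. Thus $V \cdot k_o \subset U$. If $F \subset K$ is a finite set of minimal cardinality $s_K(V)$ with $F \cdot V = K$, then $F \cdot V \cdot k_o = K$, and since $V \cdot k_o \subset U$ we conclude $F \cdot U = K$. Therefore $s_K(U) \leq |F| = s_K(V)$, and the desired estimate follows by combining the two inequalities above.

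I don't anticipate any real obstacle here: the only non-trivial ingredients are Lemma \ref{help1} (pigeonholing a translate to produce a large intersection) and Lemma \ref{help2} (the self-convolution case), both of which are already proved in the excerpt. The only point one needs to watch is that right-multiplication by the fixed element $k_o$ preserves syndeticity indices, which is immediate since right translation is a homeomorphism of $K$.
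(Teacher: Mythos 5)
Your proof is correct and follows essentially the same route as the paper's: use Lemma \ref{help1} to pin down $k_o$ and form $E = C \cap k_o\cdot D$, apply Lemma \ref{help2} to the self-correlation set of $E$, and observe that $U$ contains a right translate of that set by $k_o$. The only cosmetic difference is that you make the containment $E\cap k\cdot E \subset C\cap kk_o\cdot D$ explicit, where the paper phrases the same fact as the inequality $m(C\cap k\cdot D)\geq m(E\cap kk_o^{-1}\cdot E)$.
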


\begin{proof}
By Lemma \ref{help1} there exists $k_o \in K$ such that the set
\[
E := C \cap k_o \cdot D
\]
has measure at least $m(C) \cdot m(D)$. Hence, 
\[
m(C \cap k \cdot D) \geq m(E \cap k \cdot k_o^{-1} \cdot E)
\]
for all $k \in K$. Define the open set
\[
U' := \big\{ k \in K \, : \, m(E \cap k \cdot E) > 0 \big\} \subset K.
\] 
By Lemma \ref{help2} there exists a finite set $F \subset K$ with
\[
|F| \leq \Big\lfloor \frac{1}{m(E)} \Big\rfloor \leq \Big\lfloor \frac{1}{m(C) \cdot m(D)} \Big\rfloor.
\]
such that $F U' = K$. Since 
\[
U \supset U' \cdot k_o,
\]
we conclude that 
\[
s_K(U) \leq \Big\lfloor \frac{1}{m(C) \cdot m(D)} \Big\rfloor.
\]
\end{proof}

\subsection{Proof of Proposition \ref{jordan}}

There is no reason to expect that the set $U$ in Proposition \ref{help3} is Jordan measurable. 
However, as we shall see in this subsection, it always contains an open Jordan measurable subset 
of the same syndeticity index. This will follow from the following proposition, which for compact
and \emph{metrizable} groups is quite standard. Although the setting can be adapted so that only this
weaker version of the proposition needs to be applied, we shall here give the proof of the same
statement for \emph{any} compact Hausdorff group.  

\begin{proposition}
\label{pre-jordan}
Suppose $C \subset K$ is a closed set. Then $C$ equals the intersection of a decreasing 
net $(C_\alpha)$ of closed Jordan measurable sets in $K$. 
\end{proposition}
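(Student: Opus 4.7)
The plan is to consider the family $\cJ$ of all closed Jordan measurable subsets of $K$ that contain $C$, partially ordered by reverse inclusion. I would first check that $\cJ$ is a directed set: if $A_1, A_2 \in \cJ$, then $A_1 \cap A_2$ is closed, contains $C$, and satisfies $\partial(A_1 \cap A_2) \subset \partial A_1 \cup \partial A_2$, so $m(\partial(A_1 \cap A_2)) = 0$. Hence $A_1 \cap A_2 \in \cJ$ and it refines both. The theorem therefore reduces to showing that $C = \bigcap_{A \in \cJ} A$, which in turn reduces, by regularity of $K$, to the following approximation claim: for every open set $V \supset C$, there exists $A \in \cJ$ with $A \subset V$.

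To prove this approximation claim, I would invoke the structure theorem for compact Hausdorff groups (a consequence of Peter-Weyl): every neighborhood of the identity in $K$ contains a closed normal subgroup $N$ with $K/N$ a compact Lie group, in particular metrizable. Given $V$, pick an open neighborhood $U$ of $e$ with $UC \subset V$ (possible because $C$ is compact and multiplication is continuous), then choose $N \subset U$ as above. Let $q : K \to K/N$ be the quotient map and $m'$ the Haar measure on $K/N$, so that $q_*m = m'$ and $q$ is continuous, open, and surjective.

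Since $K/N$ is compact metrizable with metric $d$, the set $C' := q(C)$ is closed and can be thickened: the family $C'_\varepsilon := \{k' \in K/N : d(k', C') \leq \varepsilon\}$ consists of closed sets with $\bigcap_{\varepsilon > 0} C'_\varepsilon = C'$, and the topological boundaries $\partial C'_\varepsilon$ are pairwise disjoint for distinct $\varepsilon$, so $m'(\partial C'_\varepsilon) = 0$ for all but countably many $\varepsilon$. Choose such an $\varepsilon$ small enough that $C'_\varepsilon \subset q(V)$, which is possible since $q(K \setminus V)$ is closed in $K/N$ and disjoint from $C'$. Then $A := q^{-1}(C'_\varepsilon)$ is closed and satisfies $C \subset A \subset NC \subset UC \subset V$. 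Because $q$ is continuous and open, $\partial A = q^{-1}(\partial C'_\varepsilon)$, and so $m(\partial A) = m'(\partial C'_\varepsilon) = 0$, giving $A \in \cJ$ as desired.

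The main obstacle is the step invoking Peter-Weyl to replace an arbitrary open neighborhood of $C$ by a neighborhood of the form $NC$ with $K/N$ metrizable; this is the only place where the non-metrizability of $K$ is genuinely felt, and it is what forces us to pass to a net rather than a sequence. Once this reduction is in place, the remainder of the argument is the standard metric thickening together with the easy observation that continuous open surjective homomorphisms of compact groups preserve Jordan measurability under pullback.
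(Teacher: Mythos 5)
Your proof is correct, but it takes a genuinely different route from the paper. The paper proves the more general Proposition~\ref{real jordan}: for \emph{any} locally compact Hausdorff space $X$ and any regular Borel probability measure $\mu$, a compact set can be sandwiched inside Jordan measurable sets arbitrarily close to it in measure. The construction there is a carefully interleaved tower of open and closed sets exploiting only regularity of $\mu$ and the local compactness of $X$; no algebraic structure is used. You instead invoke the structure theory of compact groups (Peter--Weyl, $K = \varprojlim K/N$ with $K/N$ Lie) to reduce the non-metrizable case to the metrizable one, then use the elementary metric thickening argument. Your approach buys conceptual clarity and a short argument, at the cost of being specific to Haar measure on a compact group (you need $q_* m = m'$); the paper's argument buys generality at the cost of a more elaborate construction. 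Both work for the application at hand.

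Two small slips in your write-up, neither of which affects the overall correctness of the idea. First, the displayed chain $C \subset A \subset NC \subset UC \subset V$ has the middle inclusion backwards: since $C' \subset C'_\varepsilon$, one has $NC = q^{-1}(C') \subset q^{-1}(C'_\varepsilon) = A$, not the other way around. Second, the condition ``$C'_\varepsilon \subset q(V)$'' is not sufficient to conclude $A \subset V$: because $q$ is not injective, $q^{-1}(q(V)) = NV$ may strictly contain $V$. The condition you actually want (and the one your supporting reasoning already supplies, via disjoint compact sets in a metric space) is $C'_\varepsilon \cap q(K \setminus V) = \emptyset$. Since $q(K\setminus V)$ is compact and disjoint from $C'$ (because $NC \subset UC \subset V$), it lies at positive distance from $C'$, and choosing $\varepsilon$ below that distance gives $A = q^{-1}(C'_\varepsilon) \subset V$ directly.
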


Before we turn to the proof of this proposition, we give an easy corollary. 

\begin{corollary}
\label{cor. pre-jordan}
Suppose $U \subset K$ is an open set with the property that there exists a finite set $F \subset K$ 
such that $FU = K$. Then there exists an open Jordan measurable subset $U' \subset U$ such 
that $FU' = K$.
\end{corollary}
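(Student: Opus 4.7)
The plan is to derive the corollary directly from Proposition \ref{pre-jordan} by a compactness argument. First I would pass to complements: set $C := K \setminus U$, which is closed. Applying Proposition \ref{pre-jordan} yields a decreasing net $(C_\alpha)$ of closed Jordan measurable subsets of $K$ with $C = \bigcap_\alpha C_\alpha$, and I would then define
\[
V_\alpha := K \setminus C_\alpha.
\]
Each $V_\alpha$ is open, contained in $U$, and the family $(V_\alpha)$ is increasing with $\bigcup_\alpha V_\alpha = U$.

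Next I would verify that each $V_\alpha$ is Jordan measurable. Since $C_\alpha$ is closed and $V_\alpha$ is open, a direct computation gives
\[
\partial V_\alpha = \overline{V_\alpha} \setminus V_\alpha = \overline{V_\alpha} \setminus V_\alpha^{o}
\qquad \text{and} \qquad
\partial C_\alpha = C_\alpha \setminus C_\alpha^{o} = \overline{V_\alpha} \setminus V_\alpha,
\]
so $\partial V_\alpha = \partial C_\alpha$ and hence $m(\partial V_\alpha) = 0$.

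Finally I would exploit compactness to locate a single $\alpha$ that works. Since $FU = K$, we have
\[
K = F U = \bigcup_\alpha F V_\alpha,
\]
and the sets $FV_\alpha$ are open (finite unions of open sets) and increasing in $\alpha$. By compactness of $K$ some $FV_\alpha$ already covers $K$, and setting $U' := V_\alpha$ gives the desired open Jordan measurable subset of $U$ with $FU' = K$.

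The argument is routine; the only "obstacle" worth flagging is that one must use an \emph{increasing} net of open sets so that compactness of $K$ upgrades a potentially infinite subcover to a single set, which is why we complemented into Proposition \ref{pre-jordan} rather than trying to approximate $U$ directly from the inside.
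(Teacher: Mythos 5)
Your proof is correct and follows essentially the same route as the paper: produce an increasing net of open Jordan measurable sets exhausting $U$, then use compactness together with nestedness to extract a single member of the net. The only difference is that the paper invokes Proposition \ref{pre-jordan} directly as if it were stated for open sets and increasing nets, silently passing to complements; you make that complementation explicit (setting $V_\alpha = K\setminus C_\alpha$) and verify $\partial V_\alpha = \partial C_\alpha$, which is a small detail the paper glosses over but is worth recording.
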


\begin{proof}
By Proposition \ref{pre-jordan}, there exists an increasing net $(V_\alpha)$ of open Jordan 
measurable sets with
\[
U = \bigcup_\alpha V_\alpha. 
\]
In particular,
\[
K = FU = \bigcup_\alpha FV_\alpha.
\]
Hence, $(FV_\alpha)$ is an open covering of $K$, so we may extract a finite sub-covering. 
Since $(V_\alpha)$ is nested, there must exist $\alpha$ such that
\[
K = FV_\alpha,
\]
which finishes the proof. 
\end{proof}

The proof of Proposition \ref{jordan} is now immediate. 

\begin{proof}[Proof of Proposition \ref{jordan}]
Suppose $A, B \subset K$ are Borel sets with positive $m$-measures and define the open set
\[
U := \Big\{ k \in K \, : \, m(A \cap k \cdot B) > 0 \Big\} \subset K.
\]
By Lemma \ref{help3}, 
\[
s_{K}(U) \leq \Big\lfloor \frac{1}{m(A) \cdot m(B)} \Big\rfloor,
\]
and by Corollary \ref{cor. pre-jordan}, we can find a Jordan measurable set $U' \subset U$
with
\[
s_{K}(U') \leq \Big\lfloor \frac{1}{m(A) \cdot m(B)} \Big\rfloor,
\]
which finishes the proof.
\end{proof}

\subsection{Proof of Proposition \ref{pre-jordan}}

Recall (see Chapter 2 in \cite{Ru} for the proof) that if $X$ is any (locally) compact Hausdorff space and $K \subset X$ is compact and $U \subset X$ is an open set containing $K$, then there exists an open subset $V$ with compact closure such that
\[
K \subset V \subset U.
\]
This simple property of locally compact Hausdorff spaces is the only thing required to give a 
proof of the following statement, which clearly implies Proposition \ref{pre-jordan}.

\begin{proposition}
\label{real jordan}
Suppose $K \subset X$ is compact. Then, for every open subset $U \supset K$, there exists a
$\mu$-Jordan measurable set $K \subset C \subset U$.
\end{proposition}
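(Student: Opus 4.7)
The plan is to produce a continuous function $f \colon X \to [0,1]$ that equals $1$ on $K$ and is supported in a compact subset of $U$, and then to take $C$ to be a closed super-level set $\{f \geq t\}$ at a judiciously chosen $t \in (0,1)$.

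First I would iterate the stated local compactness property, Urysohn-fashion, to produce a family of open sets $\{V_r\}_{r \in D}$, indexed by the set $D$ of dyadic rationals in $(0,1)$, satisfying
\[
K \subset V_r, \qquad \overline{V_r} \subset U, \qquad \overline{V_s} \subset V_r \text{ whenever } r,s \in D \text{ and } r < s.
\]
Defining $f(x) := \sup\{r \in D : x \in V_r\}$ with $\sup \emptyset := 0$, one obtains a function $X \to [0,1]$ that equals $1$ on $K$ and is supported in the compact set $\overline{V_{r_0}} \subset U$ for any fixed $r_0 \in D$. Its continuity follows from the two identities
\[
\{f > t\} = \bigcup_{r \in D,\, r > t} V_r \qand \{f < t\} = \bigcup_{s \in D,\, s < t} (\overline{V_s})^c,
\]
which visibly describe open sets; the non-obvious direction of the second identity uses that $f(x) < t$ forces $f(x) < s' < s < t$ for some $s,s' \in D$, whence $x \notin V_{s'}$ and therefore $x \notin \overline{V_s}$ since $\overline{V_s} \subset V_{s'}$.

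Next I would consider the closed super-level sets $C_t := \{f \geq t\}$ for $t \in (0,1)$. Each $C_t$ satisfies $K \subset C_t \subset \supp(f) \subset U$, and since $\{f > t\}$ is open and contained in $C_t$ it is contained in $C_t^\circ$; thus
\[
\partial C_t \;=\; C_t \setminus C_t^\circ \;\subset\; C_t \setminus \{f > t\} \;=\; \{f = t\}.
\]
The level sets $\{f = t\}$ for varying $t \in (0,1)$ are pairwise disjoint Borel subsets of the compact set $\supp(f)$, on which $\mu$ is finite. Hence at most countably many of them carry positive $\mu$-mass, and any $t_0 \in (0,1)$ with $\mu(\{f = t_0\}) = 0$ yields a set $C := C_{t_0}$ with $K \subset C \subset U$ and $\mu(\partial C) = 0$, as required.

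The only step requiring any real care is the verification that the Urysohn-type function $f$ is indeed continuous using \emph{only} the single local compactness property granted by the paper, in particular without appealing to metrizability or second countability of $X$; this is however entirely standard and is witnessed by the two displayed identities above.
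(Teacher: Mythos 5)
Your argument is correct in spirit and takes a genuinely different route from the paper's. The paper builds, by hand, a nested alternating chain of open sets $V_{n,0}, V_{n,1}$ and uses the \emph{regularity} of $\mu$ at each stage to force $\mu(\bigcap_n \overline{V_{n,1}}) = \mu(\bigcup_n V_{n,0})$, so that the limiting closed set is automatically Jordan measurable (and has measure within $\eps$ of $\mu(K)$). You instead build a single Urysohn-type continuous function $f$ (carefully, using only the stated local compactness property) and then apply a soft level-set pigeonhole: the sets $\{f=t\}$, $t\in(0,1)$, are pairwise disjoint, so at most countably many can carry positive mass, and any $t_0$ with $\mu(\{f=t_0\})=0$ gives a Jordan measurable super-level set. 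The paper's approach is more quantitative (it controls $\mu(C)$, which it later exploits), while yours is shorter and entirely soft; both avoid any metrizability or separability hypothesis on $X$.

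One small inaccuracy: with your conventions ($V_r$ decreasing in $r$, $f(x)=\sup\{r : x\in V_r\}$), the set $\{f>0\}$ equals $\bigcup_{r\in D} V_r$, which grows as $r\to 0$ and is \emph{not} contained in any single $\overline{V_{r_0}}$; consequently $\supp(f)$ need not be compact nor contained in $U$, and the inclusion $C_t \subset \supp(f)\subset U$ is not justified as written. The fix is immediate and doesn't affect the structure of the argument: for any $r_0\in D$ and any $t>r_0$ one has $\{f\ge t\}\subset \overline{V_{r_0}}$ (since $f(x)\ge t$ forces $x\in V_{r'}$ for some $r' > r_0$, and $V_{r'}\subset \overline{V_{r'}}\subset V_{r_0}$), so $C_t\subset \overline{V_{r_0}}\subset U$. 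Likewise the level sets $\{f=t\}$ for $t\in(r_0,1)$ all lie in the compact set $\overline{V_{r_0}}$, on which $\mu$ is finite, and the pigeonhole should be run over that range of $t$. With that repair the proof is complete.
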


\begin{proof}
Fix $\eps > 0$ and a decreasing sequence $(\eps_n)$ of positive real numbers, converging 
to zero, with $\eps_o = \eps$. 

By regularity of $\mu$, we can find an open set $U_o \subset U$ with 
\[
K \subset U_o
\]
and 
\[
\mu(U_o) \leq \mu(K) + \eps_0. 
\]
Since $X$ is a locally compact Hausdorff space, there exists an open set $V_{0,1} \subset X$ with 
compact closure such that
\[
K \subset V_{0,1} \subset \overline{V_{0,1}} \subset U_o. 
\]
By regularity of $\mu$, we choose an open set $U_1 \subset X$ such that
\[
\mu(U_1) \leq \mu(\overline{V_{0,1}}) + \eps_1
\]
and since $X$ is a locally compact Hausdorff space we can find open sets $V_{1,0}, V_{1,1} \subset X$
such that
\[K \subset \overline{V_{0,1}} 
\subset V_{1,0} \subset \overline{V_{1,0}} 
\subset V_{1,1} \subset \overline{V_{1,1}}
\subset U_1.
\]
Again, by regularity of $\mu$, we can find an open set $U_2 \subset V_{1,1}$ with
\[
\mu(U_2) \leq \mu(\overline{V_{1,0}}) + \eps_2
\]
and open sets $V_{2,0}, V_{2,1} \subset X$ such that
\[
K \subset \overline{V_{1,0}} 
\subset V_{2,0} \subset \overline{V_{2,0}} 
\subset V_{2,1} \subset \overline{V_{2,1}}
\subset U_2 \subset V_{1,1}.
\]
For $n > 2$, we can find an open set $U_{n+1} \subset V_{n,1}$ with
\[
\mu(U_{n+1}) \leq \mu(\overline{V_{n,0}}) + \eps_{n+1}
\]
and open sets $V_{n+1,0}, V_{n+1,1} \subset X$ such that
\[
K \subset \overline{V_{n,0}} 
\subset V_{n+1,0} \subset \overline{V_{n+1,0}} 
\subset V_{n+1,1} \subset \overline{V_{n+1,1}}
\subset U_{n+1} \subset V_{n,1}.
\]
By construction
\[
V_{n,0} \subset V_{n+1,0} \qand V_{n,1} \supset V_{n+1,1},
\]
for all $n$ and hence, if we define
\[
V := \bigcup_{n} V_{n,0} \qand C := \bigcap_{n} \overline{V_{n,1}},
\]
then $V \subset X$ is open and $C \subset X$ is closed, and moreover
\[
\mu(V) = \lim_n \mu(V_{n+1,0}) \qand \mu(C) = \lim_{n} \mu(\overline{V_{n+1,1}}).
\]
Since 
\[
\mu(\overline{V_{n+1,1}}) \leq \mu(U_{n+1}) \leq \mu(\overline{V_{n,0}}) + \eps_{n+1} \leq \mu(V_{n+1,0}) + \eps_{n+1},
\]
for all $n$ and $\eps_n \ra 0$, we have
\[
\mu(C) = \lim_{n} \mu(\overline{V_{n+1,1}}) \leq \lim_{n} \big( \mu(V_{n+1,0}) + \eps_{n+1} \big)
= \mu(V). 
\]
Since $V \subset C$, we conclude that $C$ is $\mu$-Jordan measurable, and
\[
\mu(C) \leq \mu(U_o) \leq \mu(K) + \eps.
\]
To prove the second assertion, we first note that the set $K \subset V_o$ is arbitrary, and thus
we can always ensure that there exists a closed $\mu$-Jordan measurable set 
\[
K \subset C \subset V_o.
\]
Since $K$ is compact, there exists a decreasing net $(V_\alpha)$ of open sets with 
\[
K = \bigcap V_\alpha,
\]
and from the arguments above, we know that, for every $\alpha$, there exists a $\mu$-Jordan
measurable set such that
\[
K \subset C_\alpha \subset V_\alpha,
\]
which finishes the proof. 
\end{proof}

\end{document}